\documentclass{amsart}
\usepackage{amssymb,amsmath,stmaryrd,mathrsfs}
\def\definetac{\newif\iftac}    
\ifx\tactrue\undefined
  \definetac
  \ifx\state\undefined\tacfalse\else\tactrue\fi
\fi
\iftac\else\usepackage{amsthm}\fi
\usepackage[all,2cell]{xy}
\UseAllTwocells
\usepackage{enumitem}
\usepackage{color}
\definecolor{darkgreen}{rgb}{0,0.45,0} 
\usepackage[pagebackref,colorlinks,citecolor=darkgreen,linkcolor=darkgreen]{hyperref}
\usepackage{mathtools}          
\usepackage{braket}             

\usepackage{url}                
\usepackage{xspace}             

\makeatletter
\let\ea\expandafter

\def\mdef#1#2{\ea\ea\ea\gdef\ea\ea\noexpand#1\ea{\ea\ensuremath\ea{#2}\xspace}}
\def\alwaysmath#1{\ea\ea\ea\global\ea\ea\ea\let\ea\ea\csname your@#1\endcsname\csname #1\endcsname
  \ea\def\csname #1\endcsname{\ensuremath{\csname your@#1\endcsname}\xspace}}

\DeclareRobustCommand\widecheck[1]{{\mathpalette\@widecheck{#1}}}
\def\@widecheck#1#2{%
    \setbox\z@\hbox{\m@th$#1#2$}%
    \setbox\tw@\hbox{\m@th$#1%
       \widehat{%
          \vrule\@width\z@\@height\ht\z@
          \vrule\@height\z@\@width\wd\z@}$}%
    \dp\tw@-\ht\z@
    \@tempdima\ht\z@ \advance\@tempdima2\ht\tw@ \divide\@tempdima\thr@@
    \setbox\tw@\hbox{%
       \raise\@tempdima\hbox{\scalebox{1}[-1]{\lower\@tempdima\box
\tw@}}}%
    {\ooalign{\box\tw@ \cr \box\z@}}}


\newcount\foreachcount

\def\foreachletter#1#2#3{\foreachcount=#1
  \ea\loop\ea\ea\ea#3\@alph\foreachcount
  \advance\foreachcount by 1
  \ifnum\foreachcount<#2\repeat}

\def\foreachLetter#1#2#3{\foreachcount=#1
  \ea\loop\ea\ea\ea#3\@Alph\foreachcount
  \advance\foreachcount by 1
  \ifnum\foreachcount<#2\repeat}

\def\definescr#1{\ea\gdef\csname s#1\endcsname{\ensuremath{\mathscr{#1}}\xspace}}
\foreachLetter{1}{27}{\definescr}
\def\definecal#1{\ea\gdef\csname c#1\endcsname{\ensuremath{\mathcal{#1}}\xspace}}
\foreachLetter{1}{27}{\definecal}
\def\definebold#1{\ea\gdef\csname b#1\endcsname{\ensuremath{\mathbf{#1}}\xspace}}
\foreachLetter{1}{27}{\definebold}
\def\definebb#1{\ea\gdef\csname l#1\endcsname{\ensuremath{\mathbb{#1}}\xspace}}
\foreachLetter{1}{27}{\definebb}
\def\definefrak#1{\ea\gdef\csname f#1\endcsname{\ensuremath{\mathfrak{#1}}\xspace}}
\foreachletter{1}{9}{\definefrak} 
\foreachletter{10}{27}{\definefrak}
\foreachLetter{1}{27}{\definefrak}
\def\definebar#1{\ea\gdef\csname #1bar\endcsname{\ensuremath{\overline{#1}}\xspace}}
\foreachLetter{1}{27}{\definebar}
\foreachletter{1}{8}{\definebar} 
\foreachletter{9}{15}{\definebar} 
\foreachletter{16}{27}{\definebar}
\def\definetil#1{\ea\gdef\csname #1til\endcsname{\ensuremath{\widetilde{#1}}\xspace}}
\foreachLetter{1}{27}{\definetil}
\foreachletter{1}{27}{\definetil}
\def\definehat#1{\ea\gdef\csname #1hat\endcsname{\ensuremath{\widehat{#1}}\xspace}}
\foreachLetter{1}{27}{\definehat}
\foreachletter{1}{27}{\definehat}
\def\definechk#1{\ea\gdef\csname #1chk\endcsname{\ensuremath{\widecheck{#1}}\xspace}}
\foreachLetter{1}{27}{\definechk}
\foreachletter{1}{27}{\definechk}
\def\defineul#1{\ea\gdef\csname u#1\endcsname{\ensuremath{\underline{#1}}\xspace}}
\foreachLetter{1}{27}{\defineul}
\foreachletter{1}{27}{\defineul}

\def\autofmt@n#1\autofmt@end{\mathrm{#1}}
\def\autofmt@b#1\autofmt@end{\mathbf{#1}}
\def\autofmt@l#1#2\autofmt@end{\mathbb{#1}\mathsf{#2}}
\def\autofmt@c#1#2\autofmt@end{\mathcal{#1}\mathit{#2}}
\def\autofmt@s#1#2\autofmt@end{\mathscr{#1}\mathit{#2}}
\def\autofmt@f#1\autofmt@end{\mathfrak{#1}}
\def\autofmt@u#1\autofmt@end{\underline{\smash{\mathsf{#1}}}}
\def\autofmt@U#1\autofmt@end{\underline{\underline{\smash{\mathsf{#1}}}}}
\def\autofmt@h#1\autofmt@end{\widehat{#1}}
\def\autofmt@r#1\autofmt@end{\overline{#1}}
\def\autofmt@t#1\autofmt@end{\widetilde{#1}}
\def\autofmt@k#1\autofmt@end{\check{#1}}

\def\auto@drop#1{}
\def\autodef#1{\ea\ea\ea\@autodef\ea\ea\ea#1\ea\auto@drop\string#1\autodef@end}
\def\@autodef#1#2#3\autodef@end{%
  \ea\def\ea#1\ea{\ea\ensuremath\ea{\csname autofmt@#2\endcsname#3\autofmt@end}\xspace}}
\def\autodefs@end{blarg!}
\def\autodefs#1{\@autodefs#1\autodefs@end}
\def\@autodefs#1{\ifx#1\autodefs@end%
  \def\autodefs@next{}%
  \else%
  \def\autodefs@next{\autodef#1\@autodefs}%
  \fi\autodefs@next}


\DeclareSymbolFont{bbold}{U}{bbold}{m}{n}
\DeclareSymbolFontAlphabet{\mathbbb}{bbold}
\newcommand{\bbDelta}{\ensuremath{\mathbbb{\Delta}}\xspace}
\newcommand{\bbone}{\ensuremath{\mathbbb{1}}\xspace}
\newcommand{\bbtwo}{\ensuremath{\mathbbb{2}}\xspace}


\newcommand{\phibar}{\ensuremath{\overline{\varphi}}\xspace}



\mdef\delbar{\overline{\partial}}

\mdef\hf{\textstyle\frac12 }
\mdef\thrd{\textstyle\frac13 }
\mdef\qtr{\textstyle\frac14 }

\newcommand{\op}{^{\mathrm{op}}}

\newcommand{\coop}{^{\mathrm{coop}}}

\SelectTips{cm}{}
\newdir{ >}{{}*!/-10pt/@{>}}    

\newcommand{\pullbackcorner}[1][dr]{\save*!/#1-1.2pc/#1:(-1,1)@^{|-}\restore}

\mdef\Id{\mathrm{Id}}
\mdef\id{\mathrm{id}}
\alwaysmath{ell}
\alwaysmath{infty}
\alwaysmath{odot}
\def\frc#1/#2.{\frac{#1}{#2}}   
\mdef\ten{\mathrel{\otimes}}

\mdef\sqten{\mathrel{\boxtimes}}

\DeclareRobustCommand\widecheck[1]{{\mathpalette\@widecheck{#1}}}
\def\@widecheck#1#2{%
    \setbox\z@\hbox{\m@th$#1#2$}%
    \setbox\tw@\hbox{\m@th$#1%
       \widehat{%
          \vrule\@width\z@\@height\ht\z@
          \vrule\@height\z@\@width\wd\z@}$}%
    \dp\tw@-\ht\z@
    \@tempdima\ht\z@ \advance\@tempdima2\ht\tw@ \divide\@tempdima\thr@@
    \setbox\tw@\hbox{%
       \raise\@tempdima\hbox{\scalebox{1}[-1]{\lower\@tempdima\box
\tw@}}}%
    {\ooalign{\box\tw@ \cr \box\z@}}}


\DeclareMathOperator\colim{colim}

\DeclareMathOperator\sk{sk}

\DeclareMathOperator\End{End}

\newcommand{\too}[1][]{\ensuremath{\overset{#1}{\longrightarrow}}}
\newcommand{\ot}{\ensuremath{\leftarrow}}

\let\toto\rightrightarrows
\let\into\hookrightarrow

\mdef\we{\overset{\sim}{\longrightarrow}}
\mdef\leftwe{\overset{\sim}{\longleftarrow}}

\let\maps\colon

\newcommand{\fib}{\mathsf{fib}}
\newcommand{\cof}{\mathsf{cof}}


\let\xto\xrightarrow

\def\rightarrowtailfill@{\arrowfill@{\Yright\joinrel\relbar}\relbar\rightarrow}
\newcommand\xrightarrowtail[2][]{\ext@arrow 0055{\rightarrowtailfill@}{#1}{#2}}

\def\twoheadrightarrowfill@{\arrowfill@{\relbar\joinrel\relbar}\relbar\twoheadrightarrow}
\newcommand\xtwoheadrightarrow[2][]{\ext@arrow 0055{\twoheadrightarrowfill@}{#1}{#2}}


\def\slashedarrowfill@#1#2#3#4#5{%
  $\m@th\thickmuskip0mu\medmuskip\thickmuskip\thinmuskip\thickmuskip
   \relax#5#1\mkern-7mu%
   \cleaders\hbox{$#5\mkern-2mu#2\mkern-2mu$}\hfill
   \mathclap{#3}\mathclap{#2}%
   \cleaders\hbox{$#5\mkern-2mu#2\mkern-2mu$}\hfill
   \mkern-7mu#4$%
}
\def\rightslashedarrowfill@{%
  \slashedarrowfill@\relbar\relbar\mapstochar\rightarrow}
\newcommand\xslashedrightarrow[2][]{%
  \ext@arrow 0055{\rightslashedarrowfill@}{#1}{#2}}
\mdef\hto{\xslashedrightarrow{}}
\mdef\htoo{\xslashedrightarrow{\quad}}




\def\toiso{\xto{\smash{\raisebox{-.5mm}{$\scriptstyle\sim$}}}}
\def\otiso{\xleftarrow{\smash{\raisebox{-.5mm}{$\scriptstyle\sim$}}}}

\def\shvar#1#2{{\ensuremath{%
  \hspace{1mm}\makebox[-1mm]{$#1\langle$}\makebox[0mm]{$#1\langle$}\hspace{1mm}%
  {#2}%
  \makebox[1mm]{$#1\rangle$}\makebox[0mm]{$#1\rangle$}%
}}}
\def\sh{\shvar{}}

\def\Bigsh{\shvar{\Big}}

\long\def\my@drawfill#1#2;{%
\@skipfalse
\fill[#1,draw=none] #2;
\@skiptrue
\draw[#1,fill=none] #2;
}
\newif\if@skip
\newcommand{\skipit}[1]{\if@skip\else#1\fi}
\newcommand{\drawfill}[1][]{\my@drawfill{#1}}



\newif\ifhyperref
\@ifpackageloaded{hyperref}{\hyperreftrue}{\hyperreffalse}
\iftac
  \let\your@state\state
  \def\state#1{\gdef\currthmtype{#1}\your@state{#1}}
  \let\your@staterm\staterm
  \def\staterm#1{\gdef\currthmtype{#1}\your@staterm{#1}}
  \let\defthm\newtheorem
  \def\currthmtype{}
  \ifhyperref
    \def\autoref#1{\ref*{label@name@#1}~\ref{#1}}
  \else
    \def\autoref#1{\ref{label@name@#1}~\ref{#1}}
  \fi
  \AtBeginDocument{%
    \let\old@label\label%
    \def\label#1{%
      {\let\your@currentlabel\@currentlabel%
        \edef\@currentlabel{\currthmtype}%
        \old@label{label@name@#1}}%
      \old@label{#1}}
  }
\else
  \ifhyperref
    \def\defthm#1#2{%
      \newtheorem{#1}{#2}[section]%
      \expandafter\def\csname #1autorefname\endcsname{#2}%
      \expandafter\let\csname c@#1\endcsname\c@thm}
  \else
    \def\defthm#1#2{\newtheorem{#1}[thm]{#2}}
    \ifx\SK@label\undefined\let\SK@label\label\fi
    \let\old@label\label
    \let\your@thm\@thm
    \def\@thm#1#2#3{\gdef\currthmtype{#3}\your@thm{#1}{#2}{#3}}
    \def\currthmtype{}
    \def\label#1{{\let\your@currentlabel\@currentlabel\def\@currentlabel%
        {\currthmtype~\your@currentlabel}%
        \SK@label{#1@}}\old@label{#1}}
    \def\autoref#1{\ref{#1@}}
  \fi
\fi

\newtheorem{thm}{Theorem}[section]

\defthm{cor}{Corollary}
\defthm{prop}{Proposition}
\defthm{lem}{Lemma}
\defthm{sch}{Scholium}
\defthm{assume}{Assumption}
\defthm{claim}{Claim}
\defthm{conj}{Conjecture}
\defthm{hyp}{Hypothesis}
\defthm{fact}{Fact}
\iftac\theoremstyle{plain}\else\theoremstyle{definition}\fi
\defthm{defn}{Definition}
\defthm{notn}{Notation}
\iftac\theoremstyle{plain}\else\theoremstyle{remark}\fi
\defthm{rmk}{Remark}
\defthm{eg}{Example}
\defthm{egs}{Examples}
\defthm{ex}{Exercise}
\defthm{ceg}{Counterexample}

\def\thmqedhere{\expandafter\csname\csname @currenvir\endcsname @qed\endcsname}


\setitemize[1]{leftmargin=2em}
\setenumerate[1]{leftmargin=*}

\iftac
  \let\c@equation\c@subsection
\else
  \let\c@equation\c@thm
\fi
\numberwithin{equation}{section}

\@ifpackageloaded{mathtools}{\mathtoolsset{showonlyrefs,showmanualtags}}{}

\alwaysmath{alpha}
\alwaysmath{beta}
\alwaysmath{gamma}
\alwaysmath{Gamma}
\alwaysmath{delta}
\alwaysmath{Delta}
\alwaysmath{epsilon}
\mdef\ep{\varepsilon}
\alwaysmath{zeta}
\alwaysmath{eta}
\alwaysmath{theta}
\alwaysmath{Theta}
\alwaysmath{iota}
\alwaysmath{kappa}
\alwaysmath{lambda}
\alwaysmath{Lambda}
\alwaysmath{mu}
\alwaysmath{nu}
\alwaysmath{xi}
\alwaysmath{pi}
\alwaysmath{rho}
\alwaysmath{sigma}
\alwaysmath{Sigma}
\alwaysmath{tau}
\alwaysmath{upsilon}
\alwaysmath{Upsilon}
\alwaysmath{phi}
\alwaysmath{Pi}
\alwaysmath{Phi}
\mdef\ph{\varphi}
\alwaysmath{chi}
\alwaysmath{psi}
\alwaysmath{Psi}
\alwaysmath{omega}
\alwaysmath{Omega}

%

\makeatother


\usepackage{tikz}
\usepackage{mathdots}
\usepackage{wasysym}

\newcommand{\tr}{\ensuremath{\operatorname{tr}}}
\newcommand{\tw}{\ensuremath{\operatorname{tw}}}
\newcommand{\dual}[1]{D{#1}}
\newcommand{\rdual}[1]{D_r{#1}}

\autodefs{\bProf\cProf\bSet\nAut\bCat\lMod\lMat\lProf}
\let\D\sD
\let\E\sE

\newcommand{\bg}{\ensuremath{{\bB G}}\xspace}
\newcommand{\bh}{\ensuremath{{\bB H}}\xspace}
\newcommand{\bth}{\ensuremath{{\bB \theta}}\xspace}
\newcommand{\ob}{\operatorname{ob}}

\newcommand{\tc}{\bbone}   
\newcommand{\pt}{\star} 

\newcommand{\rup}[1]{\overrightarrow{#1}}
\newcommand{\rdn}[1]{\overleftarrow{#1}}

\newcommand{\bsl}[2]{\partial(#1/#2)}

\newcommand{\card}[1]{\# #1}

\let\prof\bProf
\let\V\bV
\let\W\bW
\let\T\bT

\let\dV\sV
\let\dW\sW
\let\dT\sT
\let\dS\sS
\let\dprof\cProf

\let\tens\varoast

\newcommand{\symm}{\mathfrak{s}}
\renewcommand{\th}{^{\textrm{th}}}
\autodefs{\cDER\cCat\cCAT\cMONCAT\bSet\cProf\bCat\cPro\cMONDER\cBICAT\ncomp\nid\sProf\ncyl\fC\fZ\fT\nhocolim\nholim\cPsNat\ncoll\bTop\cRMODEL\cEx\cSp\cGpd}
\def\ho{\mathscr{H}\!\mathit{o}\xspace}

\def\shift#1#2{{#1}^{#2}}


\newcommand{\Ex}{\cEx} 

\newcommand{\loops}[1]{{#1}^{\circlearrowleft}}

\theoremstyle{plain}
\makeatletter
\newtheorem*{rep@theorem}{\rep@title}
\newcommand{\newreptheorem}[2]{%
\newenvironment{rep#1}[1]{%
 \def\rep@title{Restatement of #2 \ref{##1}}%
 \begin{rep@theorem}}%
 {\end{rep@theorem}}}
\makeatother

\newreptheorem{thm}{Theorem}
\newreptheorem{lem}{Lemma}

\title{The linearity of traces in monoidal categories and bicategories}
\author{Kate Ponto and Michael Shulman}
\date{\today}
\thanks{The first author  was partially 
supported by NSF grant DMS-1207670. The second author was partially supported by an NSF postdoctoral fellowship and 
NSF grant DMS-1128155, and appreciates the hospitality of the University of Kentucky.
Any opinions, findings, and conclusions or recommendations expressed in this material are those of the authors and 
do not necessarily reflect the views of the National Science Foundation.}
\begin{document}
\maketitle

\begin{abstract}
  We show that in any symmetric monoidal category, if a weight for colimits is absolute, then the resulting colimit of any diagram of dualizable objects is again dualizable.
  Moreover, in this case, if an endomorphism of the colimit is induced by an endomorphism of the diagram, then its trace can be calculated as a linear combination of traces on the objects in the diagram.
  The formal nature of this result makes it easy to generalize to traces in homotopical contexts (using derivators) and traces in bicategories.
  These generalizations include the familiar additivity of the Euler characteristic and Lefschetz number along cofiber sequences, as well as an analogous result for the Reidemeister trace, but also the orbit-counting theorem for sets with a group action, and a general formula for homotopy colimits over EI-categories.
\end{abstract}

\tableofcontents

\section{Introduction}
\label{sec:introduction}

In this paper, we study the following question: given a diagram in a category, when can the ``size'' of its colimit be calculated in terms of the ``size'' of the objects occurring in the diagram?
Such a question might pertain to various notions of ``size'', such as cardinality, dimension, or Euler characteristic.
Here are a few well-known facts that can be interpreted as answers to instances of this question.

\begin{enumerate}
\item If $X$ and $Y$ are finite sets, then we have an obvious formula for the cardinality of their disjoint union:\label{item:eg-coprod}
  \[ \card{(X\sqcup Y)} = \card X + \card Y. \]
\item More generally, for finite CW-complexes $X$ and $Y$, the Euler characteristic of their disjoint union is the sum of their Euler characteristics:\label{item:eg-coprod-sp}
  \[ \chi(X\sqcup Y) = \chi(X) + \chi(Y). \]
\item Similarly, if $X$ and $Y$ are finite-dimensional vector spaces, we have an analogous formula for the dimension of their sum:\label{item:eg-coprod-vect}
  \[ \mathrm{dim}(X\oplus Y) = \mathrm{dim}(X) + \mathrm{dim}(Y). \]
\item If $X\into Y$ and $X\into Z$ are injections of finite sets, then we have the ``inclusion-exclusion'' formula for the cardinality of their pushout:\label{item:eg-incl-excl}
  \[ \card{(Y+_{X} Z)} = \card Y + \card Z - \card X. \]
\item More generally, if $Y \ot X\to Z$ is an arbitrary span of finite CW complexes, then there is a similar formula for the Euler characteristic of their \emph{homotopy} pushout:\label{item:eg-hopo}
  \[ \chi(Y+_X^h Z) = \chi(Y) + \chi(Z) - \chi(X). \]
\item As a particular case of~\ref{item:eg-hopo}, if $Z=\pt$ is the one-point space and $X\to Y$ is the inclusion of a subcomplex, then the homotopy pushout is homotopy equivalent to the quotient $Y/X$, and we have\label{item:eg-cofiber}
  \[ \chi(Y/X) = \chi(Y) - \chi(X). \]
\item If $X$ is a finite-dimensional chain complex and $\mathrm{dim}(X) = \sum_n (-1)^n \mathrm{dim}(X_n)$ is its \emph{graded dimension}, then there is an obvious formula for the graded dimension of its suspension:\label{item:eg-susp-ch}
  \[ \mathrm{dim}(\Sigma X) = - \mathrm{dim}(X). \]
\item Similarly, if $X$ is a finite CW complex, then we have an analogous formula for the Euler characteristic of its suspension:\label{item:eg-susp-sp}
  \[ \chi(\Sigma X) = - \chi(X). \]
\item If $G$ is a finite group and $X$ a finite $G$-set, then we have the orbit-counting theorem (a.k.a.\ Burnside's lemma or the Cauchy-Frobenius lemma) for the cardinality of its quotient:\label{item:eg-quot}
  \[ \card {(X/G)} = \frac{1}{\card G} \sum_{g\in G} \card{(X^g)}. \]
  Here $X^g$ is the set of fixed points of $g\in G$ acting on $X$.
\item If $e\colon X\to X$ is an idempotent linear operator on a finite-dimensional vector space (i.e.\ a projection), then the dimension of its quotient is equal to its trace:\label{item:eg-idem}
  \[ \mathrm{dim}(X/e) = \tr(e). \]
\item The cardinality (or Euler characteristic) of the empty set is zero:\label{item:eg-empty}
  \[ \card\emptyset = \chi(\emptyset) = 0. \]
  as is the dimension of the zero vector space:
  \[ \mathrm{dim}(0) = 0. \]
\end{enumerate}
In all cases, the formulas have a common shape: the size of a colimit is expressed as a \emph{linear combination} of the sizes of its inputs (or other related trace-like invariants).
The first general theory of such formulas was described by Leinster~\cite{leinster:eccat}: he showed that if $A$ is a finite category that admits a \emph{weighting}, which is a function $k\colon \mathrm{ob}(A) \to \mathbb{Q}$ satisfying certain properties, then the formula
\[ \card{\colim(X)} = \sum_a k_a \cdot \card {X_a} \]
holds whenever $X\colon A\to\mathrm{Set}$ is a finite coproduct of representables.
This includes examples~\ref{item:eg-coprod}, \ref{item:eg-incl-excl}, the special case of~\ref{item:eg-quot} when the action is free, and a similar special case of~\ref{item:eg-idem}.
However, it applies only to finite sets, thus excluding the algebraic or homotopical examples; nor does it deal with the case of non-free actions.

Our original motivation to study this question came from a generalization of~\ref{item:eg-cofiber} to a statement about \emph{Lefschetz numbers}.
In fact, all of the above formulas can be similarly generalized to become statements about a trace-like invariant of an endomorphism, which reduce to the previous statements in the case of identity maps.
Specifically:
\begin{itemize}
\item For an endomorphism $f\colon X\to X$ of a finite set, we can consider the cardinality $\card{\mathrm{Fix}(f)}$ of the set of fixed points of $f$.
  When $f=\id_X$ this reduces to $\card X$.
\item For an endomorphism $f\colon X\to X$ of a finite-dimensional vector space, we can consider its \emph{trace} in the usual sense.
  When $f=\id_X$ this reduces to $\mathrm{dim}(X)$.
\item For an endomorphism $f\colon X\to X$ of a finite-dimensional manifold, we can consider its \emph{Lefschetz number}.
  When $f=\id_X$ this reduces to $\chi(X)$.
\end{itemize}
All of the above formulas remain true if cardinalities, dimensions, and Euler characteristics are replaced by fixed-point counts, traces, and Lefschetz numbers.
More specifically, given a \emph{natural} endomorphism of a diagram, there is an induced endomorphism of its colimit, and we have formulas calculating trace-like invariants of the latter in terms of the corresponding trace-like invariants of the objects in the diagram.
For example:
\begin{itemize}
\item If $X\into Y$ and $X\into Z$ are injections of finite sets and we have endofunctions $f\colon Y\to Y$ and $g\colon Z\to Z$ which agree when restricted to $X$, then there is an induced endofunction $h\colon Y+_X Z \to Y+_X Z$, and we have 
\[ \card{\mathrm{Fix}(h)} = \card{\mathrm{Fix}(f)} + \card{\mathrm{Fix}(g)} - \card{\mathrm{Fix}(f|_X)}. \]
\item If $e$ is an idempotent linear operator on a finite-dimensional vector space $X$, and $f\colon X\to X$ is any linear operator, then there is an induced operator $g\colon X/e \to X/e$ and we have
  \[ \tr(g) = \tr(e \circ f). \]
\item If $X\into Y$ is an inclusion of finite CW complexes, and $f\colon Y\to Y$ is an endomorphism such that $f(X) \subseteq X$, then there is an induced endomorphism $f/X$ of the quotient $Y/X$, and we have
  \begin{equation}
    L(f/X) = L(f) - L(f|_X). \label{eq:intro-add2}
  \end{equation}
  where $L$ denotes the Lefschetz number.
\end{itemize}
Eq.~\eqref{eq:intro-add2} is better known when written in the following way:
\begin{equation}
  L(f)=L(f|_X)+L(f/X)\label{eq:intro-add}
\end{equation}
In this form it is known as the \emph{additivity} of the Lefschetz number.

In~\cite{add}, May gave a very general proof of~\eqref{eq:intro-add}, using the fact that the Lefschetz number is an instance of an abstract notion of \emph{trace} that can be defined for an endomorphism of a \emph{dualizable object} in any symmetric monoidal category.
All of the above ``size-like'' and ``trace-like'' invariants can be put into this framework, sometimes by first mapping them into another category.
Namely:
\begin{itemize}
\item A vector space is dualizable just when it is finite-dimensional, and in that case the categorical trace of an endomorphism is precisely the classical trace.
\item If a space $X$ is a finite CW complex, then its \emph{suspension spectrum} is dualizable in the stable homotopy category, and in that case the categorical trace of an endomorphism is precisely the Lefschetz number.
\item A finite set can either be regarded as a finite CW complex and mapped into the stable homotopy category, or else regarded as the basis of a  vector space.
  In either case, the resulting categorical trace gives precisely the number of fixed points of an endofunction.
\end{itemize}
Certain properties of this abstract categorical trace are well-known and easy to prove.
For instance, if the monoidal category is semi-additive (i.e.\ finite products and coproducts coincide naturally), then the trace is additive on direct sums; this implies~\ref{item:eg-coprod}, \ref{item:eg-coprod-sp}, and~\ref{item:eg-coprod-vect}, and a nullary version of it implies~\ref{item:eg-empty}.
The trace is also \emph{cyclic}; this fairly easily implies~\ref{item:eg-idem}.

May showed an analogous, but more complicated, general result: if the symmetric monoidal category is \emph{triangulated} in a way compatible with its monoidal structure, then the categorical trace is additive along distinguished triangles, in the sense of~\eqref{eq:intro-add}.
This implies~\ref{item:eg-cofiber} and~\ref{item:eg-hopo}, and thereby~\ref{item:eg-incl-excl}.
(It is also fairly easy to see that May's axioms for compatibility between a triangulation and a monoidal structure imply~\ref{item:eg-susp-ch} and~\ref{item:eg-susp-sp}.)

Our original motivation was a desire to extend May's result to an additivity theorem for the \emph{Reidemeister trace}, a fixed-point invariant that refines the Lefschetz number.
Unlike the Lefschetz number, the Reidemeister trace is not a categorical trace in a symmetric monoidal category, but it is an instance of a more general kind of abstract trace that takes place in a \emph{bicategory}~\cite{kate:traces,PS2}.
We found that the most natural way to do this was to set up a general theory that applies to colimits of potentially arbitrary shapes, and indeed potentially arbitrary \emph{weights}, which turns out to include \emph{all} the above examples.

Recall that in enriched category theory, we consider not just ordinary colimits but \emph{weighted} colimits: if $A$ is a small category describing the shape of our diagram, then the {\bf weight} is a functor $\Phi\colon A\op \to \V$.
(Ordinary ``unweighted'' colimits are the special case when $\Phi$ is constant at the unit object.)
Such a weight $\Phi$ is said to be {\bf absolute} if $\Phi$-weighted colimits are preserved by every \V-functor; for instance, finite coproducts are absolute in vector spaces.
The simplest case of our general theorem is then:

\begin{thm}\label{thm:intro-smc}
  Let \V be a closed, cocomplete, semi-additive, symmetric monoidal category.
  If $A$ is a finite category, $\Phi\colon A\op\to \V$ is absolute, and $X\colon A\to \V$ is a diagram such that each $X_a$ is dualizable, then the weighted colimit $\colim^\Phi(X)$ is also dualizable, and we have a formula for its formal Euler characteristic (the trace of its identity map):
  \[ \chi(\colim^{\Phi}(X)) = \sum_{[\alpha]} \phi_{[\alpha]} \tr(X_\alpha) \]
  More generally, for any endo-natural-transformation $f\colon X\to X$ of such an $X$, we have a similar formula for its trace:
  \[ \tr(\colim^{\Phi}(f)) = \sum_{[\alpha]} \phi_{[\alpha]} \tr(X_\alpha\circ f_a). \]
\end{thm}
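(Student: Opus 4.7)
The plan is to reduce the theorem to a statement about adjoints in the bicategory $\V\text{-}\bProf$ of $\V$-enriched profunctors, and then compute the trace explicitly. A weight $\Phi\colon A\op \to \V$ can be reinterpreted as a profunctor $\Phi\colon A \to \bbone$ (where $\bbone$ is the unit $\V$-category), and the weighted colimit $\colim^\Phi(X) = \Phi \otimes_A X$ is realized as a composite profunctor $\bbone \to \bbone$, i.e.\ an object of $\V$. The key classical fact I invoke is the characterization of absolute weights: $\Phi$ is absolute if and only if, as a profunctor, it admits a right adjoint $\Phi^*\colon \bbone \to A$ in $\V\text{-}\bProf$.

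Given this, the first step is to construct a dual for $\colim^\Phi(X)$. Since $A$ is finite and each $X_a$ is dualizable with dual $DX_a$, the diagram $X$ viewed as a profunctor $A \to \bbone$ is itself a right adjoint in $\V\text{-}\bProf$, with right adjoint $DX\colon \bbone \to A$ assembled pointwise from the $DX_a$ using semi-additivity. Composing right adjoints, $\colim^\Phi(X) = \Phi \otimes_A X$ has right adjoint $DX \otimes_A \Phi^* = \colim^{\Phi^*}(DX)$, establishing dualizability in $\V$. The coevaluation and evaluation maps for this duality are obtained by pasting the $X_a$-dualizability data with the unit and counit of $\Phi \adj \Phi^*$.

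For the trace formula, I apply the definition of trace to $\colim^\Phi(f)$ with the dualizability data just constructed. Unraveling the coevaluations and evaluations, the trace factors through the coend $\int^a \Phi^*(a) \otimes \Phi(a)$ in $\V$, which is the ``shadow'' of the identity profunctor on $A$. In the semi-additive setting, this coend decomposes as a direct sum indexed by equivalence classes $[\alpha]$ of endomorphisms in $A$, each summand contributing a scalar $\phi_{[\alpha]}$ read off from the unit/counit data of $\Phi \adj \Phi^*$. Using naturality of $f$, the $\alpha$-summand of the overall trace then becomes $\phi_{[\alpha]} \tr(X_\alpha \circ f_a)$, giving the claimed formula; specializing to $f = \id_X$ yields the Euler-characteristic version.

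The main obstacle will be the explicit identification of this coend decomposition and its coefficients. Existence of a dual for $\colim^\Phi(X)$ and \emph{some} trace formula is essentially formal from the bicategorical viewpoint, but writing the trace as an explicit linear combination over equivalence classes of endomorphisms requires combining the cyclicity of the trace, the semi-additivity of $\V$, and the combinatorics of the twisted-arrow structure on $A$ in just the right way, in order to recognize the coefficients $\phi_{[\alpha]}$ emerging from $\Phi \adj \Phi^*$ as those that intrinsically characterize the absolute weight $\Phi$.
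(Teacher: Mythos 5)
Your overall strategy is the same as the paper's: work in the bicategory of $\V$-profunctors, where absoluteness of $\Phi$ and pointwise dualizability of $X$ are exactly right dualizability of the 1-cells $\tc\hto A$ and $A\hto \tc$, so that $\colim^\Phi(X)=\Phi\odot X$ is dualizable by composing dual pairs, and the trace is computed via the composition theorem for bicategorical traces (\autoref{thm:compose-traces}), giving $\tr(\colim^\Phi f)=\tr(f)\circ\tr(\id_\Phi)$. However, as written there is both an error and a genuine gap. The error: the coend $\int^{a}\Phi^*(a)\otimes\Phi(a)$ is \emph{not} the shadow of the identity profunctor on $A$ --- it is the shadow of $\Phi\odot\rdual\Phi$ (equivalently of $\rdual\Phi\odot\Phi$, by the shadow isomorphism), an object depending on $\Phi$, and it does not decompose as a direct sum over conjugacy classes. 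The object that does decompose is $\sh{\lI_A}\cong\int^{a}A(a,a)\cdot\lS\cong\bigoplus_{[\alpha]}\lS$, and the way the trace comes to factor through it is precisely the composition theorem: $\tr(\id_\Phi)\colon \lS\to\sh{A}$ supplies the coefficients $\phi_{[\alpha]}$ and $\tr(f)\colon\sh{A}\to\lS$ supplies the other factor. So the ``decomposition of the coend'' step should be replaced by this factorization through $\sh{\lI_A}$.

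The gap is the step you dispatch with ``using naturality of $f$'': identifying the $[\alpha]$-component of $\tr(f)\colon\sh{A}\to\lS$ as $\tr(f_a\circ X_\alpha)$. This is exactly the component lemma (\autoref{thm:smcomega2a}), and it is the only genuinely non-formal ingredient of the theorem; naturality of $f$ alone does not yield it, and your closing paragraph concedes it is ``the main obstacle'' without supplying an argument. The paper proves it by introducing base change objects: the representable profunctor $A(\id,a)$ is right dualizable with dual $A(a,\id)$ (\autoref{thm:bcodual}), the trace of the induced endomorphism $A(\id,\alpha)$ is the coprojection $\lS\to\sh{A}$ indexed by $[\alpha]$ (\autoref{thm:bcodual2}), and then a second application of the composition theorem to $A(\id,a)\odot X\cong X(a)$ (\autoref{thm:bcorestr}) identifies the composite $\tr(f)\circ[\alpha]$ with the trace of $f_a\circ X_\alpha$. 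Until you carry out this computation (or an equivalent explicit coend calculation), you have only shown that $\tr(\colim^\Phi f)$ is \emph{some} linear functional of $\tr(f)$, not the stated formula; everything before that point, as you note, is formal and agrees with the paper.
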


We call this theorem a \emph{linearity formula}, because it expresses the trace associated to the colimit as a \emph{linear combination} of traces associated to the input diagram.
The sum is indexed by ``conjugacy classes'' of endomorphisms $\alpha \colon  a\to a$ in the category $A$ (we will define these later).
In most of the above examples, the only endomorphisms are identities, so it reduces to a sum over objects of $A$.
In particular, \autoref{thm:intro-smc} has the following specializations.
\begin{itemize}
\item If \V is pointed with zero object $0$, then $0$ is dualizable and $\chi(0) = 0$, giving example~\ref{item:eg-empty}.
\item If \V is semi-additive and $X$ and $Y$ are dualizable, then $\chi(X\oplus Y) = \chi(X) + \chi(Y)$.
  This implies examples~\ref{item:eg-coprod}, \ref{item:eg-coprod-sp}, and~\ref{item:eg-coprod-vect}.
\item In any \V, if $X$ is dualizable and $e:X\to X$ is idempotent, then $\chi(X/e) = \tr(e)$, giving example~\ref{item:eg-idem}.
  Here $e$ itself serves as the only relevant ``conjugacy class''.
\item If \V is semi-additive, and $X$ is dualizable with an action of a finite group $G$ whose cardinality is invertible in \V (e.g.\ if $\V$ is rational vector spaces), then
  \begin{equation}
    \chi(X/G) = \frac{1}{\card G} \sum_{g\in G} \tr(X(g)).
  \end{equation}
  This implies example~\ref{item:eg-quot}.
  Here the ``conjugacy classes'' are ordinary conjugacy classes in $G$.
\end{itemize}
All of these apply also to traces of nonidentity morphisms.

However, \autoref{thm:intro-smc} does not apply as stated to the homotopical examples, including~\ref{item:eg-cofiber} and the motivating case~\eqref{eq:intro-add2}, since homotopy colimits are not particular weighted colimits.%
\footnote{They can be calculated in examples \emph{using} certain weighted colimits, but the relevant weights are not absolute, and the ``dualizability'' is also only up to homotopy.}
We need a version of it that applies to a ``natively homotopical'' context, and for this we find it most convenient to use \emph{derivators}.
A derivator is an enhancement of a homotopy category with just enough information to determine homotopy limits and colimits by universal properties, which is exactly what we need for this theorem.
Derivators are often also easier to work with for formal results of this sort than other models of homotopy theory, such as model categories or $(\infty,1)$-categories.

Thus, after proving \autoref{thm:intro-smc} as stated, we prove an analogous theorem for closed symmetric monoidal derivators.

\begin{thm}\label{thm:intro-der}
  Let \dV be a closed, semi-additive, symmetric monoidal derivator.
  If $A$ is a finite category, $\Phi\colon A\op\to \dV$ is absolute and has a coefficient decomposition, and $X\colon A\to \dV$ is a diagram such that each $X_a$ is dualizable, then the weighted colimit $\colim^\Phi(X)$ is also dualizable, and for any endomorphism $f\colon X\to X$ we have
  \[ \tr(\colim^{\Phi}(f)) = \sum_{[\alpha]} \phi_{[\alpha]} \tr(X_\alpha\circ f_a). \]
\end{thm}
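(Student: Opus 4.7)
My plan is to imitate the proof of \autoref{thm:intro-smc} in the derivator framework, using the 2-functoriality of $\dV$ and the hypothesis of a coefficient decomposition to replace the set-theoretic summations available in the 1-categorical case. Since $\dV$ is semi-additive, closed, symmetric monoidal, and admits homotopy (weighted) colimits, all the operations used in the proof of \autoref{thm:intro-smc}---tensor, dual, composition, and the cyclic permutation underlying the trace---make sense internally to $\dV$, and the two questions to address are whether the pointwise dualizability of $X$ propagates to the colimit and whether the linearity formula for the trace can be read off from the data of $\Phi$ together with its coefficient decomposition.

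First I would establish dualizability of $\colim^{\Phi}(X)$. Because $\Phi$ is absolute, the $\Phi$-weighted colimit is preserved by every cocontinuous morphism of derivators, and in particular it commutes with tensoring by any object of $\dV$. Combined with the pointwise dualizability of $X$, this lets me construct a candidate dual and evaluation/coevaluation maps by applying $\Phi$-shaped colimits to the diagrams of pointwise duals and pointwise (co)evaluations, exactly mirroring the 1-categorical construction. The triangle identities then drop out of absoluteness together with the Beck--Chevalley isomorphisms that come for free in a derivator. This step uses only absoluteness of $\Phi$, not the coefficient decomposition.

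Next I would compute $\tr(\colim^{\Phi}(f))$. Writing the trace out in the closed monoidal derivator $\dV$ and using the dualizability constructed above, one expresses $\tr(\colim^{\Phi}(f))$ as a single homotopy colimit indexed by the twisted arrow category of $A$, paired with $\Phi$, $X$, and $f$ via the unit and counit of the duality. The coefficient decomposition of $\Phi$ is precisely what is needed to split this homotopy colimit as a sum indexed by conjugacy classes $[\alpha]$ of endomorphisms in $A$, with scalar coefficient $\phi_{[\alpha]}$. Cyclicity of the trace (which in $\dV$ follows from the symmetric monoidal 2-functoriality of $\dV$ applied to the standard 1-categorical argument) then identifies each summand with $\phi_{[\alpha]}\tr(X_\alpha\circ f_a)$, giving the desired formula.

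The main obstacle I anticipate is ensuring that the ``twisted arrow'' bookkeeping which drives the 1-categorical proof passes cleanly through a derivator. In $\V$ one can manipulate endomorphism sets and conjugacy classes by elementary set theory, but in $\dV$ the sum over $[\alpha]$ must arise as the decomposition of a genuine homotopy colimit along the connected components of the twisted arrow category; the coefficient-decomposition hypothesis is essentially what guarantees that such a decomposition exists and is compatible with the shadow/trace structure. Verifying this compatibility---in particular, checking that the abstractly constructed trace in $\dV(\tc)$ really coincides term-by-term with the expected linear combination, and is independent of auxiliary choices in the decomposition---is where I expect the technical work of the proof to lie.
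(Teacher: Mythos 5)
There is a genuine gap in both halves of your plan. For dualizability, your proposed construction --- ``applying $\Phi$-shaped colimits to the diagrams of pointwise duals and pointwise (co)evaluations'' --- is not the 1-categorical argument of the paper and does not work in general: the dual of a weighted colimit is not the $\Phi$-weighted colimit of the pointwise duals. In the motivating stable example, where $\Phi$ is the weight for cofibers, the dual of $\cof(X_a\to X_b)$ is $\fib(DX_b\to DX_a)\simeq\Omega\,\cof(DX_b\to DX_a)$, i.e.\ the colimit of the duals weighted by $\rdual{\Phi}\simeq(\Omega\lS\to 0)$, not by $\Phi$; your candidate dual is off by a suspension. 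Moreover your invocation of absoluteness is vacuous: every weighted colimit commutes with cocontinuous morphisms (and with tensoring, since $\otimes$ is cocontinuous in each variable); absoluteness is about preservation by \emph{arbitrary} morphisms, and in this paper it is \emph{defined} as right dualizability of $\Phi$ as a 1-cell $\tc\hto A$ in the profunctor bicategory $\dprof(\dV)$. Without that bicategorical reading of the hypothesis, ``colimits of dualizables are dualizable'' is simply false (arbitrary colimits of dualizable spectra need not be dualizable), so the triangle identities cannot ``drop out''; the correct route, which is also how the 1-categorical case is actually proved, is to regard $X$ as a dualizable 1-cell $A\hto\tc$, $\Phi$ as a dualizable 1-cell $\tc\hto A$, and invoke the composition theorem for dual pairs (\autoref{thm:compose-duals}), whose coevaluation genuinely interleaves $\eta_\Phi$ and $\eta_X$ rather than being a colimit of pointwise coevaluations.

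For the trace formula, the missing ingredients are the shadow on $\dprof(\dV)$ and the composition theorem for bicategorical traces: the paper first proves $\tr(\colim^\Phi f)=\tr(f)\circ\tr(\id_\Phi)$ (\autoref{thm:dercomposite}), a composite through $\sh{A}\cong |N(\Lambda A)|\tens\lS$, where $\pi_0(\Lambda A)$ is the set of conjugacy classes (\autoref{thm:pi0LA}). The coefficient decomposition is a hypothesis on the factor $\tr(\id_\Phi)\colon\lS\to\sh{A}$ --- it is needed precisely because $\sh{A}$ need not split as a direct sum over conjugacy classes, so $\tr(f)$ is not determined by its components --- it is not a device for splitting a homotopy colimit computing $\tr(\colim^\Phi f)$, which is just a map $\lS\to\lS$. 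Finally, the step you defer as ``technical work'' is exactly the crux: identifying the composite $\tr(f)\circ[\alpha]$ with $\tr(f_a\circ X_\alpha)$ is the component lemma (\autoref{thm:smderomega}), and it does not follow from cyclicity plus ``2-functoriality applied to the 1-categorical argument''; the paper proves it with the framed-bicategory/base-change-object machinery of \S\ref{sec:base_change}--\S\ref{sec:bco-der} (the general \autoref{thm:general-omega} together with explicit homotopy exact square computations). Without that input, or an equivalent derivator computation, your outline does not yield the stated formula.
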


As before, the sum is again over conjugacy classes in $A$; the condition that $\Phi$ ``has a coefficient decomposition'' is technical and practically always satisfied.
\autoref{thm:intro-der} has the following specializations:
\begin{itemize}
\item All the examples of \autoref{thm:intro-smc} mentioned above also apply to derivators.
\item If \dV is stable and $i:X\to Y$ is a map between dualizable objects, then its cofiber $\cof(i)$ is also dualizable, and $\chi(\cof(i)) = \chi(Y) - \chi(X)$.
  This gives example~\ref{item:eg-cofiber}, while the generalization to traces gives~\eqref{eq:intro-add2}.
\item If \dV is stable and $Y\ot X \to Z$ is a span of dualizable objects, then its homotopy pushout $Y +_X^h Z$ is dualizable, and $\chi(Y+_X^h Z) = \chi(Y) + \chi(Z) - \chi(X)$.
  This implies examples~\ref{item:eg-hopo} and~\ref{item:eg-incl-excl}.
\item More generally, if \dV is stable and $X:A\to \dV$ is \emph{any} diagram with $A$ ``homotopy finite'' (see \S\ref{sec:hofin2}) and each $X_a$ dualizable, then $\colim(X)$ is dualizable, and we have
  \begin{equation}\label{eq:intro-hofin-chi}
    \chi(\colim(X))
    = \sum_{a} \chi(X_a) \cdot \sum_{k\ge 0} (-1)^k \cdot \card{\left\{
      \parbox{5.2cm}{\centering composable strings of nonidentity\\arrows of length $k$ starting at $a$}
    \right\}}
  \end{equation}
\item If \dV is stable and rational, and $X:A\to \dV$ is a diagram with each $X_a$ dualizable and $A$ a finite EI-category (i.e.\ every endomorphism is an isomorphism), then $\colim(X)$ is dualizable, and we have
  \begin{equation}
    \chi(\colim(X)) = \sum_{[a]} \sum_{C} \chi(X_{C}) \cdot
    \sum_k (-1)^k \sum_{[\vec{\alpha}]} \sum_{\vec{C}} \frac{\card{\vec{C}}}{\card{\nAut(\vec{\alpha})}}\label{eq:intro-ei-chi}
  \end{equation}
  where $\vec{\alpha}$ ranges over composable strings of noninvertible arrows of length $k$ starting at $a$, and $\vec{C}$ ranges over conjugacy classes of ``automorphisms of $\vec{\alpha}$'' (see \S\ref{sec:ei-categories}) restricting at $a$ to $C$.
\end{itemize}
As before, all of these also apply to traces of nonidentity morphisms.

These formulas also appear in the literature in various forms.
As mentioned before,~\eqref{eq:intro-add2} was proven abstractly by~\cite{add} for monoidal homotopy categories arising from a model structure, and then again in~\cite{gps:additivity} for stable monoidal derivators, using essentially the same method as May.
Our proof uses the basic definitions relating to monoidal derivators from~\cite{gps:additivity}, but the underlying idea of the proof is quite different from that of~\cite{add} --- and much more general, since it applies to colimits other than just cofibers.

On the other hand, when applied to diagrams of sets (via their suspension spectra), the formula~\eqref{eq:intro-hofin-chi} reproduces a large subclass of the formulas for cardinalities of colimits from~\cite{leinster:eccat}.
(Curiously, however, there are some examples to which both our theory and Leinster's apply, but yield different formulas.)

Finally, while this paper was in preparation, de Souza~\cite{souza:traces} independently obtained a formula equivalent to~\eqref{eq:intro-ei-chi} by other methods.
His approach relies on many explicit computations in derivators, while we use categorical abstraction to package such computations into conceptual facts.
(Much of this packaging was already done in~\cite{groth:ptstab,gps:stable,gps:additivity}; what remains is mostly isolated in \S\ref{sec:bco-der} of this paper.)
We expect that it would be possible to reduce both approaches to similar ideas, but in practice our paper proposes a very different perspective.

Even with \autoref{thm:intro-der} under our belts, however, we still have not captured all of the examples of interest.
For example, we cannot yet describe the additivity of the Reidemeister trace, since that is a trace in a bicategory (in the sense of~\cite{kate:traces,PS2}) rather than in a symmetric monoidal category.
However, it is completely straightforward to generalize Theorems~\ref{thm:intro-smc} and~\ref{thm:intro-der} to bicategories and even to \emph{derivator bicategories} (bicategories whose hom-categories are derivators).
This is a significant advantage of our approach to additivity over others such as~\cite{add} and~\cite{souza:traces}.
In the end, our most general linearity formula is the following.

\begin{thm}\label{thm:intro-derbi}
  Let \dW be a closed, locally semi-additive, derivator bicategory.
  Let $R$ and $S$ be objects of \dW, let $A$ be a finite category, let $\Phi\colon A\op\to \dW(R,R)$ be absolute and have a coefficient decomposition, and let $X\colon A\to \dW(R,S)$ be a diagram such that each $X_a$ is a dualizable 1-cell.
  Then the weighted colimit $\colim^\Phi(X)$ is also a dualizable 1-cell, and for any endomorphism $f\colon X\to X$ we have
  \[ \tr(\colim^{\Phi}(f)) = \sum_{[\alpha]} \phi_{[\alpha]} \tr(X_\alpha\circ f_a). \]
\end{thm}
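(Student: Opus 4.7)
The plan is to show that the argument used for Theorem~\ref{thm:intro-der} lifts essentially verbatim to the bicategorical derivator setting, exploiting the fact that a closed symmetric monoidal derivator is precisely the one-object case of a closed derivator bicategory. All of the formal ingredients used in the monoidal case --- preservation of absolute colimits under the operations that appear in trace computations, semi-additive enrichment on hom-derivators, and the coefficient decomposition of $\Phi$ organizing the indexing by conjugacy classes --- have direct bicategorical analogs once one is careful to track the source/target objects $R,S$ in \dW.

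First, I would establish that $Y := \colim^{\Phi}(X) \in \dW(R,S)$ is a right dualizable 1-cell. Its dual should be built from the pointwise duals $\dual{X_a} \in \dW(S,R)$ via a suitably weighted limit (equivalently, a colimit against an appropriate ``opposite'' of $\Phi$). Closedness of \dW supplies the adjoints that make this construction meaningful, while absoluteness of $\Phi$ is precisely the condition that $\colim^{\Phi}$ commutes with composition by any fixed 1-cell on either side, which is what one needs to assemble the pointwise evaluations and coevaluations for the $X_a$ into global duality data for $Y$.

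Next, the trace of the induced endomorphism $\colim^{\Phi}(f)$ is computed by unfolding its definition in $\dW(R,R)$ in terms of the unit and counit constructed above. Using the commutation of $\colim^{\Phi}$ with composition, this composite factors through a colimit of pointwise trace-like composites built from the $f_a$ and the duality data of the $X_a$. Applying the coefficient decomposition of $\Phi$ then reorganizes the resulting expression as the claimed linear combination $\sum_{[\alpha]} \phi_{[\alpha]} \tr(X_\alpha \circ f_a)$, indexed by conjugacy classes of endomorphisms in $A$, exactly as in the symmetric monoidal derivator case.

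The main obstacle is the bookkeeping required to confirm that all composites land in the correct hom-derivator and that the form of absoluteness appropriate to the bicategorical setting --- preservation under composition with a variable 1-cell, not merely tensoring with a scalar --- is what is actually supplied by the hypothesis on $\Phi$. In the monoidal case this issue is invisible since every 1-cell shares the same source and target; here one must rely on the closed, locally semi-additive structure of \dW and on the bicategorical trace formalism of~\cite{kate:traces,PS2} to ensure that the formal manipulations of the monoidal derivator proof go through in each hom-derivator. Once this set-up is in place, the remainder is a direct transcription of the proof of Theorem~\ref{thm:intro-der}.
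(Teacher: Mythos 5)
Your overall strategy---transcribe the monoidal-derivator argument into the hom-derivators of \dW---is indeed the right one, but the substitute you propose for absoluteness does not do the work you assign to it. You gloss absoluteness as ``$\colim^\Phi$ commutes with composition by any fixed 1-cell on either side.'' In a closed derivator bicategory that commutation (on the side where it even typechecks) is automatic from cocontinuity of $\odot$ in each variable, so it imposes no condition on $\Phi$ whatsoever; for instance arbitrary coproducts commute with $(-)\odot W$, yet an infinite coproduct of dualizable 1-cells need not be dualizable. The mechanism the paper uses is different: it builds a bicategory $\prof(\dW)$ of profunctors (objects are pairs $(A,R)$, hom-categories $\dW(R,S)(A\times B\op)$, composition by coends, shadow computed via $\Lambda A$; \autoref{thm:derivbicat-prof}), defines ``absolute'' to mean that $\Phi$ is right dualizable as a 1-cell $(\tc,R)\hto(A,R)$ and ``pointwise dualizable'' to mean $X$ is right dualizable as a 1-cell $(A,R)\hto(\tc,S)$, and then dualizability of $\colim^\Phi(X)=\Phi\odot X$ is immediate because composites of right dualizable 1-cells are right dualizable (\autoref{thm:compose-duals}). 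Without this (or an equivalent reformulation of the hypothesis), your ``assembly of pointwise evaluations and coevaluations into global duality data'' has no actual engine behind it.

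The trace formula is likewise not a matter of unfolding and reorganizing. The paper obtains it in two separate steps: first, the composition theorem for bicategorical traces (\autoref{thm:compose-traces}, applied in $\prof(\dW)$, which requires having constructed its shadow) gives $\tr(\colim^\Phi f)=\tr(f)\circ\tr(\id_\Phi)$, where $\tr(f)$ is a morphism $\sh{(A,R)}\to\sh{S}$ and $\sh{(A,R)}\cong |N(\Lambda A)|\tens\sh{R}$ decomposes according to conjugacy classes of $A$; second, the component lemma (\autoref{thm:derbicatomega}) identifies the $[\alpha]$-component of $\tr(f)$ with $\tr(f_a\circ X_\alpha)$. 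That lemma is precisely where the nonidentity endomorphisms $X_\alpha$ and the indexing by conjugacy classes enter, and it is the technically heavy part of the argument, proved via base change objects in a framed bicategory (the dual pair $(A,R)(\id,a)$, $(A,R)(a,\id)$ and \autoref{thm:general-omega}, together with nontrivial mate calculations in the derivator). Your sketch never explains where the $X_\alpha$ come from, and citing the trace formalism of~\cite{kate:traces,PS2} does not supply this: those results apply only once the bicategory $\prof(\dW)$, its shadow, and the component computation are in place. The coefficient decomposition of $\Phi$ merely rewrites $\tr(\id_\Phi)$ at the very end; it cannot by itself produce the sum over conjugacy classes.
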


Note that in the bicategorical case, our colimits are ``local colimits'' in a hom-category (or hom-derivator) $\dW(R,S)$, while the ``weight'' $\Phi$ is a diagram of 1-cells.
We recover ``unweighted'' colimits by taking $\Phi$ to be constant at the unit 1-cell $\lI_R$.
All the examples mentioned above generalize directly to the bicategorical context; here are a couple examples to give the idea.
\begin{itemize}
\item If \dW is locally semi-additive and $X,Y\in\dW(R,S)$ are dualizable 1-cells, then $X\oplus Y$ is dualizable, and $\chi(X\oplus Y) = \chi(X) + \chi(Y)$.
\item If \dW is locally stable and $i:X\to Y$ is a morphism of dualizable 1-cells in $\dW(R,S)$, then its cofiber is dualizable, and $\chi(\cof(i)) = \chi(Y) - \chi(X)$.
\end{itemize}
In particular, from the second example we can obtain a formula for the Reidemeister trace analogous to~\eqref{eq:intro-add2}: for $i:X\into Y$ an inclusion of dualizable spaces and $f:Y\to Y$ such that $f(X)\subseteq X$, we have
\[R(f)-i(R(f|_X))=R_{Y|X}(f),\]
where $R(f)$ is the Reidemeister trace of $f$ and $R_{Y|X}(f)$ is the ``relative Reidemeister trace''~\cite{kate:relative}.
However, this application requires a bit of work to construct the relevant derivator bicategory (which is a generalization of the ordinary bicategory of parametrized spectra from~\cite{maysig:pht}).
Since the focus of this paper is categorical rather than topological, we postpone this work to the companion paper~\cite{PS6}, which is logically dependent on this one; and give only a brief sketch of the proof in \S\ref{sec:derbicat}.

The generalization to bicategorical traces has one further advantage: it yields a uniqueness statement for linearity formulas.
Namely, if a linearity formula for some type of colimit can be shown to exist (by any method) and is sufficiently general (in particular, it must apply to bicategories as well as monoidal categories), then it \emph{must} arise from \autoref{thm:intro-derbi}.
This is a satisfying general statement that our approach does not ``miss'' any linearity formulas.

We now summarize the organization of the paper.
We begin in \S\ref{sec:traces} with a review of traces in symmetric monoidal categories and bicategories, including the notion of \emph{shadow} from~\cite{kate:traces,PS2} that enables the definition of bicategorical trace.
Of particular note is the \emph{composition theorem} for bicategorical trace, \autoref{thm:compose-traces}, which is easy to prove formally but directly gives rise to our linearity formulas.

The next two sections \S\S\ref{sec:moncat}--\ref{sec:moncat-examples} treat \autoref{thm:intro-smc} in the symmetric monoidal case.
In \S\ref{sec:moncat} we describe the general theorem (with the proof of one technical lemma postponed), then in \S\ref{sec:moncat-examples} we show how it applies to a number of examples.
Also in \S\ref{sec:moncat-examples} we recall the technical tool of \emph{base change objects} (representable profunctors), and use it to prove the missing lemma and construct several more examples, including the orbit-counting theorem.

In \S\S\ref{sec:der}--\ref{sec:eccatei} we move on to monoidal derivators.
We begin in \S\ref{sec:der} with the general theory, then in \S\ref{sec:eccat} we apply it to the main new class of examples: \emph{stable} monoidal derivators (such as classical stable homotopy theory).
In addition to the simple linearity formula~\eqref{eq:intro-add2}, we obtain a general formula for all \emph{homotopy finite} colimits in \S\ref{sec:fincolim},
 which agrees with Leinster's formula when both apply.
We generalize the  orbit-counting theorem to derivators in \S\ref{sec:deriv-burnside}.  In \S\ref{sec:eccatei}
we combine these results to obtain formulas for colimits over \emph{EI-categories} in rational stable derivators.

Next, in \S\ref{sec:bicat} we describe the theory for ordinary (i.e.\ non-derivator) bicategorical traces.
There are no especially new examples of traces here.
Then in \S\ref{sec:derbicat} we introduce derivator bicategories and prove the corresponding linearity theorem.  
Since this version of the theorem includes all the previous versions as special cases, it is not technically necessary to build up to it in stages.
However, it is easier to understand the ideas in simple cases first and then to introduce generalizations one by one.  
In \S\ref{sec:uniqueness} we prove the uniqueness statement for linearity formulas, establishing that the approach in this paper captures all similar linearity expressions.  As remarked previously, the generalization to 
derivator bicategories is an essential part of this result. 

Finally, in \S\S\ref{sec:base_change}--\ref{sec:bco-der} we discuss base change objects for monoidal derivators and derivator bicategories.  This allows us
to complete the identifications of the traces described in Parts \ref{part:more} and \ref{part:bicat}.
In \S\ref{sec:base_change} we describe a general structure for base change objects based on~\cite{shulman:frbi}; then we apply it to bicategories in \S\ref{sec:basechangebicat} and derivator bicategories in \S\ref{sec:bco-der}.

\part{Linearity in monoidal categories}
\label{part:moncat}

In this first part of the paper, we describe linearity explicitly and concretely in the simplest case: symmetric monoidal categories.

\section{Traces in monoidal categories and bicategories}
\label{sec:traces}

Let \V be a closed symmetric monoidal category with unit object \lS, monoidal product $\otimes$, and internal hom $\rhd$.
The latter means we have natural isomorphisms
\[ \V(X\otimes Y, Z) \cong \V(X, Y\rhd Z). \]
We refer to the internal-hom $X \rhd \lS$ as the \textbf{canonical dual} of $X$ and write it as $\dual X$.
There is a canonical \textbf{evaluation} map $\epsilon\colon \dual X \otimes X \to \lS$, defined by adjunction from the identity of $\dual X$.
See~\cite{dp:duality, lms:equivariant}.

We say that an object $X$ is \textbf{dualizable} if the canonical map
\begin{equation}\label{eq:dualmap}
  \mu_{X,U}\colon U\otimes \dual X \too X\rhd U
\end{equation}
(whose adjunct is $U\otimes \dual X \otimes X \xto{\id_U \otimes \epsilon} U\otimes \lS \cong U$)
is an isomorphism for all objects $U$.
It is sufficient to require this for $U=X$.

This definition of duality is convenient in concrete examples, but there is an equivalent characterization that tends to be more convenient for studying traces.

\begin{thm}
An object $X$ is dualizable if and only if there is an object $Y$ of $\V$ and morphisms 
\[\lS\xto{\eta} X\otimes Y\text{ and }Y\otimes X\xto{\epsilon} \lS\]
so that the composites 
\[X\cong \lS\otimes X\xto{\eta\otimes \id}X\otimes Y\otimes X\xto{\id\otimes \epsilon}X\otimes \lS\cong X\]
\[Y\cong Y\otimes \lS\xto{\id\otimes \eta}Y\otimes X\otimes Y\xto{\epsilon\otimes \id}\lS\otimes Y\cong Y\]
are identity maps.
\end{thm}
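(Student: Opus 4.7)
The plan is to take $Y := DX$ (the canonical dual) in both directions and see how the triangle identities match up with invertibility of $\mu_{X,U}$.

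For the $(\Rightarrow)$ direction, let $\epsilon : DX \otimes X \to \lS$ be the canonical evaluation. Let $\lambda : \lS \to X \rhd X$ be the adjunct of $\id_X : \lS \otimes X \cong X \to X$ under the closed structure adjunction. Since $\mu_{X,X} : X \otimes DX \to X \rhd X$ is invertible by hypothesis, I would define $\eta := \mu_{X,X}^{-1} \circ \lambda : \lS \to X \otimes DX$. The two triangle identities then follow by diagram chases: one uses the defining property of $\mu$ as the adjunct of $\id_U \otimes \epsilon$, while the other uses naturality of $\mu_{X,U}$ in $U$ to reduce to the case $U = X$, where it becomes $\mu_{X,X} \circ \mu_{X,X}^{-1} = \id$.

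For the $(\Leftarrow)$ direction, given $(Y,\eta,\epsilon)$ satisfying the triangle identities, the key observation is that the assignments
\[ f \longmapsto (f \otimes \id_Y) \circ (\id_V \otimes \eta) \qquad \text{and} \qquad g \longmapsto (\id_U \otimes \epsilon) \circ (g \otimes \id_X) \]
are mutually inverse bijections $\V(V \otimes X, U) \cong \V(V, U \otimes Y)$ — the two triangle identities are exactly the statements that these composites are identities. Hence $(-\otimes X) \dashv (-\otimes Y)$. Since by hypothesis $(-\otimes X) \dashv (X \rhd -)$ as well, uniqueness of right adjoints yields a natural isomorphism $U \otimes Y \cong X \rhd U$; taking $U = \lS$ identifies $Y \cong DX$, and a direct computation shows that under this identification the natural iso is precisely $\mu_{X,U}$, so $X$ is dualizable.

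The main obstacle is the last step in the backward direction: showing the natural iso produced by uniqueness of adjoints actually \emph{coincides} with the structural map $\mu_{X,U}$, rather than being merely \emph{some} isomorphism $U \otimes Y \cong X \rhd U$. This requires unpacking the definition of $\mu_{X,U}$ as the adjunct of $\id_U \otimes \epsilon$ and comparing it explicitly with the unit/counit of the adjunction built from $\eta$ and $\epsilon$. Everything else is essentially formal bookkeeping — once one accepts that giving $(\eta,\epsilon)$ satisfying the triangle identities is the same data as exhibiting $-\otimes Y$ as right adjoint to $-\otimes X$, the theorem reduces to uniqueness of adjoints together with this final matching check.
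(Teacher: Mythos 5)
Your proposal is correct and follows essentially the same route as the paper's sketch: in the forward direction your $\eta := \mu_{X,X}^{-1}\circ\lambda$ is exactly the paper's composite $\lS \to X\rhd X \xleftarrow{\cong} X\otimes \dual X$, and in the backward direction your appeal to uniqueness of right adjoints is the same Yoneda-lemma argument the paper uses to get $Y\cong \dual X$ ``by an isomorphism inducing $\mu_{X,U}$''. The final compatibility check you flag as the main obstacle is precisely the step the paper also leaves implicit in that phrase, so you have identified the right place where the real (routine) work lies.
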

\begin{proof}[Sketch of proof]
  If $X$ is dualizable, let $Y=\dual X$, $\epsilon$ the evaluation as above, and $\eta$ the composite $\lS \to X\rhd X \xleftarrow{\cong} X\otimes \dual X$.
  Conversely, by composing with $\eta$ and $\epsilon$ we have natural isomorphisms $\V(Z\otimes X,U) \cong \V(Z,U\otimes Y)$, whence the Yoneda lemma gives $Y\cong \dual X$ by an isomorphism inducing~\eqref{eq:dualmap}.
\end{proof}

By analogy with the evaluation $\epsilon$, we call $\eta$ the \textbf{coevaluation}.

Using this characterization, we define the \textbf{trace} of an endomorphism $f\colon X\xto{}X$ of a dualizable object to be the composite 
\[\xymatrix{\lS\ar[r]^-\eta&X\otimes \dual X\ar[r]^-{f\otimes \id}&X\otimes \dual X \ar[r]^\cong&\dual X\otimes X\ar[r]^-\epsilon&\lS.
}\] 
We denote the trace of the identity morphism of $X$ by $\chi(X) = \tr(\id_X)$ and call it the \textbf{Euler characteristic} of $X$.

More generally, we may consider a closed \emph{bicategory} \W, with unit objects $\lI_B\in \W(B,B)$, bicategorical composition product $\odot$ and internal homs $\rhd$ and $\lhd$.
We write $\odot$ in diagrammatic order, so that if $X\in\W(A,B)$ and $Y\in\W(B,C)$ we have $X\odot Y \in\W(A,C)$, and we orient $\rhd$ and $\lhd$ so that the adjunction isomorphisms preserve cyclic order:
\[ \W(A,C)(X\odot Y,Z) \cong \W(A,B)(X, Y\rhd Z) \cong \W(B,C)(Y, Z\lhd X). \]
Any monoidal category can be regarded as a bicategory with only one object.
Another important example to keep in mind is the bicategory whose objects are (noncommutative) rings, whose morphisms are bimodules, with $\lI_B = {}_B B _B$ and $\odot$ the usual tensor product of bimodules, and $\rhd$ and $\lhd$ the usual hom-modules.

In a closed bicategory \W, if $X\in \W(A,B)$ is a 1-cell, we refer to the internal-hom $X\rhd \lI_B$ as the \textbf{canonical right dual}, written $\rdual X$.
There is again an evaluation map $\epsilon\colon \rdual X \odot X \to \lI_B$.
We say that $X$ is \textbf{right dualizable} if the analogous map
\begin{equation}\label{eq:rdualmap}
  \mu_{X,U}\colon U\odot \rdual X \too X\rhd U
\end{equation}
is an isomorphism for all 1-cells $U$.
Again, it suffices to require this for $U=X$.
As in the symmetric monoidal case, there are numerous other characterizations of dualizability; one will be relevant here.

\begin{thm}
An object $X\in \W(A,B)$ is dualizable if and only if there is an object $Y\in \W(B,A)$  and morphisms 
\[\lI_A\xto{\eta} X\odot Y\text{ and } Y\odot X\xto{\epsilon} \lI_B\]
so that the composites 
\[X\cong \lI_A\odot X\xto{\eta\odot \id}X\odot Y\odot X\xto{\id\odot \epsilon}X\otimes \lI_B\cong X\]
\[Y\cong Y\odot \lI_A\xto{\id\odot \eta}Y\odot X\odot Y\xto{\epsilon\odot \id}\lI_B\odot Y\cong Y\]
are identity maps.
\end{thm}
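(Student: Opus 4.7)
The plan is to mirror the sketch given just above for the symmetric monoidal case, keeping careful track of which hom-category of \W each 1-cell and 2-cell inhabits. Since $X\in\W(A,B)$ forces $\rdual X\in \W(B,A)$, $\lI_A\in \W(A,A)$ and $\lI_B\in\W(B,B)$, the types of the required $Y$, $\eta$, and $\epsilon$ are all consistent with the dualizability setup, so the bookkeeping is unambiguous.

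For the forward direction, suppose $X$ is right dualizable in the sense of~\eqref{eq:rdualmap}. Set $Y := \rdual X$ and take $\epsilon$ to be the canonical evaluation $\rdual X \odot X \to \lI_B$. To construct $\eta$, first form the canonical 2-cell $\lI_A \to X \rhd X$ that arises as the adjunct of $\id_X \colon \lI_A \odot X \cong X \to X$ across the closed-bicategory adjunction, and then apply the inverse of the isomorphism $\mu_{X,X}\colon X \odot \rdual X \to X \rhd X$, which exists by hypothesis. The two triangle identities then reduce to standard unit/counit manipulations: one identity is built into the definition of the adjunct across the closed structure, while the other uses naturality of $\mu_{X,-}$ together with the fact that $\epsilon$ is the counit.

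For the converse, suppose $(Y,\eta,\epsilon)$ is given and satisfies the two triangle identities. The usual zigzag argument produces mutually inverse natural isomorphisms
\[ \W(Z\odot X,\, U) \;\cong\; \W(Z,\, U \odot Y) \]
for every 1-cell $U$ and every $Z$ of the appropriate type, sending $f\colon Z\odot X \to U$ to $(\id_U\odot\epsilon^{-\text{adj}})$-style composites built from $\eta$ on one side and $\epsilon$ on the other. Comparing with the representing adjunction $\W(Z\odot X, U) \cong \W(Z, X\rhd U)$ and invoking the bicategorical Yoneda lemma yields a canonical isomorphism $U\odot Y \cong X\rhd U$. Tracing the construction, both this isomorphism and $\mu_{X,U}$ are characterized as the adjunct of the same canonical map, so they coincide; hence $\mu_{X,U}$ is invertible for every $U$ and $X$ is right dualizable.

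The argument is entirely formal and follows the monoidal sketch verbatim once one sorts out the domains and codomains. The main obstacle I anticipate is purely bookkeeping: verifying that the isomorphism produced by the Yoneda argument really is $\mu_{X,U}$ on the nose (rather than merely some abstract isomorphism) and that the triangle-identity computations go through in their bicategorical form without implicitly invoking symmetry. No new conceptual ingredients beyond those already used in the monoidal case are needed.
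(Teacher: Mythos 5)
Your proposal is correct and takes essentially the same route as the paper, which proves this statement only by declaring it "analogous to the monoidal case" (citing May--Sigurdsson, \S16.4): its monoidal sketch is exactly your argument --- take $Y=\rdual X$ with $\epsilon$ the canonical evaluation and $\eta$ the adjunct of the identity composed with $\mu_{X,X}^{-1}$, and conversely use the zigzag-induced natural isomorphisms plus the Yoneda lemma to identify $Y$ with $\rdual X$ by an isomorphism compatible with $\mu_{X,U}$. Nothing further to flag.
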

The proof is analogous to the monoidal case; see for instance~\cite[\S16.4]{maysig:pht}. 

To define the trace in a symmetric monoidal category we used the symmetry isomorphism.  The bicategories we are interested in do not have the same kind of 
symmetry, but we can introduce similar structure that will allow us to define a trace.  A \textbf{shadow} for $\W$ is collection of functors
\[\sh{-}\colon \W(A,A)\xto{} \T\]
for all objects $A$ of \W, where $\T$ is some fixed category, together with natural isomorphisms $\sh{X\odot Y}\cong \sh{Y\odot X}$ that are compatible with the unit and associativity isomorphisms of $\W$; see \cite[Defn.~4.1]{PS2} for details.
For an object $A$, we write $\sh{A} = \sh{\lI_A}$.
In the example of rings and bimodules, the shadow of an $A$-$A$-bimodule is its quotient abelian group that equalizes the right and left actions of $A$.
If a monoidal category is \emph{symmetric}, then its identity functor is a shadow for the corresponding one-object bicategory.

If $X\in\W(A,B)$ is right dualizable and \W is equipped with a shadow, then the {\bf trace} of a 2-cell $f\colon X\to X$ is the composite
\[\xymatrix{
\sh{A}\ar[r]^-\eta & \sh{X\odot \rdual X} \ar[r]^-{f\odot \id}&
\sh{X\odot \rdual X}\ar[r]^\cong&\sh{\rdual X\odot X}\ar[r]^-{\epsilon}&\sh{B}.
}\]
This general definition is due to~\cite{kate:traces} and was studied abstractly in~\cite{PS2}.
In particular, if $A$ and $B$ are noncommutative rings and $X$ is an $A$-$B$-bimodule, then this yields the \emph{Hattori-Stallings trace} of $f$.

More generally, a \emph{twisted} endomorphism $f\colon Q\odot X\xto{} X\odot P$ also has a trace, defined to be the composite
\[\xymatrix{
  \sh{Q}\ar[r] \ar[r]^-{\eta}
  & \sh{Q\odot X\odot \rdual X} \ar[r]^-{f\odot \id}
  &\sh{X\odot P\odot \rdual X} \ar[r]^-\cong &\sh{\rdual X\odot X\odot P}\ar[r]^-{\epsilon}&\sh{P}.
}\]
Originally, traces were only defined for untwisted endomorphisms, but 
there are many examples where the source and target twisting is essential, such as the Reidemeister trace to be discussed in \S\ref{sec:derbicat} and~\cite{PS6}.

The advantage of formulating traces abstractly in this way is that general theorems become easy to prove in the abstract context, but can reduce to quite nontrivial results in examples.
This is the case for our linearity formulas, which follow more or less directly (once the framework is set up correctly) from abstract theorems about compositions of dualizable objects.

For instance,
the following theorem is easy to prove, but it can be a source of many dual pairs that would otherwise be nontrivial to construct, as observed
in~\cite{maysig:pht}.
We will also use it in this way, to conclude that colimits of certain shapes are dualizable.

\begin{thm}\label{thm:compose-duals}
  If $Y$ and $X$ are right dualizable, then $Y\odot X$ is right dualizable, and we have $\rdual{(Y\odot X)} \cong \rdual X \odot \rdual Y$.
\end{thm}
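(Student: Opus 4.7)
The plan is to use the second characterization of right dualizability (the one in terms of evaluation and coevaluation maps satisfying the triangle identities) rather than the universal property~\eqref{eq:rdualmap} directly, since exhibiting an explicit unit and counit composes much more easily than manipulating the map $\mu$. So I first record the setup: let $Y\in\W(A,B)$ and $X\in\W(B,C)$, so $Y\odot X\in\W(A,C)$; the candidate dual is $\rdual X\odot \rdual Y\in\W(C,A)$, built from $\rdual X\in\W(C,B)$ and $\rdual Y\in\W(B,A)$. The given data are units and counits
\[ \eta_Y\colon \lI_A \to Y\odot \rdual Y, \qquad \epsilon_Y\colon \rdual Y\odot Y \to \lI_B, \]
\[ \eta_X\colon \lI_B \to X\odot \rdual X, \qquad \epsilon_X\colon \rdual X\odot X \to \lI_C, \]
each pair satisfying the two triangle identities.

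Next, I would define the proposed coevaluation and evaluation for $Y\odot X$ by \emph{nesting} those of $X$ inside those of $Y$. Concretely, take
\[ \eta \;\colon\; \lI_A \xrightarrow{\eta_Y} Y\odot\rdual Y \;\cong\; Y\odot\lI_B\odot\rdual Y \xrightarrow{\id\odot\eta_X\odot\id} Y\odot X\odot\rdual X\odot\rdual Y, \]
\[ \epsilon \;\colon\; \rdual X\odot\rdual Y\odot Y\odot X \xrightarrow{\id\odot\epsilon_Y\odot\id} \rdual X\odot\lI_B\odot X \;\cong\; \rdual X\odot X \xrightarrow{\epsilon_X} \lI_C, \]
using the associativity and unit coherence of $\W$ to parse the parenthesizations as needed.

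The core step is then to verify the two triangle identities for $(\eta,\epsilon)$. For the one on $Y\odot X$, after expanding $\eta$ and $\epsilon$ and reassociating, the composite factors through a middle region of the form $Y\odot(X\odot\rdual X\odot X)\odot \rdual Y\odot Y\odot X$; the inner triangle identity for $X$ collapses $X\odot\rdual X\odot X$ to $X$, after which the outer triangle identity for $Y$ collapses the remaining $Y\odot\rdual Y\odot Y$ to $Y$, leaving the identity on $Y\odot X$. The triangle identity on the dual side is symmetric. This step is purely diagrammatic; I expect the only real annoyance to be the bookkeeping of associators and unitors, which in a general bicategory requires invoking Mac Lane coherence (or equivalently, pasting in a suitable string-diagram calculus) to justify suppressing parentheses.

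Finally, having produced an adjoint pair $(Y\odot X)\dashv (\rdual X\odot\rdual Y)$, the preceding characterization theorem immediately gives that $Y\odot X$ is right dualizable. The identification $\rdual{(Y\odot X)}\cong \rdual X\odot\rdual Y$ then follows by the same Yoneda argument sketched after the characterization theorem: both duals represent the functor $U\mapsto \W(U\odot (Y\odot X),\lI_C)$ via $\epsilon$, so they are canonically isomorphic, and one checks that this isomorphism is compatible with the defining map $\mu$ in~\eqref{eq:rdualmap}. The main obstacle, as noted, is solely the coherence bookkeeping; the conceptual content is just ``nest the duality data of $X$ inside that of $Y$.''
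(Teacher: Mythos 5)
Your proof is correct and is essentially the argument the paper has in mind: the paper states this theorem without proof (referring to~\cite{maysig:pht} as the source of this observation), and the standard argument is exactly your nesting of the duality data, $\eta = (\id_Y\odot\eta_X\odot\id_{\rdual Y})\circ\eta_Y$ and $\epsilon = \epsilon_X\circ(\id_{\rdual X}\odot\epsilon_Y\odot\id_X)$, followed by verification of the two triangle identities and the Yoneda identification of the dual. One cosmetic point: the composite does not literally pass through an object of the form $Y\odot(X\odot\rdual X\odot X)\odot\rdual Y\odot Y\odot X$; the precise step is that the $\eta_X$-insertion and the $\epsilon_Y$-contraction act on disjoint $\odot$-factors, so by the interchange law they commute, after which the composite splits as (triangle identity for $Y$)$\,\odot\,\id_X$ followed by $\id_Y\,\odot\,$(triangle identity for $X$).
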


In this case, if $g\maps Q\odot Y\to Y\odot P$
and $f\maps P\odot X \to X\odot L$ are two 2-cells, we have the
composite
\[(\id_Y\odot f)(g\odot \id_X)\maps Q\odot Y\odot X \too Y\odot X\odot L.\]
and we can ask about its trace.
This can be identified by a straightforward diagram chase.

\begin{thm}[{\cite[Prop.~7.5]{PS2}}]\label{thm:compose-traces}
  In the above situation, we have
  \[\tr\big((\id_Y\odot f)(g\odot \id_X)\big) = \tr(f)\circ \tr(g).\]
\end{thm}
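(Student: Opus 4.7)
The plan is to unfold both sides of the claimed equation into their defining string/pasting diagrams and show that they agree by a coherence-style diagram chase. The crucial input is \autoref{thm:compose-duals}, which lets me take the dual pair for $Y\odot X$ to have $\rdual{(Y\odot X)} = \rdual X \odot \rdual Y$, with coevaluation and evaluation given (up to associators) by
\[\eta_{Y\odot X}\colon \lI_A \xto{\eta_Y} Y\odot \rdual Y \xto{\id_Y\odot\eta_X\odot \id_{\rdual Y}} Y\odot X\odot \rdual X\odot \rdual Y\]
\[\epsilon_{Y\odot X}\colon \rdual X\odot \rdual Y\odot Y\odot X \xto{\id_{\rdual X}\odot\epsilon_Y\odot \id_X} \rdual X\odot X \xto{\epsilon_X} \lI_B.\]

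First I would substitute these decompositions into the definition of the left-hand trace, and simultaneously split the 2-cell $(\id_Y\odot f)(g\odot \id_X)$ into its two factors. This expresses $\tr\!\big((\id_Y\odot f)(g\odot \id_X)\big)$ as a single composite starting from $\sh{Q}$, passing through a shadow of a five-fold product, applying $g$ on the $Q\odot Y$ factor, then applying $f$ on the $P\odot X$ factor, then the cyclic shadow isomorphism, and finally a pair of evaluations. Next I would use the naturality of the cyclic shadow isomorphism $\sh{U\odot V}\cong \sh{V\odot U}$ to reorder the factors so that the pair $(X,\rdual X)$ and the pair $(Y,\rdual Y)$ each become adjacent in the cyclic arrangement.

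Once the factors are cyclically aligned, the diagram naturally splits into an ``inner'' subdiagram involving only $f$, $\eta_X$ and $\epsilon_X$, and an ``outer'' subdiagram involving only $g$, $\eta_Y$ and $\epsilon_Y$. By inspection of the definition of the twisted trace, the inner loop is exactly the map $\sh{P}\xto{\tr(f)}\sh{L}$ and the outer loop is exactly $\sh{Q}\xto{\tr(g)}\sh{P}$, with the two composed in series. This yields $\tr(f)\circ\tr(g)$, as required.

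The main obstacle is bookkeeping: I must verify that the various cyclic isomorphisms, associators of $\odot$, and unitors can all be threaded together consistently so that the chase actually yields the two advertised subdiagrams rather than something off by a permutation of factors. This is exactly what the shadow axioms (naturality and compatibility with the associativity and unit isomorphisms of $\W$, \cite[Defn.~4.1]{PS2}) are designed to arrange, and together with the triangle identities for the two dual pairs they reduce every cell of the large diagram to a commutative one. No single sub-step is deep; the content is in organizing the coherence data. A cleaner, implementation-free rendering can be given using string diagrams in $\W$, where the identity becomes visually transparent: the ``$X$-loop'' and the ``$Y$-loop'' can be slid past each other through the shadow, after which each loop independently evaluates to the corresponding trace.
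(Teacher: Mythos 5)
Your proposal is correct and follows essentially the same route as the proof the paper points to (\cite[Prop.~7.5]{PS2}): take the composite dual pair supplied by \autoref{thm:compose-duals}, unfold the trace of $(\id_Y\odot f)(g\odot\id_X)$, and chase the diagram using functoriality of $\odot$ together with naturality of the cyclic shadow isomorphisms and the shadow coherence axioms, so that the $Y$-loop computes $\tr(g)$ and the $X$-loop computes $\tr(f)$. The only cosmetic slips are that the target of your $\epsilon_{Y\odot X}$ should be the unit of the target object of $X$ (so the labels $\lI_A$, $\lI_B$ need to be read consistently with $Y\odot X$) and that the triangle identities are not actually needed in the chase itself, only in the proof of \autoref{thm:compose-duals} which you invoke.
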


\autoref{thm:compose-traces} is the origin of all our linearity formulas.
The basic idea is that given $X$ and $f$, we choose $Y$ so that $Y\odot X$ is the colimit of $X$.
With $g=\id_Y$, the left-hand side of \autoref{thm:compose-traces} is then the trace of $\colim(f)$, while the right-hand side expresses it as a composite of a ``row vector'' with a ``column vector'', hence a linear combination of the components of the trace of $f$.

\begin{rmk}\label{rmk:twisting}
Reflecting our interests here, we will make limited explicit use of twisted traces.  There will be none in the first two parts and only target twisting in the later parts.
Despite this, many of the results in the paper stated for untwisted or partially twisted traces extend to the case of more general twisting.
\end{rmk}

\section{Linearity in monoidal categories}
\label{sec:moncat}

For this section, let \V be a complete and cocomplete closed symmetric monoidal category, with tensor product $\otimes$, unit object $\lS$, and internal-hom $\rhd$.
Then we can construct the following closed bicategory $\prof(\V)$:
\begin{itemize}
\item Its objects are small categories $A$, $B$, $C$, \dots.
\item Its 1-cells are \V-profunctors (a.k.a.\ distributors, bimodules, or just ``modules'').
  A \V-profunctor $H\colon A\hto B$ is defined to be a functor $B\op\times A \to \V$.
\item Its 2-cells are morphisms of profunctors, i.e.\ natural transformations.
\item The composite of profunctors $H\colon A\hto B$ and $K\colon B\hto C$ is the coend
  \begin{equation}
    ({H\odot K})(c,a) = \int^{b\in B} H(b,a) \otimes K(c,b).
  \end{equation}
\item The unit 1-cell $\lI_A\colon A\hto A$ consists of copowers of the unit object \lS by the homsets of $A$:
  \[\lI_A(a,a') = A(a,a') \cdot \lS.\]
\item The right hom of profunctors $H\colon B\hto C$ and $K\colon A\hto C$ is the end
  \begin{equation}
    ({H\rhd K})(b,a) = \int_{c\in C} H(c,b) \rhd K(c,a)
  \end{equation}
  and similarly for the left hom $\lhd$.
\item It has a shadow valued in \V, defined by
  \begin{equation}
    \sh{H} = \int^{a\in A} H(a,a).
  \end{equation}
\end{itemize}

Let $\tc$ denote the terminal category, with one object and one (identity) morphism.
Then \V-profunctors $A\hto \tc$ are equivalent to \V-functors $A\to \V$, while profunctors $\tc\hto A$ are equivalent to functors $A\op\to \V$.
The latter sort of functor is the one traditionally used as a \emph{weight} for colimits in enriched category theory.
In the special case of \V itself, the traditional definition of weighted colimits is equivalent to the following.

\begin{defn}\label{def:smcwgtcolim}
  For functors $X\colon A\to \V$ and $\Phi\colon A\op\to\V$, the \textbf{$\Phi$-weighted colimit of $X$} is the composite of profunctors
  \[ \colim^\Phi(X) = \Phi\odot X = \int^{a\in A} \big(\underline\Phi(a) \otimes \uX(a)\big) \]
  regarded as an object of \V.
\end{defn}

If $\Phi$ is constant at the unit object $\lS$, then it is easy to identify the $\Phi$-weighted colimit of $X$ with its ordinary colimit.
This is the case we generally care most about, but it is conceptually helpful to consider the general case.
In particular, as we will see in \autoref{thm:trivmult}, including weighted colimits is what unifies ``additivity formulas'' with ``multiplicativity formulas''.

\begin{rmk}\label{rmk:enriched-cats}
  In fact, in enriched category theory one additionally considers  colimits where the diagram shape $A$ is a \V-enriched category; see for instance~\cite{kelly:enriched}.
  The definition of $\prof(\V)$ and everything we do with it can also be generalized to this situation; this follows from the general theorems in \autoref{part:formal}.
  However, the case of unenriched $A$ suffices for the examples here.
\end{rmk}

Now by \autoref{thm:compose-traces}, if $X$ and $\Phi$ are right dualizable when regarded as profunctors, then $\colim^\Phi(X)$ is dualizable in \V.
To avoid confusion, we introduce new names for profunctor right dualizability of $X$ and $\Phi$, which are of very different sorts.

\begin{defn}\ 
  \begin{itemize}
  \item A functor $X\colon A\to \V$ is \textbf{pointwise dualizable} if it is right dualizable in $\prof (\V)$ when regarded as a profunctor $A\hto \tc$.
  \item A functor $\Phi\colon A\op\to \V$ is \textbf{absolute} if it is right dualizable in $\prof (\V)$ when regarded as a profunctor $\tc\hto A$.
  \end{itemize}
\end{defn}

\noindent
Thus we have:

\begin{thm}
  If $X\colon A\to \V$ is pointwise dualizable and $\Phi\colon A\op\to \V$ is absolute, then $\colim^\Phi(X)$ is dualizable.
\end{thm}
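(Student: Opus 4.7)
The plan is to prove this by a direct invocation of Theorem~\ref{thm:compose-duals} applied inside the bicategory $\prof(\V)$, together with the observation that $\colim^\Phi(X)$ is, essentially by \autoref{def:smcwgtcolim}, precisely the bicategorical composite $\Phi\odot X$.

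First I would unpack the types. Viewed as a profunctor, $X$ is a 1-cell $A\hto\tc$ and $\Phi$ is a 1-cell $\tc\hto A$, so their composite $\Phi\odot X$ is a 1-cell $\tc\hto\tc$. By the hypotheses, $X$ is right dualizable as a 1-cell $A\hto\tc$ (pointwise dualizability) and $\Phi$ is right dualizable as a 1-cell $\tc\hto A$ (absoluteness). \autoref{thm:compose-duals} then immediately gives that $\Phi\odot X$ is right dualizable in $\prof(\V)$.

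Next I would identify what right dualizability in $\prof(\V)$ means for an endo-1-cell at $\tc$. Because $\tc$ is the terminal category, the hom-category $\prof(\V)(\tc,\tc)$ is just \V itself, and one checks directly from the coend and unit formulas that, restricted to $\tc$, the bicategorical composition $\odot$ agrees with the monoidal product $\otimes$ of \V and the unit 1-cell $\lI_\tc$ agrees with the unit object $\lS$. Under this identification, the abstract definition of right dualizability in the bicategory at the object $\tc$ reduces to the definition of dualizability in the symmetric monoidal category \V recalled at the start of \S\ref{sec:traces}. Hence right dualizability of $\Phi\odot X$ in $\prof(\V)$ is exactly dualizability of $\colim^\Phi(X)$ in \V.

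I do not expect a genuine obstacle: everything is formal once the framework of $\prof(\V)$ has been set up, and the only thing to verify with any care is the identification $\prof(\V)(\tc,\tc)\cong\V$ as monoidal categories, which is a routine calculation with coends over the terminal indexing category. In effect, the theorem is a clean corollary of \autoref{thm:compose-duals}, and the conceptual content is simply that absolute weights and pointwise-dualizable diagrams are precisely the two classes of right dualizable profunctors whose composite computes the weighted colimit.
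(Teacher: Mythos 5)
Your proposal is correct and is essentially the paper's own argument: the theorem is obtained immediately by applying \autoref{thm:compose-duals} to the composite $\colim^\Phi(X)=\Phi\odot X$ of the right dualizable 1-cells $\Phi\colon\tc\hto A$ and $X\colon A\hto\tc$ in $\prof(\V)$, with the (implicit) identification of $\prof(\V)(\tc,\tc)$ with \V as a monoidal category, so that right dualizability of the composite is dualizability of the weighted colimit in \V.
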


By analogy with~\cite{PS3,PS4}, these notions might also be called \emph{fiberwise dualizable} and \emph{totally dualizable}.
However, when thinking of $\Phi$ as a weight for colimits, the term ``absolute'' is common;
it refers to the fact that in this case $\Phi$-weighted colimits in any \V-category are preserved by any \V-functor (see~\cite{street:absolute}).
(The word \emph{Cauchy} is also in use, since when metric spaces are regarded as enriched categories as in~\cite{lawvere:metric-spaces}, convergence of Cauchy sequences becomes an example.)
Similarly, when talking about diagrams (rather than fibrations, as in~\cite{PS4}), the adjective ``pointwise'' seems more intuitive than ``fiberwise''.
It is further justified by the following result that is closely related  to~\cite[Cor.~4.4]{PS4} and~\cite[Lem.~11.5]{gps:additivity}.

\begin{lem}\label{thm:smcpwdual}
  A functor $X\colon A\to \V$ is pointwise dualizable if and only if each object $\uX(a)$ is dualizable in \V.
\end{lem}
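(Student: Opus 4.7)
The plan is to identify the canonical right dual of $X\colon A\to \V$ (viewed as a profunctor $X\colon A\hto\tc$) explicitly, and then recognize profunctor right dualizability as a strictly pointwise condition in $\V$. Since $\tc$ is the terminal category, every end and coend indexed over $\tc$ collapses to its single component, so the profunctor formulas from the definition of $\prof(\V)$ reduce to elementary expressions.

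First I would compute $\rdual X = X\rhd \lI_{\tc}$ using the end formula for $\rhd$. Because $\tc$ has one object, the end is trivial and yields $\rdual X(a) = X(a)\rhd\lS = \dual{X(a)}$, that is, the canonical dual of $X(a)$ in $\V$ at each $a$. Next, for an arbitrary $U\colon A\to \V$, I would compute both $U\odot \rdual X$ and $X\rhd U$ from the coend/end formulas: the coend over $\tc$ collapses to give
\[ (U\odot \rdual X)(a',a) = U(a)\otimes \dual{X(a')}, \qquad (X\rhd U)(a',a) = X(a')\rhd U(a). \]

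Then I would trace through the definition of the canonical map $\mu_{X,U}\colon U\odot \rdual X\to X\rhd U$ in $\prof(\V)$, which is defined as the adjunct of the evaluation pairing. Componentwise at $(a',a)$, this map becomes precisely the canonical duality map $\mu_{X(a'),U(a)}\colon U(a)\otimes \dual{X(a')}\to X(a')\rhd U(a)$ of $\V$ from~\eqref{eq:dualmap}. With this identification in hand, the lemma follows quickly: if $X$ is pointwise dualizable, then $\mu_{X,X}$ is an isomorphism, and restricting to the diagonal $(a,a)$ shows $\mu_{X(a),X(a)}$ is iso, which (by the remark just after~\eqref{eq:dualmap}) means $X(a)$ is dualizable in $\V$. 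Conversely, if each $X(a)$ is dualizable, then $\mu_{X(a'),V}$ is an isomorphism for every $V\in \V$, and specializing to $V = U(a)$ shows that $\mu_{X,U}$ is an isomorphism for every $U$, hence $X$ is right dualizable as a profunctor.

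The main obstacle I anticipate is bookkeeping: keeping straight the variance conventions on $H(b,a)$, the orientations of $\rhd$ versus $\lhd$, and the distinction between $U\odot\rdual X$ and $\rdual X\odot U$, so that the pointwise formula for $\mu_{X,U}$ is correctly identified with the $\V$-level duality map. Once those identifications are pinned down, the mathematical content is immediate because the terminal category makes all the end/coend integrals trivial.
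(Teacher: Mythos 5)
Your proposal is correct and is essentially the paper's own argument: the paper's proof consists precisely of observing that the $(b,a)$-component of $\mu_{X,U}$ is $\mu_{X(a),U(b)}$ and that a morphism of profunctors is an isomorphism exactly when all its components are, which is what you establish (in slightly more detail, via the collapse of the ends/coends over $\tc$). The only cosmetic difference is that the paper allows $U$ with an arbitrary source category $B$ rather than just $B=A$, but your computation applies verbatim there, and in any case checking $U=X$ suffices.
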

\begin{proof}
  For any $U\in\prof(V)(B,\tc)$, $a\in A$, and $b\in B$, the $(b,a)$ component of the morphism $\mu_{X,U}$ from~\eqref{eq:rdualmap} is $\mu_{\uX(a),\uU(b)}$.
  But $\mu_{X,U}$ is an isomorphism as soon as all its components are.
\end{proof}

Now \autoref{thm:compose-traces} gives us a formula for traces.

\begin{thm}\label{thm:compose-traces-smc}
  If $X\colon A\to\V$ is pointwise dualizable and $\Phi\colon A\op\to\V$ is absolute, then for any $f\colon X\to X$, we have
\begin{equation}
  \tr(\colim^\Phi(f)) = \tr(f) \circ \tr(\id_\Phi).\label{eq:smlin}
\end{equation}
\end{thm}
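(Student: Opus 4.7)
The plan is to apply \autoref{thm:compose-traces} directly in the bicategory $\prof(\V)$, with $\Phi$ playing the role of $Y$ (a 1-cell $\tc\hto A$) and $X$ playing its own role (a 1-cell $A\hto \tc$). By hypothesis, $X$ and $\Phi$ are right dualizable in $\prof(\V)$ (this is precisely the content of ``pointwise dualizable'' and ``absolute''), so by \autoref{thm:compose-duals} the composite $\Phi\odot X\in \prof(\V)(\tc,\tc)$ is right dualizable. Under the canonical identification $\prof(\V)(\tc,\tc)\cong \V$, this composite corresponds to the coend $\int^{a}\uPhi(a)\otimes\uX(a)$, which is exactly $\colim^\Phi(X)$ by \autoref{def:smcwgtcolim}.

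Next, I would take the (untwisted) 2-cells $g=\id_\Phi$ and $f\colon X\to X$ in the notation of \autoref{thm:compose-traces} (so $Q=\lI_\tc$, $P=\lI_A$, $L=\lI_\tc$). The composite $(\id_\Phi \odot f)(g\odot \id_X)$ reduces to $\id_\Phi\odot f$, which under the identification $\prof(\V)(\tc,\tc)\cong \V$ is precisely the induced endomorphism $\colim^\Phi(f)$. The conclusion of \autoref{thm:compose-traces} then reads
\[ \tr(\id_\Phi \odot f) = \tr(f)\circ \tr(\id_\Phi), \]
and the right-hand side is already literally the right-hand side of~\eqref{eq:smlin}.

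What remains is to verify that the bicategorical trace of $\id_\Phi\odot f$ in $\prof(\V)$, viewed as a map $\sh{\tc}\to\sh{\tc}$, agrees with the ordinary symmetric monoidal trace of $\colim^\Phi(f)$ in $\V$. The shadow on $\prof(\V)(\tc,\tc)$ is $\sh{H}=\int^{\ast}H(\ast,\ast)=H(\ast,\ast)$, so $\sh{\tc}=\lS$ and the shadow restricts to the identity functor $\V\to\V$. Under this identification the evaluation and coevaluation of the dual pair $(\Phi\odot X, \rdual{(\Phi\odot X)})$ in $\prof(\V)$ match the evaluation and coevaluation of $\colim^\Phi(X)$ with its canonical dual in $\V$, so the two notions of trace agree on this 2-cell.

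The only step requiring any real care, and hence the main potential obstacle, is this last compatibility between the bicategorical and monoidal traces at $\tc$; but it is entirely formal once one notes that $\prof(\V)(\tc,\tc)$ is isomorphic to $\V$ as a symmetric monoidal category with the shadow being the identity. Everything else is a direct translation of the hypotheses into the language of $\prof(\V)$ and a single invocation of \autoref{thm:compose-traces}.
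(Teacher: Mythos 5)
Your proposal is correct and is essentially the paper's own proof: identify $\colim^\Phi(f)$ with $\id_\Phi\odot f$ as an endomorphism of $\Phi\odot X$ in $\prof(\V)$ and invoke \autoref{thm:compose-traces}, the compatibility at $\tc$ being immediate since $\prof(\V)(\tc,\tc)\cong\V$ with the shadow acting as the identity. The extra verification you flag is the same implicit identification the paper relies on, so there is no substantive difference in route.
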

\begin{proof}
  Identify $\colim^\Phi(f)$ with
  $(\id_\Phi\odot f) \colon \Phi\odot X\to \Phi\odot X$
  and then apply \autoref{thm:compose-traces}.
\end{proof}

In order for this to be useful we need to be able to calculate $\tr(f)$ and $\tr(\id_\Phi)$ more concretely.
First note that since colimits commute with colimits (including copowers), we have
\[ \sh{A}\; = \;\int^{a\in A} \Big(A(a,a) \cdot \lS\Big) \;\cong\; \left(\int^{a\in A} A(a,a) \right) \cdot \lS. \]
The set $\int^{a\in A} A(a,a)$ is the disjoint union of all the endomorphism sets $A(a,a)$, quotiented by the relation $\alpha \circ \beta \sim \beta \circ \alpha$ for any $\alpha,\beta$ (which need not be endomorphisms themselves).
We call this relation \emph{conjugacy}, since when $A$ is a group regarded as a 1-object category, it becomes precisely the relation of conjugacy, and $\int^{a\in A} A(a,a)$ is the set of conjugacy classes.

Thus, $\sh{A}$ is the copower of $\lS$ by the set of conjugacy classes of $A$, and we have coprojections
\[ A(a,a) \cdot \lS \too \sh{A} \]
that send the copy of \lS in the domain corresponding to each $\alpha\in A(a,a)$ to the copy in the codomain corresponding to its conjugacy class.
Since these coprojections are jointly epimorphic,
$\tr(f)$ is determined by one morphism $\tr(f)_{[\alpha]}\colon \lS\to\lS$ for each conjugacy class $[\alpha]$ of $A$.
The following lemma identifies these morphisms.

\begin{lem}[The component lemma for symmetric monoidal
categories]\label{thm:smcomega2a}
   For any morphism $\alpha\in A(a,a)$, $\tr(f)_{[\alpha]}$ is the trace of the composite 
\begin{equation}
  \xymatrix{X(a) \ar[r]^-{X_{\alpha}} & X(a) \ar[r]^-{f_a} & X(a)}
\label{eq:trfal}
\end{equation}
\end{lem}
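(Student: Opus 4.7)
The plan is to compute $\tr(f) \circ \iota_\alpha$ directly, where $\iota_\alpha\colon \lS \to \sh{A}$ is the canonical morphism obtained by composing the $\alpha$-summand inclusion $\lS \to A(a,a) \cdot \lS = \lI_A(a,a)$ with the $a$-th coprojection into the coend $\sh{\lI_A} = \sh{A}$. First I would make explicit the duality data for $X$ in $\prof(\V)$. Applying the end formula to $\rdual X = X \rhd \lI_\tc$ yields $\rdual X(a) = \dual{X(a)}$ with the contravariant $A$-action; the counit $\epsilon\colon \rdual X \odot X \to \lI_\tc$ is then induced by the pointwise evaluations $\dual{X(a)} \otimes X(a) \to \lS$, and the unit $\eta\colon \lI_A \to X \odot \rdual X$ has components $\eta_{a',a}(\gamma) = (X_\gamma \otimes \id) \circ \eta_{X(a')}$ for $\gamma\colon a' \to a$. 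This formula for $\eta$ is forced by the condition that $\eta$ correspond to $\id_X$ under the duality adjunction, and can be cross-checked against the triangle identities.

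The key tool is that for any morphism $g\colon H \to H'$ of $(A,A)$-profunctors, the induced map $\sh{g}$ commutes with the $a$-summand coprojections into the coends $\sh{H}$ and $\sh{H'}$. Using this, I would push $\iota_\alpha$ through the trace formula stage by stage. The composite $\sh{\eta} \circ \iota_\alpha$ becomes the $a$-coprojection of $(X_\alpha \otimes \id) \circ \eta_{X(a)}$; applying $\sh{f \odot \id_{\rdual X}}$, whose diagonal $a$-component is $f_a \otimes \id$, yields the $a$-coprojection of $((f_a \circ X_\alpha) \otimes \id) \circ \eta_{X(a)}$; the cyclicity isomorphism $\sh{X \odot \rdual X} \cong \sh{\rdual X \odot X}$ is just the symmetry on each $a$-summand; and finally $\sh{\epsilon}$ restricted to the $a$-summand is $\ep_{X(a)}$. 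Assembling these identifications, $\tr(f) \circ \iota_\alpha$ collapses to the standard symmetric monoidal trace of $f_a \circ X_\alpha$ in $\V$.

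The main obstacle is getting the explicit form of the unit $\eta$ right, since it requires keeping straight the conventions for profunctor composition, the direction of the duality adjunction, and the covariance/contravariance of the $A$-action on $X$ and $\rdual X$. Once $\eta$ is correctly identified, the rest is bookkeeping in which each map in the trace formula collapses compatibly onto the chosen $a$-summand, reducing the bicategorical trace to a classical monoidal trace on $X(a)$.
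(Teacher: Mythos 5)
Your proposal is correct, but it takes a genuinely different route from the paper. You unfold the duality data of $X$ itself in $\prof(\V)$ --- identifying $\rdual X$ pointwise as $\dual{X(a)}$, writing down the coevaluation $\lI_A \to X\odot \rdual X$ explicitly on components, and then pushing the coprojection $\iota_\alpha\colon \lS\to\sh{A}$ through each stage of the trace composite using compatibility of $\sh{-}$ with coend coprojections; each stage collapses onto the chosen $a$-summand and you recover the monoidal trace of $f_a\circ X_\alpha$. The computations you sketch (the form of $\eta$, the diagonal components of $f\odot\id$, the componentwise symmetry realizing the cyclicity isomorphism, and $\sh{\epsilon}$ restricting to $\ep_{X(a)}$) are all accurate, modulo the routine verification of the triangle identities for your explicit $(\eta,\epsilon)$, which you flag and which follows from pointwise dualizability (\autoref{thm:smcpwdual}). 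The paper instead never touches the duality data of $X$: it uses the base change objects $A(\id,a)$ and $A(a,\id)$, which form a dual pair (\autoref{thm:bcodual}) with explicitly computable traces (\autoref{thm:bcodual2}), identifies $A(\id,a)\odot X\cong X(a)$ (\autoref{thm:bcorestr}), and then applies the composition theorem \autoref{thm:compose-traces} once, so that $\tr(f_a\circ X_\alpha)=\tr(f)\circ\tr(A(\id,\alpha))=\tr(f)_{[\alpha]}$. Your direct computation is more elementary and self-contained, but it is tied to the strict setting where coends are honest colimits admitting componentwise reasoning; the paper's base-change argument is what generalizes essentially verbatim to the derivator and derivator-bicategory versions of the component lemma in \S\ref{sec:base_change}--\ref{sec:bco-der}, which is why the paper packages the proof that way.
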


\begin{proof}
We will prove this in \S\ref{sec:moncat-examples} on page \pageref{thm:smcomega2p}. 
\end{proof}
Thus, we have a complete computation of $\tr(f)$ for any endomorphism $f$ of a pointwise dualizable $X\colon A\to\V$.

The other ingredient in~\eqref{eq:smlin} is $\tr(\id_\Phi)$.
This is a morphism $\lS \to \sh{A}$ which depends on $\Phi$; we call it the \textbf{coefficient vector} of $\Phi$.
This name is inspired by the case of most interest: when $A$ is finite and \V is {\bf semi-additive} (i.e.\ finite products and coproducts coincide naturally,  and are called \emph{biproducts} or \emph{direct sums} and written with $\oplus$).
In this case, $\sh{A}$ is a direct sum of copies of $\lS$ indexed by the conjugacy classes of $A$, and so $\tr(\id_\Phi)$ really is a ``column vector'' whose entries are morphisms $\lS\to\lS$.
(Note that when \V is semi-additive, $\V(\lS,\lS)$ is a commutative semiring which acts on every homset of \V.)
We denote the entries of this column vector by $\phi_{[\alpha]}$, and call them the \textbf{coefficients} of $\Phi$.

Similarly, in this case $\tr(f)$ is a ``row vector'' whose entries are the traces $\tr(f_a \circ X_\alpha)$, giving \autoref{thm:compose-traces-smc} a more familiar form.

\begin{cor}\label{thm:smclin2}
  If $\V$ is semi-additive, $A$ is finite, and $\Phi\colon A\op\to\V$ is absolute, then we have
  \begin{equation}\label{eq:smclin2}
    \tr(\colim^\Phi(f)) = \sum_{[\alpha]} \phi_{[\alpha]} \cdot \tr\big(f_a \circ X_\alpha\big).
  \end{equation}
  for any pointwise dualizable $X\colon A\to \V$ and $f\colon X\to X$.
\end{cor}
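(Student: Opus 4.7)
The plan is to assemble the corollary directly from three ingredients already in hand: \autoref{thm:compose-traces-smc}, the component lemma \autoref{thm:smcomega2a}, and the biproduct decomposition of $\sh{A}$ when $\V$ is semi-additive and $A$ is finite.

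First, I invoke \autoref{thm:compose-traces-smc} to rewrite
\[ \tr(\colim^\Phi(f)) = \tr(f) \circ \tr(\id_\Phi), \]
a composite $\lS \to \sh{A} \to \lS$. The goal is then to identify this composite as the claimed sum. To do this, I need to leverage the finiteness of $A$ and semi-additivity of $\V$ to turn the coend defining $\sh{A}$ into an actual biproduct.

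Next, I would unpack $\sh{A}$. Since $\sh{A}$ is the copower of $\lS$ by the set $\int^{a\in A} A(a,a)$ of conjugacy classes, and since $A$ being finite means there are only finitely many such classes, semi-additivity of $\V$ gives a canonical isomorphism
\[ \sh{A} \;\cong\; \bigoplus_{[\alpha]} \lS, \]
with biproduct inclusions $\iota_{[\alpha]}\colon \lS \to \sh{A}$ and projections $\pi_{[\alpha]}\colon \sh{A}\to \lS$ satisfying the usual matrix calculus ($\pi_{[\alpha]}\iota_{[\beta]} = \delta_{[\alpha],[\beta]}$ and $\sum_{[\alpha]} \iota_{[\alpha]} \pi_{[\alpha]} = \id$). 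By the \emph{definition} of the coefficients, $\phi_{[\alpha]} := \pi_{[\alpha]} \circ \tr(\id_\Phi)$, so $\tr(\id_\Phi) = \sum_{[\alpha]} \iota_{[\alpha]}\, \phi_{[\alpha]}$.

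Then I would apply \autoref{thm:smcomega2a}: the composite $\tr(f) \circ \iota_{[\alpha]}$ is precisely the component $\tr(f)_{[\alpha]}$, which is identified there with $\tr(f_a \circ X_\alpha)$. (Strictly speaking, the component lemma states $\tr(f)_{[\alpha]}$ is obtained from the jointly epimorphic coprojections $A(a,a)\cdot \lS \to \sh{A}$; in the semi-additive, finite setting these factor through the biproduct inclusions $\iota_{[\alpha]}$ and the identification is immediate.) Combining these, and distributing composition over the biproduct sum,
\[ \tr(f) \circ \tr(\id_\Phi) \;=\; \tr(f) \circ \sum_{[\alpha]} \iota_{[\alpha]}\, \phi_{[\alpha]} \;=\; \sum_{[\alpha]} \tr(f_a \circ X_\alpha) \cdot \phi_{[\alpha]}, \]
which is the desired formula.

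The only genuinely substantive step is the component lemma \autoref{thm:smcomega2a} itself, whose proof is deferred. Given that lemma, the corollary is essentially a bookkeeping exercise in linear algebra internal to $\V$: the hard work has been pushed into \autoref{thm:compose-traces-smc} (which gives the factorization through $\sh{A}$) and the component lemma (which identifies the row vector). The semi-additivity hypothesis is used only to guarantee that the relevant coend is a genuine biproduct and that composition distributes over sums in $\V(\lS,\lS)$, turning the abstract composite into a literal $\V(\lS,\lS)$-linear combination.
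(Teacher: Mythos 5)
Your proposal is correct and follows exactly the route the paper intends: the corollary is stated as an immediate consequence of \autoref{thm:compose-traces-smc}, the component lemma \autoref{thm:smcomega2a}, and the definition of the coefficients $\phi_{[\alpha]}$ as the components of $\tr(\id_\Phi)$ once semi-additivity and finiteness of $A$ identify $\sh{A}$ with the finite biproduct $\bigoplus_{[\alpha]}\lS$. Your explicit biproduct bookkeeping is just a spelled-out version of the same argument, with the identification of $\tr(f)\circ\iota_{[\alpha]}$ with $\tr(f)_{[\alpha]}$ handled correctly.
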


This is the origin of our term \textbf{linearity formula}.
In particular, when $f$ is the identity morphism, we have
\begin{equation}
  \chi(\colim^\Phi(X)) = \sum_{[\alpha]} \phi_{[\alpha]} \cdot \tr\big(X_{\alpha}\big).
\end{equation}

\section{Examples}
\label{sec:moncat-examples}

In order to obtain concrete examples, we need to identify some absolute weights and calculate their coefficient vectors.
There is no general way to do this: the question of which weights are absolute depends heavily on what \V we choose, even for simple categories $A$.

Most of the examples we can describe at this point are fairly trivial, in the sense that their linearity formula can be proven easily in more direct ways.
Thus, while it is satisfying to have a general theory, it may seem at this point that it doesn't buy us very much.
This is largely true in the non-homotopical case, although in Examples~\ref{eg:dirsum} and~\ref{thm:burnside} we get a little simplification, amounting to the fact that it suffices to prove the linearity formula in a few particularly simple cases.
This will also be true in the homotopical examples to be considered in \S\ref{sec:der} and beyond, but in that case it is a much bigger win.

\begin{eg}\label{eg:zero}
  Let $A$ be the empty category, and $\Phi\colon A\op \to\V$ the unique functor; then $\Phi$-weighted colimits are initial objects.
  For any category $B$, there is a unique profunctor $U\colon B\hto A$, and the map $\mu_{\Phi,U}\colon  U\odot \rdual\Phi \to \Phi\rhd U$ is the unique map from the initial to the terminal object of $\prof(\V)(B,\tc)$.
  To say that this is an isomorphism when $B=\tc$ is by definition to say that \V is \emph{pointed}, and this in turn implies the corresponding statement for general $B$.
  When \V is pointed, its joint initial and terminal object is called the \emph{zero object} and denoted $0$.

  Thus, this $\Phi$ is absolute just when \V is pointed.
  There is a unique functor $X\colon A\to\V$, which is trivially pointwise dualizable; hence its colimit, which is the zero object of \V, is dualizable.
  Finally, the shadow of $A$ is the zero object $0$, so the trace of the unique endomorphism of $0$ is the composite $\lS \to 0 \to \lS$, i.e.\ the zero endomorphism of \lS.
  It is quite trivial to prove all this directly, but it serves as a good beginning example to see the general theory working.
\end{eg}

\begin{eg}\label{eg:dirsum}
  Let $A$ be the discrete category with two objects $a$ and $b$.
  Then a diagram $X\colon A\to\V$ consists of a pair of objects $X_a$ and $X_b$, and is pointwise dualizable just when $X_a$ and $X_b$ are dualizable.

  Let $\Phi\colon A\to \V$ be constant at $\lS$.
  Then $\colim^\Phi(X) = X_a + X_b$, i.e.\ $\Phi$-weighted colimits are binary coproducts.
  Now the right dual $\rdual{\Phi}$ is given by
  \begin{align}
    (\rdual\Phi)_a
    &= \int^x \Phi(x) \rhd \lI_A(x,a)\\
    &\cong \Big(\Phi(a)\rhd \lI_A(a,a)\Big) \times \Big(\Phi(b) \rhd \lI_A(b,a)\Big)\\
    &\cong \Big(\lS\rhd \lS\Big) \times \Big(\lS \rhd \emptyset\Big)\\
    &\cong \lS \times \big(\lS \rhd \emptyset\big)
  \end{align}
  and similarly $(\rdual\Phi)_b \cong (\lS\rhd \emptyset) \times \lS$, where $\emptyset$ is the initial object of \V.

  If \V is pointed, then $\emptyset = 0$ and $\lS\rhd 0 \cong 0$ and $\lS\times0 \cong \lS$, so that $\rdual\Phi$ is also constant at \lS.
  Thus, for $U\in\prof(\V)(C,A)$ we have
  \begin{align}
    (U\odot \rdual\Phi)_{c} &= U_{c,a} + U_{c,b} \qquad\text{and}\\
    (\Phi \rhd U)_c &= U_{c,a} \times U_{c,b}
  \end{align}
  while $\mu_{\Phi,U}$ is the canonical morphism
  \[  U_{c,a} + U_{c,b}  \too U_{c,a} \times U_{c,b} \]
  whose components $U_{c,a}\to U_{c,a}$ and $U_{c,b}\to U_{c,b}$ are the identity and whose components $U_{c,a}\to U_{c,b}$ and $U_{c,b}\to U_{c,a}$ are zero morphisms.
  To say that this is an isomorphism when $C=\lS$ is by definition to say that \V is semi-additive, and this in turn implies the corresponding statement for general $C$.

  Thus, when \V is semi-additive, this $\Phi$ is absolute, and so binary coproducts of dualizable objects are dualizable.
  We have $\sh{A} \cong \lS \oplus \lS$, and for a pointwise dualizable $X$ and $f\colon X\to X$, \autoref{thm:smcomega2a} implies that $\tr(f)\colon \lS\oplus \lS \too \lS$ is the row vector composed of $\tr(f_a)$ and $\tr(f_b)$.
  Thus, we have
  \[\tr(f_a \oplus f_b) = \phi_a \cdot \tr(f_a) + \phi_b\cdot \tr(f_b)\]
  for some $\phi_a, \phi_b \in \V(\lS,\lS)$.
  Knowing that such $\phi_a$ and $\phi_b$ exist, and are the same for all $X$ and $f$, enables us to calculate them easily.
  Namely, let $X_a = 0$ and $X_b=\lS$ and let $f$ be the identity.
  Then $\tr(f_a)=0$ by the previous example, and $\tr(f_b)=1$ since it is the identity; while $X_a \oplus X_b \cong \lS$ and $f_a \oplus f_b = 1$, so that $\tr(\colim^\Phi f)=1$ as well.
  Thus, $1 = \phi_a \cdot 0 + \phi_b \cdot 1$, so $\phi_b = 1$.
  Similarly, $\phi_a=1$, so our linearity formula is
  \[\tr(f_a \oplus f_b) = \tr(f_a) + \tr(f_b).\]
  As before, of course, it is fairly easy to prove this directly.
\end{eg}

\begin{eg}\label{eg:sup}
  The formal analysis of \autoref{eg:dirsum} applies equally well when $A$ is \emph{any} discrete category.
  Semi-additivity of \V again implies that all \emph{finite} coproducts are absolute, with an analogous linearity formula.
  Examples of \V for which \emph{infinite} coproducts are absolute arise somewhat more rarely, but they do exist.
  For instance, if \V is the category of suplattices (i.e.\ posets with all suprema, and supremum-preserving functions), then coproducts of arbitrary cardinality are absolute, and we have an analogous linearity formula:
  \[ \tr\left(\bigoplus_a f_a\right) = \sum_{a} \tr(f_a). \]
  In this case, \lS is the two-element lattice, and $\V(\lS,\lS)$ is a two-element set, while sums of morphisms are pointwise suprema.
  Thus, traces carry very little information.
  Informally, while traces in the additive case ``count'' fixed points, traces in the suplattice case merely record whether any fixed point exists.
\end{eg}

\begin{eg}\label{thm:trivmult}
  Let $A=\tc$ be the terminal category.
  Then $X\colon A\to\V$ is just an object of \V, and is pointwise dualizable just when that object is dualizable.
  Similarly, $\Phi\colon A\op\to\V$ is also just an object of \V, and is absolute just when that object is dualizable, while $\colim^\Phi(X)$ is just the tensor product $\Phi\otimes X$.

  The shadow of $\tc$ is just the unit object \lS, and the trace of $f\colon X\to X$ is just its ordinary trace in \V.
  The trace of $\id_\Phi$ is also obviously just its trace in \V, giving the unique coefficient $\phi$.
  Thus the linearity formula reduces to
  \[ \tr(\Phi \otimes f) = \tr(f)\circ \tr(\id_\Phi), \]
  which is (a special case of) the usual \emph{multiplicativity} formula for traces, \cite{PS4}.
  Thus we see that \emph{linearity} includes both \emph{additivity} (as the special case when all coefficients are $1$) and \emph{multiplicativity} (as the special case when there is only one term).
  The ``twisted multiplicativity'' of~\cite{PS4} is also a sort of linearity, but using a more complicated bicategory.
\end{eg}

\begin{eg}\label{thm:suspension}
  Let \V be the category of $\mathbb{Z}$-graded objects in an additive symmetric monoidal category $\mathbf{U}$, with the usual tensor product:
  \[ (X\otimes Y)_n = \bigoplus_{k+m=n} X_k \otimes Y_m \]
  and the symmetry isomorphism that maps $X_k \otimes Y_m$ to $Y_m \otimes X_k$ by $(-1)^{k m}$.
  Let $S^n$ denote the graded object that is the unit object \lS of $\mathbf{U}$ in degree $n$ and $0$ in all other degrees.
  Then $S^n$ is dualizable (indeed, invertible) with dual $S^{-n}$, and inspecting the definition of trace yields $\tr(\id_{S^n})=(-1)^n$.

  Hence, \autoref{thm:trivmult} implies that if $X$ is dualizable, so is $S^1 \otimes X$, and the trace of $S^1\otimes f$ is the negative of the trace of $f$.
  Of course, $S^1 \otimes X$ is just the ``suspension'' of $X$, with $(S^1\otimes X)_n = X_{n-1}$.
  A similar argument applies to chain complexes in an abelian symmetric monoidal category.
\end{eg}

Before we give more examples of \autoref{thm:compose-traces-smc} we introduce some important general examples of dualizable profunctors.
For any profunctor $H\colon B\hto D$ and any functors $f\colon A\to B$ and $g\colon C\to D$, we have an induced profunctor $H(g,f)\colon A\hto C$ defined by
\[ ({H(g,f)})(c,a) = H(gc,fa)\cdot \lS. \]
In particular, taking $H$ to be the identity profunctor $\lI_B$ and $g$ to be the identity functor, we have a profunctor $\lI_B(\id_A,f)\colon A\hto B$, which we generally denote by $B(\id,f)$.
Similarly, we have $B(f,\id)\colon B\hto A$; these two are defined by
\begin{align}
  (B(\id,f))(b,a) = B(b,fa) \cdot \lS \qquad\text{and}\qquad
  (B(f,\id))(a,b) = B(fa,b) \cdot \lS.
\end{align}
These are called \textbf{representable profunctors} or \textbf{base change objects}.
The following facts about them are well-known and easy to prove.
Abstractly, they say that $\prof(\V)$ is a \emph{proarrow equipment}~\cite{wood:proarrows-i} or a \emph{framed bicategory}~\cite{shulman:frbi}; we will return to this point of view in \S\ref{sec:framed}.

\begin{prop}\label{thm:bcodual}
  $B(\id,f)$ is right dualizable, with right dual $B(f,\id)$.
  The evaluation has components
  \[ \int^{a\in A} (B(fa,b) \cdot \lS) \otimes (B(b',fa)\cdot \lS) \too  B(b',b) \cdot \lS \]
  given by composition in $B$, while the coevaluation has components
  \[ A(a,a')\cdot \lS \too B(fa,fa')\cdot \lS \;\cong\; \int^{b\in B} (B(b,fa')\cdot \lS) \otimes (B(fa,b)\cdot \lS) \]
  given by the action of $f$ on arrows.
\end{prop}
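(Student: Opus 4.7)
The plan is to invoke the $\eta/\epsilon$ characterization of right dualizability stated earlier in the paper: it suffices to construct unit and counit profunctor maps
\[ \eta\colon \lI_A \to B(\id,f) \odot B(f,\id) \qquad\text{and}\qquad \epsilon\colon B(f,\id) \odot B(\id,f) \to \lI_B \]
as described in the statement, and then to verify the two triangle identities. All of this reduces, via a coend calculation and the co-Yoneda lemma, to elementary facts about composition in $B$ and the functoriality of $f$.

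First I would compute the two composites. For the unit side,
\[ (B(\id,f) \odot B(f,\id))(a',a) = \int^{b\in B} (B(b,fa)\cdot \lS) \otimes (B(fa',b)\cdot \lS), \]
and the (copower form of the) co-Yoneda lemma identifies this with $B(fa',fa)\cdot \lS$; under this identification the proposed $\eta$ is precisely the copower extension of the map $A(a',a) \xrightarrow{f} B(fa',fa)$, which is manifestly (di)natural in $a'$ and $a$. For the counit side,
\[ (B(f,\id) \odot B(\id,f))(b',b) = \int^{a\in A} (B(fa,b)\cdot \lS) \otimes (B(b',fa)\cdot \lS), \]
and the family of composition maps $B(fa,b) \otimes B(b',fa) \to B(b',b)$ is dinatural in $a$ by associativity of composition in $B$, hence descends through the coend to give the proposed $\epsilon$.

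The triangle identities then become bookkeeping in $B$. For the triangle
\[ B(\id,f) \xrightarrow{\eta \odot \id} B(\id,f) \odot B(f,\id) \odot B(\id,f) \xrightarrow{\id \odot \epsilon} B(\id,f), \]
the middle term at $(b,a)$ is $\int^{a'} B(fa',fa) \otimes B(b,fa') \cdot \lS$; an element $\phi\colon b\to fa$ of $B(b,fa)$ is sent by $\eta\odot\id$ to the class of $(f(\id_a),\phi) = (\id_{fa},\phi)$ at the coend index $a'=a$, and then $\id\odot\epsilon$ applies composition in $B$ to give $\id_{fa}\circ \phi = \phi$. The analogous triangle for $B(f,\id)$ similarly reduces to $\psi\circ \id_{fa} = \psi$. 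The main obstacle is simply keeping the indices of the various coends straight and verifying that the element picked out after applying $\eta\odot \id$ really is $(\id_{fa},\phi)$ in the correct summand — which is exactly the content of the co-Yoneda identification — after which the triangle identities collapse to the unitality of composition in $B$ together with $f(\id_a)=\id_{fa}$.
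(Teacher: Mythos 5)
Your proposal is correct, and it is a legitimately different route from the one the paper takes. The paper offers no direct computation for \autoref{thm:bcodual} at all: it records the statement as ``well-known and easy to prove'' and points to the fact that $\prof(\V)$ is a proarrow equipment/framed bicategory, with the duality of $B(\id,f)$ and $B(f,\id)$ deduced abstractly in \S\ref{sec:framed} (\autoref{lem:basechangeder}) from the four defining identities of the base change cells $\alpha,\beta,\eta,\epsilon$. You instead construct the unit and counit by hand via the copower form of co-Yoneda and check the triangle identities componentwise; this is a perfectly sound, self-contained argument, and it has the advantage of directly verifying the explicit formulas for the evaluation and coevaluation asserted in the statement (which in the paper's approach must be unwound from the equipment structure). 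One small point to make explicit: since $\V$ is an arbitrary symmetric monoidal category, your ``elements'' $\phi\colon b\to fa$ should be read as indexing copower summands --- every object in sight is of the form $S\cdot\lS$ and every map is the image of a map of $\bSet$-profunctors under the strong monoidal, cocontinuous functor $(-)\cdot\lS\colon\bSet\to\V$ (using that $\otimes$ preserves colimits in each variable and that the coends of copowers are copowers of $\bSet$-coends), so the whole verification legitimately reduces to the representable-profunctor computation in $\bSet$, where unitality of composition and $f(\id_a)=\id_{fa}$ finish the job exactly as you say. What the paper's abstract route buys in exchange is reusability: the same equipment-level argument applies verbatim in settings like $\lProf(\dW)$ for derivator bicategories (\S\ref{sec:bco-der}), where pointwise element-style reasoning is not available.
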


\begin{prop}\label{thm:bcorestr}
  For any \V-profunctor $H\colon B\hto D$ and \V-functors $f\colon A\to B$ and $g\colon C\to D$, we have
  \begin{align}
    H(\id,f) &\cong B(\id,f) \odot H\\
    H(g,\id) &\cong H \odot D(g,\id).
  \end{align}
\end{prop}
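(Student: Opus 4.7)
The plan is to recognize both isomorphisms as direct instances of the enriched co-Yoneda lemma (Yoneda reduction). The underlying picture is that the representable profunctor $B(\id,f)$ should act as ``precomposition by $f$ in the second variable,'' while $D(g,\id)$ acts as ``precomposition by $g$ in the first variable''; and co-Yoneda is exactly the tool that collapses such a coend.

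For the first isomorphism, I would unfold the composite using the definition of $\odot$ as a coend:
\[
(B(\id,f) \odot H)(d,a) \;=\; \int^{b\in B} B(\id,f)(b,a) \otimes H(d,b)
\;=\; \int^{b\in B} \bigl(B(b,fa)\cdot \lS\bigr) \otimes H(d,b).
\]
Since copowers by a set commute with $\otimes$ (indeed $(S\cdot \lS)\otimes Y \cong S\cdot Y$), this coend simplifies to $\int^{b} B(b,fa)\cdot H(d,b)$. Applying the (covariant) co-Yoneda lemma to the functor $H(d,-)\colon B \to \V$ collapses this coend to $H(d,fa)$, which is by definition $H(\id,f)(d,a)$. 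Naturality in $a$ and $d$ is automatic from naturality of the co-Yoneda isomorphism, giving the required isomorphism of profunctors.

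The second isomorphism is symmetric. I would unfold
\[
(H \odot D(g,\id))(c,b) \;=\; \int^{d\in D} H(d,b) \otimes \bigl(D(gc,d)\cdot \lS\bigr) \;\cong\; \int^{d\in D} D(gc,d) \cdot H(d,b),
\]
and then apply the contravariant form of co-Yoneda to $H(-,b)\colon D\op \to \V$ to reduce the coend to $H(gc,b) = H(g,\id)(c,b)$.

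The main step in both cases is the co-Yoneda lemma, so there is no real obstacle: the entire argument is a standard coend calculation once the conventions have been matched. The only minor bookkeeping is verifying the variance of each variable (the $B$-variable of $H$ is covariant in the first isomorphism, while the $D$-variable is contravariant in the second), which dictates which form of co-Yoneda to invoke. All the substantive content has in fact already been installed by setting up $\prof(\V)$ with the coend composition and the definition of representable profunctors immediately preceding the statement.
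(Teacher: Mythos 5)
Your proposal is correct: unfolding the composite as a coend and collapsing it with the co-Yoneda (density) lemma, using $(S\cdot\lS)\otimes Y\cong S\cdot Y$ and the cocontinuity of $\otimes$, gives exactly $(B(\id,f)\odot H)(d,a)\cong\int^{b}B(b,fa)\cdot H(d,b)\cong H(d,fa)$ and dually $(H\odot D(g,\id))(c,b)\cong\int^{d}D(gc,d)\cdot H(d,b)\cong H(gc,b)$, with the variances matched correctly to the paper's conventions. The paper itself gives no explicit proof of this proposition: it declares these facts ``well-known and easy to prove'' and instead points to the abstract viewpoint that $\prof(\V)$ is a proarrow equipment/framed bicategory, returning to it in \S\ref{sec:framed}--\S\ref{sec:basechangebicat}, where the analogous statement (\autoref{thm:bco-restriction}, that restriction $f^*Mg^*$ is computed by composing with the base change objects) is deduced from the equivalence of definitions of a framed bicategory in \cite[Theorem~4.1]{shulman:frbi}. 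So your route is the standard concrete one and is entirely adequate here; the paper's abstract route buys uniformity, since the same framed-bicategory argument is reused verbatim for $\prof(\W)$ and for the derivator bicategories later, where a hands-on coend computation would have to be redone in each setting.
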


We can also explicitly calculate traces with respect to this dual pair.

\begin{prop}\label{thm:bcodual2}
  If $\mu\colon f\to f$ is a natural transformation, then the trace of the induced endomorphism $\mu\colon B(\id,f) \to B(\id,f)$ is the map
  \[ \sh{A} = \int^{a\in A} A(a,a) \cdot \lS \too \int^{b\in B} B(b,b)\cdot \lS = \sh{B} \]
  induced by the diagonals of the following commutative squares:
  \begin{equation}
    \vcenter{\xymatrix{
      A(a,a)\ar[r]^-{\mu_a\otimes f}\ar[d]_{f\otimes \mu_a} &
      B(fa,fa) \times B(fa,fa)\ar[d]^{\mathrm{comp}}\\
      B(fa,fa) \times B(fa,fa)\ar[r]_-{\mathrm{comp}} &
      B(fa,fa)
      }}
  \end{equation}
  In particular, the trace of the identity map $\id_{B(\id,f)}$ is induced by the maps $f\colon A(a,a)\xto{} B(fa,fa)$.
\end{prop}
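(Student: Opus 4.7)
The proposition is a direct computation tracking the explicit formulas for $\eta$ and $\epsilon$ furnished by Proposition~\ref{thm:bcodual} through the definition of trace. I would proceed as follows.

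The first step is to unfold everything in sight. The trace of $\mu$ is by definition the composite
\[ \sh{\lI_A} \xto{\eta} \sh{B(\id,f)\odot B(f,\id)} \xto{\mu\odot \id} \sh{B(\id,f)\odot B(f,\id)} \xto{\cong} \sh{B(f,\id)\odot B(\id,f)} \xto{\epsilon} \sh{\lI_B}. \]
Using the coend formula for the shadow and Fubini, the middle two terms both take the form $\int^{a,b} B(b,fa)\otimes B(fa,b)\cdot \lS$, with the two factors interchanged; the shadow-compatibility isomorphism is then (induced by) the symmetry of $\otimes$ swapping the tensor factors. I would record these identifications explicitly.

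Next, I would chase a generator $\alpha\in A(a,a)\cdot \lS$ through each step. By Proposition~\ref{thm:bcodual}, the coevaluation sends $\alpha$ to the element $f(\alpha)\in B(fa,fa)$, which under the canonical coend isomorphism $B(fa,fa)\cdot \lS \cong \int^b B(b,fa)\cdot \lS\otimes B(fa,b)\cdot\lS$ is represented by the class of $\id_{fa}\otimes f(\alpha)$ (with $b=fa$). The 2-cell $\mu\odot \id$ acts by postcomposing the first factor with $\mu_a$, producing the representative $\mu_a\otimes f(\alpha)$. The shadow isomorphism reorders this as $f(\alpha)\otimes \mu_a$ in the $b=fa$ component of $\sh{B(f,\id)\odot B(\id,f)}$. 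Finally, the evaluation composes these two maps in $B$, yielding $f(\alpha)\circ\mu_a \in B(fa,fa)\cdot \lS$, which is then projected into $\sh{\lI_B}=\int^b B(b,b)\cdot \lS$.

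It remains to observe that the commutative square in the statement has diagonal $\alpha\mapsto f(\alpha)\circ \mu_a = \mu_a \circ f(\alpha)$, the two expressions being equal by naturality of $\mu\colon f\to f$. Since this agrees with the value $f(\alpha)\circ \mu_a$ computed above, the trace is exactly the map induced by these diagonals, as claimed. The special case $\mu=\id$ then gives immediately the stated formula $\alpha\mapsto f(\alpha)$.

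The main source of care, rather than any real obstacle, will be bookkeeping: keeping straight the orientations of $B(\id,f)$ versus $B(f,\id)$, which coend variable is being integrated at each stage, and which side of $\beta\otimes \gamma$ is acted on by $\mu$. Once a consistent convention is fixed, each step is forced by Propositions~\ref{thm:bcodual} and~\ref{thm:bcorestr} together with the coend definition of the shadow, and the verification is mechanical.
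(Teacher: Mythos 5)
Your proposal is correct and is essentially the paper's own argument: the paper proves this proposition simply "by inspection of the definition of traces and the description of the evaluation and coevaluation in \autoref{thm:bcodual}", and your write-up just carries out that inspection explicitly, with the generator chase through the coends and the appeal to naturality of $\mu$ matching what the paper leaves implicit.
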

\begin{proof}
  By inspection of the definition of traces and the description of the evaluation and coevaluation in \autoref{thm:bcodual}.
\end{proof}

\begin{eg}
  For any $a\in A$, the profunctor $A(\id,a)$ is an absolute weight.
  By \autoref{thm:bcorestr}, the colimit of $X\colon A\to\V$ weighted by $A(\id,a)$ is just $X(a)$, which is dualizable whenever $X$ is pointwise dualizable.
  By \autoref{thm:bcodual2}, the coefficient vector of $A(\id,a)$ is the map $\lS \to \sh{A}$ induced by the identity morphism of $a$, so that the linearity formula becomes the obvious fact that $\tr(f_a) = \tr(f_a)$.
\end{eg}

We can obtain less trivial examples by invoking the following easy fact.

\begin{prop}\label{thm:retracts}
  Any retract of a right dualizable 1-cell in a bicategory is again right dualizable.
  That is, if $X,Y\in \W(A,B)$ and we have $r\colon X\to Y$ and $s\colon Y\to X$ with $r s = \id_Y$, and $X$ is right dualizable, then so is $Y$.
  Moreover, if \W has a shadow, then the trace of any $f\colon Y\to Y$ is equal to the trace of $sfr\colon X\to X$.
\end{prop}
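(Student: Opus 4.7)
The plan is to take the canonical right dual $\rdual{Y} = Y \rhd \lI_B$ and show that the comparison map $\mu_{Y,U}\colon U \odot \rdual{Y} \to Y \rhd U$ of~\eqref{eq:rdualmap} is an isomorphism for all 1-cells $U$; by the characterization recorded just after~\eqref{eq:rdualmap} this establishes right dualizability of $Y$. The idea is to transfer the known isomorphism $\mu_{X,U}$ across the retract data $(r,s)$.

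In a closed bicategory both $(-) \rhd U$ and $\rdual{(-)} = (-) \rhd \lI_B$ are contravariantly functorial on 2-cells in their first slot, so $r\colon X\to Y$ and $s\colon Y\to X$ induce
\[
  r^* \colon Y \rhd U \to X \rhd U, \quad
  s^* \colon X \rhd U \to Y \rhd U, \quad
  \rdual{r} \colon \rdual{Y} \to \rdual{X}, \quad
  \rdual{s} \colon \rdual{X} \to \rdual{Y},
\]
and the retract identity $rs = \id_Y$ immediately yields $s^* r^* = (rs)^* = \id$ and $\rdual{s}\,\rdual{r} = \rdual{(rs)} = \id$. Naturality of $\mu$ in the dualized 1-cell, which is built into its adjoint definition, provides the two commutative squares
\[
  r^* \mu_{Y,U} = \mu_{X,U}(\id_U \odot \rdual{r}), \qquad \mu_{Y,U}(\id_U \odot \rdual{s}) = s^* \mu_{X,U}.
\]
Since $X$ is right dualizable, $\mu_{X,U}$ is invertible, so I set
\[
  \phi \;=\; (\id_U \odot \rdual{s}) \circ \mu_{X,U}^{-1} \circ r^* \;\colon\; Y \rhd U \;\to\; U \odot \rdual{Y}.
\]
Plugging the two squares in gives $\mu_{Y,U}\phi = s^* r^* = \id$ and $\phi\,\mu_{Y,U} = \id_U \odot (\rdual{s}\,\rdual{r}) = \id$, so $\mu_{Y,U}$ is an isomorphism and $Y$ is right dualizable.

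For the trace statement, once $Y$ is known to be right dualizable I appeal to cyclicity of the bicategorical trace: for any dualizable 1-cells $P,Q \in \W(A,B)$ and 2-cells $\alpha\colon P\to Q$, $\beta\colon Q\to P$, one has $\tr(\beta\alpha) = \tr(\alpha\beta)$ (a standard property of traces in bicategories with shadows, proved in~\cite{PS2} and also deducible from \autoref{thm:compose-traces}). Taking $\alpha = r$ and $\beta = sf$ yields $\beta\alpha = sfr\colon X\to X$ and $\alpha\beta = r(sf) = (rs)f = f\colon Y\to Y$, so $\tr(sfr) = \tr(f)$.

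The main obstacle is bookkeeping: keeping the variance of $\rhd$ and $\rdual{(-)}$ straight so that the two naturality squares are oriented exactly as above. Once that is in place the retract identities force both composites $\mu_{Y,U}\phi$ and $\phi\,\mu_{Y,U}$ to be identities, and the trace identity is then an immediate consequence of cyclicity.
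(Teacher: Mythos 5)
Your proposal is correct and is essentially the paper's own argument: the paper notes that $\mu_{Y,U}$ is a retract of $\mu_{X,U}$ (via exactly the maps $\id_U\odot\rdual r$, $r^*$ and $\id_U\odot\rdual s$, $s^*$ you write down) and that a retract of an isomorphism is an isomorphism, while you simply unfold that lemma into the explicit inverse $\phi$. The trace identity is likewise deduced in the paper from cyclicity, via $\tr(f)=\tr(frs)=\tr(sfr)$, matching your argument.
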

\begin{proof}
  If $Y$ is a retract of $X$, then $\mu_{Y,U}$ is a retract of $\mu_{X,U}$ for any $U$, and a retract of an isomorphism is an isomorphism.
  The statement about traces follows from cyclicity (\cite[Corollary~7.3]{PS2}), since $\tr(f) = \tr(f r s) = \tr(s f r)$.
\end{proof}

This immediately gives rise to the following somewhat tautological example.

\begin{eg}\label{thm:idempotents}
  Let $A$ be the category generated by a single object $a$ and a single idempotent $\alpha\colon a\to a$.
  Then a functor $A\to \V$ consists of an object $X$ together with an idempotent $e\colon X\to X$, and is pointwise dualizable just when $X$ is dualizable.
  Similarly, a functor $\Phi\colon A\op\to\V$ is also just an object with an idempotent.
  If we take $\Phi$ to be the unit object \lS with the identity idempotent, then a $\Phi$-weighted colimit of $(X,e)$ is a splitting of the idempotent $e$.

  To see $\Phi$ is absolute, first observe that 
  the unique representable profunctor $A(\id,a)\colon\tc \hto A$ is absolute.
  Concretely, the value of $A(\id,a)$ on the single object is the coproduct $\lS+ \lS$, equipped with the idempotent induced by $\alpha$, which projects it onto the first summand.
  The splitting of this idempotent is precisely our weight $\Phi$, so by \autoref{thm:retracts}, $\Phi$ is also absolute.
  Moreover, its coefficient vector $\tr(\id_\Phi)$ is the trace of the idempotent $\alpha$.
  Since $A$ has two conjugacy classes, the identity and the idempotent, this coefficient vector is a morphism
  \[ \phi \colon  \lS \to \{\id_a,\alpha\} \cdot \lS \cong \lS + \lS. \]
  By \autoref{thm:bcodual2}, this map is induced by the action of the functor $a\colon \lS \to A$ (which yields the first coprojection) followed by composing with $\alpha$.
  Thus, it is just the second coprojection.

  Now suppose $e\colon X\to X$ is an idempotent and we have $f\colon X\to X$ such that $f e = e f$, so that $f$ is an endomorphism of $(X,e)\colon A\to \V$.
  Then if $Y$ is a splitting of $(X,e)$, with section $s\colon Y\to X$ and retraction $r\colon X\to Y$, the induced endomorphism of $Y$ is the composite $r f s$, and the general linearity formula says that $\tr(r f s)$ is the composite
  \begin{equation}\label{eq:tridem3}
    \xymatrix@C=2pc{\lS \ar[r]^-{\phi} & \lS + \lS \ar[rr]^-{[\tr(f), \tr(f e)]} && \lS. }
  \end{equation}
  Since $\phi$ is the second coprojection, this yields $\tr(f e)$.
  This also follows directly from the cyclicity of ordinary traces.
  In an additive context, we may say that the coefficients of $\Phi$ are $\phi_{[\id_a]} = 0$ and $\phi_{[\alpha]} = 1$, but in this case this formula holds whether or not \V is additive.
\end{eg}

For a less trivial example, let $G$ be a finite group and $A=\bg$ the corresponding one-object groupoid.
Then a functor $\bg \to \V$ consists of an object $X$ with a left action by $G$, and is pointwise dualizable just when $X$ is dualizable.
If we take $\Phi\colon \bg\op \to \V$ to be \lS with the trivial right $G$-action, then $\colim^\Phi(X)$ is the quotient $X/G$.

For absoluteness of this weight, we need an additional condition on $\V$.
Recall that if $\V$ is semi-additive, then the monoid $\V(\lS,\lS)$ of endomorphisms of the unit object is a semiring.
In particular, any positive integer $n$ can be regarded as an element of this semiring, namely $\overbrace{\id_\lS+\id_\lS + \cdots+\id_\lS}^n$.
We say that $\V$ is \textbf{$n$-divisible} if this semiring element is invertible, and write $\frac{1}{n}$ for its inverse.
This implies that for any morphism $h\colon X\to Y$ in $\V$, there is a morphism $k\colon X\to Y$ such that $h = \overbrace{k+k+\cdots+k}^{n}$.
Specifically, $k$ is the composite
\[ X \cong X\otimes \lS \xrightarrow{h \otimes \frac{1}{n}}  Y\otimes \lS \cong Y. \]
We denote this morphism $k$ by $\frac{1}{n}\cdot h$.

\begin{thm}\label{thm:burnside}
  Suppose $G$ is a finite group, $X\colon \bg\to \V$, and $f\colon X\to X$ is a natural transformation.  If \V is semi-additive and $\card G$-divisible, then   
  \begin{equation}
    \tr(f/G) = \frac{1}{\card G} \sum_{g\in G} \tr(f\circ X(g))
  \end{equation}
  where $X(g)$ is the action of $g\in G$ on $X$.
\end{thm}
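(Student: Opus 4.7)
The plan is to apply the linearity formula (Corollary \ref{thm:smclin2}) to the weight $\Phi\colon\bg\op\to\V$ constant at $\lS$, with trivial right $G$-action; then $\colim^\Phi(X)=X/G$ and $\colim^\Phi(f)=f/G$. The whole task thus reduces to (a) checking that this $\Phi$ is absolute, and (b) computing its coefficient vector $\tr(\id_\Phi)\in\sh{\bg}$.

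For absoluteness, I will exhibit $\Phi$ as a retract of the representable $\bg(\id,\pt)\colon\tc\hto\bg$, which is automatically absolute by \autoref{thm:bcodual}. Concretely, $\bg(\id,\pt)$ is the free object $G\cdot\lS=\bigoplus_{g\in G}\lS$ with its regular right $G$-action. In a semi-additive, $\card G$-divisible \V, the $G$-equivariant section $s\colon\lS\to G\cdot\lS$ defined by $s=\frac{1}{\card G}\sum_{g\in G}\iota_g$ and the $G$-equivariant retraction $r\colon G\cdot\lS\to\lS$ given by summing all coordinates satisfy $rs=\id_\lS$, so $\Phi$ is a retract of $\bg(\id,\pt)$ in the relevant functor category, hence absolute by \autoref{thm:retracts}.

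The cyclicity half of \autoref{thm:retracts} then gives $\tr(\id_\Phi)=\tr(sr)$ in $\sh{\bg}$. I observe that the idempotent $sr$ on $G\cdot\lS$ is precisely the averaging endomorphism $\frac{1}{\card G}\sum_{g\in G}X(g)$, where $X(g)$ denotes the action of $g$ on $\bg(\id,\pt)$, and invoke linearity of trace in the semi-additive case to rewrite
\[
  \tr(\id_\Phi)=\frac{1}{\card G}\sum_{g\in G}\tr(X(g)).
\]
Each $X(g)$ is the endomorphism of $\bg(\id,\pt)$ induced by the natural transformation $g\colon f\to f$ of the classifying functor $f\colon\tc\to\bg$, so \autoref{thm:bcodual2} identifies $\tr(X(g))\colon\lS\to\sh{\bg}$ as the inclusion $\iota_{[g]}$ into the $[g]$-summand of $\sh{\bg}=\bigoplus_{[h]}\lS$. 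Collecting equal summands gives
\[
  \phi_{[g]}=\frac{\card{[g]}}{\card G}.
\]

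To finish, \autoref{thm:smclin2} yields
\[
  \tr(f/G)=\sum_{[g]}\phi_{[g]}\cdot\tr(f\circ X(g))=\frac{1}{\card G}\sum_{[g]}\card{[g]}\cdot\tr(f\circ X(g)),
\]
and I rewrite the right-hand sum as $\frac{1}{\card G}\sum_{g\in G}\tr(f\circ X(g))$ using the fact that $\tr(f\circ X(g))$ depends only on $[g]$: if $h\in G$, naturality of $f$ gives $fX(h)=X(h)f$, and so cyclicity yields $\tr(f\circ X(hgh\inv))=\tr(X(h\inv)\circ f\circ X(h)\circ X(g))=\tr(f\circ X(g))$. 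The main bookkeeping obstacle is step (b); in particular verifying that $sr$ really equals the averaging sum over $G$ and that \autoref{thm:bcodual2} applies to each $X(g)$ in the form I have used it. Everything else is a direct application of the machinery already in hand.
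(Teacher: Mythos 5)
Your proposal is correct, and its skeleton is the same as the paper's: realize $\Phi$ as a retract of the representable $\bg(\id,a)$ via the averaged diagonal $s$ and the fold map $r$ (so absoluteness follows from \autoref{thm:bcodual} and \autoref{thm:retracts}), use cyclicity to reduce the coefficient vector to $\tr(sr)$, and finish with \autoref{thm:smclin2} together with conjugation-invariance of $\tr(f\circ X(g))$. Where you genuinely diverge is in the evaluation of $\tr(sr)$. The paper computes it by brute force: it writes down the coevaluation and evaluation of the dual pair $(\bg(\id,a),\bg(a,\id))$ explicitly and multiplies Kronecker-delta matrices before passing to $\int^\bg$. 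You instead observe that $sr=\frac{1}{\card G}\sum_{g}X(g)$, where $X(g)$ is the endomorphism of $\bg(\id,a)$ induced (in the sense of \autoref{thm:bcodual2}) by the natural transformation $g\colon a\to a$, and then use additivity of the bicategorical trace in the 2-cell variable plus \autoref{thm:bcodual2} to get $\tr(sr)=\frac{1}{\card G}\sum_g \iota_{[g]}$ directly. This is a clean shortcut; its cost is the two facts you yourself flag: (i) the decomposition of $sr$, which does check out (both sides are the all-$\frac{1}{\card G}$ matrix on $\bigoplus_{g}\lS$), and (ii) linearity of $\tr(-)$ on endomorphism monoids, which the paper never states but which holds because in the semi-additive closed setting $\odot$ and the shadow are additive functors. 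One point to be careful about: early on you call $X(g)$ ``the action of $g$ on $\bg(\id,\pt)$,'' which in the paper's language is the \emph{right} module action; for nonabelian $G$ those maps are not individually equivariant, hence not 2-cells, and their traces would not be defined. Your later clarification --- that $X(g)$ is the endomorphism induced by the natural transformation $g$ of the classifying functor, i.e.\ left translation --- is the correct reading, it is a genuine 2-cell, and its average still equals $sr$, so the argument goes through as you intend.
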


This a fundamental example of our approach and we will extend this result to derivators in \S\ref{sec:deriv-burnside}.

\begin{proof}
  The unique representable $\bg(\id,a)$ is the copower $G\cdot \lS \cong \bigoplus_{g\in G} \lS$, with the right $G$-action that permutes the summands, i.e.\ $g\in G$ sends the $h\th$ summand to the $(hg)\th$.
  If \lS has the trivial $G$-action, the fold map
  \[ r = [\id]_{g\in G} \colon \bigoplus_{g\in G} \lS \too \lS \]
  is $G$-equivariant, i.e.\ is a morphism in $\V^\bg$.
  The diagonal $(\id)_{g\in G}\colon  \lS \to \bigoplus_{g\in G} \lS$ is also $G$-equivariant for these actions; let $s$ be the morphism
  \[ s = {\textstyle\frac1{\card G}} \cdot (\id)_{g\in G} \colon \lS \too \bigoplus_{g\in G} \lS. \]
  Then the composite $r s \colon \lS \to \lS$ is
  \[ \displaystyle\sum_{g\in G} \frac{1}{\card G} \;=\; \card G\cdot \frac{1}{\card G} \;=\; \id_{\lS}.\]
  Hence, our weight $\Phi$ is a retract of $\bg(\id,a)$ in $\V^\bg= \prof(\V)(\tc,\bg)$, and thus is absolute.
  Therefore, if $X$ is a dualizable object with any left $G$-action, its quotient $X/G$ is also dualizable, and we have a linearity formula for traces.

  Since $\id_{\Phi} = rs$ as above, by the cyclicity of traces, the coefficient vector $\tr(\id_\Phi)$ is equal to $\tr(sr)$.
  Inspecting the definitions of $s$ and $r$, we see that $sr$ is the $G$-equivariant endomorphism of $\bigoplus_{g\in G} \lS$ determined by the $(\card G\times\card G)$-matrix where all entries are $\frac{1}{\card G}$.
  To calculate the trace of this endomorphism, note that the dual representable $\bg(a,\id)$ is $\bigoplus_{g\in G} \lS$ with the \emph{left} $G$-action where $g\in G$ sends the $h\th$ summand to the $(gh)\th$, while the unit profunctor $\lI_\bg$ is $\bigoplus_{g\in G} \lS$ with both right and left $G$-actions.
  By \autoref{thm:bcodual}, the coevaluation
  \[ \lS \;\too\; \int^{\bg} \left(\bigoplus_{g\in G} \lS \;\otimes\; \bigoplus_{h\in G} \lS\right)
  \;\toiso\; \int^{\bg} \bigoplus_{g,h\in G} \lS
  \]
  picks out the image of the $(e,e)\th$ summand, whereas the evaluation
  \[ \bigoplus_{g\in G} \lS \;\otimes\;  \bigoplus_{h\in G} \lS \;\;\toiso\;\;
  \bigoplus_{g,h\in G} \lS \;\too\; \bigoplus_{g\in G} \lS
  \]
  maps the $(g,h)\th$ summand to the $(gh)\th$ summand.
  Thus, the trace of $sr$:
  \[ \lS \too \int^{\bg} \bigoplus_{g,h\in G} \lS \xto{sr\otimes \id} \int^{\bg} \bigoplus_{k,\ell\in G} \lS \too \int^\bg \bigoplus_{m\in G} \lS \]
  is induced (after passage to $\int^\bg$) by the composite
  \begin{equation}
    \lS \xto{(\delta_{g,e}\cdot \delta_{h,e})_{g,h}} \bigoplus_{g,h\in G} \lS
    \xto{\left(\frac{1}{\card G} \cdot \delta_{h,\ell}\right)_{g,h,k,\ell}} \bigoplus_{k,\ell\in G} \lS
    \xto{(\delta_{k \ell,m})_{k,\ell,m}} \bigoplus_{m\in G} \lS\label{eq:pretrsr}
  \end{equation}
  in which the $\delta$'s are Kronecker's.
  Multiplying these matrices, we see that the $m\th$ component of~\eqref{eq:pretrsr} is
  \[ \sum_{g,h,k,\ell} \left(\delta_{g,e} \cdot \delta_{h,e} \cdot \frac{1}{\card G} \cdot \delta_{h,\ell} \cdot \delta_{k\ell,m}\right) = \frac{1}{\card G}\]
  Since passage to $\int^\bg$ simply identifies the $k\th$ and $(k')\th$ summands when $k$ and $k'$ are in the same conjugacy class of $G$, the trace of $sr$ is the vector
  \[ \lS \xto{\left(\frac{\card C}{\card G}\right)_{C}} \bigoplus_C \lS \]
  where $C$ ranges over conjugacy classes in $G$.

  In other words, the coefficient vector of $\Phi$ assigns to a conjugacy class $C$ the number $\frac{\card C}{\card G}$.
  Thus, our linearity formula is
  \begin{align}
    \tr(f/G) &= \sum_{C} \frac{\card C}{\card G} \cdot  \tr(f \circ X(C)).
  \end{align}
  where $X(C)$ denotes the action on $X$ of some element of $C$ --- by cyclicity of traces, it doesn't matter which.
  We can  split this up as a sum over elements of $G$ rather than conjugacy classes to recover the description in 
  the statement of the theorem.
\end{proof}

  In particular, if $f$ is the identity morphism of $X$, then we have
  \begin{equation}\label{eq:burnside}
    \chi(X/G) = \frac{1}{\card G} \sum_{g\in G} \tr(X(g)).
  \end{equation}
  This is a generalization of the orbit-counting theorem (a.k.a.\ Burnside's lemma or the Cauchy-Frobenius lemma).
  Namely, suppose $\V = R\text-\mathbf{Mod}$ for a commutative ring $R$ in which $\card G$ is invertible, $Z$ is a finite $G$-set, and $X = R[Z]$ with the induced $G$-action.
  Then $X/G \cong R[Z/G]$, so that $\chi(X/G) = \card (Z/G)$; while $\tr(X(g))$ is the number of fixed points of $g$ acting on $Z$.
  Thus~\eqref{eq:burnside} reduces exactly to the orbit-counting theorem:
  \[ \card{(X/G)} = \frac{1}{\card G} \sum_{g\in G} \card{(X^g)}. \]

\begin{rmk}
  Note that in the previous example, the pointwise trace of $\id_X$ is the ``row vector'' with entries indexed by conjugacy classes of $G$, which assigns to each conjugacy class the trace of its action on $X$.
  In other words, it is the \emph{character} of the group representation $X$.
\end{rmk}

\begin{rmk}
  In addition to colimits weighted by profunctors $\Phi \colon \tc\hto A$, we may consider those weighted by arbitrary profunctors $\Phi \colon  B\hto A$.
  In this case the ``colimit'' of $X\colon A \to\V$, defined as before using the tensor product of functors, is a diagram $ B\to \V$ rather than a single object.
  For instance, if $\theta \colon G\to H$ is a group homomorphism, with corresponding functor $\bth\colon \bg \to\bh$, then the colimit of $X\colon\bg\to\V$ weighted by the representable $\bh(\mathbf{B}\theta,\id)$ is the \emph{induced representation} of $X$ along $\theta$.
  In this case, a computation similar to \autoref{thm:burnside} produces the formula for the character of an induced representation:
  \[ \tr(\colim^{\bh(\theta,\id)}(X)(h)) =
  \frac{1}{\card G}
  \sum_{k\in H \atop k^{-1} h k = \theta(g)} \tr(X(g))
  \]
\end{rmk}

Finally, we can use the base change profunctors to prove \autoref{thm:smcomega2a}.
Recall the statement, which applies in the situation of a pointwise dualizable $X\colon A\to \V$ and an endomorphism $f\colon X\to X$.
	
\begin{replem}{thm:smcomega2a}[The  component lemma for symmetric monoidal
categories]\label{thm:smcomega2p}
   For any morphism $\alpha\in A(a,a)$, $\tr(f)_{[\alpha]}$ is the trace of the composite 
   \begin{equation}
     \xymatrix{X(a) \ar[r]^-{X_{\alpha}} & X(a) \ar[r]^-{f_a} & X(a)}
     \label{eq:trfal2}
   \end{equation}
\end{replem}

Here $\tr(f)_{[\alpha]}$ denotes the composite of $\tr(f)\colon \sh{A} \to \lS$ with the coprojection $\lS \to \sh{A}$ induced by the conjugacy class $[\alpha]$.
	
\begin{proof}
  Let $a$ also denote the functor $\tc \to A$ that picks out the object $a\in A$.
  Then by \autoref{thm:bcodual}, the profunctor $A(\id,a)\colon \tc \hto A$ is right dualizable, and we have an endomorphism $A(\id,\alpha) \colon A(\id,a) \to A(\id,a)$ induced by composition with $\alpha$.
  Therefore, by \autoref{thm:compose-traces}, the trace of the composite
  \begin{equation}
    A(\id,a) \odot X \xto{A(\id,\alpha)\odot \id} A(\id,a) \odot X \xto{\id \odot f} A(\id,a) \odot X \label{eq:trfal3}
  \end{equation}
  is equal to the composite
  \[ \lS \xto{\tr(A(\id,\alpha))} \sh{A} \xto{\tr(f)} \lS. \]
  However, under the isomorphism $A(\id,a) \odot X \cong X(a)$ of \autoref{thm:bcorestr},~\eqref{eq:trfal3} is identified with~\eqref{eq:trfal2}.
  Finally, \autoref{thm:bcodual2} tells us that $\tr(A(\id,\alpha))$ is the coprojection $\lS \to \sh{A}$ induced by $[\alpha]$.
\end{proof}

\begin{rmk}\label{rmk:non-cocomplete}
  For simplicity, we have assumed that our monoidal category \V is complete, cocomplete, and closed.
  However, it follows for formal reasons that the same linearity formulas hold even if \V admits only the particular colimits in question.
  For instance, using the methods of~\cite[\S3.11]{kelly:enriched}, we can embed any \V in a complete and cocomplete closed monoidal category $\V'$, by a functor that preserves limits, tensor products, and any relevant colimits, hence also dualizability and traces.
\end{rmk}

\part{Linearity in derivators}
\label{part:more}

In this second part of the paper, we extend the approach to linearity from \autoref{part:moncat} to homotopical situations, in which we must replace colimits by \emph{homotopy} colimits.
(Recall that our motivation for this generalization is a desire to capture the familiar additivity of the Lefschetz number,~\eqref{eq:intro-add2}.)
There are many axiomatic frameworks for homotopy theory, such as model categories and $(\infty,1)$-categories, but the one which we find most convenient is \emph{derivators}, which were inverted by Grothendieck~\cite{grothendieck:derivateurs}, Franke~\cite{franke:triang}, and 
Heller~\cite{heller:htpythys} and studied further by~\cite{m:introder, cisinski:idcm, groth:ptstab}.

\S\ref{sec:der} and \S\ref{sec:eccat} contain the general results and basic examples.
The remaining sections in this part, \S\S\ref{sec:hofin2}-\ref{sec:eccatei}, contain more examples, including the linearity formulas for homotopy finite colimits and EI-categories.
(A reader who is mainly interested in additivity on cofiber sequences, such as for the applications to the Lefschetz number and Reidemeister trace, should feel free to skip these sections.)
To minimize the background required for this part of the paper, we postpone some details and proofs until \autoref{part:formal}.

\section{Linearity in monoidal derivators}
\label{sec:der}

We begin by recalling some of the basic notions of derivator theory; see~\cite{groth:ptstab,gps:stable,gps:additivity} for details.
A \textbf{derivator} is a 2-functor $\sD\colon \cCat\op\to \cCAT$, where \cCat and \cCAT denote the 2-categories of small and large categories, respectively, that satisfies four axioms.
One of these is that for any functor $u\colon A\to B$, the functor
\[u^*\coloneqq \sD(u)\colon \sD(B)\to \sD(A)\]
has a left adjoint $u_!$ and a right adjoint $u_*$.  

We think of the category $\sD(A)$ as the homotopy category of $A$-shaped homotopy coherent diagrams in some homotopy theory that \sD represents, and refer to its objects as \textbf{coherent diagrams}.
With this in mind, we refer to $u^*$ as 
\emph{restriction along} $u$ and think of $u_!$ and $u_*$ as corresponding \emph{(homotopy) Kan extensions}.
Each object $a$ of a category $A$ defines a functor $a\colon \tc\to A$, where $\tc$ denotes the terminal category.
Thus, we have restriction functors $a^*\colon \sD(A)\to\sD(\tc)$ which take $X\in\sD(A)$ to $X_a \coloneqq a^* X$; together these define a functor $\sD(A)\to \sD(\tc)^A$.
We call $\sD(\tc)$ the \textbf{underlying category} of \sD, and the image of a coherent diagram $X\in \sD(A)$ under this functor its \textbf{underlying diagram}.    (This motivated our abuse of notation in \autoref{thm:intro-der} where 
we wrote $X\colon A\to \dV$ rather than the more correct $X\in \dV(A)$.)
Another axiom of a derivator is that the underlying diagram functor is {\bf conservative}, i.e.\ a morphism $X\to Y$ in $\sD(A)$ is an isomorphism just when its image in $\sD(\tc)^A$ is an isomorphism. 

The third axiom of a derivator is that it takes coproducts in \cCat to products in \cCAT, i.e.\ $\sD(\coprod_i A_i) \simeq \prod_i\sD(A_i)$.
The final axiom is more technical and allows us to compute with the functors $u_!$ and $u_*$.
Essentially it says that they are ``pointwise'' Kan extensions: each object $(u_! X)_a$ is a colimit (i.e.\ a left Kan extension to $\tc$) over the comma category $(u/a)$.
We will not make much explicit use of this axiom; instead we will rely on results from \cite{gps:stable, gps:additivity} that build on it.

Now if $\cMONCAT$ denotes the 2-category of monoidal categories and strong monoidal functors, a \textbf{monoidal derivator} is a 2-functor $\sD\colon \cCat\op\to \cMONCAT$ satisfying two further conditions.
The first is that the composite of $\sD$ with the forgetful functor $\cMONCAT\to \cCAT$ is a derivator.
The second is that the monoidal product is \emph{cocontinuous} in each variable, a compatibility condition between the monoidal product and left adjoints $u_!$ \cite[Definition~3.19]{gps:additivity}.
A monoidal derivator is {\bf symmetric} if it lifts to the 2-category of symmetric monoidal categories and symmetric monoidal functors.
Finally, there is also a notion of \emph{closed} monoidal derivator \cite[Definition~8.5]{gps:additivity}, but we will not need to use the details.

If \sD is a monoidal derivator, then we write the tensor product of $\sD(A)$ as $\otimes_A$ and its unit object as $\lS_A$.
When $A=\tc$ we sometimes omit the subscripts.
In particular, $\lS \coloneqq \lS_\tc$ is ``the unit object of \sD''.
Note that since $\pi_A^*\colon \sD(\tc) \to \sD(A)$ is strong monoidal, where $\pi_A\colon A\to \tc$ is the unique functor, we have in particular $\lS_A \cong \pi_A^*(\lS)$ for any $A$; a coherent diagram in the image of $\pi_A^*$ is said to be {\bf constant}.

The simplest examples of derivators are \textbf{represented}, where $y(\sC)(A) \coloneqq \sC^A$ for some complete and cocomplete ordinary category \sC.
If \sC is also a closed monoidal category, then $y(\sC)$ is a closed monoidal derivator.
Most other interesting examples arise from homotopy categories of model categories or $(\infty,1)$-categories, for which we have $\ho(\sC)(A) \coloneqq \sC^A[(\sW^A)^{-1}]$ for some class \sW of weak equivalences in \sC.
If \sC is a monoidal model category, then $\ho(\sC)$ is a closed monoidal derivator.

The \emph{classical homotopy derivator} $\ho(\bTop)$ of spaces under weak homotopy equivalence is \emph{universal}, in the sense that it acts on every other derivator.
We denote this action by $\tens \colon  \ho(\bTop)\times \sD\to\sD$.\label{page:tensor}
It is easiest to define by
\[ |NA| \tens X \coloneqq (\pi_A)_! (\pi_A)^* X \]
for $X\in\sD(\tc)$ and $A\in\cCat$, where $|NA|$ denotes the geometric realization of the nerve of $A$.
Since every space is weak homotopy equivalent to $|NA|$ for some $A$, this suffices to define the action $\tens$.
(It is not obvious that $(\pi_A)_! (\pi_A)^* X$ depends only on the weak homotopy type of $|NA|$; this is a theorem of Heller~\cite{heller:htpythys} and Cisinski~\cite{cisinski:presheaves}.)

\bigskip

For the rest of this section (and until the end of \S\ref{sec:eccat}), let \dV be a closed symmetric monoidal derivator.
We first intend to mimic the construction of the bicategory $\prof(\V)$ from \S\ref{sec:moncat}.
This requires the definition of \emph{coend} in a derivator, which was introduced in~\cite{gps:additivity}.
Let $\tw(A)$ be the {\bf twisted arrow category} of $A$.  The objects of $\tw(A)$ are the morphisms of $A$ 
and the morphisms from $a_1\xto{f_1} b_1$ to $a_2\xto{f_2} b_2$ in $\tw(A)$
are pairs of morphisms $b_1 \xto{h} b_2$ and $a_2 \xto{g} a_1$ such that $f_2 = h f_1 g$.
This category has source and target projections $(s,t)\colon \tw(A) \to A\op\times A$.
Therefore, its opposite has projections $(t\op,s\op)\colon \tw(A)\op \to A\op\times A$.

Now if \dV is a derivator, the \textbf{coend} of $X\in \dV(A\op\times A)$ is defined by
\[ \int^A X \coloneqq (\pi_{\tw(A)\op})_! (t\op,s\op)^* X, \]
where $\pi_{\tw(A)\op}\colon \tw(A)\op\to \tc$ is the unique functor.
(We will use $\pi$ throughout this paper to denote projection maps.)
It was shown in~\cite[Theorem~5.9]{gps:additivity} that we can construct a closed bicategory $\dprof(\dV)$  from a closed symmetric monoidal derivator \dV  as follows:
\begin{itemize}
\item Its objects are small categories.
\item Its hom-category from $A$ to $B$ is $\dV(A\times B\op)$.
\item Its composition functors are the composites
  \begin{align*}\dV(A\times B\op) &\times \dV(B\times C\op)\\
   & \xto{\pi_{B\times C\op}^*\times \pi_{A\times B\op}^*} \dV(A\times B\op\times B\times C\op)\times \dV(A\times B\op\times B\times C\op)\\
   & \xto{\otimes} \dV(A\times B\op\times B\times C\op)\\
   &\xto{\int^B} \dV(A\times C\op). 
   \end{align*}
\item The identity 1-cell of a small category $B$ is
  \begin{equation}
    \lI_B\;=\;(t,s)_! \lS_{\tw(B)} \;\cong\; (t,s)_! (\pi_{\tw(B)})^* \lS_{\tc} \; \in \dV(B\times B\op)
  \end{equation}
     where $\lS_\tc$ is the monoidal unit of $\sD(\tc)$.
\end{itemize}
(In the terminology of~\cite{gps:additivity}, the bicategorical composition is the external-canceling tensor product
  $ \otimes_{[B]} \colon \dV(A\times B\op) \times \dV(B\times C\op) \too \dV(A\times C\op). $)
When $\dV$ is the represented derivator on a complete and cocomplete closed symmetric monoidal category \V, this bicategory can be identified with 
the bicategory $\prof(\V)$ from \S\ref{sec:moncat}. 

As before, we define the \textbf{$\Phi$-weighted colimit} of $X\in\dV(A)$ by $\Phi\in\dV(A\op)$ to be the composite $\colim^\Phi(X) = \Phi\odot X$ in $\dprof(\dV)$, where $\Phi$ and $X$ are regarded as 1-cells $\tc\hto A$ and $A\hto \tc$ in $\dprof(\dV)$, respectively.
The following observation is somewhat less obvious in the derivator case than in the classical one.

\begin{prop}\label{thm:colim-is-colim}
  For any $X\in\dV(A)$, if $\Phi = (\pi_{A\op})^*\lS$ is constant at the unit object, then we have $\colim^{\Phi}(X) \cong \colim(X)$, where $\colim(X)$ denotes the usual derivator colimit $(\pi_A)_!(X)$.
\end{prop}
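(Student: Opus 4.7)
The plan is to unfold the defining coend for $\Phi \odot X$, use that the weight $\Phi$ is pulled back from $\lS_\tc$, and then identify the resulting expression with $(\pi_A)_! X$. By the definition of bicategorical composition in $\dprof(\dV)$ recalled above, $\colim^\Phi(X) = \Phi \odot X$ is the coend $\int^A\bigl(p_1^* \Phi \otimes p_2^* X\bigr)$, where $p_1 \colon A\op \times A \to A\op$ and $p_2 \colon A\op \times A \to A$ are the projections. Since $\Phi = \pi_{A\op}^* \lS$ and pullback is strong monoidal, the left factor $p_1^* \Phi = (\pi_{A\op} \circ p_1)^* \lS = \pi_{A\op \times A}^* \lS$ is the monoidal unit $\lS_{A\op \times A}$, and the unit isomorphism for $\otimes$ reduces the integrand to $p_2^* X$, giving $\colim^\Phi(X) \cong \int^A p_2^* X$.

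Next, I unfold the coend via its definition in terms of the twisted arrow category: $\int^A p_2^* X = (\pi_{\tw(A)\op})_! (t\op, s\op)^* p_2^* X$. Since $p_2 \circ (t\op, s\op) = s\op$, this simplifies to $(\pi_{\tw(A)\op})_! (s\op)^* X$. The factorization $\pi_{\tw(A)\op} = \pi_A \circ s\op$ and functoriality of $(-)_!$ then yield $(\pi_A)_! \bigl((s\op)_! (s\op)^* X\bigr)$, so it suffices to show the counit $(s\op)_! (s\op)^* X \to X$ is an isomorphism.

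This final step is the main obstacle and is equivalent to the source projection $s\op \colon \tw(A)\op \to A$ being homotopy cofinal. The cleanest route is to recognize the constant weight $\Phi = \pi_{A\op}^* \lS$ as the base change profunctor $\tc(\pi_A, \id) \colon \tc \hto A$ for the terminal functor $\pi_A \colon A \to \tc$ (as developed for derivators in \autoref{part:formal}), and then apply the base change adjunction, which directly yields $\tc(\pi_A, \id) \odot X \cong (\pi_A)_! X$; this is the derivator analog of Propositions~\ref{thm:bcodual} and \ref{thm:bcorestr}. Alternatively, one can argue directly that the slice category $(s\op/a)$, whose objects are composable pairs $a \ot x \to y$ in $A$ with appropriate morphisms, has contractible nerve; since $\tw(A)$ is the category of elements of the hom bifunctor, the required cofinality reduces to a standard fact. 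Either route completes the proof.
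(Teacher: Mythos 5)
Your unfolding of $\Phi\odot X$ is correct, and your reduction to showing $(\pi_{\tw(A)\op})_!(s\op)^*X\cong(\pi_A)_!X$ is exactly where the content lies; the paper itself disposes of this in one line by citing \cite[Corollary~5.8]{gps:additivity} (which moves $(\pi_A)_!$ out of the coend) and then using the unit isomorphism. Your first route is a genuinely different and essentially correct argument: $(\pi_{A\op})^*\lS$ is indeed the base change object $\tc(\pi_A,\id)$, composing on the left with $\tc(\id,\pi_A)$ is $\pi_A^*$ by the restriction formula, and since $\tc(\pi_A,\id)\odot(-)$ is left adjoint to $\tc(\id,\pi_A)\odot(-)$ (because $\tc(\pi_A,\id)$ is right dual to $\tc(\id,\pi_A)$, \autoref{lem:basechangeder}), uniqueness of left adjoints gives $\tc(\pi_A,\id)\odot X\cong(\pi_A)_!X$. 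This is not circular, but note that it leans on the framed-bicategory machinery of \S\ref{sec:bco-der} (\autoref{thm:der-frbi} and the derivator restriction formula), which the paper develops only much later, and that the paper only records the formula for composing with $B(\id,f)$ on the left, so the small adjunction step you allude to is genuinely needed. What this buys you is independence from the external citation; what it costs is importing heavier machinery for a statement the paper settles in two lines.

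Two of your statements are wrong as written, and one of them is the step you call the main obstacle. First, invertibility of the counit $(s\op)_!(s\op)^*X\to X$ is \emph{not} equivalent to homotopy finality of $s\op$: finality says only that the map becomes invertible after applying $(\pi_A)_!$, i.e.\ that $\colim_{\tw(A)\op}(s\op)^*X\to\colim_A X$ is an isomorphism, which is all you need; invertibility of the counit itself asserts that $(s\op)^*$ is fully faithful, a strictly stronger condition that would need its own argument. You should drop the counit reformulation and reduce directly to finality. Second, in your ``direct'' alternative you test the wrong comma categories: by the criterion the paper uses (\cite[Corollary~3.13]{gps:stable}), homotopy finality of $s\op\colon\tw(A)\op\to A$ requires the categories $(a/s\op)$, whose objects are composable pairs $a\to x\to y$, to be homotopy contractible — not the slices $(s\op/a)$ of spans $a\ot x\to y$ that you describe, whose contractibility is neither the right criterion for finality nor by itself sufficient for the counit claim. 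Moreover, contractibility of $(a/s\op)$ is not an immediate consequence of $\tw(A)$ being a category of elements; it requires an argument — for instance, the endofunctor sending $(a\xto{u}x\xto{f}y)$ to $(a\xto{\id_a}a\xto{fu}y)$ admits natural transformations to both the identity functor and the constant functor at $(\id_a,\id_a)$, which contracts the nerve — or simply the citation \cite[Corollary~5.8]{gps:additivity}, whose content is precisely this finality statement.
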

\begin{proof}
  By~\cite[Corollary~5.8]{gps:additivity}, we have
  \begin{align*}
    \colim^{(\pi_{A\op})^*\lS}(X)
    &= \int^{A} (\pi_{A\op})^*\lS \otimes X\\
    &\cong \int^{\tc} \lS \otimes (\pi_A)_! X\\
    &\cong (\pi_A)_! X.\qedhere
  \end{align*}
\end{proof}

We can now generalize the definitions and results of \S\ref{sec:moncat}.

\begin{defn}\ 
  \begin{itemize}
  \item A coherent diagram $X\in\dV(A)$ is \textbf{pointwise dualizable} if it is right dualizable when regarded as a 1-cell $A\hto \tc$ in $\dprof(\dV)$.
  \item A coherent diagram $\Phi\in\dV(A\op)$ is \textbf{absolute} if it is right dualizable when regarded as a 1-cell $\tc\hto A$ in $\dprof(\dV)$.
  \end{itemize}
\end{defn}

Thus, by \autoref{thm:compose-duals}, we have:

\begin{thm}
  If $X\in\dV(A)$ is pointwise dualizable and $\Phi\in\dV(A\op)$ is absolute, then $\colim^\Phi(X)$ is dualizable.
\end{thm}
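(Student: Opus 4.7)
The plan is to apply Theorem~\ref{thm:compose-duals} essentially verbatim, just as was done in the symmetric monoidal case. By hypothesis, $X$ is right dualizable as a 1-cell $A \hto \tc$ in $\dprof(\dV)$, and $\Phi$ is right dualizable as a 1-cell $\tc \hto A$. Since right dualizability is closed under composition in any bicategory (Theorem~\ref{thm:compose-duals}), the composite 1-cell $\Phi \odot X \in \dprof(\dV)(\tc,\tc)$ is right dualizable, with right dual given by $\rdual{X} \odot \rdual{\Phi}$. By the definition of $\colim^\Phi(X)$, this composite is precisely the weighted colimit in question.

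What remains is the identification of right dualizability in the hom-category $\dprof(\dV)(\tc,\tc)$ with dualizability in $\dV$ in the symmetric monoidal sense. For this one unpacks the construction of $\dprof(\dV)$ from the excerpt: the hom-category $\dprof(\dV)(\tc,\tc)$ is $\dV(\tc \times \tc\op) \cong \dV(\tc)$, the composition of 1-cells $\tc \hto \tc$ is given by the external-cancelling tensor product $\otimes_{[\tc]}$, which reduces to the ordinary tensor product $\otimes$ on $\dV(\tc)$, and the identity 1-cell $\lI_\tc$ reduces to the monoidal unit $\lS$ since $\tw(\tc) \cong \tc$. Hence the bicategorical notion of right dualizability for 1-cells in $\dprof(\dV)(\tc,\tc)$ coincides with the ordinary symmetric monoidal notion of dualizability in $\dV$, and we conclude that $\colim^\Phi(X)$ is dualizable.

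The only mildly nontrivial point is this last identification, which is why the proof is genuinely a corollary rather than a tautology; but the relevant compatibilities of composition and units have already been recorded in~\cite[Theorem~5.9]{gps:additivity} as part of the construction of $\dprof(\dV)$, so no fresh computation is required. Thus the entire argument is just ``$X$ and $\Phi$ are right dualizable, therefore so is their composite, which is $\colim^\Phi(X)$.''
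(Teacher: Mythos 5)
Your proposal is correct and matches the paper's own argument: the paper proves this theorem exactly by applying \autoref{thm:compose-duals} to $X$ and $\Phi$ regarded as right dualizable 1-cells in $\dprof(\dV)$, with $\colim^\Phi(X)=\Phi\odot X$ by definition. The identification of dualizability in $\dprof(\dV)(\tc,\tc)$ with monoidal dualizability in $\dV(\tc)$, which you spell out, is left tacit in the paper but is exactly the content of the construction in~\cite[Theorem~5.9]{gps:additivity}.
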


We also have a version of \autoref{thm:smcpwdual}.

\begin{lem}[{\cite[Lemma~11.5]{gps:additivity}}]\label{thm:derptdual}
  $X\in\dV(A)$ is pointwise dualizable if and only if each object $X_a\in\dV(\tc)$ is dualizable.
\end{lem}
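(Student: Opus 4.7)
Both directions rest on the derivator analogs of the base change results (\autoref{thm:bcodual} and \autoref{thm:bcorestr}), whose construction for $\dprof(\dV)$ is carried out in \S\ref{sec:bco-der}; at the level of this section, the plan is to reduce dualizability of $X$ as a profunctor $A \hto \tc$ to ordinary dualizability of each $X_a$ by restricting along the base change objects $A(\id,a)$ and using conservativity.

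For the forward direction, assume $X \in \dV(A)$ is pointwise dualizable, i.e., right dualizable in $\dprof(\dV)$. The derivator analog of \autoref{thm:bcodual} gives that for every $a \in A$, the representable profunctor $A(\id,a) \colon \tc \hto A$ is right dualizable. By \autoref{thm:compose-duals}, the composite $A(\id,a) \odot X$ is then right dualizable in $\dprof(\dV)$. But the derivator analog of \autoref{thm:bcorestr} identifies this composite with the restriction $a^* X = X_a$, regarded as a 1-cell $\tc \hto \tc$ — which is precisely an object of $\dV(\tc)$. Hence each $X_a$ is dualizable in $\dV(\tc)$.

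For the reverse direction, assume each $X_a \in \dV(\tc)$ is dualizable. Since $\dprof(\dV)$ is closed, the candidate right dual $\rdual X := X \rhd \lI_\tc \in \dV(A\op)$ exists, equipped with the canonical natural comparison map $\mu_{X,U} \colon U \odot \rdual X \to X \rhd U$ for every 1-cell $U$. As recalled after \eqref{eq:rdualmap}, it suffices to show that $\mu_{X,X}$ is an isomorphism. This morphism lives in $\dV(A \times A\op)$, so by the conservativity axiom it is enough to check that its restriction along each $(a',a) \colon \tc \to A \times A\op$ is an isomorphism. Using the base change compatibilities for $\odot$ and $\rhd$ — again, the derivator analogs of \autoref{thm:bcorestr} — together with the identification $(\rdual X)_a \cong X_a \rhd \lS$, this pointwise restriction is identified with the duality map $\mu_{X_{a'},X_a}$ in $\dV(\tc)$, which is an isomorphism by the assumed dualizability of $X_a$.

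The main obstacle is the bookkeeping in the reverse direction: making precise how restriction along $(a',a)$ commutes with the coend defining $\odot$ and the end defining $\rhd$ in $\dprof(\dV)$, and then identifying the resulting pointwise map with $\mu_{X_{a'},X_a}$. This is precisely the content of the base change calculus for derivator bicategories developed later in \S\ref{sec:bco-der}, so at the level of this section the argument is a formal consequence of those results.
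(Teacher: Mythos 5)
Your plan is essentially correct, but it is worth noting how it relates to what the paper actually does: the paper gives no proof here, deferring to \cite[Lemma~11.5]{gps:additivity}, and the closest in-paper argument is the proof of \autoref{thm:smcpwdual}, where \emph{both} directions fall out at once from a single observation — the $(b,a)$-component of $\mu_{X,U}$ \emph{is} $\mu_{X_a,U_b}$, so conservativity (Der2) handles ``if'', and taking $U\in\dV(\tc)$ (i.e.\ $B=\tc$) handles ``only if'' without any representables. Your reverse direction is exactly this argument (candidate dual from closedness, reduction to $U=X$, conservativity, componentwise identification), while your forward direction takes a genuinely different route through the base change objects $A(\id,a)$, \autoref{thm:compose-duals}, and the identification $A(\id,a)\odot X\cong X_a$; that is a perfectly valid and rather clean formal argument, but it leans on the framed-bicategory machinery of \S\ref{sec:bco-der}, whereas the componentwise argument needs nothing beyond the compatibilities already used in your reverse direction. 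Two cautions on the reverse direction: first, restriction does \emph{not} in general commute with the coends and ends defining $\odot$ and $\rhd$ — what saves you here is that the middle category is $\tc$, so $X\odot\rdual{X}$ and $X\rhd X$ are just external products/homs, and the compatibility you need is pseudonaturality of the external two-variable structure (equivalently the base-change identities $(a'\times a\op)^*(-)\cong A(\id,a')\odot(-)\odot A(a,\id)$), so you should phrase it that way; second, the real content is not just identifying the source and target of the restricted map with those of $\mu_{X_{a'},X_a}$ but checking that the map itself corresponds under these isomorphisms — this naturality check is precisely what the cited \cite[Lemma~11.5]{gps:additivity} carries out, so flagging it only as ``bookkeeping'' deferred to \S\ref{sec:bco-der} is acceptable for a plan but is where the actual work lives.
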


Our next order of business is to construct a shadow on $\dprof(\dV)$.

\begin{thm}\label{thm:derivshadow}
  The bicategory $\dprof(\dV)$ has a shadow valued in $\dV(\tc)$, defined for $H\in \dprof(\dV)(A,A) = \dV(A\times A\op)$ by
  \begin{equation}
    \sh{H} = \int^A \symm^* H
  \end{equation}
  where $\symm\colon A\times A\op \toiso A\op\times A$ is the symmetry.
\end{thm}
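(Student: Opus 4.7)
My plan is to verify the two main ingredients of a shadow as in \cite[Defn.~4.1]{PS2}: a cyclicity isomorphism $\sh{X\odot Y}\cong\sh{Y\odot X}$ together with its coherences against the associators and unitors of $\dprof(\dV)$. Functoriality of $\sh{-}$ is automatic, since the construction is the composite of two 2-functorial operations, namely the restriction $\symm^*$ and the coend $\int^A$.

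The heart of the proof is the cyclicity isomorphism. Given $X\in\dV(A\times B\op)$ and $Y\in\dV(B\times A\op)$, I would unfold the definition of $\odot$ recalled above to obtain
\[
\sh{X\odot Y}\;=\;\int^A \symm^* \int^B \bigl(\pi^* X \otimes \pi^* Y\bigr)\qquad\text{in }\dV(\tc),
\]
and similarly for $\sh{Y\odot X}$. A Fubini theorem for coends in derivators (which is implicit in the construction of $\dprof(\dV)$, see \cite{gps:additivity}) collapses each iterated coend into a single coend over $A\times B$ of an object of $\dV(A\times B\times B\op\times A\op)$. The remaining discrepancy between the two sides is then a permutation of the four coordinates/tensor factors, which is handled by the symmetry isomorphism of $\otimes$ in $\dV$ together with naturality of the coend under isomorphisms of indexing categories. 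Composing these identifications gives the required $\sh{X\odot Y}\cong\sh{Y\odot X}$.

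For the shadow coherence axioms I would verify that the cyclicity isomorphism interacts correctly with the structure of $\dprof(\dV)$. The associativity pentagon for three composable cells reduces, under repeated applications of Fubini, to a hexagon for the symmetric monoidal structure on $\dV$ applied to an external tensor product of four factors; the unit axiom reduces to the derivator identity $\int^A \symm^*(\lI_A) \cong \lS_\tc$, which follows from the formula $\lI_A = (t,s)_!\lS_{\tw(A)}$, an adjunction, and a comma-square calculation for $\symm$. In both cases the diagrams commute essentially formally once the Fubini identifications are in hand.

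The main technical obstacle will be making the Fubini step rigorous in the derivator setting: one must match the iterated coend $\int^A\int^B$ with a single coend $\int^{A\times B}$ via the structural identification of twisted arrow categories $\tw(A)\op\times\tw(B)\op \simeq \tw(A\times B)\op$, and track how $\symm^*$ intertwines with this identification. Once this is established and shown to be appropriately natural, the shadow axioms become formal consequences of the symmetric monoidal coherence of $\dV$ combined with the universality of coends, avoiding any direct 2-cell manipulation at the level of $\dprof(\dV)$.
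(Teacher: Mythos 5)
Your overall route is the same as the paper's: the cyclicity isomorphism $\sh{X\odot Y}\cong\sh{Y\odot X}$ is obtained from the Fubini theorem for coends in derivators (\cite[Lemma~5.3]{gps:additivity}), with the residual coordinate permutation absorbed by the symmetry of $\otimes$, and the coherence of this isomorphism is reduced to the same homotopy-exact-square manipulations that establish associativity of composition in $\dprof(\dV)$ (\cite[Lemma~5.12]{gps:additivity}). So the skeleton of your argument is sound and matches the paper's (very terse) proof, and you correctly identify the identification $\tw(A\times B)\op\simeq\tw(A)\op\times\tw(B)\op$ as the technical crux of the Fubini step.

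However, your treatment of the unit axiom contains a genuine error. The unit axiom of a shadow in \cite[Defn.~4.1]{PS2} does \emph{not} ask for a computation of $\sh{\lI_A}$; it asks that, for $M\in\dprof(\dV)(A,A)$, the cyclicity isomorphism $\sh{M\odot\lI_A}\cong\sh{\lI_A\odot M}$ be compatible with the left and right unit isomorphisms of $\dprof(\dV)$ (both composites to $\sh{M}$ agree). Your proposed reduction to the identity $\int^A\symm^*(\lI_A)\cong\lS_\tc$ is moreover false as stated: by the computation carried out later in the paper, $\sh{\lI_A}\cong|N(\Lambda A)|\tens\lS$, which is $\lS\oplus\lS$ already for $A$ discrete on two objects, and more generally records the conjugacy classes of $A$; it is $\lS$ only when $\Lambda A$ is homotopy contractible. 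The correct way to handle this axiom, in the spirit of the rest of your argument, is to run the same argument that proves the unit isomorphisms of composition in $\dprof(\dV)$ (the homotopy exact squares built from $\tw(A)\times_{A\op}\tw(A)\op$ used in \cite{gps:additivity}), checked to be compatible with the Fubini identification and $\symm^*$; no computation of $\sh{\lI_A}$ is needed or possible in that form.
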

\begin{proof}
  The shadow isomorphism $\sh{H\odot K} \cong \sh{K\odot H}$ is essentially just the Fubini theorem,~\cite[Lemma~5.3]{gps:additivity}.
  The shadow axiom follows from essentially the same argument that proves the associativity of composition in $\dprof(\dV)$,~\cite[Lemma~5.12]{gps:additivity}.
\end{proof}

Thus, by \autoref{thm:compose-traces}, we have a linearity formula.

\begin{thm}\label{thm:dercomposite}
  If $X\in\dV(A)$ is pointwise dualizable and $\Phi\in\dV(A\op)$ is absolute, then for any $f\colon X\to X$ we have
  \[ \tr(\colim^\Phi (f)) = \tr(f) \circ \tr(\id_\Phi). \]
\end{thm}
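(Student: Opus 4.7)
The plan is to reduce the statement to a direct application of the abstract composition theorem for bicategorical traces (\autoref{thm:compose-traces}) in the bicategory $\dprof(\dV)$, exactly mirroring the proof of \autoref{thm:compose-traces-smc} in the monoidal-category setting. All of the heavy lifting has already been done: \dprof(\dV) is a closed bicategory by \cite{gps:additivity}, the shadow has been constructed in \autoref{thm:derivshadow}, and the hypotheses on $X$ and $\Phi$ are precisely right dualizability of the 1-cells $X\colon A\hto \tc$ and $\Phi\colon\tc\hto A$.

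First, I would unpack the identifications. By definition, $\colim^{\Phi}(X)=\Phi\odot X$ in $\dprof(\dV)$, and the induced endomorphism $\colim^{\Phi}(f)$ corresponds to the horizontal composite
\[ \id_{\Phi}\odot f\colon \Phi\odot X \longrightarrow \Phi\odot X. \]
Since $\Phi$ and $X$ are right dualizable, \autoref{thm:compose-duals} immediately gives that $\Phi\odot X$ is right dualizable, confirming that $\colim^{\Phi}(X)$ is a dualizable object of $\dV(\tc)$ (the shadow of $\tc$ under \autoref{thm:derivshadow} being just the unit object $\lS$, so that traces of endomorphisms of $\Phi\odot X\in\dprof(\dV)(\tc,\tc)$ are literally endomorphisms of $\lS$ in $\dV(\tc)$).

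Second, I would apply \autoref{thm:compose-traces} with the objects $Y\coloneqq \Phi$ and $X\coloneqq X$, taking the twistings $Q,P,L$ to all be the unit 1-cell $\lI_{\tc}$, the 2-cell on $X$ to be our given $f$, and $g\coloneqq \id_{\Phi}$. Then $g\odot\id_X = \id_{\Phi\odot X}$, so the composite appearing on the left of \autoref{thm:compose-traces} is just $\id_{\Phi}\odot f$, whose trace is $\tr(\colim^{\Phi}(f))$. The right-hand side is $\tr(f)\circ\tr(\id_{\Phi})$, which is exactly the linearity formula.

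I do not expect any real obstacle: everything substantial has already been isolated into earlier statements. The only bookkeeping is to confirm that the shadow of \autoref{thm:derivshadow} applied to the one-object hom-category $\dprof(\dV)(\tc,\tc)\simeq\dV(\tc)$ is naturally the identity (so the ``abstract'' trace in \dprof(\dV) agrees with the trace of a dualizable object of $\dV(\tc)$ in the usual sense), and that the horizontal composite $\id_{\Phi}\odot f$ is indeed what one means by $\colim^{\Phi}(f)$; both are immediate from unwinding the definitions of $\odot$ and of the shadow as a coend.
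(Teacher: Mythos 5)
Your proposal is correct and matches the paper's own argument: the paper derives \autoref{thm:dercomposite} precisely by identifying $\colim^\Phi(f)$ with $\id_\Phi\odot f$ and invoking \autoref{thm:compose-traces} in the shadowed bicategory $\dprof(\dV)$, exactly as in the monoidal case \autoref{thm:compose-traces-smc}. The bookkeeping points you flag (the shadow on $\dprof(\dV)(\tc,\tc)$ reducing to the identity on $\dV(\tc)$, and the identification of the induced endomorphism) are indeed the only things to check, and they unwind as you say.
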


As before, to make this useful, we need to analyze $\sh{A} = \sh{\lI_A}$ further.
By definition of $\lI_A$ and $\sh{-}$, this is obtained by transporting $\lS_\tc$ diagonally along the following diagram starting from the top right and ending at the 
bottom left.  We restrict horizontally and left Kan extend vertically (ignore the top left corner for the moment):
\begin{equation}\label{eq:dershunit}
  \vcenter{\xymatrix@C=3pc{
    \Lambda A \ar[r] \ar[d] \pullbackcorner & \tw(A) \ar[r] \ar[d]^{(t,s)} & \tc\\
    \tw(A)\op \ar[r]_{(s\op,t\op)} \ar[d] & A\times A\op \\
    \tc
  }}
\end{equation}

Let $\Lambda A$ be defined by the pullback in~\eqref{eq:dershunit}.
Explicitly, the objects of $\Lambda A$ are pairs of morphisms $(a\xto{\alpha} b,b\xto{\beta} a)$ in $A$ which are composable in both orders.
A morphism from $(\alpha,\beta)$ to $(\alpha',\beta')$ is a pair of morphisms $(a\xto{\xi} a', b'\xto{\zeta} b)$ such that $\alpha = \zeta\alpha'\xi$ and $\beta' = \xi\beta\zeta$.
Since $(t,s)$ is a discrete opfibration, the above pullback square is homotopy exact~\cite[Prop.~1.24]{groth:ptstab}, meaning that the Beck-Chevalley condition holds for the vertical left adjoints; thus we have
\begin{align}
  \sh{A}\, &\cong (\pi_{\Lambda A})_! (\pi_{\Lambda A})^* \lS_\tc \notag\\
  &= |N(\Lambda A)| \tens \lS.\label{eq:LA}
\end{align}

\begin{rmk}
In fact, the nerve $N(\Lambda A)$ is equivalent to the \emph{cyclic nerve} $ZA$ of $A$; thus $\sh{A}$ may be regarded as the ``Hochschild homology of $A$ with respect to \dV''.
Recall that the $n$-simplices of $ZA$ are composable loops of $(n+1)$ morphisms in $A$:
\[ a_0 \xto{\alpha_1} a_1 \xto{\alpha_2} \cdots \xto{\alpha_n} a_n \xto{\alpha_{n+1}} a_0, \]
with face and degeneracy maps defined by composition and inserting identities.
There is a map of simplicial sets $N(\Lambda A) \to Z A$ which sends an object $(\alpha,\beta)$ to the composite $\beta\alpha$, and an $n$-simplex
\[ (\alpha_0,\beta_0) \xto{(\xi_1,\zeta_1)} (\alpha_1,\beta_1) \xto{(\xi_2,\zeta_2)}\cdots \xto{(\xi_n,\zeta_n)} (\alpha_n,\beta_n) \]
of $N(\Lambda A)$ to the $n$-simplex
\[ a_0 \xto{\xi_1} a_1 \xto{\xi_2} \cdots \xto{\xi_n} a_n \xto{\beta_0 \zeta_0 \zeta_1 \cdots \zeta_n \alpha_n} a_0 \]
of $Z A$.
This map can be shown to be a weak homotopy equivalence.
However, we will need only the following weaker assertion.
\end{rmk}

\begin{lem}\label{thm:pi0LA}There is a bijection between 
  $\pi_0(\Lambda A)$ and the set of conjugacy classes of $A$.
\end{lem}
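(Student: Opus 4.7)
The plan is to construct mutually inverse maps $\Phi\maps \pi_0(\Lambda A) \to \coprod_a A(a,a)/\!\sim$ and $\Psi$ going the other way, where $\sim$ denotes the conjugacy relation generated by $\alpha\circ\beta \sim \beta\circ\alpha$. On objects of $\Lambda A$, I set $\Phi(\alpha,\beta) = [\beta\alpha]$, and on conjugacy classes I set $\Psi[\gamma] = [(\gamma,\id_a)]$ where $\gamma\in A(a,a)$.

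First I would check that $\Phi$ descends to $\pi_0$. Given any morphism $(\xi,\zeta)\maps (\alpha,\beta) \to (\alpha',\beta')$ in $\Lambda A$, the defining identities $\alpha = \zeta\alpha'\xi$ and $\beta' = \xi\beta\zeta$ yield
\[ \beta\alpha \;=\; (\beta\zeta\alpha')\,\xi
\qquad\text{while}\qquad
\beta'\alpha' \;=\; \xi\,(\beta\zeta\alpha'), \]
so $\beta\alpha$ and $\beta'\alpha'$ lie in the same conjugacy class by a single application of the generating relation.

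Next I would show $\Psi$ is well-defined. Since conjugacy is the equivalence relation generated by $\tau\sigma \sim \sigma\tau$ for $\tau\maps x\to y$ and $\sigma\maps y\to x$, it suffices to connect $(\sigma\tau,\id_x)$ and $(\tau\sigma,\id_y)$ in $\Lambda A$. The key computation is that from any object of the form $(\beta\alpha, \id_a)$ there are in fact \emph{two} morphisms in $\Lambda A$: one to $(\alpha,\beta)$ using $(\xi,\zeta) = (\id_a,\beta)$, and one to $(\beta,\alpha)$ using $(\xi,\zeta) = (\alpha,\id_a)$; both defining identities hold on the nose. Applied to $(\sigma\tau,\id_x)$ we obtain a morphism into $(\sigma,\tau)$, and applied to $(\tau\sigma,\id_y)$ we obtain another morphism into $(\sigma,\tau)$. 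This produces the zigzag
\[ (\sigma\tau,\id_x) \;\to\; (\sigma,\tau) \;\leftarrow\; (\tau\sigma,\id_y), \]
which is the desired connection in $\Lambda A$.

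Finally, I would verify that $\Phi$ and $\Psi$ are mutually inverse. The composite $\Phi\Psi[\gamma] = [\id_a\cdot\gamma] = [\gamma]$ is immediate. For the other composite, $\Psi\Phi[(\alpha,\beta)] = [(\beta\alpha,\id_a)]$, and the morphism $(\beta\alpha,\id_a) \to (\alpha,\beta)$ constructed above (with $\xi = \id_a$, $\zeta = \beta$) shows this equals $[(\alpha,\beta)]$ in $\pi_0(\Lambda A)$. The main obstacle is the well-definedness in the second step, which is entirely a matter of spotting the right pair of morphisms in $\Lambda A$; once one notices that a single object $(\beta\alpha,\id_a)$ admits maps to both cyclic rotations $(\alpha,\beta)$ and $(\beta,\alpha)$, the bijection essentially writes itself.
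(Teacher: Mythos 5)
Your proof is correct and follows essentially the same route as the paper's: the same assignment $(\alpha,\beta)\mapsto[\beta\alpha]$ with inverse induced by $\gamma\mapsto(\gamma,\id)$, verified on the generating conjugacy relation via explicit morphisms of $\Lambda A$. The only cosmetic difference is that you phrase it as a pair of mutually inverse maps and connect $(\sigma\tau,\id)$ to $(\tau\sigma,\id)$ by a zigzag through $(\sigma,\tau)$, whereas the paper argues surjectivity plus injectivity and uses a single morphism $(\xi,\zeta)\colon(\zeta\xi,\id)\to(\id,\xi\zeta)$.
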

\begin{proof}
  Each object $(\alpha,\beta)\in\Lambda A$ induces a conjugacy class $[\alpha\beta] = [\beta\alpha]$, and if $(\xi,\zeta)\colon (\alpha,\beta)\to(\alpha',\beta')$ is a morphism, then
  \[ [\alpha'\beta'] = [\alpha'\xi\beta\zeta] = [\zeta\alpha'\xi\beta] = [\alpha\beta]. \]
  Thus, $\pi_0(\Lambda A)$ maps to the set of conjugacy classes, and the map is clearly surjective.
  For injectivity, first note that for any $(\alpha,\beta)$ we have morphisms $(\alpha,\id)\colon (\alpha,\beta) \to (\id,\alpha\beta)$ and $(\beta,\id)\colon (\alpha\beta,\id) \to (\alpha,\beta)$.
  Thus, it suffices to show that if $[\alpha] = [\beta]$ then $(\id,\alpha)$ and $(\beta,\id)$ are in the same connected component of $\Lambda A$.
  But if $[\alpha] = [\beta]$, then we have $\alpha = \zeta\xi$ and $\beta=\xi\zeta$ for some $\xi$ and $\zeta$, and hence there is a morphism $(\xi,\zeta)\colon (\alpha,\id) \to (\id,\beta)$ in $\Lambda A$.
\end{proof}

In particular, every conjugacy class $[\alpha]$ in $A$ yields a uniquely determined morphism $[\alpha]\colon \lS \to \sh{A}$ in $\dV(\tc)$.
More generally, for any functor $F\colon B\to A$ and any natural transformation $\mu\colon F\to F$, we have a functor $\Lambda\mu \colon  \Lambda B\to \Lambda A$ which sends $(b_1\xto{\beta_1} b_2, b_2\xto{\beta_2} b_1)$ to $(\mu_{b_2}\circ F(\beta_1), F(\beta_2))$ (naturality of $\mu$ makes this functorial).
Thus, we have an induced map $[\mu]\colon\sh{B} \to \sh{A}$.
In the case $B=\tc$, where $\mu$ is just an endomorphism $a\xto{\alpha} a$ in $A$, this yields the above map $[\alpha]\colon\lS \to \sh{A}$.

Now since natural transformations between functors induce homotopies between maps of nerves, if we have a transformation $\Lambda \mu \to \Lambda \nu$, then $[\mu]=[\nu]\colon\sh{B} \to\sh{A}$.
In the case $B=\tc$, this implies that $[\alpha]$ is determined by the connected component of $(\alpha,\id)$ in $\Lambda A$, and hence (by \autoref{thm:pi0LA}) by the conjugacy class of $\alpha$.

There is also a component lemma. 

\begin{lem}[The  component lemma for monoidal derivators]\label{thm:smderomega}
  If $X\in\dV(A)$ is pointwise dualizable and $f\colon X\to X$, then for any conjugacy class $[a\xto{\alpha} a]$ in $A$, the composite
  \begin{equation}
    \xymatrix{ \lS_\tc \ar[r]^-{[\alpha]} & \sh{A} \ar[r]^-{\tr(f)} & \lS_\tc }\label{eq:smderomtr}
  \end{equation}
  is equal to the trace in $\dV(\tc)$ of the composite
  \begin{equation}
    \xymatrix{ X_a \ar[r]^-{X_\alpha} & X_a \ar[r]^-{f_a} & X_a .}\label{eq:smderomres}
  \end{equation}
\end{lem}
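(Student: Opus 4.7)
The plan is to mimic the proof of \autoref{thm:smcomega2a} in the derivator setting by constructing a derivator analog of the representable base change 1-cell $A(\id,a)\colon \tc \hto A$. Using the functor $a\colon \tc \to A$, I would define a 1-cell $B_a \in \dprof(\dV)(\tc,A)$ via a suitable Kan extension of $\lS_\tc$; in light of the construction $\lI_A = (t,s)_!\lS_{\tw(A)}$ of the unit 1-cell, the natural guess is to take $B_a$ to be the restriction of $\lI_A$ along $a\colon \tc \to A$ in the first variable. The endomorphism $\alpha\colon a\to a$ should then induce an endomorphism $B_\alpha\colon B_a \to B_a$ via the action of $\alpha$ inside $\lI_A$.

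Once $B_a$ is set up, the strategy is: first, check that $B_a$ is right dualizable in $\dprof(\dV)$ (this is the base change analog of \autoref{thm:bcodual}) and that there is a natural isomorphism $B_a \odot X \cong X_a$ (analog of \autoref{thm:bcorestr}) under which $B_\alpha \odot \id_X$ and $\id_{B_a} \odot f$ correspond to $X_\alpha$ and $f_a$, respectively. Second, apply \autoref{thm:compose-traces} to the composite
\[ B_a \odot X \xrightarrow{B_\alpha \odot \id} B_a \odot X \xrightarrow{\id \odot f} B_a \odot X \]
which then gives $\tr(f_a \circ X_\alpha) = \tr(\id_{B_a} \odot f) \circ \tr(B_\alpha \odot \id_X)$. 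The right-hand factor $\tr(B_\alpha)$ is a map $\lS_\tc \to \sh{A}$, and by a further computation using the definition of $\sh{A} = |N(\Lambda A)| \tens \lS$ together with naturality of the trace construction under change of base, it should be identified with the map $[\alpha]\colon \lS_\tc \to \sh{A}$ induced (as on the paragraph just before the lemma) by the functor $\tc \to \Lambda A$ picking out $(\alpha,\id)$. By \autoref{thm:pi0LA} this depends only on the conjugacy class of $\alpha$, matching the definition of $[\alpha]$ used in~\eqref{eq:smderomtr}. Meanwhile the left-hand factor $\tr(\id_{B_a} \odot f)$ is, by naturality, the composite of $\tr(f)\colon \sh{A} \to \lS_\tc$ with an appropriate structural isomorphism, yielding the composite in~\eqref{eq:smderomtr}.

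The main obstacle will be establishing the base change properties of $B_a$ in a clean, formal way within the derivator framework. Unlike in the symmetric monoidal case, where the representable $A(\id,a)$ and its dualizability are computed by hand, in $\dprof(\dV)$ the verification requires understanding how $B_a$ interacts with the coend formulas and the unit 1-cell $\lI_A = (t,s)_!\lS_{\tw(A)}$; in particular, the coevaluation and evaluation maps must be produced via Kan extensions and a Beck--Chevalley argument, and the identification of $\tr(B_\alpha)$ with $[\alpha]$ requires tracking a pullback square along $\tw(A)$ very similar to~\eqref{eq:dershunit}. The cleanest route is almost certainly to defer this to the general theory of base change objects for derivators developed in \S\ref{sec:bco-der}, and then deduce the lemma from that theory exactly as in the proof of \autoref{thm:smcomega2p}; the present section need only isolate the one statement about $B_\alpha$ and $\tr$ that makes the component lemma work.
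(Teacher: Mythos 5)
Your proposal is correct and follows essentially the same route as the paper: the paper also defers the proof to the base-change machinery of \S\ref{sec:base_change}--\ref{sec:bco-der}, where the representable $(A,R)(\id,a)$ is exactly the restriction of the unit 1-cell, its dualizability and the isomorphism $(A,R)(\id,a)\odot X\cong X_a$ come from the framed-bicategory structure on $\dprof(\dV)$, and the identification $\tr\big((A,R)(\id,\alpha)\big)=[\alpha]$ is obtained from \autoref{thm:general-omega} together with the computation of the shadow of the 2-cell via $\Lambda\alpha\colon\tc\to\Lambda A$ (\autoref{thm:derbi-2cell-sh}), after which \autoref{thm:compose-traces} finishes the argument just as in \autoref{thm:smcomega2a}.
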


\begin{proof}We will prove a generalization of this result in \S\ref{sec:basechangebicat} on page \pageref{thm:compderivbicat}.
\end{proof}

In contrast to the situation in \S\ref{sec:moncat}, $\tr(f)$ may not be determined by its composites with the morphisms $[\alpha]$.
However, for most applications, it is sufficient to know these composites, because the coefficient vector is built out of them.
This is the situation of the following definition.

We say that a derivator \dV is \textbf{semi-additive} if the category $\dV(\tc)$ is so.
This implies that all categories $\dV(A)$ are also semi-additive.
Moreover, if \dV is symmetric monoidal, then $\dV(\tc)(\lS,\lS)$ is a commutative semiring which acts on the homsets of all the $\dV(A)$.

\begin{defn}
  If $\Phi\in\dV(A\op)$ is absolute, then $\tr(\id_\Phi)\colon\lS \to \sh{A}$ is called its \textbf{coefficient vector}.
  If \dV is semi-additive and we can express the coefficient vector of $\Phi$ as a linear combination
  \begin{equation}
    \tr(\id_\Phi) = \sum_{[\alpha]} \phi_{[\alpha]} \cdot [\alpha],\label{eq:philin}
  \end{equation}
  for $\phi_{[\alpha]}\in\dV(\tc)(\lS,\lS)$, then we refer to the $\phi_{[\alpha]}$ as the \textbf{coefficients} of $\Phi$ and say that $\Phi$ has a \textbf{coefficient decomposition}.
\end{defn}

Often $\Lambda A$ is essentially discrete (and finite), so that $\sh{A} \cong \bigoplus_{[\alpha]} \lS$ and thus $\Phi$ automatically has a coefficient decomposition.
Whenever $\Phi$ has a coefficient decomposition, we can give a more familiar description of the formula in \autoref{thm:dercomposite}.

\begin{cor}\label{thm:smderlin2}
If \dV is semi-additive and $\Phi\in\dV(A\op)$ is absolute and has a coefficient decomposition, then 
\[ \tr(\colim^\Phi(f)) = \sum_{[\alpha]} \phi_{[\alpha]} \cdot \tr(f_a \circ X_{\alpha}) \]
for any endomorphism $f$ of a pointwise dualizable $X\in\dV(A)$.
\end{cor}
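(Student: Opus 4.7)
The plan is to combine \autoref{thm:dercomposite} with the component lemma (\autoref{thm:smderomega}), exploiting the coefficient decomposition to turn a composite of morphisms into a concrete linear combination of scalars.

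First I would invoke \autoref{thm:dercomposite} to write
\[ \tr(\colim^\Phi(f)) \;=\; \tr(f) \circ \tr(\id_\Phi). \]
By hypothesis, $\Phi$ admits a coefficient decomposition, so
\[ \tr(\id_\Phi) \;=\; \sum_{[\alpha]} \phi_{[\alpha]} \cdot [\alpha] \]
as morphisms $\lS \to \sh{A}$ in $\dV(\tc)$. Semi-additivity of $\dV(\tc)$ (so in particular its hom-sets are modules over the commutative semiring $\dV(\tc)(\lS,\lS)$) lets me post-compose both sides of this equation with $\tr(f)\colon \sh{A}\to\lS$ and distribute over the sum and the scalar action, yielding
\[ \tr(\colim^\Phi(f)) \;=\; \sum_{[\alpha]} \phi_{[\alpha]} \cdot \bigl(\tr(f)\circ [\alpha]\bigr). \]

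Next I would apply \autoref{thm:smderomega} to each term: for a chosen representative $\alpha\colon a\to a$ of the conjugacy class $[\alpha]$, the composite $\tr(f)\circ[\alpha]$ is equal to the trace in $\dV(\tc)$ of the endomorphism $f_a \circ X_\alpha$ of the dualizable object $X_a$ (pointwise dualizability being guaranteed by \autoref{thm:derptdual}). Substituting gives exactly the stated formula
\[ \tr(\colim^\Phi(f)) \;=\; \sum_{[\alpha]} \phi_{[\alpha]} \cdot \tr(f_a \circ X_\alpha). \]

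There is essentially no obstacle here since both \autoref{thm:dercomposite} and \autoref{thm:smderomega} do the real work; the corollary is the packaging of those two results once a coefficient decomposition is assumed. The only subtle point worth mentioning is well-definedness: the choice of representative $\alpha$ (and of object $a$) in a conjugacy class does not affect $\tr(f_a\circ X_\alpha)$, but this follows from \autoref{thm:smderomega} itself, since the left-hand composite $\tr(f)\circ[\alpha]$ depends only on the conjugacy class by the discussion preceding the lemma.
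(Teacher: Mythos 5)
Your proposal is correct and follows exactly the route the paper intends for this corollary: compose the formula of \autoref{thm:dercomposite} with the coefficient decomposition of $\tr(\id_\Phi)$, distribute using semi-additivity, and identify each term $\tr(f)\circ[\alpha]$ via the component lemma \autoref{thm:smderomega}. The paper treats this as an immediate consequence and gives no separate proof, so nothing is missing.
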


As before, the problem is now to produce examples of absolute weights and calculate their coefficient vectors.
We easily obtain derivator versions of the simple examples from \S\ref{sec:moncat-examples}: zero objects, direct sums, and tensor products.
For these, there is not much gained by passing to derivators, since they are actual colimits in the underlying category $\dV(\tc)$.
(This follows from the fact that $\dV\colon \cCat\op\to\cCAT$ takes coproducts to products.)

The splitting of idempotents is slightly less trivial, since idempotents in a derivator (that is, objects of $\dV(E)$ where $E$ is the free-living idempotent) are \emph{coherent} idempotents, and not every idempotent in $\dV(\tc)$ may admit a coherentification.
However, coherent idempotents are, in particular, idempotents in $V(\tc)$, and their coherent splitting is, in particular, a splitting in $V(\tc)$.
Thus, again the linearity formula follows from ordinary category theory, not requiring the derivator structure.

In the next four sections (and continuing in \S\ref{sec:derbicat})
we consider some situations where honestly new phenomena occur in the derivator context, primarily arising from \emph{stability}.

\begin{rmk}
  The component lemma for derivators is closely related to~\cite[Theorem 6.3]{PS3}.
  Indeed, a monoidal derivator is a particular kind of indexed monoidal category, for which the base category $\mathbf{S}$ is \cCat.
  It doesn't have \emph{indexed homotopy coproducts} in the sense of~\cite{PS3}, but it does if we restrict its domain to the category $\cGpd$ of groupoids (since comma categories for groupoids coincide with homotopy pullbacks).
  The bicategory constructed as in~\cite[Theorem 5.2]{PS3} from this indexed monoidal category is equivalent to the sub-bicategory of $\prof(\dV)$ whose objects are groupoids (using the facts that when $A$ is a groupoid, we have $A\cong A\op$ and $\tw(A)\simeq A$).

  Thus,~\cite[Theorem 6.1]{PS3} implies that when $A$ is a groupoid, $X\in\dV(A)$ is pointwise dualizable just when it is dualizable in the symmetric monoidal category $\dV(A)$, and in that case~\cite[Theorem 6.3]{PS3} says that from $\tr(f)\colon\sh{A}\to \lS$ we can extract the trace of $f$ in $\dV(A)$.
  Moreover, since for any $a\in A$ the functor $a^*\colon\dV(A)\to \dV(\tc)$ is strong monoidal and thus preserves traces, from the trace of $f$ in $\dV(A)$ we can extract the traces of each $f_a\colon X_a\to X_a$.
  Thus, with a little care, we can deduce the special case of the component lemma when $\alpha=\id_a$ from~\cite[Theorem 6.3]{PS3}.

  The general component lemma, in the case when $\dV$ is the homotopy derivator of chain complexes, is remarked on in~\cite[Example 6.6]{PS3}.
  In fact,~\cite[Theorem 6.3]{PS3} can also be generalized to a statement that includes the full component lemma, but we will not do that here.
\end{rmk}

\section{Stable derivators and additivity}
\label{sec:eccat}

Let \dV be a symmetric monoidal derivator which is \emph{pointed}, i.e. its initial and terminal objects coincide and are denoted by $0$.
Let $\bbtwo$ denote the arrow category $(a\xto{\alpha} b)$, and let $\ulcorner$ be the span category $(c \ot a \to b)$.
Then for any $X\in \dV(\bbtwo)$ with underlying diagram $(X_a \to X_b)$, we can right Kan extend it to an object of $\dV(\ulcorner)$ with underlying diagram $(0\ot X_a \to X_b)$.
The colimit of this coherent span is an object called the \textbf{cofiber} of $X$.
(Sometimes this word refers to the induced map from $X_b$ to this object, perhaps itself lifted to $\dV(\bbtwo)$; but we will care primarily about the object.)

In particular, from any object $X\in \dV(\tc)$ we can obtain (again by right Kan extension) an object of $\dV(\bbtwo)$ with underlying diagram $(X\to 0)$.
Its cofiber is called the \textbf{suspension} of $X$ and denoted $\Sigma X$.
In a represented derivator, the suspension is always $0$, but in general it is nontrivial and deserves its name.

Dually, we have the \textbf{fiber} of any object of $\dV(\bbtwo)$, and the \textbf{loop space} $\Omega X$ of any object $X$.
There is an adjunction $\Sigma \dashv \Omega$, and \dV is \textbf{stable} if and only if this adjunction is an equivalence (see~\cite{gps:stable}).
A stable derivator is automatically additive.

Let us consider the possible absoluteness of these constructions.
We begin by observing that cofibers in any pointed symmetric monoidal derivator are a weighted colimit.
Let $\Phi\in\dV(\bbtwo\op)$ be the essentially unique diagram of the form $(0\ot \lS)$, which we can obtain as before by right Kan extension.
Note that $\tw(\bbtwo)\cong \ulcorner\op$; and for any $X\in \dV(\bbtwo)$, the weighted colimit $\Phi \odot X$ is the pushout of the $\tw(\bbtwo)\op$-diagram
\[ \vcenter{\xymatrix{ \lS\otimes X_a \ar[r] \ar[d] & \lS \otimes X_b \\ 0 \otimes X_a }}
\qquad\text{or equivalently}\qquad
\vcenter{\xymatrix{ X_a \ar[r] \ar[d] & X_b \\ 0. }}
\]
Thus, $\Phi \odot X$ is exactly the cofiber of $X$, as defined above.

Since cofibers are a weighted colimit, we can now ask under what circumstances they are absolute.
To calculate the canonical right dual $\rdual{\Phi}$ of our weight $\Phi$, we start with the external hom $\Phi \rhd \lI_{\bbtwo} \in \dV(\bbtwo\times \bbtwo\times\bbtwo\op)$, restrict to $\tw(\bbtwo) \times \bbtwo$, then right Kan extend to $\bbtwo$.
The resulting $(\tw(\bbtwo) \times \bbtwo)$-diagram looks like
\begin{equation}
  \vcenter{\xymatrix{
      \Phi_a \rhd (\lI_\bbtwo)_{a,a} \ar[r]\ar[d] &
      \Phi_b \rhd (\lI_\bbtwo)_{a,a}\ar@{<-}[r]\ar[d] &
      \Phi_b \rhd (\lI_\bbtwo)_{b,a}\ar[d]\\
      \Phi_a \rhd (\lI_\bbtwo)_{a,b} \ar[r] &
      \Phi_b \rhd (\lI_\bbtwo)_{a,b}\ar@{<-}[r] &
      \Phi_b \rhd (\lI_\bbtwo)_{b,b}
    }}
\end{equation}
or equivalently
\begin{equation}
  \vcenter{\xymatrix{
      0 \ar[r]\ar[d] &
      \lS\ar@{<-}[r]\ar[d] &
      0\ar[d]\\
      0 \ar[r] &
      \lS\ar@{<-}[r] &
      \lS.
    }}
\end{equation}
Therefore, its right Kan extension to $\bbtwo$ looks like $(\Omega \lS \to 0)$; this is $\rdual{\Phi}$.

Now recall from \S\ref{sec:traces} that $\Phi$ is absolute iff the canonical map $U\odot \rdual {\Phi}\to \Phi \rhd U$ is an isomorphism for any $U$.
In our case, $U\odot \rdual{\Phi}$ is the pushout of a $\tw(\bbtwo)\op$-diagram that looks like $U_a \otimes \Omega\lS \ot U_b \otimes \Omega\lS \to 0$, i.e.\ the cofiber of $U_b \otimes \Omega\lS \to U_a \otimes \Omega\lS$.
On the other hand, $\Phi \rhd U$ is the limit of a $\tw(\bbtwo)$-diagram that looks like $(U_b \to U_a \ot 0)$, i.e.\ the \emph{fiber} of $U_b \to U_a$.
So absoluteness of $\Phi$ requires that the canonical map
\begin{equation}
  \cof(U_b \otimes \Omega\lS \to U_a \otimes \Omega\lS) \too \fib(U_b \to U_a)\label{eq:stable-cofib-iso}
\end{equation}
be an equivalence for all $U$; in other words, that we can calculate a fiber as a ``shifted'' cofiber.
This is similar to \autoref{eg:zero} and \autoref{eg:dirsum}: we are asking a colimit construction to coincide with a limit construction.

We also remarked in \S\ref{sec:traces} that it suffices to require this when $U=\Phi$.
The fiber of $\Phi$ is just $\lS$, while the cofiber of $\Phi\otimes \Omega \lS$ is the suspension $\Sigma\Omega\lS$.
Thus, absoluteness of $\Phi$ equivalently  requires that the canonical map
\[ \Sigma \Omega \lS \too \lS \]
be an equivalence.
This map is the counit of the adjunction $\Sigma\dashv\Omega$ at $\lS$; thus we have proven one direction of the following result.

\begin{thm}\label{eg:cofiber}
  A pointed symmetric monoidal derivator \dV is stable if and only if the weight $\Phi$ for cofibers is absolute.
  In this case, if $X\in\dV(\bbtwo)$ is pointwise dualizable, then so is its cofiber $\colim^\Phi(X)$, and for any $f\colon X\to X$ we have
  \begin{equation}
    \tr(\colim^\Phi(f)) = \tr(f_b) - \tr(f_a).\label{eq:cofiber-add}
  \end{equation}
\end{thm}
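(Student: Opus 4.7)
My plan is to build on the one direction already given in the excerpt. The discussion preceding the theorem shows that absoluteness of $\Phi$ forces the counit $\Sigma\Omega\lS\to\lS$ to be an isomorphism, which is a special case of the counit $\Sigma\Omega\to\id$ being an isomorphism, i.e.\ of stability. For the converse, I assume $\dV$ is stable, so the counit $\Sigma\Omega Y\to Y$ is an isomorphism for every $Y\in\dV(\tc)$, in particular for $Y=\lS$. The excerpt has already reduced absoluteness of $\Phi$ to checking that $U\odot\rdual{\Phi}\to\Phi\rhd U$ is an isomorphism at $U=\Phi$, and identified this single map with the counit $\Sigma\Omega\lS\to\lS$; hence stability gives absoluteness. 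Once $\Phi$ is absolute, \autoref{thm:compose-duals} immediately yields the dualizability of $\colim^\Phi(X)=\Phi\odot X$ whenever $X$ is pointwise dualizable.

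For the trace formula, I observe that $\bbtwo$ has only the endomorphisms $\id_a$ and $\id_b$, which cannot be conjugate since there is no morphism $b\to a$. A direct inspection of the definition of $\Lambda\bbtwo$ given just after~\eqref{eq:dershunit} shows that it is discrete on the two objects $(\id_a,\id_a)$ and $(\id_b,\id_b)$, so $\sh{\bbtwo}\cong\lS\oplus\lS$ and $\Phi$ automatically has a coefficient decomposition. Applying \autoref{thm:smderlin2} then gives
\[
\tr(\colim^\Phi(f))=\phi_{[\id_a]}\,\tr(f_a)+\phi_{[\id_b]}\,\tr(f_b)
\]
for some $\phi_{[\id_a]},\phi_{[\id_b]}\in\dV(\tc)(\lS,\lS)$ depending only on $\Phi$, and it remains only to identify these two scalar coefficients.

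I will determine them by plugging in two pointwise dualizable test diagrams with identity endomorphisms. First, take $X^{(1)}=b_!\lS$, which has underlying diagram $(0\to\lS)$; its cofiber is $\lS$, so the formula specializes to $1=\phi_{[\id_a]}\cdot 0+\phi_{[\id_b]}\cdot 1$, forcing $\phi_{[\id_b]}=1$. Next, take $X^{(2)}=\pi_\bbtwo^*\lS$, the constant diagram $(\lS\xto{\id}\lS)$; its cofiber is a zero object, since the cofiber of an identity map is $0$ in any pointed derivator. This gives $0=\phi_{[\id_a]}+\phi_{[\id_b]}$, whence $\phi_{[\id_a]}=-1$, and substituting these coefficients yields~\eqref{eq:cofiber-add}. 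The only potentially delicate step is the explicit description of $\Lambda\bbtwo$, but this is a routine combinatorial check from the definition; the rest is either a direct application of theorems already proved in the excerpt or an elementary computation in a pointed derivator.
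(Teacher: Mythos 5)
Your proof that stability implies absoluteness, and your computation of the coefficients, follow the paper's own argument essentially verbatim: the same reduction of absoluteness to the single map at $U=\Phi$, the same identification $\Lambda\bbtwo\simeq\{*,*\}$ giving $\sh{\bbtwo}\cong\lS\oplus\lS$, and the same two test diagrams $(0\to\lS)$ and $(\lS\xto{\id}\lS)$ (in the opposite order) to pin down $\phi_b=1$ and $\phi_a=-1$. The genuine gap is in the other direction of the ``if and only if''. You assert that the excerpt has already handled ``absolute $\Rightarrow$ stable'', because absoluteness forces the counit $\Sigma\Omega\lS\to\lS$ to be an isomorphism, ``which is a special case of\dots stability''. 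But that is backwards as logic: invertibility of the counit at the single object $\lS$ is a \emph{consequence} of stability, not a proof of it, and the discussion preceding the theorem only establishes the equivalence ``$\Phi$ absolute $\iff$ $\Sigma\Omega\lS\to\lS$ is invertible''. Knowing this at $\lS$ alone says nothing a priori about the (co)unit at other objects of $\dV(\tc)$, let alone in the categories $\dV(A)$, so the direction ``$\Phi$ absolute $\Rightarrow$ \dV stable'' is exactly the step your proposal is missing --- and it is the step the paper's proof actually supplies.

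The missing argument uses the closed monoidal structure: since $\otimes$ is cocontinuous in each variable it commutes with suspension, so $Y\otimes\Sigma\lS\cong\Sigma Y$ for all $Y$; then $\Sigma\Omega\lS\cong\lS$ says $\Omega\lS\otimes\Sigma\lS\cong\lS$, i.e.\ $\Sigma\lS$ is invertible with inverse $\Omega\lS$, whence $\Sigma\cong(-\otimes\Sigma\lS)$ is an equivalence and \dV is stable. Without some such argument you have proved only ``stable $\Rightarrow$ absolute'' plus a consequence of absoluteness, not the stated equivalence. The rest of your proposal --- dualizability of $\colim^\Phi(X)$ via \autoref{thm:compose-duals}, discreteness of $\Lambda\bbtwo$, and the coefficient computation via \autoref{thm:smderlin2} (which, as in the paper, implicitly uses that stable derivators are additive, hence semi-additive) --- is correct and coincides with the paper's route.
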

\begin{proof}
  We have shown ``only if''.
  For ``if'', suppose that $\Phi$ is absolute.
  Since $\otimes$ is cocontinuous, it preserves suspensions in both arguments; thus for any $Y\in\dV(\tc)$ we have
  \[ Y\otimes \Sigma\lS \cong \Sigma(Y\otimes \lS) \cong \Sigma Y. \]
  In particular, we have
  \[ \Omega\lS \otimes \Sigma\lS \cong \Sigma\Omega\lS \cong \lS.\]
  Thus, $\Sigma\lS$ and $\Omega\lS$ are invertible objects of $\dV(\tc)$.
  It follows that the functor $(-\otimes \Sigma \lS)$ is an equivalence with inverse $(-\otimes \Omega\lS)$, and hence so is the functor $\Sigma$.
  Thus, $\dV$ is stable.

  By \autoref{thm:smderlin2}, when this holds we have a linearity formula.
  Note that $\Lambda\bbtwo$ is the discrete category on two objects, so that $\sh{\bbtwo} = \lS \oplus \lS$.
  Thus, $\tr(\id_\Phi)$ is determined by two morphisms $\phi_a,\phi_b\colon \lS \to\lS$, and we have
  \[ \tr(\colim^\Phi (f)) = \phi_a \cdot \tr(f_a) + \phi_b \cdot \tr(f_b).\]
  As in \autoref{eg:dirsum}, now that we know that $\phi_a$ and $\phi_b$ exist, we can calculate them by considering some very special cases.
  On one hand, if $X$ is $\lS_\bbtwo$ (which looks like $(\lS\to \lS)$), then its cofiber is $0$, so if $f$ is the identity, we have
  \[ 0 = \phi_a \cdot 1 + \phi_b \cdot 1.\]
  On the other hand, if $X$ is $(0\to\lS)$, then its cofiber is $\lS$, so if $f$ is again the identity, we have
  \[ 1 = \phi_a \cdot 0 + \phi_b \cdot 1.\]
  Solving these equations, we obtain $\phi_b = 1$ and $\phi_a = -1$.
\end{proof}

If we identify cofiber sequences with distinguished triangles, then~\eqref{eq:cofiber-add} reproduces the additivity formula of~\cite{add,gps:additivity} for traces in symmetric monoidal stable derivators, as a special case of the general linearity formula.
In particular, it yields our motivating example of Lefschetz numbers as follows.
Given $X\into Y$ and $f\colon Y\to Y$ preserving $X$, we apply the suspension spectrum functor $\Sigma^\infty_+$ to get into the stable homotopy category, which is a stable symmetric monoidal derivator.
The trace of $\Sigma^\infty_+ f$ is then the Lefschetz number of $f$, and similarly for $f_X$ and $f/X$; thus~\eqref{eq:cofiber-add} becomes~\eqref{eq:intro-add2}.

\begin{rmk}\label{rmk:additive}
  Amusingly, we can extract from this a quick proof of the fact that closed symmetric monoidal stable derivators are not just semi-additive but additive, i.e.\ their homsets are not just abelian monoids but abelian groups.
  The proof of \autoref{eg:cofiber} requires only semi-additivity, but concludes that there is a morphism $\phi_a\colon \lS\to\lS$ such that $\phi_a + \id_\lS = 0$.
  Thus, tensoring any morphism $X\to Y$ with $\phi_a$ yields an additive inverse.
  This is somewhat shorter than the proof in~\cite[Proposition 4.12 and Corollary 4.14]{groth:ptstab}, but unlike that proof it only applies when \dV is closed symmetric monoidal.
  (In \autoref{thm:semiadditive} we will see that semi-additivity in the monoidal case can be similarly extracted.)
\end{rmk}

If we combine this with the Mayer--Vietoris sequence of~\cite{gps:stable}, we have a corresponding ``inclusion-exclusion'' result for pushouts.

\begin{cor}\label{thm:pushouts}
  Suppose \dV is stable and we have a coherent span $X\in \dV(\mathord\ulcorner)$:
  \[ \xymatrix{ X_{c} & X_{a} \ar[l] \ar[r] & X_{b}} \]
  which is pointwise dualizable.
  Then its pushout $\colim(X)$ is also dualizable.
  Moreover, if $f\colon X\to X$ is an endomorphism, then
  \begin{equation}
    \tr(\colim(f)) = \tr(f_{b}) + \tr(f_{c}) - \tr(f_{a}).\label{eq:pushout-add}
  \end{equation}
\end{cor}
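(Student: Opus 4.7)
My plan is to derive this as a direct consequence of \autoref{eg:cofiber} by expressing the pushout as the cofiber of a Mayer--Vietoris map. The key identity in any stable derivator is that for a coherent span $X_c \xot{} X_a \to X_b$, the pushout $\colim(X)$ fits into a cofiber sequence
\[ X_a \too X_b \oplus X_c \too \colim(X), \]
where the first map is the difference of the two legs of the span. This is the Mayer--Vietoris construction from~\cite{gps:stable}, which can be realized coherently as an object $M(X) \in \dV(\bbtwo)$ whose underlying diagram is $X_a \to X_b \oplus X_c$ and whose cofiber (computed as in \autoref{eg:cofiber}) is $\colim(X)$. The whole construction is functorial in $X \in \dV(\mathord\ulcorner)$, so any endomorphism $f \colon X \to X$ yields a compatible endomorphism $M(f)$ of $M(X)$, and the cofiber of $M(f)$ is $\colim(f)$.

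Given this setup, the proof proceeds as follows. First, since $X$ is pointwise dualizable, the objects $X_a$, $X_b$, $X_c \in \dV(\tc)$ are each dualizable by \autoref{thm:derptdual}; by semi-additivity (which holds since stable derivators are additive, cf.~\autoref{rmk:additive}), $X_b \oplus X_c$ is also dualizable, and applying the component lemma \autoref{thm:smderomega} or directly the derivator analog of \autoref{eg:dirsum} gives
\[ \tr(f_b \oplus f_c) = \tr(f_b) + \tr(f_c). \]
Thus $M(X) \in \dV(\bbtwo)$ is pointwise dualizable in the sense required by \autoref{eg:cofiber}. Applying \autoref{eg:cofiber} to $M(X)$ and $M(f)$ then shows that $\colim(X)$ is dualizable and
\[ \tr(\colim(f)) = \tr(f_b \oplus f_c) - \tr(f_a) = \tr(f_b) + \tr(f_c) - \tr(f_a), \]
which is~\eqref{eq:pushout-add}.

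The main obstacle is the coherent construction of the Mayer--Vietoris cofiber sequence: we need $M$ to be a morphism of derivators $\dV^{\mathord\ulcorner} \to \dV^{\bbtwo}$ whose pointwise cofiber (in the sense of \autoref{eg:cofiber}) agrees with the pushout $(\pi_\ulcorner)_!$, and which is compatible with endomorphisms. In practice this construction is available from~\cite{gps:stable}, where the Mayer--Vietoris square is produced as an object of $\dV(\square)$ whose restriction to the pushout corner yields $\colim(X)$ and whose restriction to the anti-diagonal yields $X_a \to X_b \oplus X_c$; stability then forces this square to be both a pushout and a pullback, so $\colim(X)$ is the cofiber of the anti-diagonal map. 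An alternative that avoids this appeal would be to compute the weight $\Phi \in \dV(\mathord\ulcorner^{\op})$ for pushouts explicitly and check absoluteness and the coefficient decomposition directly, mirroring the proof of \autoref{eg:cofiber}; the computation of $\rdual{\Phi}$ would produce the same cancellation between $\Sigma$ and $\Omega$, and determining the coefficients $\phi_a, \phi_b, \phi_c$ would be handled by testing on the three canonical rank-one spans, just as in the cofiber case.
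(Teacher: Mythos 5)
Your proposal is correct and matches the paper's own argument: the paper likewise invokes the Mayer--Vietoris construction of~\cite[Theorem~6.1]{gps:stable} to produce a coherent $Y\in\dV(\bbtwo)$ with $Y_a\cong X_a$, $Y_b\cong X_b\oplus X_c$, and $\colim(Y)\cong\colim(X)$, notes the functoriality so that $f$ induces an endomorphism of $Y$, and then combines \autoref{eg:cofiber} with \autoref{eg:dirsum}. No essential differences.
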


\begin{proof}
  By~\cite[Theorem~6.1]{gps:stable}, from $X$ we can construct $Y\in \dV(\bbtwo)$ with $Y_a \cong X_{a}$ and $Y_b \cong X_{b} \oplus X_{c}$, such that $\colim(Y) \cong \colim(X)$.
  Moreover, the construction in the proof of this is functorial, so that $f\colon X\to X$ induces $g\colon Y\to Y$ with $\colim(g) =\colim(f)$.
  Thus, it suffices to invoke \autoref{eg:cofiber} and \autoref{eg:dirsum}.
\end{proof}

\section{Homotopy finite categories}
\label{sec:hofin2}
\label{sec:fincolim}

The additivity results of the previous section can be significantly generalized, using the fact that all ``finite'' colimits can be constructed from pushouts (and initial objects).
The relevant notion of ``finite'' in the homotopical case is slightly subtle, however.
We say that $A$ is \textbf{strictly homotopy finite} if its nerve $N A$ contains only finitely many nondegenerate simplices, which is equivalent to asking that $A$ is finite, skeletal, and with no nonidentity endomorphisms.
We call $A$ \textbf{homotopy finite} if is equivalent to a strictly homotopy finite category;  for calculating colimits this is just as good.

We begin by proving a general theorem about constructing homotopy finite colimits out of pushouts.
If \D is a derivator and $A$ a small category, we say that a full subcategory $\E\subseteq \D(\tc)$ is \textbf{closed under $A$-colimits} if whenever $X\in\D(A)$ is such that $X_a\in\E$ for all $a\in A$, then also $\colim X \in\E$.

\begin{thm}\label{thm:hofin-constr}
  Let $A$ be a homotopy finite category.
  \begin{enumerate}
  \item If \D is a derivator and $\E\subseteq \D(\tc)$ contains the initial object and is closed under pushouts, then it is closed under $A$-colimits.\label{item:hofin1}
  \item If $F\colon \D\to\E$ is a morphism of derivators that preserves the initial object and pushouts, then it preserves $A$-colimits.
  \end{enumerate}
\end{thm}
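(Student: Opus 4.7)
The plan is to prove both parts simultaneously by induction on the combinatorial complexity of $A$, reducing the case of an arbitrary homotopy finite diagram to iterated pushouts (and the empty colimit). First, since homotopy colimits in a derivator are invariant under equivalences of indexing categories (by the underlying derivator axioms together with the fact that $u^*$ is an equivalence whenever $u$ is an equivalence), we may replace $A$ by a strictly homotopy finite model, i.e.\ a finite skeletal category with no nonidentity endomorphisms. Such a category is a finite direct category: each object $a$ has a well-defined degree equal to the length of the longest chain of nonidentity arrows ending at $a$.

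Next, I would induct on the total number of nondegenerate simplices of $NA$ (equivalently, on the sum over $a\in A$ of the number of nonidentity composable chains ending at $a$). The base case $A=\emptyset$ gives $\colim_\emptyset = (\pi_\emptyset)_!$, which by the product axiom sends anything to the initial object of $\D(\tc)$, so it is handled by the hypothesis that $\E$ contains the initial object (and that $F$ preserves it). For the inductive step, choose a maximal object $a\in A$, meaning an object admitting no nonidentity outgoing morphisms; such an object exists because $A$ is finite and has no cycles. Let $A'=A\setminus\{a\}$ be the full subcategory on the remaining objects, and let $\partial(A/a)$ be the full subcategory of the slice $A/a$ on the nonidentity arrows into $a$. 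Both $A'$ and $\partial(A/a)$ are strictly homotopy finite, and their nerves have strictly fewer nondegenerate simplices than $NA$, so the induction hypothesis applies to them.

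The key computational step is to produce a natural pushout square
\[
\vcenter{\xymatrix{
L_a X \ar[r] \ar[d] & X_a \ar[d]\\
\colim_{A'} (X|_{A'}) \ar[r] & \colim_A X,
}}
\]
where $L_a X := \colim_{\partial(A/a)} (X|_{\partial(A/a)})$ is the latching object. This follows from a standard homotopy exactness argument: the evident square of categories with corners $\partial(A/a)$, $A/a$, $A'$, $A$ is homotopy cocartesian (because $a$ is maximal, so the inclusion $A/a \hookrightarrow A$ realizes $A$ as glued from $A'$ along the boundary of the cone on $\partial(A/a)$), and the slice $A/a$ has a terminal object, so $\colim_{A/a}$ and $X_a$ agree; these facts together with the cocontinuity of $u_!$ give the pushout above. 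Granted this square, $\colim_A X$ is a pushout of three objects which lie in $\E$ by induction (for $\colim_{A'}(X|_{A'})$ and $L_a X$) and by hypothesis (for $X_a$), so closure under pushouts gives part (i); the identical argument with $F$ applied termwise, using that $F$ preserves pushouts, proves part (ii).

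The main obstacle is the pushout decomposition of $\colim_A X$ — the rest is bookkeeping. Verifying the relevant square of categories is homotopy exact requires either a direct Beck–Chevalley verification using the pointwise Kan extension axiom, or an appeal to the analogous decomposition already available in $\ho(\bCat)$ via the tensoring construction on page \pageref{page:tensor}; either way this is the one step that goes beyond purely formal manipulation.
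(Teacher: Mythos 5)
Your induction scheme is a genuinely different route from the paper's: instead of passing to the opposite category of nondegenerate simplices, the homotopy final functor to a truncated semisimplicial shape, and the general Reedy filtration of \autoref{thm:reedy}, you peel off one maximal object at a time using a latching pushout square. That square is a true statement --- it is essentially the specialization of \autoref{thm:reedy} to a finite direct category filtered with one object per degree, where $\partial D^c$ is empty, so $P_cX\cong L_cX$ and the squares \eqref{eq:colimnX} and \eqref{eq:Pc} collapse to exactly your square --- and granted it, your bookkeeping (including the deduction of part~(ii), since every ingredient of the square is built from pushouts and colimits over strictly smaller homotopy finite categories) does go through, and would avoid the finality argument and the sphere induction of the paper's proof.

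The gap is that this square is the entire technical content of the theorem, and the justification you offer for it does not work as stated. In a derivator, $\colim_A X$ for a coherent $X\in\D(A)$ is not an invariant of the nerve of $A$, so nerve-level homotopy cocartesianness of the square of index categories with corners $\partial(A/a)$, $A/a$, $A'$, $A$ cannot, together with ``cocontinuity of $u_!$'', yield the decomposition. Indeed the bare principle is false: take $A=\{b\to a\}$ and the commuting square whose other three corners are all $\{b\}$; every nerve is contractible, so the square is homotopy cocartesian on nerves, yet the claimed decomposition would give $\colim_A X\simeq X_b$ while $\colim_A X\simeq X_a$. (A gluing principle does hold for genuine pushouts of $(\infty\text{-})$categories, but that is not among the derivator axioms used here, and checking that your square is such a pushout is essentially equivalent to the latching computation itself; likewise the tensoring $|N(-)|\tens(-)$ of \S\ref{sec:der} only computes colimits of \emph{constant} diagrams, so it cannot substitute for a diagram-level argument.) What is actually needed is a construction inside \D of a coherent cocartesian square with corners $L_aX$, $X_a$, $\colim_{A'}(X|_{A'})$, $\colim_A X$ --- for instance via Kan extensions using that $A'\subseteq A$ is a sieve and that $A/a$ has a terminal object, verifying cocartesianness through an explicit homotopy exact (Beck--Chevalley) computation, or via a weights argument as in the proof of \autoref{thm:reedy}. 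You flag this as ``the one step that goes beyond purely formal manipulation,'' but that step is precisely where the paper spends its effort; without it the proposal is an outline rather than a proof. (A small additional slip: $A/a\to A$ is not an inclusion, since distinct arrows $b\to a$ give distinct objects of $A/a$ over the same $b$, although $N(A/a)$ is indeed the cone on $N(\partial(A/a))$.)
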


In the terminology of~\cite{ak:closure}, this theorem says that in the world of derivators, homotopy finite colimits lie in the \emph{closure} (later authors have preferred \emph{saturation}) of pushouts and initial objects.

The idea of our proof of \autoref{thm:hofin-constr} is to construct colimits over $A$ out of \emph{geometric realizations} of (semi)simplicial \emph{bar constructions}, which in turn can be computed using coproducts and pushouts.
The latter fact is an instance of a more general theorem about colimits over Reedy categories, which we will prove first as a lemma.

Recall that a {\bf Reedy category} is a small category $C$ together with a function $\deg \colon \ob(C) \to \mathbb{N}$ and two subcategories $\rup C$ and $\rdn C$ containing all the objects, such that every nonidentity map in $\rup C$ strictly raises degree, every nonidentity map in $\rdn C$ strictly lowers degree, and every map in $C$ factors uniquely as a map in $\rdn C$ followed by one in $\rup C$.
A good modern reference for Reedy categories is~\cite{rv:reedy}.

The best example to think of is the opposite $\bbDelta\op$ of the simplex category \bbDelta, for which there is only one object $[n]$ of each degree $n$, while $\rup {\bbDelta\op}$ consists of the degeneracy maps and $\rdn{\bbDelta\op}$ of the face maps.
A $\bbDelta\op$-diagram in a category \V is known as a {\bf simplicial object} in \V, and when $\V=\bTop$ is the category of topological spaces it is called a {\bf simplicial space}.
When a simplicial space $X$ is ``good'' (or \emph{proper} or \emph{Reedy cofibrant}), its homotopy colimit is equivalent to its \emph{geometric realization} $|X|$.
The latter can be defined directly as the tensor product of functors $X\otimes_{[\bbDelta\op]} \Delta$, where $\Delta\colon\bbDelta \to \bTop$ takes $[n]$ to the standard $n$-simplex $\Delta^n$.
However, it is well-known in classical algebraic topology (see e.g.~\cite[X.2.4]{ekmm} or~\cite[23.10]{shulman:htpylim}) that $|X|$ can also be constructed as a sequential colimit \emph{filtered by degree}:
\[ |X| = \colim\Big( |X|_0 \into |X|_1 \into |X|_2 \into \dots \Big) \]
in which the spaces $|X|_n$ can be constructed inductively by a pair of pushout squares:
\begin{equation}
  \vcenter{\xymatrix{
      \partial\Delta^n \times s_n X \ar[r]\ar[d] &
      \partial \Delta^n \times X_n\ar[d]\\
      \Delta^n \times s_n X\ar[r] &
      P_n X \ar[r] \ar[d] & X_n \ar[d] \\
      & |X|_{n-1} \ar[r] & |X|_n. }}
\end{equation}
Here $\partial \Delta^n$ is the boundary of $\Delta^n$ and is topologically an $(n-1)$-sphere, while $s_n X$ is the subspace of $X_n$ consisting of the degenerate simplices (those in the image of a degeneracy map from $X_k$ for some $k<n$).
When $X$ is ``good'', this sequential colimit and these pushouts are also homotopy colimits; thus we can view this as a construction of the homotopy colimit of $X$ out of homotopy pushouts and a homotopy sequential colimit, which can thus be expressed entirely in the classical homotopy derivator $\ho(\bTop)$.

The lemma we prove next generalizes this construction to arbitrary derivators \D and arbitrary Reedy categories $C$.
For this, we need analogues of the spaces $\Delta^n$, $\partial \Delta^n$, and $s_n X$.
The space $\Delta^n$ is contractible, so when working up to homotopy we can omit it.
For the other two, we introduce the following notation: if $c\in C$ with $\deg(c)=n$, $\bsl{C}{c}$ is the full subcategory of the slice category $C/c$ whose objects are morphisms that factor through some object of degree strictly less than $n$, or equivalently which are not in $\rdn C$.
Likewise, we have the category $\bsl{c}{C}$ of morphisms with domain $c$ that are not in $\rup C$.

Now if $X\in \D(C)$ and $c\in C$, let $L_c X$ denote the colimit of the restriction of $X$ to $\bsl{C}{c}$.
This is called the {\bf latching object} of $X$ at $c$; it generalizes the space $s_n X$ of degenerate simplices.
Similarly, we let $\partial D^c$ denote the geometric realization of the nerve of $\bsl{c}{C}$;
it generalizes the simplicial $(n-1)$-sphere $\partial \Delta^n$.

\begin{lem}\label{thm:reedy}
  Let $C$ be a Reedy category, \D a derivator, and $X\in\D(C)$.
  Then there is a coherent diagram whose shape is the poset $\mathbb{N}$ and whose colimit is $\colim(X)$:
  \begin{equation}
    \emptyset = \colim_{-1}(X) \to \colim_0(X) \to \colim_1(X) \to \colim_2(X) \to \cdots.\label{eq:colimX}
  \end{equation}
  Moreover, the morphisms $\colim_{n-1}(X) \to \colim_{n}(X)$ appear in cocartesian squares
  \begin{equation}\label{eq:colimnX}
    \vcenter{\xymatrix{
        \displaystyle\coprod_{\deg(c)=n}^{\vphantom{\deg}} P_c X \ar[r]\ar[d] &
        \displaystyle\coprod_{\deg(c)=n}^{\vphantom{\deg}} X_c \ar[d]\\
        \colim_{n-1}(X)\ar[r] &
        \colim_{n}(X)
      }}
  \end{equation}
  while the objects $P_c X$, for $c\in C$, appear in cocartesian squares
  \begin{equation}\label{eq:Pc}
    \vcenter{\xymatrix{
        \partial D^c\tens L_c X\ar[r]\ar[d] &
        \partial D^c\tens X_c\ar[d]\\
        L_c X\ar[r] &
        P_c X.
      }}
  \end{equation}
\end{lem}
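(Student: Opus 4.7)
The plan is to filter $C$ by Reedy degree and express $\colim(X)$ as a sequential colimit of partial colimits. Let $C_{\le n}$ be the full subcategory on objects of degree at most $n$, with inclusion $j_n\colon C_{\le n}\hookrightarrow C$, and set $\colim_n(X) \coloneqq (\pi_{C_{\le n}})_! j_n^* X$. To assemble these into a coherent diagram of shape $\mathbb{N}$ whose colimit recovers $\colim(X)$, I would form the Grothendieck-style category $\tilde C$ of pairs $(n,c)$ with $\deg(c)\le n$, with projections $p\colon \tilde C\to\mathbb{N}$ and $q\colon \tilde C\to C$, and consider $p_! q^* X\in\D(\mathbb{N})$. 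A base-change argument (the fiber of $p$ over $n$ is $C_{\le n}$) identifies the underlying diagram of $p_! q^* X$ with $(\colim_n(X))_n$ together with the transitions induced by the inclusions $C_{\le n-1}\hookrightarrow C_{\le n}$, and the equivalence $\colim_{\mathbb{N}}(p_! q^* X)\simeq \colim_C(X)$ follows from homotopy cofinality of $q$, since each slice category $(c/q)$ has initial object $(\deg c,c)$.

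The heart of the argument is the cocartesian square~\eqref{eq:colimnX} describing the transition $\colim_{n-1}(X)\to \colim_n(X)$. The inclusion $C_{\le n-1}\hookrightarrow C_{\le n}$ adjoins precisely the discrete set of degree-$n$ objects, and by the Reedy factorization every new morphism incident to such a $c$ factors uniquely through an object of strictly lower degree (captured by the latching category $\bsl{C}{c}$) or strictly higher degree (captured by $\bsl{c}{C}$). I would present $C_{\le n}$ as a homotopy pushout in $\cCat$ built from $C_{\le n-1}$ together with the ``boundary cell'' data of each new object, and transport this to $\D$ using the cocontinuity of left Kan extensions. The identification of the attaching object as $P_c X$ then formalizes the classical observation, already visible in the $C=\bbDelta\op$ case recalled just before the lemma, that attaching a Reedy cell at $c$ requires combining the latching object $L_c X$ (the coherent data below $c$) and $X_c$ (the new data at $c$) along the homotopy type $\partial D^c$ of the matching category $\bsl{c}{C}$, via the tensor action $\tens$ introduced on page~\pageref{page:tensor}; this is precisely the pushout~\eqref{eq:Pc}.

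The main obstacle will be carrying out this identification intrinsically in a general derivator. In a represented or model-categorical setting the square~\eqref{eq:Pc} is a standard calculation with Reedy cofibrant replacements, but axiomatically one must extract it from the derivator axioms alone. I expect the cleanest route is to handle one degree-$n$ object $c$ at a time (the construction is local in $c$, so one may reduce to the case where $C$ has a single object of top degree) and then to obtain~\eqref{eq:Pc} from a chain of homotopy-exact squares involving the full subcategory inclusions of $\bsl{C}{c}$ and $\bsl{c}{C}$, together with the cocontinuity of $\tens$ in both variables and the Fubini/coend manipulations already exploited in~\cite{gps:additivity} and in the construction of $\dprof(\dV)$. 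Iterating over $n\in\mathbb{N}$ and assembling the resulting cocartesian squares into the coherent diagram~\eqref{eq:colimX}, using the cofinality argument of the first paragraph, would then complete the proof.
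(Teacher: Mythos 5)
Your first paragraph is a workable alternative to the way the paper produces the coherent $\mathbb{N}$-diagram \eqref{eq:colimX}: since $p\colon\tilde C\to\mathbb{N}$ is an opfibration with fibers $C_{\le n}$, the underlying diagram of $p_!q^*X$ is indeed $(\colim_n(X))_n$, and $q$ is homotopy final, so $\colim_{\mathbb{N}}(p_!q^*X)\cong\colim(X)$. (One small slip: $(\deg c,c,\id_c)$ is \emph{not} initial in $(c/q)$, because $(c/q)$ contains objects $(n,d,\phi)$ with $n<\deg c$; it is initial only in the reflective full subcategory where $n\ge\deg c$, which still gives homotopy contractibility.) The paper instead gets \eqref{eq:colimX} by tensoring $X$ with a filtration of weights, but either route would do for that part.

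The genuine gap is that the cocartesian squares \eqref{eq:colimnX} and \eqref{eq:Pc} --- the entire content of the lemma --- are only gestured at, and the device you propose for them does not work as stated. Presenting $C_{\le n}$ as a ``homotopy pushout in \cCat built from $C_{\le n-1}$ and boundary cell data'' and invoking cocontinuity is not available: $C_{\le n-1}$ is neither a sieve nor a cosieve in $C_{\le n}$ (the degree-$n$ objects receive morphisms from, and map to, lower-degree objects), so there is no gluing decomposition of the index category from which derivator colimits would automatically decompose into \eqref{eq:colimnX}; cocontinuity of $u_!$ says nothing about colimits indexed over a pushout of diagram shapes. Moreover, \eqref{eq:Pc} involves $\partial D^c=|N\bsl{c}{C}|$, which arises by collapsing the \emph{covariant} hom $\partial C(c,-)$ to a space; a filtration of $C$ by the full subcategories $C_{\le n}$ sees only the objects and cannot produce this term by Fubini or cocontinuity alone. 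The missing key input is exactly the two-sided skeletal filtration of the hom bifunctor: the paper quotes from \cite{rv:reedy} the sequence $\emptyset\into\sk_0C\into\sk_1C\into\cdots$ in $\bSet^{C\times C\op}$ with colimit $C(-,-)$, whose attaching squares are pushouts along monomorphisms of the cells with boundary $\bigl(\partial C(c,-)\times C(-,c)\bigr)\cup\bigl(C(c,-)\times\partial C(-,c)\bigr)$; being pushouts along monomorphisms they are cocartesian in $\shift{\ho(\bTop)}{C\times C\op}$, and one then applies the cocontinuous functors $(\pi_C)_!$ (sending $C(c,-)$ to a point and $\partial C(c,-)$ to $\partial D^c$) and $-\tens_C X$ (sending $C(-,c)$ to $X_c$ and $\partial C(-,c)$ to $L_cX$, by the discrete-opfibration computation of \cite[Cor.~5.8]{gps:additivity}) to obtain \eqref{eq:colimnX} and \eqref{eq:Pc}. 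Without this (or an equivalent) combinatorial decomposition being identified and proved, your sketch --- which itself flags the identification as ``the main obstacle'' and only expresses an expectation of how it might go --- does not amount to a proof.
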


In the latter pushout, $\tens$ denotes the tensoring of an arbitrary derivator \D over $\ho(\bTop)$ which we introduced on page~\pageref{page:tensor} in \S\ref{sec:der}.

\begin{proof}
  In this proof we will denote $\ho(\bTop)$ by $\dS$.
  Following the philosophy expressed by~\cite{rv:reedy} as ``it's all in the weights'', we construct the desired data in $\dS(C\times C\op)$ first.
  By~\cite[Observation 6.2]{rv:reedy}, we have a sequence of monomorphisms in $\bSet^{C\times C\op}$ whose colimit is the hom-functor $C(-,-)$:
  \begin{equation}
    \emptyset \into \sk_0 C \into \sk_1 C \into \sk_2 C \into \dots.\label{eq:skC}
  \end{equation}
  Moreover, the inclusions $\sk_{n-1} C\into \sk_n C$ are defined inductively (starting from $\sk_{-1} C = \emptyset$) by pushout diagrams
  \begin{equation}\label{eq:skCn}
  \vcenter{\xymatrix{
      \displaystyle\coprod_{\deg(c)=n}^{\vphantom\deg} Q_c \ar@{^(->}[r]\ar[d] &
      \displaystyle\coprod_{\deg(c)=n}^{\vphantom\deg} C(c,-) \times C(-,c)\ar[d]\\
      \sk_{n-1}C\ar@{^(->}[r] &
      \sk_nC
      }}
  \end{equation}
  while the objects $Q_c\in\bSet^{C\times C\op}$, for $c\in C$, are defined by pushout diagrams
  \begin{equation}\label{eq:Qc}
  \vcenter{\xymatrix{
      \partial C(c,-) \times \partial C(-,c)\ar@{^(->}[r]\ar@{^(->}[d] &
      \partial C(c,-) \times C(-,c)\ar@{^(->}[d]\\
      C(c,-) \times \partial C(-,c) \ar@{^(->}[r] &
      Q_c.
      }}
  \end{equation}
  Here $\partial C(-,c)$ denotes the subfunctor of the representable $C(-,c)$ consisting of morphisms factoring through some object of degree strictly less than $n=\deg(c)$, i.e.\ those morphisms that are not in $\rdn C$.
  Similarly, $\partial C(c,-)$ is the subfunctor of $C(c,-)$ consisting of morphisms that are not in $\rup C$.

  Now since~\eqref{eq:skCn} and~\eqref{eq:Qc} are pushouts along monomorphisms, they are also homotopy pushouts in $\bTop$ (regarding sets as discrete spaces), and hence yield cocartesian squares in \dS, or more precisely in the shifted derivator $\shift\dS{C\times C\op}$.
  Similarly, since~\eqref{eq:skC} consists of monomorphisms, its colimit is a homotopy colimit, hence a colimit in $\shift\dS{C\times C\op}$.

  We now apply the left Kan extension morphism $(\pi_C)_!\colon \shift\dS{C\times C\op} \to \shift\dS{C\op}$ to these diagrams.
  Since $(\pi_C)_!$ is cocontinuous, the images are again colimit diagrams in $\shift\dS{C\op}$.
  To compute them, notice first that $C(c,-) \in \dS(C)$ is the left Kan extension of the unit $\lS_{c/C}$ (which is also the terminal object) along the discrete opfibration $c/C \to C$; thus its colimit is $(\pi_{c/C})_!(\lS_{c/C}) \cong |N(c/C)|$, which is contractible since $c/C$ has an initial object.
  Similarly, $\partial C(c,-)$ is the left Kan extension of the unit along the discrete opfibration $\bsl{c}{C} \to C$, so its colimit is $|N \bsl{c}{C}|$, which we have christened $\partial D^c$.
  Thus,~\eqref{eq:Qc} becomes
  \begin{equation}\label{eq:piQc}
  \vcenter{\xymatrix{
      \partial D^c \times \partial C(-,c) \ar@{^(->}[r]\ar@{^(->}[d] &
      \partial D^c \times C(-,c) \ar@{^(->}[d]\\
      \partial C(-,c)\ar@{^(->}[r] &
      (\pi_C)_!Q_c.
      }}
  \end{equation}
  and~\eqref{eq:skCn} becomes
  \begin{equation}\label{eq:piskCn}
  \vcenter{\xymatrix{
      \displaystyle\coprod_{\deg(c)=n}^{\vphantom\deg} (\pi_C)_! Q_c \ar@{^(->}[r]\ar[d] &
      \displaystyle\coprod_{\deg(c)=n}^{\vphantom\deg} C(-,c)\ar[d]\\
      (\pi_C)_!\sk_{n-1}C\ar@{^(->}[r] &
      (\pi_C)_!\sk_nC.
      }}
  \end{equation}
  while~\eqref{eq:skC} becomes
  \begin{equation}\label{eq:piskC}
    \emptyset \into (\pi_C)_! \sk_0 C \into (\pi_C)_! \sk_1 C \into (\pi_C)_! \sk_2 C \into \dots
  \end{equation}
  whose colimit is the terminal object $\lS_{C\op} \in \shift\dS{C\op}$.

  Now we apply the functor $-\tens_C X$.
  We define $\colim_n(X) = (\pi_C)_! \sk_n C \tens_C X$.
  By essentially the same proof as \autoref{thm:colim-is-colim}, we have $\lS_{C\op}\tens_C X \cong \colim(X)$; thus the sequence~\eqref{eq:piskC} yields~\eqref{eq:colimX} with colimit $\colim(X)$ as desired.
  It remains to extract~\eqref{eq:colimnX} and~\eqref{eq:Pc} from~\eqref{eq:piskCn} and~\eqref{eq:piQc}.

  As before, since $C(-,c) \in \dS(C\op)$ is the left Kan extension of the unit along the discrete opfibration $(C/c)\op \to C\op$, by~\cite[Corollary 5.8]{gps:additivity} the weighted colimit $C(-,c) \tens_C X$ is the ordinary colimit of the restriction of $X$ to $C/c$.
  But $C/c$ has a terminal object $\id_c$, so this colimit is isomorphic to $X_c$.
  Thus, if we define $P_c X = (\pi_C)_! Q_c \tens_C X$, the cocartesian square~\eqref{eq:piskCn} yields~\eqref{eq:colimnX}.

  Finally, $\partial C(-,c)$ is the left Kan extension of the unit along the discrete opfibration $(\bsl{C}{c})\op \to C\op$, so $\partial C(-,c)\tens_C X$ is the ordinary colimit of the restriction of $X$ to $\bsl{C}{c}$.
  In other words, it is the latching object $L_c X$.
  Since $\tens$ associates with the product $\times$ in $\dS$, from~\eqref{eq:piQc} we obtain~\eqref{eq:Pc}.
\end{proof}

\begin{proof}[Proof of \autoref{thm:hofin-constr}]
  Since \D preserves equivalences of categories, we may assume $A$ to be strictly homotopy finite.
  The same construction shows both parts, but we will speak only about~\ref{item:hofin1}; we leave it to the reader to deduce the other in a similar way.
  Thus, let $X\in\D(A)$ be such that $X_a\in\E$ for all $a\in A$; we want to show $\colim X \in\E$.

  Let $B$ be the opposite of the category of nondegenerate simplices in $A$.
  Its objects are strings of composable nonidentity arrows $a_0 \to a_1 \to \cdots \to a_n$ in $A$, and its morphisms are \emph{face maps} generated by composing some of the arrows in a string, plus discarding some from the beginning and the end.
  (Composing nonidentity arrows can never produce an identity arrow, since $A$ is strictly homotopy finite.)

  There is a functor $q\colon B \to A$ sending each string to its first object.
  We claim it is homotopy final; recall that this means $A$-colimits in any derivator can equivalently be calculated as $B$-colimits after restriction along $q$.
  By~\cite[Corollary 3.13]{gps:stable}, it suffices to show that for any $a\in A$, the comma category $(a/q)$ is homotopy contractible, i.e.\ has a contractible nerve.
  The objects of this category are strings of composable arrows $a\to a_0 \to a_1 \to \cdots \to a_n$ in which the first arrow $a\to a_0$ might be an identity.
  The subcategory of $(a/q)$ consisting of those strings in which $a\to a_0$ \emph{is} an identity is coreflective.
  But this subcategory has a terminal object, namely the string $a \xto{\id_a} a$.
  Thus, $(a/q)$ is homotopy contractible, so $q$ is homotopy final.
  It follows that $\colim(X) \cong \colim(q^*X)$.

  Now since $A$ is strictly homotopy finite, there is a maximum length of any string of composable nonidentity arrows in $A$; call that maximum $n$.
  Let $\bbDelta'_n$ be the subcategory of the simplex category $\bbDelta$ containing only the objects $[0]$ through $[n]$ and only the coface maps.
  Then there is a functor $p\colon B\to (\bbDelta'_n)\op$ sending each string to its length.
  Thus, $\colim(q^*X) \cong \colim(p_!q^*X)$.
  Moreover, since $p$ is a discrete opfibration with finite fibers, each object occurring in $p_! q^* X$ is a finite coproduct of objects occurring in $X$.
  Since \E contains the initial object and is closed under pushouts, it is also closed under finite coproducts by~\cite[Cor.~4.11]{gps:stable}. 

  The diagram $p_! q^* X$ looks like a (finite) classical semisimplicial bar construction:
  \begin{equation}
    \xymatrix{
      \displaystyle\bigoplus_{a_0\to\cdots\to a_n}^{\vphantom a} X_{a_0}
      \ar@<1.6mm>[r] \ar@{}[r]|-{\vdots} \ar@<-3mm>[r] &
      \raisebox{-3mm}{$\cdots$}
      \ar@<1.6mm>[r] \ar@{}[r]|-{\vdots} \ar@<-3mm>[r] &
      \displaystyle\bigoplus_{a_0 \to a_1 \to a_2}^{\vphantom a} X_{a_0}
      \ar[r] \ar@<2mm>[r] \ar@<-2mm>[r] &
      \displaystyle\bigoplus_{a_0 \to a_1}^{\vphantom a} X_{a_0} \ar@<1mm>[r] \ar@<-1mm>[r] &
      \displaystyle\bigoplus_a^{\vphantom a} X_a
    }
  \end{equation}
  Therefore, we have reduced the problem to showing that \E is closed under $(\bbDelta'_n)\op$-colimits.
  However, $(\bbDelta'_n)\op$ inherits a Reedy structure from $\bbDelta\op$, with all morphisms being in $\rdn{(\bbDelta'_n)\op}$ and only identities in $\rup{(\bbDelta'_n)\op}$.
  Thus, by \autoref{thm:reedy}, for any $Y\in \dV((\bbDelta'_n)\op)$ we have a sequence
  \begin{equation}
    \emptyset \to \colim_0(Y) \to \colim_1(Y) \to \colim_2(Y) \to \cdots.\label{eq:colimY}
  \end{equation}
  with colimit $\colim(Y)$.
  The cocartesian squares~\eqref{eq:colimnX} and~\eqref{eq:Pc} reduce in the case $k\le n$ to
  \begin{equation}
    \vcenter{\xymatrix{
        P_k Y \ar[r]\ar[d] &
        Y_k \ar[d]\\
        \colim_{k-1}(Y)\ar[r] &
        \colim_{k}(Y)
      }}
    \qquad\text{and}\qquad
    \vcenter{\xymatrix{
        \partial D^n\tens L_k Y\ar[r]\ar[d] &
        \partial D^n\tens Y_k\ar[d]\\
        L_k Y\ar[r] &
        P_k Y.
      }}
  \end{equation}
  When $k>n$, then $(\bbDelta'_n)\op$ has no objects of degree $k$, so we have $\colim_{k-1}(Y) \cong \colim_k(Y)$.
  Thus,~\eqref{eq:colimY} stabilizes at $n$, so that $\colim(Y) \cong \colim_n(Y)$.

  Therefore, it will suffice to show by induction that if each $Y_n\in\E$, then $\colim_k(Y)\in \E$.
  The case $k=-1$ is the assumption that $\emptyset\in \E$.
  Since \E is closed under pushouts, for the induction step it suffices to show that $P_k Y\in \E$.
  And since $\rup{(\bbDelta'_n)\op}$ contains only identities, $L_k Y = \emptyset$, so that $P_k Y \cong \partial D^k \tens Y_k$.

  To complete the proof we show by induction on $k$ that if $Z\in\E$ then $S^k \tens Z\in\E$, where $S^k = \partial D^{k+1}$ is the topological $k$-sphere.
  If $k=0$, then $S^0 \tens Z = Z\oplus Z$, which is in \E since \E is closed under finite coproducts.
  Thus, suppose that $S^k\tens Z\in\E$.
  Then we have a (homotopy) cocartesian square in \dS:
  \begin{equation}
  \vcenter{\xymatrix@-.5pc{
      S^k\ar[r]\ar[d] &
      \pt\ar[d]\\
      \pt\ar[r] &
      S^{k+1}
      }}
  \end{equation}
  Since $\tens$ is cocontinuous in its first variable, and the one-point space $\pt$ is the unit of the monoidal structure on \dS, we have an induced cocartesian square in \dV:
  \begin{equation}
  \vcenter{\xymatrix@-.5pc{
      S^k\tens Z\ar[r]\ar[d] &
      Z\ar[d]\\
      Z\ar[r] &
      S^{k+1} \tens Z
      }}
  \end{equation}
  Thus, $S^{k+1} \tens Z$ is a pushout of objects in \E, hence also in \E.
\end{proof}

We note in passing that a slight modification of the proof of \autoref{thm:hofin-constr} shows the following.  We will use this result in \cite{PS6}.

\begin{thm}\label{thm:colim-constr}\ 
  \begin{enumerate}
  \item If \D is a derivator and $\E\subseteq \D(\tc)$ is closed under pushouts and coproducts, then it is closed under all colimits.\label{item:cc1}
  \item If $F\colon \D\to\E$ is a morphism of derivators that preserves pushouts and coproducts, then it is cocontinuous.\label{item:cc2}
  \end{enumerate}
\end{thm}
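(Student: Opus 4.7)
The plan is to adapt the proof of \autoref{thm:hofin-constr}, using an unbounded version of the bar construction. Parts \ref{item:cc1} and \ref{item:cc2} are proved by the same construction, so I focus on \ref{item:cc1}. Given $X\in\D(A)$ with each $X_a\in\E$ for an arbitrary small $A$, I proceed as in the proof of \autoref{thm:hofin-constr}, but take $B$ to be the opposite of the full category of simplices of $A$ (including the degenerate ones), with objects the functors $[n]\to A$. The coreflection argument establishing homotopy finality of $q\colon B\to A$ (sending a simplex to its initial vertex) carries over verbatim, since the terminal object of each comma category is a $0$-simplex, which belongs to $B$ regardless of degeneracy considerations. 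Let $p\colon B\to\bbDelta\op$ be the length functor and set $Y = p_! q^* X$. Then $\colim(X)\cong\colim(Y)$, and since $p$ is a discrete opfibration, each $Y_{[n]}$ is the (possibly infinite) coproduct $\coprod_{a_0\to\cdots\to a_n} X_{a_0}$, which lies in $\E$ by the assumed closure under arbitrary coproducts.

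Applying the Reedy lemma \autoref{thm:reedy} to $Y \in \D(\bbDelta\op)$ expresses $\colim(Y)$ as the colimit of a sequence
\[ \emptyset = \colim_{-1}(Y) \to \colim_0(Y) \to \colim_1(Y) \to \cdots \]
via the cocartesian squares \eqref{eq:colimnX} and \eqref{eq:Pc}. The inductive argument from \autoref{thm:hofin-constr} that $S^k\tens Z\in\E$ whenever $Z\in\E$ transports verbatim, as it uses only pushouts and binary coproducts, the latter being pushouts along the initial object (the empty coproduct, which lies in $\E$). In the bar construction $Y$, the latching objects $L_{[n]}Y$ admit direct descriptions as coproducts of $X_{a_0}$'s indexed by degenerate simplices, so they too lie in $\E$. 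Combining these observations with closure under pushouts and arbitrary coproducts shows by induction on $n$ that each $P_{[n]}Y$ and each $\colim_n(Y)$ lies in $\E$.

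The genuinely new ingredient, compared with \autoref{thm:hofin-constr}, is that the outer colimit of the filtration $\colim_n(Y)$ is now an honest sequential colimit rather than stabilizing at a finite stage; we need $\colim_n \colim_n(Y)$ itself to remain in $\E$. This is where closure under arbitrary (as opposed to merely finite) coproducts becomes essential: a sequential colimit $\colim(Z_0\to Z_1\to\cdots)$ is the coequalizer of the pair $(\id,\mathrm{shift})\colon\coprod_n Z_n\toto\coprod_n Z_n$, and the coequalizer of $f,g\colon U\to V$ in any category with pushouts and coproducts is the pushout of $V\xleftarrow{(f,g)} U\sqcup U\xrightarrow{\nabla} U$. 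These identifications hold in the derivator setting by direct diagrammatic manipulation together with the pointwise Kan extension axiom. For \ref{item:cc2}, the same construction works: $F$ preserves pushouts and coproducts by hypothesis, hence also the initial object, the sphere tensorings, the latching objects, and the sequential colimit, and therefore the whole construction. The principal obstacle is precisely this reduction of sequential colimits to pushouts and coproducts in the derivator sense; everything else is a direct adaptation of \autoref{thm:hofin-constr}.
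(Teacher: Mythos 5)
Your overall strategy is the same as the paper's (replace $X$ by a bar-type diagram over a category of simplices via a homotopy final ``first vertex'' functor and a discrete opfibration to a subcategory of $\bbDelta\op$, apply \autoref{thm:reedy}, then handle the resulting sequential colimit by reducing it to coproducts and pushouts), but there is a genuine gap in your variant. By indexing over the \emph{full} simplex category $\bbDelta\op$, including degeneracies, you make the Reedy filtration of \autoref{thm:reedy} carry nontrivial latching objects: the squares \eqref{eq:Pc} now involve $L_{[n]}Y$ and $\partial D^{[n]}\tens L_{[n]}Y$, and you need $L_{[n]}Y\in\E$. Your claim that $L_{[n]}Y$ ``admits a direct description as a coproduct of $X_{a_0}$'s indexed by degenerate simplices'' is exactly an Eilenberg--Zilber statement, but now for a \emph{homotopy} colimit over the latching category in an arbitrary derivator, and it is not automatic: after base-changing along the discrete opfibration $p$, $L_{[n]}Y$ is a colimit of a non-constant diagram over a category whose components are indexed by degenerate $n$-simplices of $NA$, and identifying that colimit with the asserted coproduct requires an argument (contractibility of each component together with control of the diagram on it) that you do not give. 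The paper avoids this entirely by using \emph{all} simplices but \emph{only face maps}, i.e.\ the indexing category $(\bbDelta')\op$, whose Reedy structure has only identities in $\rup{(\bbDelta')\op}$; then every latching object is $\emptyset$ and the induction from \autoref{thm:hofin-constr} applies verbatim. Your finality claim for the first-vertex functor from the full simplex category is fine (the comma category $(a/q)$ is the opposite of the category of simplices of $N(a/A)$, which is contractible), but the degeneracies buy you nothing and cost you the latching-object problem.

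The second place where your proof is thinner than it should be is the step you yourself identify as the crux: expressing an $\omega$-colimit in a derivator in terms of coproducts and pushouts. The 1-categorical identifications you quote (sequential colimit as coequalizer of identity and shift on $\coprod_n Z_n$; coequalizer as a pushout of $V\leftarrow U\sqcup U\to U$) are the right idea, but asserting that they ``hold in the derivator setting by direct diagrammatic manipulation'' skips the actual content. The paper's proof supplies precisely this: it exhibits the homotopy final inclusion of the poset $\Upsilon\subseteq \omega\times\omega\op$ on objects $(n,n)$ and $(n,n+1)$, and then the discrete opfibration $u\colon\Upsilon\to P$ onto the parallel-pair category $P=(1\toto 0)$, so that $u_!$ produces the two coproducts and reduces everything to $P$-colimits, which are finite and hence covered by \autoref{thm:hofin-constr}. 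If you repair the first gap by dropping the degeneracies (as in the paper) and carry out this coherent replacement of the $\omega$-colimit rather than asserting it, your argument becomes the paper's proof.
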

\begin{proof}
  As before, we discuss only~\ref{item:cc1}.
  The proof of~\ref{item:cc2} is analogous, except that we also invoke the fact that a morphism of derivators is cocontinuous (i.e.\ preserves left Kan extensions) as soon as it preserves colimits~\cite[Proposition~2.3]{groth:ptstab}.

  Given $A\in\cCat$,  we now let $B$ be the category of \emph{all} simplices in $A$, with only face maps between them.
  As in the previous proof  $q\colon B\to A$ is homotopy final, and we have a discrete opfibration $p\colon B\to (\bbDelta')\op$, where $\bbDelta'$ is the subcategory of $\bbDelta$ containing all the objects but only the coface maps.
  Thus, if $X\in\D(A)$ then $\colim(X) \cong\colim(p_! q^* X)$, and the objects in $p_! q^* X$ are coproducts of those in $X$.
  Thus, it suffices to show that \E is closed under $(\bbDelta')\op$-colimits.

  Now $(\bbDelta')\op$ is again a Reedy category with only identities in $\rup{(\bbDelta')\op}$, so \autoref{thm:reedy} applies.
  The same argument as in the proof of \autoref{thm:hofin-constr} shows that if $Y\in \D((\bbDelta')\op)$ has each $Y_n\in \E$, then each $\colim_n(Y)\in \E$.
  Thus, it remains only to show \E is closed under $\omega$-colimits, where $\omega$ is the ordinal $(0\le 1\le 2\le \dots)$ regarded as a category.

  The idea of this is a standard one: the sequential colimit of $Z_0 \to Z_1 \to Z_2 \to\cdots$ can equivalently be calculated as a coequalizer of two maps $\coprod_{i\ge 0} Z_i \toto \coprod_{i\ge 0} Z_i$, one whose components are identities, and one whose components are the nonidentity maps in the diagram.
  To express this in derivator language, let $\Upsilon$ be the full sub-poset of $\omega\times \omega\op$ whose objects have the form $(n,n)$ or $(n,n+1)$, and let $v\colon \Upsilon\to\omega$ be the first projection.
  Then $v$ is homotopy final, so it suffices to consider $\Upsilon$-colimits.
  Now let $P$ be the category $(1 \toto 0)$, and define $u\colon \Upsilon\to P$ by $u(a,b) = b-a$, with morphisms of the form $(n,n+1) \to (n,n)$ going to one of the parallel arrows in $P$, and morphisms of the form $(n,n+1) \to (n+1,n+1)$ going to the other.
  Then $u$ is a discrete opfibration, so $u_!$ computes the coproduct of each fiber; thus it suffices to consider $P$-colimits, i.e.\ coequalizers.
  But these are finite, so we can apply \autoref{thm:hofin-constr}.
  (It is also easy to construct coequalizers explicitly out of pushouts in essentially the usual way.)
\end{proof}

Now we can prove a linearity formula for homotopy finite colimits.
Note that if $A$ is strictly homotopy finite, then $\Lambda A$ is just the discrete set $A_0$ of objects of $A$, so that $\sh{A} = A_0 \cdot \lS$.

\begin{thm}\label{thm:eccat}
  Let $A$ be homotopy finite and let \dV be a stable, closed symmetric monoidal derivator.
  If $X\in \dV(A)$ is pointwise right dualizable, then $\colim(X)$ is right dualizable, and for any $f\colon X\to X$ we have
  \begin{equation}\label{eq:eccattr}
    \tr(\colim(f))
    = \sum_{a} \tr(f_a) \cdot \sum_{k\ge 0} (-1)^k \cdot \card{\left\{
      \parbox{5.2cm}{\centering composable strings of nonidentity\\arrows of length $k$ starting at $a$}
    \right\}}
  \end{equation}
\end{thm}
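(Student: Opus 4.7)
The plan is to prove dualizability of $\colim(X)$ and then compute its trace via a bar-construction model of $\colim(X)$ borrowed from the proof of \autoref{thm:hofin-constr}. Since \dV preserves equivalences, the first step is to reduce to $A$ being strictly homotopy finite. For dualizability, I would let $\sE\subseteq\dV(\tc)$ denote the class of dualizable objects: in the stable setting the zero object lies in $\sE$ (\autoref{eg:zero}) and $\sE$ is closed under pushouts (\autoref{thm:pushouts}), so since each $X_a\in\sE$ by \autoref{thm:derptdual}, \autoref{thm:hofin-constr} gives $\colim(X)\in\sE$.

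For the trace computation, I would follow the proof of \autoref{thm:hofin-constr}: let $B$ be the opposite of the category of nondegenerate simplices of $A$, let $q\colon B\to A$ be the (homotopy final) first-object functor, and let $p\colon B\to(\bbDelta'_n)\op$ be the length functor, where $n$ is the maximal length of a string of nonidentity arrows. Then $\colim(X)\cong\colim(Y)$ with $Y:=p_!q^*X$, and because $p$ is a discrete opfibration with fibers indexed by length-$k$ strings $\vec a = (a_0\to\cdots\to a_k)$, one has $Y_k = \bigoplus_{\vec a}X_{a_0}$ with $g:=p_!q^*f$ acting on the $\vec a$-summand as $f_{a_0}$. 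Applying \autoref{thm:reedy} to $Y$ on $(\bbDelta'_n)\op$ will produce a tower $0=\colim_{-1}(Y)\to\cdots\to\colim_n(Y)\cong\colim(Y)$ with cocartesian squares
\[
\vcenter{\xymatrix{ \partial D^k\tens Y_k\ar[r]\ar[d] & Y_k\ar[d] \\ \colim_{k-1}(Y)\ar[r] & \colim_k(Y) }}
\]
in which $P_k Y \cong \partial D^k \tens Y_k$ since the latching object vanishes ($\rup{(\bbDelta'_n)\op}$ contains only identities). By induction on $k$, \autoref{thm:pushouts} ensures dualizability of $\colim_k(Y)$ and yields $\tr(\colim_k(g))=\tr(\colim_{k-1}(g))+\tr(g_{Y_k})-\tr(g_{\partial D^k\tens Y_k})$.

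The key sub-computation will be that $\bsl{[k]}{(\bbDelta'_n)\op}$ is the poset of proper faces of $\Delta^k$ (with reverse-inclusion ordering), so $\partial D^k\simeq S^{k-1}$ for $k\ge1$ and $\partial D^0=\emptyset$; then using the pushout $S^k\simeq\pt+_{S^{k-1}}\pt$ in $\ho(\bTop)$, cocontinuity of $\tens$ in its first variable, and \autoref{thm:pushouts}, a second induction on $k$ shows that $\tr(S^k\tens h)=(1+(-1)^k)\tr(h)$ for any endomorphism $h$ of a dualizable object. Substituting this in collapses the Reedy recursion to $\tr(\colim_k(g))=\tr(\colim_{k-1}(g))+(-1)^k\tr(g_{Y_k})$, and since $\tr(g_{Y_k})=\sum_a N_k(a)\tr(f_a)$ where $N_k(a)$ counts length-$k$ strings starting at $a$, summing over $k$ and swapping the order of summation (extending to all $k\ge0$ is harmless, as $N_k(a)=0$ for $k>n$) will yield the claimed formula.

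The main obstacle is the two intertwined inductions: one on $k$ to compute $\tr(S^k\tens h)$, and one on $k$ to track dualizability and trace behavior along the Reedy tower. Both steps rely crucially on stability of \dV, which supplies the alternating signs and guarantees that each pushout preserves dualizability; the rest is an organized assembly of \autoref{thm:hofin-constr}, \autoref{thm:reedy}, and \autoref{thm:pushouts}.
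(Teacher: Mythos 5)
Your proposal is correct and follows essentially the same route as the paper: dualizability via \autoref{thm:hofin-constr} applied to the class of dualizable objects, then the trace computed by re-running that proof's semisimplicial bar construction over $(\bbDelta'_n)\op$ via \autoref{thm:reedy}, with the sphere computation $\tr(S^k\tens h)=(1+(-1)^k)\tr(h)$ and \autoref{thm:pushouts} collapsing the tower to $\sum_k(-1)^k\tr(g_{Y_k})$, exactly as in the paper. The only difference is that you spell out explicitly the inductions the paper merely cites from its earlier proof.
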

\begin{proof}
  In \autoref{thm:hofin-constr}\ref{item:hofin1}, let $\E\subseteq\dV(\tc)$ be the full subcategory of dualizable objects.
  This obviously contains the zero object, and is closed under pushouts by \autoref{thm:pushouts}.
  Thus, if $X\in \dV(A)$ is pointwise right dualizable, then $\colim(X)$ is right dualizable.

  The inductive arguments in the proof of \autoref{thm:hofin-constr}, together with \autoref{thm:pushouts}, show that for any pointwise dualizable truncated semisimplicial diagram $Y$ and any $g\colon Y\to Y$, we have
  \begin{align}
    \tr(|g|_{k+1})
    &=
    \tr(|g|_{k}) +
    \tr(g_{k+1}) -
    \tr(S^k \tens g_{k+1}).
  \end{align}
  Moreover, for any dualizable $Z$ and $h\colon Z\to Z$ we have
  \[ \tr(S^{k+1}\tens h) = 2\tr(h) - \tr(S^k\tens h).\]
  Since $S^0 \tens Z = Z\oplus Z$, we have $\tr(S^0 \tens h) = 2\tr(h)$, so by induction we have
  \[ \tr(S^k\tens h) =
  \begin{cases}
    2\tr(h) & k\text{ is even}\\
    0 & k\text{ is odd}.
  \end{cases}
  \]
  Thus, for $Y$ and $g$ as above, another induction yields
  \begin{equation}
    \tr(\colim(g)) = \sum_{0\le k\le n+1} (-1)^k \tr(g_k).\label{eq:realiztr}
  \end{equation}
  In \autoref{thm:hofin-constr}   $\colim(X)$ for $X\in\dV(A)$  is constructed as the colimit of a truncated semisimplicial diagram $Y$ where $Y_k$ is the the coproduct of the objects $X_a$ over strings of composable nonidentity morphisms (nondegenerate simplices) in $A$.
  Since the projection from the opposite category of nondegenerate simplices to $A$ selects the first object in a composable string, each simplex $(a_0 \to a_1 \to \cdots \to a_k)$ contributes a summand of $X_{a_0}$, so that we have
  \[ Y_k = \bigoplus_a \bigoplus_{\left\{\parbox{4.3cm}{\scriptsize\centering composable strings of nonidentity\\arrows of length $k$ starting at $a$}\right\}} X_a. \]
  Combining this with~\eqref{eq:realiztr} and the simple additivity formula for coproducts, and rearranging summations, yields~\eqref{eq:eccattr}.
\end{proof}

Note that \autoref{thm:pushouts} is also an instance of this formula, where $A$ is the category $\{b \leftarrow a \to c\}$.
Starting at each object there is one composable string of length 0, contributing $+1$, while the object $a$ also has two strings of length 1, contributing $-2$; thus $\phi_b = \phi_c = 1$ while $\phi_a=-1$.

We end this section by comparing the linearity formula of \autoref{thm:eccat} to the formula for the cardinality of a colimit in~\cite[\S3]{leinster:eccat}.
The latter depends on the notion of a \textbf{weighting} on a finite category $A$ (\cite[Def.~1.10]{leinster:eccat}), which is a function $k^\bullet$ from the objects of $A$ to some ring such that for all $a\in A$,
\begin{equation}
  \sum_{b\in A} \card{(A(a,b))}\cdot k^b = 1.\label{eq:weighting}
\end{equation}
The linearity formula of~\cite{leinster:eccat} is:

\begin{prop}\cite[Proposition~3.1]{leinster:eccat}\label{leinster}
Let $A$ be a finite category and $k^\bullet$ a $\mathbb{Q}$-weighting on $A$.  If $X\colon A\to \bSet$ is finite and a sum of representables
then $\card{\colim X}=\sum_a k^a \card{X(a)}$.
\end{prop}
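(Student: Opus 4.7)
The plan is to reproduce Leinster's original argument, which proceeds by reducing the claim to the case of a single representable and then invoking the defining equation~\eqref{eq:weighting} of the weighting; no derivator, dualizability, or shadow machinery is required.

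First I would observe that both sides of the asserted equation are additive in $X$ under disjoint union. Colimits of sets preserve coproducts, so $\card{\colim(X)}$ is additive in $X$, and the right-hand side $\sum_a k^a \cdot \card{X(a)}$ is manifestly additive as well. Since $X$ is by hypothesis a finite coproduct of representables, this reduces the claim to the case $X = A(a_0,-)$ for a single object $a_0 \in A$.

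In this reduced case, the right-hand side equals $\sum_a k^a \cdot \card{A(a_0,a)}$, which by~\eqref{eq:weighting} applied with first argument $a_0$ equals $1$. On the left, the category of elements of $A(a_0,-)$ is the slice $(a_0/A)$, which has initial object $(a_0,\id_{a_0})$; hence it is connected, $\colim A(a_0,-)$ is a singleton, and $\card{\colim A(a_0,-)} = 1$ as well. Matching the two sides completes the proof.

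The argument has essentially no technical obstacle: it is a one-line application of the weighting condition combined with the standard Yoneda-style computation of the colimit of a representable. The genuinely interesting point lies outside this proof and is presumably what the surrounding discussion addresses: one wants to compare Leinster's formula with that of \autoref{thm:eccat}. When $A$ is strictly homotopy finite one expects the alternating sum $\sum_{k\ge 0}(-1)^k \cdot c_k(a)$ appearing in \autoref{thm:eccat} (with $c_k(a)$ the number of composable strings of nonidentity arrows of length $k$ starting at $a$) to itself satisfy the weighting equation, and therefore to coincide with Leinster's $k^a$ whenever the latter is unique; the ``examples yielding different formulas'' mentioned in the introduction should correspond precisely to situations where this uniqueness fails or where $A$ is not homotopy finite, so that the two frameworks overlap only partially.
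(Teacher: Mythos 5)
Your proof is correct: the reduction to a single representable by additivity under coproducts, followed by the weighting equation and the observation that $\colim A(a_0,-)$ is a singleton (the category of elements $(a_0/A)$ having an initial object), is exactly Leinster's original argument, which is all the paper invokes here --- it states the result only as a citation of \cite[Proposition~3.1]{leinster:eccat} and gives no proof of its own. Your closing speculation about comparing the weighting with the coefficients of \autoref{thm:eccat} is in the spirit of what the paper actually does afterwards (see \autoref{thm:weighting} and the surrounding discussion), but it is not needed for the statement itself.
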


In fact, from this proposition,~\cite[Corollary~1.5]{leinster:eccat}, and the comments after~\cite[Definition~1.10]{leinster:eccat}, we obtain exactly  the formula in \autoref{thm:eccat} in the special case when $X$ is a coproduct of representables.
This restriction on $X$ was necessary in~\cite{leinster:eccat} essentially since \bSet is not stable, so that its colimits are not ``homotopy correct''.
Passing to the stable case allows the formula to hold for all $X$.

On the other hand, \autoref{leinster}  is stated for any finite category with a weighting, rather than merely the homotopy finite ones.
The following proposition says that in fairly general circumstances, if $A$-shaped colimits admit a linearity formula in our sense, then $A$ has an induced weighting.

\begin{prop}\label{thm:weighting}
  Suppose $A$ is a finite category such that $\lS_{A\op}$ is absolute in some derivator \dV and each monoid $A(b,b)$ acts freely on each homset $A(a,b)$.
  Then the components of the coefficient vector $\tr(\id_{\lS_{A\op}})$ at the conjugacy classes of identities form a weighting on $A$.
\end{prop}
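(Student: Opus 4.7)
The plan is to apply \autoref{thm:dercomposite} to the representable diagram based at each object $a \in A$ and read off the weighting equation from the resulting identity. For fixed $a\in A$, let $X = a_!\lS_\tc \in \dV(A)$ be the left Kan extension of the unit along the functor $a \colon \tc \to A$ picking out $a$. By the pointwise formula for left Kan extensions, $X_b \cong A(a,b)\cdot \lS$ is a finite copower of the dualizable unit, hence dualizable in $\dV(\tc)$; thus $X$ is pointwise dualizable by \autoref{thm:derptdual}. Its colimit is
\[ \colim(X) \;=\; (\pi_A)_!(a_!\lS_\tc) \;\cong\; (\pi_A \circ a)_!\lS_\tc \;\cong\; \lS. \]

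Since $\lS_{A\op}$ is assumed absolute and computes ordinary colimits (\autoref{thm:colim-is-colim}), \autoref{thm:dercomposite} applied with $f = \id_X$ yields
\[ \id_\lS \;=\; \chi(\lS) \;=\; \tr(\id_X)\circ \tr(\id_{\lS_{A\op}}). \]
Writing the coefficient vector as $\tr(\id_{\lS_{A\op}}) = \sum_{[\alpha]} \phi_{[\alpha]}\cdot [\alpha]$ and invoking the component lemma \autoref{thm:smderomega}, which identifies $\tr(\id_X) \circ [\alpha]$ for $\alpha \colon b\to b$ with the trace in $\dV(\tc)$ of $X_\alpha \colon X_b \to X_b$, we obtain
\[ 1 \;=\; \sum_{[\alpha\colon b\to b]} \phi_{[\alpha]} \cdot \tr(X_\alpha). \]

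The decisive step is the evaluation of $\tr(X_\alpha)$. Functoriality of the base change object $a_!\lS_\tc$ makes $X_\alpha$ act on $X_b = \bigoplus_{\gamma\colon a\to b} \lS$ by the partial permutation $\gamma \mapsto \alpha\gamma$, whose trace as a map between finite direct sums of copies of $\lS$ is the scalar $\card{\{\gamma \in A(a,b) : \alpha\gamma = \gamma\}} \cdot \id_\lS$. The freeness of the $A(b,b)$-action on $A(a,b)$ ensures that this count vanishes unless $\alpha = \id_b$, in which case every $\gamma$ is fixed and the trace equals $\card{A(a,b)}$. Hence only identity conjugacy classes $[\id_b]$ contribute. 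Setting $k^b := \phi_{[\id_b]}/N_b$, where $N_b$ is the number of objects $b'$ in $A$ with $[\id_{b'}] = [\id_b]$ — so that $k$ is constant on each identity conjugacy class and redistributed uniformly across its objects — gives
\[ 1 \;=\; \sum_{[\id_b]} \phi_{[\id_b]} \cdot \card{A(a,b)} \;=\; \sum_{b\in A} \card{A(a,b)} \cdot k^b, \]
which is precisely Leinster's weighting equation. (We use here that $\card{A(a,b)} = \card{A(a,b')}$ whenever $[\id_b] = [\id_{b'}]$, which follows from the cyclicity of traces applied to the $X_{\id_b}$.)

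The main obstacle is confirming that $X_\alpha$ really does act as left multiplication on the copower $A(a,b)\cdot \lS$ — i.e., that the coherent base change object $a_!\lS_\tc$ interacts with endomorphisms in $A$ exactly as its underlying-diagram description suggests, so that the trace reduces to a counting problem solved by freeness. With the base change machinery of \autoref{sec:bco-der} in hand, this is a direct calculation. Apart from this and the straightforward bookkeeping from conjugacy classes of identities to all objects, the argument is a single application of linearity.
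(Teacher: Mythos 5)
Your construction is the paper's own: your $X=a_!\lS_\tc$ is (up to canonical isomorphism) the profunctor $Y^a=(\id\times a)^*\lI_A$ used in the paper, and the core of your argument --- apply \autoref{thm:dercomposite} with $f=\id$ to this pointwise dualizable diagram, identify $\tr(X_\alpha)$ via the component lemma as a fixed-point count on $A(a,b)\cdot\lS$, use freeness to kill every class containing a nonidentity, and observe that the relevant colimit is $\lS$ so the left-hand side is $1$ --- is exactly the paper's computation. (The paper gets $\lS_{A\op}\otimes_{[A]}Y^a\cong a^*\lS_{A\op}\cong\lS$ directly, rather than via $(\pi_A\circ a)_!\lS$ and \autoref{thm:colim-is-colim}; this is immaterial.)

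Where you diverge is the last step, and there you end up proving a different statement than the one asserted. The proposition claims that $k^b:=\phi_{[\id_b]}$ is itself a weighting; you instead set $k^b:=\phi_{[\id_b]}/N_b$ and verify that \emph{this} is a weighting. The two agree only when every $N_b=1$, i.e.\ when distinct objects have distinct identity classes --- which, by your own observation that freeness forces $[\id_b]=[\id_{b'}]$ only when $b\cong b'$, means no two distinct objects of $A$ are isomorphic. That is the reading the paper intends: its proof passes directly from the class-indexed expression $\tr(\id_{Y^a})\circ\tr(\id_{\lS_{A\op}})$ to the object-indexed sum on the left of \eqref{eq:weighting}, tacitly identifying objects with their identity classes; in that situation your $N_b$-bookkeeping is vacuous and your argument reduces to the paper's. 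If $A$ does have distinct isomorphic objects you are pointing at a genuine wrinkle (the class-indexed and object-indexed sums really differ), but your repair both changes the conclusion and requires $N_b$ to be invertible in the semiring $\dV(\tc)(\lS,\lS)$, which is not among the hypotheses; one could instead concentrate the full weight $\phi_{[\id_b]}$ on a single representative of each isomorphism class and assign $0$ to the rest, which needs no division. Finally, your parenthetical justification that $\card{A(a,b)}=\card{A(a,b')}$ ``by cyclicity of traces'' is not quite right --- cyclicity only yields an equality of scalars in $\dV(\tc)(\lS,\lS)$, which need not detect an equality of integers --- but the fact holds for the simpler reason above: freeness forces $b\cong b'$.
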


\begin{proof}
  For any $a$, consider the profunctor $Y^a = (\id\times a)^* \lI_A \in \dprof(\dV)(A,\tc)$.
  Since $(Y^a)_b \cong \bigoplus_{A(a,b)} \lS$ and $A$ is finite, $Y^a$ is pointwise dualizable by \autoref{thm:derptdual}.
  Moreover,  by \autoref{thm:smderomega}, we have
  \[ \tr(\id_{Y^a}) \colon \sh{A}\cong \bigoplus_{\pi_0(\Lambda A)} \lS \too \lS \]
  has a component at $\beta\colon b\to b$ equal to the trace of the action of $\beta$ on $\bigoplus_{A(a,b)} \lS$, which is just the number of fixed points of the action of $\beta$ on the homset $A(a,b)$.
  By assumption, this is zero unless $\beta=\id_b$, in which case it is $\card {(A(a,b))}$.

  Since $\lS_{A\op}$ is absolute, $\lS_{A\op}\otimes_{[A]} Y^a$ is dualizable.
  Then by \autoref{thm:dercomposite}, its Euler characteristic can be expressed as the sum on the left-hand side of~\eqref{eq:weighting} 
  where $k^b$ denotes the $b^{\mathrm{th}}$ component of $\chi(\lS_{A\op})$.
  We also have  $\lS_{A\op}\otimes_{[A]} Y^a\cong a^*\lS_{A\op} \cong \lS$ and $\chi(\lS) = 1$ completing the proof.
\end{proof}

The proof of \autoref{thm:weighting} also implies that under its hypotheses, our linearity formula for $A$-shaped colimits reduces 
to~\autoref{leinster} when $X$ is a coproduct of representables.
However, for general $X$, the coefficients at nonidentity endomorphisms can matter.

For instance, consider the case when $A=\bg$ is a finite group $G$ regarded as a one-object groupoid.
We have seen in \autoref{thm:burnside} that $\bg$-colimits are absolute in any $\card G$-divisible \V, and the coefficient at a conjugacy class $C\subseteq G$ is given by $\frac{\card C}{\card G}$.
(We will generalize this result to derivators in \S\ref{sec:deriv-burnside}.)
Thus in this case, $\bg$ satisfies the hypotheses of \autoref{thm:weighting} for the represented derivator $y(\V)$, yielding the weighting consisting of the single number $\frac1{\card G}$, as in~\cite[Example~1.11(b)]{leinster:eccat}.
However, if $X$ is \lS with the trivial $G$-action, then all components of $\chi(X)$ are equal to 1, so that our linearity formula gives
\begin{equation}
  \chi(\colim(X)) = \sum_{C} \frac{\card C}{\card G} = 1.\label{eq:trivial-action}
\end{equation}
\autoref{leinster} does not apply in this case, since this $X$ is not a coproduct of representables.
However,~\eqref{eq:trivial-action} is nevertheless correct even as a formula for the cardinality of the colimit of a trivial action on a one-element set.

On the other hand, if $\lS_{A\op}$ is absolute but the endomorphism monoids do not act freely, then our linearity formula can differ from~\autoref{leinster} even for coproducts of representables.
For instance, suppose $A$ is the free-living idempotent, with one object $\star$ and one nonidentity idempotent $e\in A(\star,\star)$.
Then as we have seen, $\lS_{A\op}$ is a retract of a representable, hence absolute, and its coefficient vector has components $0$ and $1$ at $\id_\star$ and $e$ respectively.
Thus, if $X\in\dV(A)$ is an object equipped with a coherent idempotent, our linearity formula says that $\chi(\colim(X))$ is the trace of the action of this idempotent, as in \S\ref{sec:moncat}.

However, the weighting assigned to $A$ by~\cite{leinster:eccat} is the single number $\frac12$.
And when $X\in\dV(A)$ is a coproduct of $n$ representables, its underlying object is a coproduct of $2n$ copies of \lS, hence has Euler characteristic $2n$.
Thus~\autoref{leinster} says that the Euler characteristic of $\colim(X)$ is $\frac12(2n) = n$.
Our formula also gives the correct answer in this case, since the matrix of the action of the idempotent on a coproduct of $n$ representables is block diagonal with $n$ blocks of the form $\begin{pmatrix}0&0\\1&1\end{pmatrix}$, hence has trace $n$.
But it is a different formula.

\section{The orbit-counting theorem}
\label{sec:deriv-burnside}

In this section we generalize the orbit-counting theorem, \autoref{thm:burnside}, to derivators.
Let $G$ be a finite group and let $\bg$ denote the 1-object category associated to $G$.
We will also denote the set $G$ regarded as a discrete category by $G$.

The main ingredient in our proof is the following theorem, which essentially reduces the derivator case to the ordinary case.
We say that a derivator \dV is \textbf{$n$-divisible} if $\dV(\tc)$ is $n$-divisible; this implies that any morphism in any category $\dV(A)$ can be ``divided by $n$''.

\begin{thm}\label{thm:dgm-ff}
  If \dV is a semi-additive and $\card G$-divisible derivator, then the underlying diagram functor
  \[  \dV(\bg) \to \dV(\tc)^\bg \]
  is fully faithful.
\end{thm}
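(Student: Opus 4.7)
Plan: The strategy is an averaging/retract argument exploiting $\card{G}$-divisibility, following the same pattern as the orbit-counting theorem (\autoref{thm:burnside}). Let $i \colon \tc \to \bg$ be the inclusion of the unique object; then the underlying diagram functor is essentially $i^*$ equipped with the induced $G$-actions on $X_\star = i^* X$ coming from the endomorphisms of $\star$ in $\bg$. Morphisms in $\dV(\tc)^\bg$ between $X_\star$ and $Y_\star$ are precisely the $G$-fixed points of the natural conjugation $G$-action on $\dV(\tc)(X_\star, Y_\star)$; since \dV is semi-additive and $\card{G}$-divisible, this fixed subset is exactly the image of the averaging idempotent $e = \frac{1}{\card{G}} \sum_{g \in G} (g \cdot -)$.

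The main step is to identify $\dV(\bg)(X, Y)$ with the same image of $e$. Since the comma category $i/\star$ is discrete with $\card{G}$ objects, the underlying object of $i_! i^* X$ is the copower $G \cdot X_\star$, with $G$ acting by permutation of summands twisted by the action on $X_\star$. I would construct a section $\sigma \colon X \to i_! i^* X$ of the counit $\epsilon \colon i_! i^* X \to X$ in $\dV(\bg)$ (not merely in the underlying diagram category) directly from $\card{G}$-divisibility, essentially as in the proof of absoluteness of the quotient weight in \autoref{thm:burnside}. The adjunction $i_! \dashv i^*$ then yields a natural bijection $\dV(\bg)(i_! i^* X, Y) \cong \dV(\tc)(X_\star, Y_\star)$, and pre-composition with $\sigma$ makes $\dV(\bg)(X, Y)$ a retract of this $\dV(\tc)$ hom-set. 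Unwinding the explicit form of $\sigma \epsilon$ shows that this retract corresponds to applying the averaging idempotent $e$, identifying $\dV(\bg)(X, Y)$ with the image of $e$, and hence with $\dV(\tc)^\bg(X_\star, Y_\star)$.

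The hardest step is constructing $\sigma$ in $\dV(\bg)$ itself, since we cannot appeal to the statement we are trying to prove. Instead, $\sigma$ must arise from the derivator structure via the retract that makes $\lS \in \dV(\bg\op)$ absolute in the derivator form of \autoref{thm:burnside}: the same averaging data, transported under the duality between profunctors $\tc \hto \bg$ and diagrams $\bg \hto \tc$, provides a splitting of $\epsilon$ at the coherent level. Once $\sigma$ is in place, the identification with the averaging idempotent is a routine check using the component lemma \autoref{thm:smderomega} and the explicit description of the $G$-action on $i_! i^* X$.
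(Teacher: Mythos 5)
Your overall architecture (retract of hom-sets via the counit $\epsilon\colon u_!u^*X \to u_!u^*X \to X$, the adjunction identification $\dV(\bg)(u_!u^*X,Y)\cong\dV(\tc)(X_\star,Y_\star)$, and the observation that equivariant maps are the image of the averaging idempotent) matches the paper's strategy closely. But there is a genuine gap at exactly the step you identify as the hardest one: the construction of the coherent section $\sigma\colon X \to u_!u^*X$ in $\dV(\bg)$. The averaging map $s=\frac{1}{\card G}(\id)_{g\in G}$ of \autoref{thm:burnside} is constructed only as an equivariant map of \emph{underlying} diagrams, i.e.\ it lives in $\dV(\tc)^\bg$; lifting it to a morphism in $\dV(\bg)$ is precisely (an instance of) the fullness you are trying to prove. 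Your proposed escape --- taking the retract exhibiting absoluteness of the constant weight in ``the derivator form of \autoref{thm:burnside}'' --- is circular in this paper's development: \autoref{thm:derivatorburnside} obtains that coherent retract from \autoref{thm:divbicat}, which is itself a consequence of \autoref{thm:dgm-ff}. As written, no independent construction of the coherent averaging data is given, so the argument begs the question. (A secondary issue: routing through weights, duality in $\prof(\dV)$, and the component lemma presumes a closed symmetric monoidal structure on \dV, which \autoref{thm:dgm-ff} does not assume.)

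The paper's proof avoids this by never averaging at the coherent level. It builds a coherent endomorphism of $X$ from data that is automatically coherent: the unit of $u^*\dashv u_*$, the counit of $u_!\dashv u^*$, and a canonical isomorphism $u_!\cong u_*$ constructed in \autoref{thm:freecofreeiso} by exhibiting a map whose underlying matrix is a permutation matrix. \autoref{thm:dgm-ff-lemma} then shows the underlying map of the composite $X \to u_*u^*X \cong u_!u^*X \to X$ is multiplication by $\card G$; divisibility plus conservativity of the underlying diagram functor make the composite invertible in $\dV(\bg)$, which yields the coherent retract $\sigma$ without ever dividing an incoherent map and lifting it. If you replace your construction of $\sigma$ by this unit--iso--counit composite and the $\card G$ computation, the rest of your argument (adjunction, free $G$-object, image of the averaging idempotent equals the equivariant maps) goes through and recovers the theorem.
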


We can interpret this theorem as the observation that  being a coherent \bg-diagram is a mere \emph{property} of an incoherent one.
To our knowledge, this theorem first appeared explicitly in~\cite{souza:traces}, although it ought to be well-known.

To begin with, note that there is a canonical natural transformation
\begin{equation}\label{eq:gbgcomma}
  \vcenter{\xymatrix{
      G\ar[r]^p\ar[d]_p \drtwocell\omit &
      \tc\ar[d]^u\\
      \tc\ar[r]_u &
      \bg
      }}
\end{equation}
whose component at $g\in G$ is just $g$ regarded as a morphism in \bg.
Since this is a comma square, the fourth defining property of a derivator~\cite[Definition~2.1]{gps:stable} yields isomorphisms $p_!p^* \cong u^*u_!$ and $p_*p^*\cong u^*u_*$.
Moreover, since $G$ is discrete, we have $p_! Z \cong \sum_{g\in G} Z_g$ and $p_* Z \cong \prod_{g\in G} Z_g$.

The underlying diagram functor $\dV(\bg) \to \dV(\tc)^\bg$ associates to any object of $\dV(\bg)$ an object in $\dV(\tc)$ with a $G$-action.
In particular, for any $Y\in\dV(\tc)$, the objects $u^* u_! Y$ and $u^* u_* Y$ of $\dV(\tc)$ have canonical $G$-actions.

\begin{lem}\label{thm:underlying-action}
  In any derivator \dV, the induced $G$-action on $u^* u_! Y \cong \sum_{g\in G} Y$ permutes the summands by left translation.
  Similarly, the $G$-action on $u^* u_* Y \cong \prod_{g\in G} Y$ permutes the factors by right translation.
\end{lem}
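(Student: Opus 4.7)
The plan is to track the $G$-action on $u^*u_!Y$ through the Beck-Chevalley isomorphism $p_!p^* \xrightarrow{\sim} u^*u_!$ induced by the comma square~\eqref{eq:gbgcomma}, exploiting its 2-naturality in the defining 2-cell $\beta$. For each $g\in G$, let $\alpha_g\colon u\Rightarrow u$ denote the natural transformation of functors $\tc\to\bg$ whose unique component is the morphism $g\colon\star\to\star$; by definition of $u^*$, the action of $g$ on $u^*X$ is $\dV(\alpha_g)_X$.

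First I would establish a 2-cell identity in $\cCat$. The canonical 2-cell $\beta\colon up\Rightarrow up$ has component $\beta_h = h$ at $h\in G$. Let $L_g\colon G\to G$ denote left-translation by $g$, so that $p\circ L_g = p$ holds strictly. Then
\[ (\alpha_g p)\cdot \beta \;=\; \beta L_g \colon up\Rightarrow up, \]
since both sides have component $gh$ at $h$ (the left by vertical composition $g\circ h = gh$ in $\bg$, the right by $\beta_{L_g(h)} = \beta_{gh} = gh$).

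Second, I would invoke the 2-functoriality of Beck-Chevalley mates. Vertically composing $\beta$ with $\alpha_g p$ on the codomain side corresponds to post-composing the mate $p_!p^* \xrightarrow{\sim} u^*u_!$ with the $g$-action $(\alpha_g)^* u_!$, while whiskering $\beta$ with $L_g$ on the domain side corresponds to pre-composing the mate with the endomorphism of $p_!p^*Y$ induced by $L_g$ (using $pL_g=p$). The identity above then yields a commutative square exhibiting the $g$-action on $u^*u_!Y$ as conjugate to the $L_g$-action on $p_!p^*Y$. Under the identification $p_!p^*Y \cong \bigoplus_{h\in G} Y$ arising from the discrete opfibration $p$, the latter is by construction the permutation of summands sending the $h$-copy to the $gh$-copy, i.e.\ left translation.

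Finally, the dual statement for $u^*u_*Y$ follows by the same strategy applied to the right-adjoint Beck-Chevalley isomorphism $u^*u_* \xrightarrow{\sim} p_*p^*$, using the companion identity $\beta\cdot (\alpha_g p) = \beta R_g$ (where $R_g$ is right translation by $g$), whose common component at $h$ is $hg$. The main obstacle is not conceptual but clerical: verifying the 2-cell pasting identities and keeping track of orientations carefully enough that left translation emerges in the $u_!$ case and right translation in the $u_*$ case.
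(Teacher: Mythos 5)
Your proposal is correct and follows essentially the same route as the paper: the paper likewise expresses the $g$-action as the image under $\dV$ of the 2-cell $g\colon u\Rightarrow u$, observes that pasting this 2-cell onto the comma square~\eqref{eq:gbgcomma} equals pasting the comma square with the commutative square built from left translation $g\cdot-$ on $G$, and then uses functoriality of mates to identify the action with the permutation of summands (dually, factors) under $p_!p^*Y\cong\bigoplus_{h\in G}Y$. Your explicit 2-cell identity $(\alpha_g p)\cdot\beta=\beta L_g$ (and its companion for $u_*$) is exactly the equality of pasted rectangles used in the paper's proof.
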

\begin{proof}
  For any $g\in G$, we have a natural transformation $g\colon u\to u$ with unique component $g$, and the action of $g$ on $u^* X$ for $X\in\dV(\bg)$ is the image of this transformation under the 2-functor $\dV$.
  Thus, for $Y\in\dV(\tc)$, the induced isomorphism
  \[ p_! p^* Y \toiso u^* u_! Y \xto{g} u^* u_! Y \]
  is the mate-transformation associated to the pasted rectangle on the left below.
  \begin{equation}
    \vcenter{\xymatrix{
        G\ar[r]^p\ar[d]_p \drtwocell\omit &
        \tc\ar[d]^u\\
        \tc\ar[r]^u\ar@{=}[d] \drtwocell\omit{g} &
        \bg \ar@{=}[d]\\
        \tc \ar[r]_u &
        \bg
      }}
    \qquad=\qquad
    \vcenter{\xymatrix{
        G \ar[r] \ar[d]_{g\cdot -} &
        \tc \ar[d] \\
        G\ar[r]^p\ar[d]_p \drtwocell\omit &
        \tc\ar[d]^u\\
        \tc\ar[r]_u &
        \bg
      }}
  \end{equation}
  However, this is equal to the pasted rectangle on the right.
  Thus, after composing again with the mate $p_! p^* \cong u^* u_!$ associated to the lower square on the right, it cancels, leaving only the left translation by $g$.
  The proof for $u_*$ is dual.
\end{proof}

For the rest of this section, we suppose that $G$ is finite and that \dV is a semi-additive derivator.

\begin{prop}\label{thm:freecofreeiso}
  When $G$ is finite and \dV is semi-additive, there is a canonical isomorphism $u_! \toiso u_*$.
\end{prop}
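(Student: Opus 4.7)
The plan is to construct a canonical natural transformation $\nu\colon u_! \to u_*$ and then verify it is an isomorphism using conservativity. Since $p\colon G \to \tc$ has finite discrete domain and $\dV$ is semi-additive, we have $\dV(G)\simeq\dV(\tc)^{|G|}$ with finite biproducts, so the canonical ``identity matrix'' comparison yields a natural isomorphism $\sigma\colon p_! \xto{\sim} p_*$. Combining $\sigma$ with the Beck-Chevalley isomorphisms
\[ p_! p^* \xto{\sim} u^* u_! \qquad\text{and}\qquad u^* u_* \xto{\sim} p_* p^* \]
(which hold because \eqref{eq:gbgcomma} is a comma square and the fourth derivator axiom applies) produces a natural isomorphism $\phi\colon u^* u_! \xto{\sim} u^* u_*$ of functors $\dV(\tc)\to\dV(\tc)$.

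To lift $\phi$ to a map $\nu\colon u_! \to u_*$, I would use the adjunction $u_!\dashv u^*$: such a $\nu$ corresponds to a natural transformation $\id_{\dV(\tc)} \to u^* u_*$, and I take for the latter the composite of the unit $\id \to p_* p^*$ of $p^*\dashv p_*$ with the inverse Beck-Chevalley isomorphism $p_* p^* \xleftarrow{\sim} u^* u_*$. By \autoref{thm:underlying-action} (or direct inspection), on underlying objects this unit is the diagonal $Y \to \prod_{g\in G} Y$.

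Finally, one checks that $u^*\nu = \phi$ by a diagram chase using the triangle identities for $u_!\dashv u^*$ and the naturality of the Beck-Chevalley transformations associated to \eqref{eq:gbgcomma}. Once this is known, the conclusion is immediate: since $\bg$ has a single object, the underlying-diagram functor $\dV(\bg)\to\dV(\tc)^{\bg}$ is just $u^*$ decorated with the tautological $G$-action on values, so the conservativity axiom of \dV makes $u^*$ conservative on morphisms; hence $\nu$ is a natural isomorphism.

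The main obstacle will be the last verification, since it entangles the two adjunctions $u_!\dashv u^*$ and $p^*\dashv p_*$ with the Beck-Chevalley machinery and the biproduct comparison $\sigma$. Conceptually, however, the check amounts to recognizing both $u^*\nu$ and $\phi$ as mates of the same pasting datum, where the key input is that in the semi-additive setting $\sigma$ itself is the mate of the identity of $p^*$ under the ambidextrous pair $p_!\dashv p^*\dashv p_*$; once this is set up, the identification reduces to routine applications of the triangle identities.
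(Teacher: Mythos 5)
There is a genuine gap here, and in fact the map you construct is not the right one. The point you miss is exactly \autoref{thm:underlying-action}: under the Beck--Chevalley identifications coming from the comma square \eqref{eq:gbgcomma}, the $G$-action on $u^*u_!Y\cong\bigoplus_{g\in G}Y$ is by \emph{left} translation while the action on $u^*u_*Y\cong\prod_{g\in G}Y$ is by \emph{right} translation. Hence your ``identity matrix'' isomorphism $\phi\colon u^*u_!\xrightarrow{\sim}u^*u_*$ is not $G$-equivariant, so it cannot be of the form $u^*\nu$ for any morphism $\nu\colon u_!Y\to u_*Y$ in $\dV(\bg)$; the verification $u^*\nu=\phi$ you defer to a diagram chase is false whenever $\card G\ge 2$. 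Moreover, the particular lift you choose is not an isomorphism: the adjunct under $u_!\dashv u^*$ of the composite $\id\to p_*p^*\xrightarrow{\sim}u^*u_*$ is the ``diagonal'' map, which forces $(u^*\nu)_{e,h}=\id_Y$ for all $h$; but any $\nu$ coming from $\dV(\bg)$ satisfies the equivariance relation $(u^*\nu)_{g,h}=(u^*\nu)_{hg,e}$ (this follows from the mate identities $\widehat{h}\circ g^*=\widehat{hg}$ and $h^*\circ\widecheck{g}=\widecheck{hg}$), so \emph{every} entry of $u^*\nu$ is forced to be $\id_Y$. The all-ones matrix is a transfer-type map, not invertible once $\card G\ge 2$ (for vector spaces it has rank one), so your $\nu$ fails to be an isomorphism.

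The correct canonical map corresponds, under $\dV(\bg)(u_!Y,u_*Y)\cong\prod_{g\in G}\dV(\tc)(Y,Y)$, to the family that is $\id_Y$ at $g=e$ and $0$ at all $g\neq e$ --- equivalently, its adjunct $Y\to u^*u_*Y$ is the ``delta at $e$'' map rather than the diagonal. This is where semi-additivity genuinely enters (zero morphisms and the matrix calculus for finite biproducts), not through the ambidexterity $p_!\cong p_*$, which does not transport along the Beck--Chevalley squares in the way you suggest. The real content, which your proposal labels routine, is the mate computation showing that for this choice $(u^*\phi)_{g,h}=\phi_{hg}=\delta_{h,g^{-1}}$, i.e.\ that the underlying matrix is the (antidiagonal) permutation matrix and hence invertible. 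Your final step --- deducing invertibility of $\nu$ from invertibility of $u^*\nu$ via conservativity of the underlying diagram functor --- is fine and matches the intended argument, but everything before it needs to be replaced as above.
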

\begin{proof}
  First of all, note that for any $Y\in \dV(\tc)$ we have an isomorphism
  \begin{align}
    \dV(\bg)(u_!Y,u_*Y) &\cong \dV(\tc)(u^* u_! Y, Y)\\
    &\cong \dV(\tc)(p_! p^* Y, Y)\\
    &\cong \dV(\tc)(\textstyle\sum_{g\in G} Y, Y)\\
    &\cong \prod_{g\in G} \dV(\tc)(Y,Y).
  \end{align}
  Let $\phi\colon u_!Y \to u_*Y$ be the morphism that corresponds under this isomorphism to the family of morphisms $(\phi_g\colon Y\to Y)_{g\in G}$ defined by
  \begin{equation}
    \phi_g =
    \begin{cases}
      \id_Y & \quad g=e\\
      0 &\quad g\neq e,
    \end{cases}
  \end{equation}
  where $e\in G$ is the identity element.
  We will show that $\phi$ is an isomorphism.
  It suffices to show that $u^*\phi$ is an isomorphism.
  
  By a similar argument, we have
  \begin{align}
    \dV(\tc)(u^*u_!Y,u^*u_*Y)
    &\cong \dV(\tc)(p_! p^* Y,p_* p^* Y)\\
    &\cong \dV(\tc)\big(\textstyle\sum_{g\in G} Y, \prod_{h\in G} Y\big)\\
    &\cong \prod_{g,h\in G} \dV(\tc)(Y,Y).
  \end{align}
  We have written $\sum$ and $\prod$ for clarity in this isomorphism, but since \dV is semi-additive, finite sums and products are isomorphic and may be written as direct sums $\bigoplus$.
  With this identification, isomorphisms such as
  \begin{equation}
    \dV(\tc)\big({\textstyle\bigoplus_{i} Z_i, \bigoplus_{j} W_j}\big) \cong \prod_{i,j} \dV(\tc)(Z_i,W_j)
  \end{equation}
  allow us to identify composition of morphisms between finite direct sums with matrix multiplication.

  We will show that $u^*\phi$ is a permutation matrix, and hence invertible.
  Note that for any $g\in G$, we have a 2-cell
  \begin{equation}\label{eq:gsq}
  \vcenter{\xymatrix{
      \tc \ar@{=}[r]\ar@{=}[d] \drtwocell\omit{g} &
      \tc\ar[d]^u\\
      \tc\ar[r]_-u &
      \bg
      }}
  \end{equation}
  This gives rise to mates $\gchk\colon Y \to u^*u_! Y$ and $\ghat\colon u^* u_* Y \to Y$.
  Moreover, since~\eqref{eq:gsq} factors through the square~\eqref{eq:gbgcomma} by the morphism $g\colon \tc \to G$, if we make the identifications
  \begin{equation}
    \textstyle
    u^*u_! Y \cong p_! p^* Y \cong \bigoplus_{g\in G} Y
    \qquad\text{and}\qquad
    u^*u_* Y \cong p_* p^* Y \cong \bigoplus_{g\in G} Y
  \end{equation} 
  then \gchk and \ghat correspond respectively to the inclusion of the $g^{\mathrm{th}}$ summand and the projection onto the $g^{\mathrm{th}}$ factor.
  Therefore, the $(g,h)$-component of $u^* \phi$ can be obtained as the composite
  \begin{equation}\label{eq:uphigh}
  \vcenter{\xymatrix{
      Y\ar[r]^-{\gchk} &
      u^*u_! Y\ar[r]^-{u^*\phi} &
      u^*u_* Y\ar[r]^-{\hhat} &
      Y.
      }}
  \end{equation}
  Similarly, the $g$-component of $\phi$ can be obtained as the composite
  \begin{equation}
  \vcenter{\xymatrix{
      Y\ar[r]^-{\gchk} &
      u^*u_! Y\ar[r]^-{\phibar} &
      Y
      }}
  \end{equation}
  where $\phibar$ is the map corresponding to $\phi$ under the adjunction $u^*\dashv u_*$.
  However, since $\phibar$ is by definition the composite $u^*u_! Y \xto{u^* \phi} u^* u_* Y \xto{\epsilon} Y$, where $\epsilon$ is the counit of the adjunction $u^*\dashv u_*$, we can also express the $g$-component of $\phi$ as
  \begin{equation}\label{eq:phig}
    \vcenter{\xymatrix{
        Y\ar[r]^-{\gchk} &
        u^*u_! Y\ar[r]^-{u^*\phi} &
        u^*u_* Y\ar[r]^-{\epsilon} &
        Y.
      }}
  \end{equation}
  Now note that $\epsilon$ is in fact $\ehat$ (recall that $e\in G$ is the identity element), since when $g=e$ the square~\eqref{eq:gsq} commutes.
  Thus, comparing~\eqref{eq:uphigh} and~\eqref{eq:phig}, we obtain an immediate identification $(u^*\phi)_{g,e} = \phi_g$.

  Now observe that for any $g,h\in G$, we have
  \begin{equation}
  \vcenter{\xymatrix{
      \tc\ar@{=}[r]\ar@{=}[d] \drtwocell\omit{h} &
      \tc\ar@{=}[r]\ar[d]^u \drtwocell\omit{g} &
      \tc\ar[d]^u\\
      \tc\ar[r]_-u &
      \bg\ar@{=}[r] &
      \bg
    }}
  \qquad=\qquad
  \vcenter{\xymatrix{
      \tc\ar@{=}[r]\ar@{=}[d] \drtwocell\omit{hg} &
      \tc \ar[d]^u\\
      \tc \ar[r]_-u &
      \bg.
      }}
  \end{equation}
  Thus, by the functoriality of mates, the composite
  \begin{equation}
    \xymatrix{u^* u_* Y \ar[r]^-{g^*} &
      u^* u_* Y \ar[r]^-{\hhat} &
      Y}
  \end{equation}
  is equal to $\widehat{hg}$, where $g^*$ denotes the image of the 2-cell $g\colon u\to u$ under the 2-functor \dV.
  Similarly, the composite
  \begin{equation}
    \vcenter{\xymatrix{
        Y\ar[r]^-{\gchk} &
        u^* u_! Y\ar[r]^-{h^*} &
        u^* u_! Y
      }}
  \end{equation}
  is equal to $\widecheck{hg}$.
  Now we have the commutative diagram
  \begin{equation}
  \vcenter{\xymatrix{
      Y\ar[r]^-{\gchk} \ar@{=}[d] &
      u^*u_! Y\ar[r]^-{u^*\phi} \ar[d]^{h^*} &
      u^*u_* Y\ar[r]^-{\hhat} \ar[d]^{h^*} &
      Y \ar@{=}[d]\\
      Y\ar[r]_-{\widecheck{hg}} &
      u^*u_! Y\ar[r]_-{u^*\phi} &
      u^*u_* Y\ar[r]_-{\epsilon} &
      Y
      }}
  \end{equation}
  which shows that
  \[(u^*\phi)_{g,h} = \phi_{hg} =
  \begin{cases}
    \id_Y &\quad h = g^{-1}\\
    0 &\quad \text{otherwise}.
  \end{cases}
  \]
  This is a permutation matrix, as claimed.
\end{proof}

\begin{lem}\label{thm:dgm-ff-lemma}
  For any $X\in\dV(\bg)$, after applying $u^*$ the composite
  \begin{equation}
    X \xto{\eta} u_* u^* X \otiso u_! u^* X \xto{\epsilon} X\label{eq:diaretr}
  \end{equation}
  is multiplication by $\card  G$.
\end{lem}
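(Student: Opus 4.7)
The plan is to apply $u^*$ to the composite \eqref{eq:diaretr} and evaluate it as a sum of $\card G$ copies of the identity. Under the identifications $u^* u_* u^* X \cong p_* p^* u^* X \cong \bigoplus_{g\in G} u^* X$ and $u^* u_! u^* X \cong p_! p^* u^* X \cong \bigoplus_{g\in G} u^* X$ coming from the comma square \eqref{eq:gbgcomma} and semi-additivity, $u^*$ of the composite becomes
\[ u^* X \xto{\alpha} \bigoplus_{g\in G} u^* X \xto{\beta} \bigoplus_{g\in G} u^* X \xto{\gamma} u^* X. \]

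First I would compute $\alpha = u^*\eta$ and $\gamma = u^*\epsilon$ by the same hat/check mate calculus used in the proof of \autoref{thm:freecofreeiso}. Writing $\gchk$ for the inclusion of the $g^{\mathrm{th}}$ summand and $\ghat$ for projection onto the $g^{\mathrm{th}}$ factor, and factoring each 2-cell $g\colon u \to u$ through \eqref{eq:gbgcomma} via the object $g\colon \tc\to G$, the triangle identities for $u^*\dashv u_*$ and $u_!\dashv u^*$ identify the $g^{\mathrm{th}}$ component of $\alpha$ with the action of $g$ on $u^* X$ and the $g^{\mathrm{th}}$ component of $\gamma$ with the action of $g^{-1}$. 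Meanwhile, $\beta = u^*(\phi^{-1})$ was already computed (up to inversion) inside the proof of \autoref{thm:freecofreeiso}: $u^*\phi$ is the permutation matrix with $(u^*\phi)_{g,h} = \id$ iff $h = g^{-1}$, and so $\beta$ sends the $h^{\mathrm{th}}$ factor of its source identically to the $h^{-1}$th summand of its target.

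Multiplying these three matrices yields a self-map of $u^* X$ in which the contribution indexed by $g \in G$ is the composite $g^{-1} \circ \id \circ g = \id_{u^* X}$; summing over $g$ gives $\card G \cdot \id_{u^* X}$, as desired. The main obstacle is purely bookkeeping: one must pin down exactly which permutation $\beta$ implements and which action (of $g$ versus $g^{-1}$) appears in each component of $\alpha$ and $\gamma$, so that the $\card G$ diagonal contributions add constructively rather than cancelling. The key structural inputs are the factorization of the 2-cells $g\colon u\to u$ through the comma square \eqref{eq:gbgcomma} together with the triangle identities of the two adjunctions $u_!\dashv u^* \dashv u_*$, exactly as in the proof of \autoref{thm:freecofreeiso}.
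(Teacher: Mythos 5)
Your proposal is correct in substance and follows essentially the same route as the paper's proof: apply $u^*$, identify the components of $u^*\eta_X$ and $u^*\epsilon_X$ via the hat/check mates, triangle identities and equivariance (\autoref{thm:underlying-action}), feed in the permutation matrix for $u^*\phi$ from the proof of \autoref{thm:freecofreeiso}, and multiply matrices to get $\card G$ diagonal identity contributions. The only caveat is exactly the bookkeeping you flag: as literally stated ($\alpha$ having $g$-component the action of $g$, $\gamma$ having $g$-component the action of $g^{-1}$, and $\beta$ the inversion permutation), the paired terms come out as $g\circ g$ rather than $g^{-1}\circ g$; the consistent assignment, as in the paper, has \emph{both} the unit and counit components at the $g$-th slot equal to the action of $g$, so that the inversion permutation in the middle pairs the $g$-th component of $\alpha$ with the $(g^{-1})$-th component of $\gamma$ and each of the $\card G$ terms is the identity.
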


\begin{proof}
  The proof of \autoref{thm:freecofreeiso} also showed that for any $Y\in\dV(\tc)$, the map $\epsilon_Y \colon  u^* u_* Y \to Y$ is $\widehat{e}$, which (under the identifications of $u^* u_*$ and $u^* u_!$ with $\card G$-ary direct sums) corresponds to projection onto the $e^{\mathrm{th}}$ factor.
  In particular, this is the case for $\epsilon_{u^* X} \colon  u^* u_* u^* X \to u^* X$.
  Since $u^* \eta_X \colon  u^*X  \to u^* u_* u^* X$ is a section of $\epsilon_{u^* X}$, its component onto the $e^{\mathrm{th}}$ factor must be the identity.
  However, $u^* \eta_X$ is also an equivariant map of $G$-objects in $\dV(\tc)$, and by \autoref{thm:underlying-action} $G$ acts by right translation on $u^* u_* u^* X$.
  Therefore, the component of $u^* \eta_X$ onto the $g^{\mathrm{th}}$ factor must be the action of $g$.

  Similarly, we can show that the component of $u^* \epsilon_X$ out of the $g^{\mathrm{th}}$ summand must be the action of $g$.
  Moreover, we have seen in the proof of \autoref{thm:freecofreeiso} that the middle factor $u^* u_! u^* X \toiso u^* u_* u^* X$ is a permutation matrix which sends the $g^{\mathrm{th}}$ summand to the $(g^{-1})^{\mathrm{th}}$ summand.
  Therefore, $u^*$ of the composite~\eqref{eq:diaretr} is the sum
  \[ \sum_{g,h\in G} h^{-1} \circ \phi_{hg}\circ g \]
  with $\phi_{hg}$ as in \autoref{thm:freecofreeiso}.
  But this is equally
  \[ \sum_{g\in G}( g^{-1} \circ g) = (\card G) \cdot \id.  \qedhere \]
\end{proof}

\begin{proof}[Proof of \autoref{thm:dgm-ff}]
  Since $\card G$ is invertible by assumption, \autoref{thm:dgm-ff-lemma} implies the composite
  \begin{equation}
    u^*X \xto{u^*\eta} u^*u_* u^* X \otiso u^*u_! u^* X \xto{u^*\epsilon} u^*X 
  \end{equation}
  is invertible.  
  Hence so is~\eqref{eq:diaretr}.
 
  In particular, every $X\in\dV(\bg)$ is a natural retract of $u_! u^* X$.
  Therefore, if $X,X'\in\dV(\bg)$, the homset $\dV(\bg)(X,X')$ is a retract of $\dV(\bg)(u_! u^* X,X')$.
  Moreover, this retraction is preserved by the underlying diagram functor $\dV(\bg) \to \dV(\tc)^\bg$.
  Thus, it will suffice to show that
  \begin{equation}
    \dV(\bg)(u_! u^* X, X') \too  \dV(\tc)^\bg(u^* u_! u^* X, u^* X')\label{eq:dgmofretr}
  \end{equation}
  is an isomorphism.
  However, by adjunction we have
  \[\dV(\bg)(u_! u^* X, X') \cong \dV(\tc)(u^* X, u^* X'),\]
  and by \autoref{thm:underlying-action}, $u^* u_! u^* X \cong \bigoplus_{g\in G} u^* X$ is the free $G$-object on $u^* X$ in $\dV(\tc)$.
  Thus, the domain and codomain of~\eqref{eq:dgmofretr} are isomorphic, and it is straightforward to check that this is indeed the desired map.
\end{proof}

\begin{cor}\label{thm:divbicat}
  If \dV is a semi-additive derivator, then the following bicategories with shadows are equivalent:
  \begin{enumerate}
  \item The full sub-bicategory of $\prof(\dV)$, as constructed in \S\ref{sec:der}, whose objects are groupoids $A$ such that \dV is $\card{\nAut_A(a)}$-divisible for each $a\in A$.\label{item:divbi1}
  \item The locally full sub-bicategory of $\prof(\dV(\tc))$, as constructed in \S\ref{sec:traces}, with the same objects as above, and whose 1-cells are the underlying incoherent diagrams of coherent ones.\label{item:divbi2}
  \end{enumerate}
  More precisely, since $\dV(\tc)$ is not cocomplete (though it does have some colimits, like coproducts), the claim is that it does have the necessary colimits in order to define the bicategory~\ref{item:divbi2} by the same formulas that would be used in $\prof(\dV(\tc))$.
\end{cor}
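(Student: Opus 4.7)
The plan is to bootstrap from \autoref{thm:dgm-ff} (the fully-faithfulness of the underlying diagram functor $\dV(\bg)\to\dV(\tc)^\bg$ for a single finite group) up to an equivalence of entire bicategories. First I would extend \autoref{thm:dgm-ff} from one-object groupoids to arbitrary small groupoids $A$ satisfying the divisibility condition. Since every small groupoid decomposes up to equivalence as a disjoint union $\coprod_i \bg_i$ of its automorphism groups, and the third axiom of a derivator gives $\dV(\coprod_i \bg_i)\simeq \prod_i \dV(\bg_i)$, fully-faithfulness for each $\bg_i$ yields it for $A$. Applied to the product $A\times B\op$ of two groupoids in our sub-bicategory (whose automorphism groups are $\nAut_A(a)\times\nAut_B(b)$, with $\dV$ divisible by their orders because divisibility multiplies), this already gives an equivalence on hom-categories between (1) and (2), since by definition the 1-cells of (2) are the essential image of the underlying diagram functor.

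Having matched hom-categories, the remaining task is to verify that units, composition, and shadow are transported correctly. The unit 1-cell $\lI_A = (t,s)_!\lS_{\tw(A)}$ in $\dprof(\dV)$ has pointwise values $\lI_A(a,a') = A(a,a')\cdot\lS$ because $(t,s)$ is a discrete opfibration, matching the formula for the identity profunctor in $\prof(\dV(\tc))$. For composition, I would first observe that when $B$ is a groupoid, every object of $\tw(B)$ is isomorphic to an identity morphism $\id_b$, so $\tw(B)\simeq B$; consequently the coend $\int^B H\otimes K = (\pi_{\tw(B)\op})_!(t\op,s\op)^*(H\otimes K)$ reduces to a coherent colimit over $B$ of the restriction of $H\otimes K$ along the diagonal-like embedding $B\to B\op\times B$. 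This colimit decomposes over the connected components of $B$ into coherent colimits over each one-object automorphism groupoid $\bg_i$.

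The main obstacle, and the crux of the proof, is to show that these coherent one-object-groupoid colimits agree with the ordinary coends in $\dV(\tc)$. For this I would use \autoref{thm:freecofreeiso} and \autoref{thm:dgm-ff-lemma}: in a semi-additive and $\card{G}$-divisible derivator, $u_!\cong u_*$ canonically, and the composite $X\to u_*u^*X\cong u_!u^*X\to X$ applied after $u^*$ is multiplication by $\card{G}$. Underlying, $u^*u_!u^*X\cong\bigoplus_{g\in G} u^*X$, and chasing the mate-formula description from the proof of \autoref{thm:freecofreeiso} shows that (after dividing by $\card{G}$) the canonical retraction of $u^*X$ onto the underlying object of $u_!X$ is the averaging idempotent $\frac{1}{\card{G}}\sum_{g\in G} g^*$. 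The image of this idempotent is simultaneously the ordinary orbit-coend in $\dV(\tc)$ (which therefore exists, establishing the ``necessary colimits'' clause of the statement) and the underlying object of the coherent colimit. Under the fully-faithful comparison from the first step, this identification is automatically natural and compatible with the associativity and unit coherences of bicategory (1), which then transport to (2).

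Finally, the shadow $\sh{H}=\int^A \symm^* H$ is computed by exactly the same groupoid-colimit analysis applied to the one-object automorphism groupoids of $A$, so the shadow isomorphisms $\sh{H\odot K}\cong \sh{K\odot H}$ furnished by \autoref{thm:derivshadow} restrict to matching shadow structure on bicategory (2). The hard part will be the idempotent-image identification in the third paragraph: while plausible and morally clear from the proof of \autoref{thm:freecofreeiso}, making it fully explicit requires careful mate-calculus bookkeeping to ensure naturality in $X$ and compatibility with the product $A\times B\op$ case (not just a single $\bg$), since we need it for profunctor composition rather than just objectwise colimits.
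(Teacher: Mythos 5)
Your overall architecture matches the paper's: same objects, hom-categories identified by extending \autoref{thm:dgm-ff} to groupoids and to the products $A\times B\op$ (using that divisibility multiplies), and then the real content is showing that the homotopy colimits and Kan extensions defining composition, units and shadow in $\prof(\dV)$ are also ordinary ones in $\dV(\tc)$. Where you diverge is at that crux: you propose to exhibit the coherent colimit as the splitting of the averaging idempotent $\tfrac{1}{\card{G}}\sum_{g\in G} g$ on the underlying object (note the retract you want is $\pi_!X$ as a retract of $u^*X\cong\pi_!u_!u^*X$, obtained by applying $\pi_!$ to the retraction of $X$ onto $u_!u^*X$ from \autoref{thm:dgm-ff-lemma}, not a retraction ``onto the underlying object of $u_!X$''), and then to check that this splitting is the $1$-categorical colimit. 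That route can be completed, but it is precisely the step you flag as requiring careful mate-calculus bookkeeping, and it is unnecessary: the full-faithfulness you already established in your first paragraph yields the identification by a universal-property argument, which is what the paper does. Writing $U$ for the underlying diagram functor,
\[\dV(\tc)(\pi_!X, Y)\;\cong\;\dV(\bg)(X,\pi^*Y)\;\cong\;\dV(\tc)^{\bg}(UX, U\pi^*Y),\]
and since $U\pi^*Y$ is the constant diagram at $Y$, this exhibits $\pi_!X$ as the ordinary colimit of $UX$ in $\dV(\tc)$ --- no idempotent computation, and the existence of the needed colimits in $\dV(\tc)$ comes for free. The same argument, applied fiberwise, shows that $\dV(\tc)$ has, and the coherent constructions compute, left Kan extensions along Grothendieck opfibrations whose fibers are groupoids of this kind; that is how the paper disposes of the external variables in profunctor composition and of the shadow (using $\tw(\bg)\simeq\bg$, as you note), which is exactly the compatibility issue you leave as a residual worry. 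So: correct strategy and the same skeleton, but your treatment of the key step trades a one-line adjunction argument for an explicit idempotent identification that you do not carry out.
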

\begin{proof}
  The bicategories in question have the same objects, and by \autoref{thm:dgm-ff} they have equivalent hom-categories.
  Thus, it suffices to show that their composition and identities are the same; or, more precisely, that the (homotopy) colimits and left Kan extensions used in defining~\ref{item:divbi1} are also ordinary colimits and Kan extensions in $\dV(\tc)$.

To this end, if $U$ denotes the underlying diagram functor, then \autoref{thm:dgm-ff} implies that whenever $\dV$ is $\card G$-divisible, we have
\[\dV(\tc)(\pi_!X, Y)\cong \dV(\bg)(X,\pi^*(Y))\cong \dV(\tc)^{\bg}(UX, U\pi^*Y). \] 
Note that $U\pi^* Y$ is the ordinary constant \bg-shaped diagram on $Y\in\dV(\tc)$.
Therefore, this composite isomorphism exhibits $\pi_! X$ as a colimit of $UX$ in $\dV(\tc)$.

In other words, although $\dV(\tc)$ is not cocomplete, it \emph{does} have colimits of those $\bg$-shaped diagrams that underlie some coherent diagram, and these colimits agree with the corresponding homotopy colimits in \dV.
More generally, $\dV(\tc)$ has colimits of coherent $A$-diagrams whenever $A$ is a groupoid such that \dV is $\card{\nAut_A(a)}$-divisible for each $a\in A$, since such colimits are just colimits over the groups $\bB\nAut_A(a)$ followed by a coproduct.
More generally still, it has left Kan extensions along Grothendieck opfibrations whose fibers are groupoids of this sort, and all of these agree with the corresponding homotopy versions in \dV.
However, all the colimits and Kan extensions used in defining the bicategory~\ref{item:divbi2} and its shadow are of this sort (note in particular that $\tw(\bg)$ is equivalent to $\bg$ itself).
This completes the proof.
\end{proof}

Now we can extract the linearity formula and obtain a generalized orbit-counting theorem.

\begin{thm} \label{thm:derivatorburnside}
  Suppose $G$ is a finite group and \dV is a stable and $\card G$-divisible symmetric monoidal derivator.
  Then the constant diagram $(\pi_{\bg\op})^*\lS$ is absolute, and its coefficient vector
  \[\tr(\id_{(\pi_{\bg\op})^*\lS}) \colon \lS \too \sh{\bg} \cong \bigoplus_{C} \lS \]
  has components $\phi_C =\frac{\card C}{\card G}$, where $C$ is the set of conjugacy classes of $G$.
  Therefore, if $X\in \dV(\bg)$ is pointwise right dualizable and $f\colon X\to X$, then
  \begin{equation}
    \tr(\colim(f)) = \sum_{C} \frac{\card C}{\card G}\tr(X_C \circ f),
  \end{equation}
  where $X_C$ denotes the action of any element of $C$ on $X$.
\end{thm}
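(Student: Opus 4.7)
The plan is to reduce the statement to the monoidal-category Burnside formula (Theorem \ref{thm:burnside}), using Corollary \ref{thm:divbicat} to translate between $\dprof(\dV)$ and $\prof(\dV(\tc))$. Note that $\dV(\tc)$ is itself closed symmetric monoidal, semi-additive (indeed additive by Remark \ref{rmk:additive}), and $\card G$-divisible, so Theorem \ref{thm:burnside} is directly available there.

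First I would check that $\tc$ and $\bg$ both lie in the sub-bicategory of Corollary \ref{thm:divbicat}: both are groupoids, and their automorphism groups have orders dividing $\card G$, so the divisibility hypothesis is satisfied. The corollary then delivers an equivalence of hom-categories $\dprof(\dV)(\tc,\bg) \simeq \prof(\dV(\tc))(\tc,\bg)$ together with matching compositions, identities, and shadows. Under this equivalence, the constant coherent weight $(\pi_{\bg\op})^*\lS$ corresponds to the constant functor $\Phi \colon \bg\op \to \dV(\tc)$ at $\lS$, which is precisely the weight handled in Theorem \ref{thm:burnside}, and the derivator base-change object $\bg(\id,\pt)$ corresponds to its classical namesake $\bigoplus_{g\in G}\lS$ with $G$ permuting summands.

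Next I would invoke Theorem \ref{thm:burnside} inside $\prof(\dV(\tc))$: it exhibits $\Phi$ as a retract of $\bg(\id,\pt)$ via the equivariant fold $r$ and the $\card G$-divisible section $s$, concludes absoluteness by Proposition \ref{thm:retracts}, and computes $\tr(\id_\Phi)$ as the vector with components $\frac{\card C}{\card G}$. Transferring back across the shadow-respecting equivalence produces the same data for $(\pi_{\bg\op})^*\lS$ in the sub-bicategory, and absoluteness upgrades to absoluteness in the full bicategory $\dprof(\dV)$ because duality data (right dual, unit, counit, triangle identities) are all local. The dualizability of $\colim(X) \cong \colim^\Phi(X)$ then follows from Theorem \ref{thm:compose-duals}, and the concluding linearity formula is Corollary \ref{thm:smderlin2} combined with the component lemma (Lemma \ref{thm:smderomega}), which identifies the components of $\tr(f)$ with the traces $\tr(f \circ X_C)$.

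The main technical obstacle is verifying that the retraction $rs = \id_\Phi$ from Theorem \ref{thm:burnside} really does arise from coherent morphisms in $\dV(\bg\op)$, and similarly that $\id_\Phi$ can be recognized as $rs$ coherently. This is delivered automatically by Theorem \ref{thm:dgm-ff}: once $r$ and $s$ are constructed at the level of underlying equivariant diagrams in $\dV(\tc)^\bg$, their coherent lifts are unique and the identity $rs = \id$ lifts along with them. Alternatively, one could dispense with the bicategorical translation and redo the retraction argument directly in $\dprof(\dV)$, using Proposition \ref{thm:bcodual} in the derivator form to absoluteness of the representable; but this would just repeat the monoidal calculation in heavier notation.
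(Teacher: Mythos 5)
Your proposal is correct and takes essentially the same route as the paper: both reduce to Theorem \ref{thm:burnside} by transporting the retract-of-a-representable argument, the duality of the base change objects, and the coefficient-vector computation across the shadow-preserving equivalence of Corollary \ref{thm:divbicat} (which is exactly where Theorem \ref{thm:dgm-ff} enters), then conclude absoluteness via Proposition \ref{thm:retracts} and the linearity formula via the general machinery.
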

\begin{proof}
  As we have seen, the underlying diagram of $u_!\lS$ is $\bigoplus_{g\in G} \lS \in\dV(\tc)^{\bg}$ with the right $G$-action that permutes the summands.
  Similarly, the underlying diagram of $(u\op)_!\lS$ is the same coproduct with the corresponding left $G$-action.
  In \autoref{thm:burnside} we observed that these two underlying diagrams are the representables $\bg(\id,a)$ and $\bg(a,\id)$ in $\prof(\dV(\tc))$, and hence dual to each other.
  Therefore, by \autoref{thm:divbicat}, they are also dual in $\prof(\dV)$.

  Similarly, the underlying diagram of $\pi^*\lS \in\dV(\bg)$ is the constant diagram at $\lS$.
  In \autoref{thm:burnside} we showed that this diagram is a retract of $\bg(\id,a)$.
  Thus, by \autoref{thm:divbicat}, $\pi^*\lS$ is also a retract of $u_!\lS$ in $\prof(\dV)$, and therefore absolute.

  Finally, the construction of the coefficient vector as a bicategorical trace is also preserved by the equivalence of \autoref{thm:divbicat}.
  Thus, our calculation of the coefficients for $\prof(\dV(\tc))$ in \autoref{thm:burnside} yields the same result in $\prof(\dV)$.
\end{proof}

There is an easy extension to finite groupoids.
Up to equivalence, a finite groupoid $A$ is a finite disjoint union of categories $\bg$, where $G$ ranges over the automorphism groups $\nAut_A(a)$ of isomorphism classes of objects in $A$.
Thus, the conjugacy classes of $A$ can be identified with pairs $([a],C)$ where $[a]$ is an isomorphism class of objects in $A$ and $C$ is a conjugacy class in $\nAut_A(a)$.

\begin{thm}\label{thm:fingroupoiddual}
  Suppose \dV is as in \autoref{thm:derivatorburnside} and $A$ is a finite groupoid such that for each object $a\in A$, \dV is $\card{\nAut_A(a)}$-divisible.
  If $X\in\dV(A)$ is such that each $X_a$ is right dualizable, then $\colim(X)$ is also right dualizable, and for any $f\colon X\to X$ we have
  \begin{equation}\label{eq:fingroupoidtrace}
    \tr(\colim(f))
    = \sum_{[a]} \sum_{C} \frac{\card C}{\card {\nAut_A(a)}}\tr(X_C \circ f_{a}).
  \end{equation}
\end{thm}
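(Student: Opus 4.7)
The plan is to reduce to the one-object case already handled in \autoref{thm:derivatorburnside} by decomposing the groupoid $A$ into its connected components. Choose a representative $a$ for each isomorphism class $[a]$ of objects of $A$, write $G_a = \nAut_A(a)$, and observe that $A$ is equivalent to the disjoint union $\coprod_{[a]} \mathbf{B} G_a$. Since a derivator sends coproducts of small categories to products of categories, we obtain an equivalence $\dV(A) \simeq \prod_{[a]} \dV(\mathbf{B} G_a)$, and similarly for $\dV(A\op)$. Let $X^{(a)} \in \dV(\mathbf{B} G_a)$ denote the components of $X$ and $f^{(a)}$ the components of $f$. Under this decomposition the colimit is simply a direct sum,
\[ \colim(X) \iso \bigoplus_{[a]} \colim(X^{(a)}), \]
and the weight $\pi_{A\op}^*\lS$ decomposes as a direct sum of the weights $\pi_{\mathbf{B} G_a\op}^*\lS$.

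First I would verify that pointwise dualizability of $X$ is equivalent to pointwise dualizability of each $X^{(a)}$: this is immediate from \autoref{thm:derptdual} applied on both sides, since the objects of $A$ and $\coprod_{[a]}\mathbf{B} G_a$ are the same up to isomorphism. Next, since each $\dV$ is $\card{G_a}$-divisible by hypothesis, \autoref{thm:derivatorburnside} applies to each component to produce (a) absoluteness of $\pi_{\mathbf{B} G_a\op}^*\lS$, and (b) dualizability of each $\colim(X^{(a)})$, together with the trace formula
\[ \tr(\colim(f^{(a)})) = \sum_{C} \frac{\card C}{\card{G_a}} \, \tr(X^{(a)}_C \circ f^{(a)}), \]
where $C$ ranges over conjugacy classes of $G_a$.

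The next step is to assemble these pieces. Semi-additivity of $\dV$ (which holds by stability, cf.\ \autoref{rmk:additive}) implies that finite direct sums of absolute weights are absolute and that finite direct sums of dualizable objects are dualizable with traces adding, as established in \autoref{eg:dirsum}. Applying this to the decomposition above shows that $\pi_{A\op}^*\lS$ is absolute, that $\colim(X)$ is dualizable, and that
\[ \tr(\colim(f)) = \sum_{[a]} \tr(\colim(f^{(a)})). \]
Substituting the per-component formula yields the claimed identity~\eqref{eq:fingroupoidtrace}, once one recognises that the conjugacy classes of the groupoid $A$ are exactly the pairs $([a], C)$ with $C$ a conjugacy class of $G_a$; this identification is straightforward from the definition of conjugacy in $A$ and the fact that morphisms in $A$ between non-isomorphic objects contribute nothing.

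The step I expect to require the most care is the bookkeeping in this last identification: one must check that the coefficient vector produced by direct-summing the per-component coefficient vectors of \autoref{thm:derivatorburnside} agrees with the coefficient vector of the original weight $\pi_{A\op}^*\lS$ under the canonical decomposition $\sh{A} \iso \bigoplus_{[a]} \sh{\mathbf{B} G_a}$. This is a naturality statement for the shadow and for $\tr(\id_{(-)})$ with respect to the equivalence $A \simeq \coprod_{[a]} \mathbf{B} G_a$, and it follows formally from \autoref{thm:compose-traces} and the coproduct axiom for $\dV$, but it is the only place where the general theory must be invoked beyond the two already-proved special cases.
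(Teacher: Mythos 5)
Your proposal is correct and follows essentially the same route as the paper: decompose the finite groupoid into its connected components $\bB\nAut_A(a)$, apply the one-object orbit-counting result (\autoref{thm:derivatorburnside}) on each component, and assemble with the additivity of duals and traces over finite coproducts, using the identification of conjugacy classes of $A$ with pairs $([a],C)$. The only cosmetic difference is that the paper packages the component decomposition via the opfibration $p\colon A\to A_0$ and the Beck--Chevalley isomorphism $(\pi_{\bB\nAut_A(a)})_!r^*\cong a^*p_!$, whereas you invoke the equivalence $A\simeq\coprod_{[a]}\bB\nAut_A(a)$ and the coproduct axiom for derivators, which amounts to the same computation.
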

\begin{proof}
  Let $A_0$ denote the set of objects of $A$; then the projection $p\colon A \to A_0$ is an opfibration, so for each $a\in A_0$ the pullback square
  \begin{equation}
    \vcenter{\xymatrix{
        \bB(\nAut_A(a))\ar[r]^-r\ar[d] &
        A\ar[d]\\
        \tc\ar[r]_a &
        A_0
      }}
  \end{equation}
  gives an isomorphism $ (\pi_{B(\nAut_A(a))})_!r^* \to a^* p_!$ as in \cite[Prop.~1.24]{groth:ptstab}.
  Thus, $(p_! X)_a$ is the colimit of the restriction of $X$ to $\bB\nAut_A(a)$, which is dualizable by \autoref{thm:derivatorburnside}.
  But $\colim(X) \cong \colim(p_! X)$, and the colimit over $A_0$ is a finite coproduct, hence also preserves dualizability.
  Formula~\eqref{eq:fingroupoidtrace} follows immediately from \autoref{thm:derivatorburnside} and the simple additivity formula for coproducts.
\end{proof}

\section{EI categories}
\label{sec:ei-categories}
\label{sec:eccatei}

Finally, we combine the results of \S\S\ref{sec:hofin2}--\ref{sec:deriv-burnside} to prove a linearity theorem for colimits over
\textbf{EI-categories}, i.e.\ categories in which every endomorphism is an isomorphism.
Thus, every endomorphism monoid is in fact an automorphism group.
Two extreme examples of EI-categories are groupoids (since all morphisms are isomorphisms) and posets (since all endomorphisms are identities).
Another standard example of an EI-category is the {\bf orbit category} $\sO_G$ of a finite group $G$, whose objects are the transitive $G$-sets $G/H$ and whose morphisms are $G$-maps.

If an EI-category is skeletal (as we may assume without loss of generality), then a morphism in it is invertible if and only if its domain and codomain are the same.
There is thus a partial order induced on the objects, where $a\le a'$ means there is an arrow $a\to a'$, and $a<a'$ iff there is a noninvertible such arrow.
Consider finite strings of composable noninvertible arrows $a_0 \xto{\alpha_1} a_1 \xto{\alpha_2} \cdots \xto{\alpha_n}a_n$; we include strings of length zero, which are just objects of $A$.
We say that two such strings are isomorphic if we have a commutative diagram 
\begin{equation}
  \vcenter{\xymatrix{
      a_0\ar[r]^-{\alpha_1}\ar[d]^{\cong} &
      a_1\ar[r]^-{\alpha_2}\ar[d]^{\cong} &
      \cdots \ar[r]^-{\alpha_n} &
      a_n\ar[d]^{\cong}\\
      a_0\ar[r]_-{\beta_1} &
      a_1\ar[r]_-{\beta_2} &
      \cdots\ar[r]_-{\beta_n} &
      a_n
    }}
\end{equation}
in which the vertical arrows are isomorphisms (hence automorphisms).
In particular, we can consider the set of isomorphism classes of such strings, and the automorphism group $\nAut(\vec{\alpha})$ of any such string.

Finally, any EI-category $A$ has a \textbf{core} $A_{\cong}$, which is a groupoid obtained by discarding all noninvertible arrows.
Since any cyclically composable pair in an EI-category consists of isomorphisms, we have $\Lambda A \cong \Lambda (A_{\cong})$, and thus
\[ \sh{A} \cong \bigoplus_{[a]}\, \sh{\bB\nAut_A(a)} \]
where $[a]$ ranges over isomorphism classes of objects of $A$.
In particular, we can expect to describe a coefficient vector with components indexed by pairs $([a],C)$, where $C$ is a conjugacy class in $\nAut_A(a)$.

\begin{thm}\label{thm:eicatdual}\label{thm:eccat-colim}
  Suppose that $A$ is a finite EI-category, and that \dV is a stable, closed symmetric monoidal derivator which is $\card{\nAut(\vec{\alpha})}$-divisible for each $\vec{\alpha}$.
  If $X\in\dV(A)$ is pointwise dualizable, then $\colim(X)$ is dualizable, and for any $f\colon X\to X$ we have
  \[ \tr(\colim(f)) = \sum_{[a]} \sum_{C} \tr(X_{C} \circ f_{a}) \cdot
  \sum_n (-1)^n \sum_{[\vec{\alpha}]} \sum_{\vec{C}} \frac{\card{\vec{C}}}{\card{\nAut(\vec{\alpha})}}. \]
\end{thm}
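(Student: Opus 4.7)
The plan is to combine \autoref{thm:eccat} (homotopy finite colimits) with \autoref{thm:fingroupoiddual} (finite groupoid colimits) by resolving $\colim(X)$ as a two-step construction: first take colimits over the automorphism groupoid of each string of noninvertible arrows, then assemble these via a finite semisimplicial colimit. Without loss of generality assume $A$ is skeletal, so noninvertible arrows go between distinct objects and the invertible endomorphisms at $a$ form the group $\nAut_A(a)$; since $A$ is finite, the maximal length $n$ of a composable string of noninvertible arrows is finite. Introduce an auxiliary category $\widetilde B$ whose objects are composable strings $\vec\alpha = (a_0 \to \cdots \to a_k)$ of noninvertible arrows of $A$ (with $k=0$ permitted), and whose morphisms are generated by isomorphisms of strings in the sense of the preamble to the theorem, together with face maps that delete arrows from either end or compose two adjacent arrows. (Composing adjacent noninvertible arrows again yields a noninvertible arrow, because cyclic composability in an EI-category would force invertibility.) The projection $q\colon\widetilde B \to A$ sending $\vec\alpha$ to $a_0$ should be shown homotopy final by the same coreflective-subcategory argument as in the proof of \autoref{thm:eccat}, and the length function factors $q$ through a further projection $p\colon \widetilde B \to (\bbDelta'_n)\op$, each fiber of $p$ being a finite groupoid equivalent to $\coprod_{[\vec\alpha]} \bB\nAut(\vec\alpha)$ indexed by isomorphism classes of length-$k$ strings.

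Since $\dV$ is $\card{\nAut(\vec\alpha)}$-divisible for every $\vec\alpha$, \autoref{thm:fingroupoiddual} applied to $q^*X$ fiberwise over $p$ shows that $p_!q^*X \in \dV((\bbDelta'_n)\op)$ is pointwise dualizable, its value at $[k]$ being the direct sum over isomorphism classes $[\vec\alpha]$ of length-$k$ strings of the dualizable colimits $\colim(X|_{\bB\nAut(\vec\alpha)})$. Since $q$ is homotopy final we have $\colim(X) \cong \colim(p_!q^*X)$, and \autoref{thm:eccat} applied to this truncated semisimplicial object (via formula \eqref{eq:realiztr}) then yields both the dualizability of $\colim(X)$ and the alternating-sum identity $\tr(\colim(f)) = \sum_k (-1)^k \tr((p_!q^*f)_{[k]})$.

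Unpacking the trace at each simplicial level via \autoref{thm:fingroupoiddual} produces the inner sum $\sum_{\vec C}\tfrac{\card{\vec C}}{\card{\nAut(\vec\alpha)}}$ over conjugacy classes $\vec C$ of $\nAut(\vec\alpha)$, while the component lemma \autoref{thm:smderomega} together with cyclicity of traces identifies the contribution at $(\vec\alpha, \vec C)$ with $\tr(X_C \circ f_{a_0})$, where $C$ is the conjugacy class in $\nAut_A(a_0)$ obtained by restricting an element of $\vec C$ to the initial vertex; regrouping by $([a], C)$ then recovers the displayed formula. The main obstacle I anticipate is the bookkeeping in this last step: one must verify that the homotopy finality of $q$ is compatible with the component-lemma identification at the $\bB\nAut(\vec\alpha)$-fibers, and that the pushforward of the coefficient vector from \autoref{thm:derivatorburnside} along the restriction $\Lambda\bB\nAut(\vec\alpha) \to \Lambda A$ really does collect the pairs $(\vec\alpha, \vec C)$ into the conjugacy classes $([a], C)$ with precisely the multiplicities demanded by the statement.
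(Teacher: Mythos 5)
Your proposal is correct and follows the paper's strategy in all essentials: the auxiliary category $\widetilde B$ of strings of noninvertible arrows is the paper's Grothendieck construction $E$, the homotopy finality of $q$ via a coreflective subcategory is exactly the paper's argument, and the fiberwise use of \autoref{thm:fingroupoiddual} followed by the alternating-sum formula for truncated semisimplicial colimits is the same two-step resolution. The one genuine difference is the choice of base for the opfibration: the paper pushes forward to $D$, the opposite category of nondegenerate simplices of the preorder reflection of $A$, applies the general formula \eqref{eq:eccattr} there, and then needs \autoref{thm:realiz-coeff} to evaluate each coefficient as $(-1)^{|\vec a|}$; you instead push forward all the way along the length functor to $(\bbDelta'_n)\op$ (which is the composite of the paper's $E\to D$ with the discrete opfibration $D\to(\bbDelta'_n)\op$, hence still an opfibration with the finite groupoid fibers you describe), so that \eqref{eq:realiztr} applies directly and the combinatorial \autoref{thm:realiz-coeff} is bypassed. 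This is a legitimate and mildly cleaner repackaging; the price is only that the per-level groupoid now aggregates all strings of a given length rather than those over a fixed chain, which changes nothing in the trace computation. Two small points you defer deserve one more sentence each in a full write-up: the coreflection argument for finality of $q$ is not literally the one from \autoref{thm:hofin-constr}, since in the EI setting the prefix arrow $a\to a_0$ may be invertible without being an identity, so (as in the paper's proof) one must treat the invertible and noninvertible cases separately; and the claim that the length functor is an opfibration should be recorded explicitly (it follows from the factorization through $D$ just mentioned, or by exhibiting opcartesian lifts with identity isomorphism components), since it is what licenses computing $(p_!q^*X)_{[k]}$ as a colimit over the fiber via the homotopy exact square of \cite[Prop.~1.24]{groth:ptstab}.
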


\begin{proof}
  Without loss of generality, we may assume $A$ is skeletal.
  Let $B$ be its \textbf{preorder reflection}, i.e.\ the objects of $B$ are those of $A$ and there is a unique arrow $a\to a'$ in $B$ precisely when there is some arrow $a\to a'$ in $A$.
  The assumptions that $A$ is skeletal and an EI-category imply that $B$ is also skeletal, i.e.\ a poset.
  In particular, it is homotopy finite.

  Let $D$ be the opposite of the category of nondegenerate simplices in $B$.
  The objects of $D$ are strings $a_0 < a_1 < \cdots < a_n$, where $a<a'$ means that there is an arrow $a\to a'$ in $A$, but $a\neq a'$.
  The non-identity morphisms in $D$ are given by deleting elements.
  More precisely, a morphism from $a_0 < a_1 < \cdots < a_n$ to $b_0 < b_1 < \cdots < b_m$ consists of an injection $\gamma$ from $[m] = \{0,1,\dots,m\}$ to $[n]$ such that $a_{\gamma(i)} = b_i$ for all $i$.

  Let $F\colon D\to \bCat$ be the functor sending such a string $a_0 < a_1 < \cdots < a_n$ to the category whose objects are composable strings $a_0 \xto{\alpha_1} a_1 \xto{\alpha_2} \cdots \xto{\alpha_n} a_n$ of (necessarily noninvertible) arrows in $A$, and whose morphisms are diagrams
  \begin{equation}\label{eq:autsimplex}
    \vcenter{\xymatrix{
        a_0\ar[r]^-{\alpha_1}\ar[d]^{\cong} &
        a_1\ar[r]^-{\alpha_2}\ar[d]^{\cong} &
        \cdots \ar[r]^-{\alpha_n} &
        a_n\ar[d]^{\cong}\\
        a_0\ar[r]_-{\beta_1} &
        a_1\ar[r]_-{\beta_2} &
        \cdots\ar[r]_-{\beta_n} &
        a_n
      }}
  \end{equation}
  in which the vertical arrows are automorphisms.
  The action of $F$ on face maps in $D$ is given by composing and discarding arrows.

  Let $E$ be the Grothendieck construction of $F$, with induced opfibration $p\colon E\to D$.
  Thus, the objects of $E$ are composable strings $a_0 \xto{\alpha_1} \cdots \xto{\alpha_n} a_n$ of noninvertible morphisms, and a morphism from $a_0 \xto{\alpha_1} \cdots \xto{\alpha_n} a_n$ to $b_0 \xto{\beta_1} \cdots \xto{\beta_m} b_m$ consists of an injection $\gamma:[m]\into [n]$ and isomorphisms $\delta_i : a_{\gamma(i)} \cong b_i$ such that $\beta_i = \delta_i \alpha_{\gamma(i)} \alpha_{\gamma(i)-1} \cdots \alpha_{\gamma(i-1)+1} \delta_{i-1}^{-1}$ for all $i$.
  There is a functor $q\colon E\to A$ sending a string $a_0 \xto{\alpha_1} \cdots \xto{\alpha_n} a_n$ to the object $a_0$, and a morphism $(\gamma,\delta)$ as above to the composite $\alpha_{\gamma(0)}\alpha_{\gamma(0)-1}\cdots \alpha_1$.
  We claim that analogously to the proof of \autoref{thm:hofin-constr}, this functor is homotopy final.
  
  To see this, let $a\in A$.
  The objects of $(a/q)$ are strings $a \xto{\alpha} a_0 \xto{\alpha_1} \cdots \xto{\alpha_n} a_n$ in which $\alpha$ \emph{might} be invertible but the other $\alpha_i$'s are not.
  We claim first that the subcategory of such objects for which $\alpha$ is an identity is coreflective.
  Suppose given some such string; to construct its coreflection we divide into two cases according to whether $\alpha$ is invertible or not.

  If $\alpha$ is not invertible, then the coreflection is $a \xto{\id_a} a \xto{\alpha} a_0 \xto{\alpha_1} \cdots \xto{\alpha_n} a_n$, with the counit discarding the first copy of $a$.
  A morphism $(\gamma,\delta)$ from $b_0 \xto{\beta_1} \cdots \xto{\beta_m} b_m$ to the given string factors through this uniquely by $(\gamma',\delta')$, where $\gamma'(0) = 0$ and $\gamma'(j+1) = \gamma(j)$, while $\delta'_0 = \id_a$ and $\delta'_{j+1} = \delta_j$.

  On the other hand, if $\alpha$ is invertible, so that in particular $a=a_0$, then the coreflection is $a \xto{\id_a} a \xto{\alpha_1 \alpha} a_1 \xto{\alpha_2} \cdots \xto{\alpha_n} a_n$, with counit $(\id_{[n]},\epsilon)$ where $\epsilon_0 = \alpha$ and $\epsilon_{i+1} = \id_{a_{i+1}}$.
  Given a morphism $(\gamma,\delta)$ from $b_0 \xto{\beta_1} \cdots \xto{\beta_m} b_m$ to the given string, we necessarily have $b_0 = a = a_0$ and $\delta_0 = \alpha$.
  Thus $(\gamma,\delta)$ factors uniquely through $(\id_{[n]},\epsilon)$ by $(\gamma',\delta')$, where $\gamma' = \gamma$ while $\delta'_0 = \id_a$ and $\delta'_{j+1} = \delta_{j+1}$.

  We have shown that every object of $(a/q)$ coreflects into the subcategory of strings for which $\alpha$ is an identity.
  But this subcategory has a terminal object, namely the string $a\xto{\id_a} a$.
  Thus $(a/q)$ is connected to $\tc$ by a zigzag of adjoints, so it is homotopy contractible, and hence $q$ is homotopy final.
  Therefore
  \[\colim(X) \cong \colim(q^* X) \cong \colim(p_! q^* X). \]
  However, since $p$ is an opfibration, for each object $\vec{a}=(a_0<\cdots<a_n)$ of $C$, the pullback square
  \begin{equation}
    \vcenter{\xymatrix@C=2pc{
        p^{-1} (\vec{a})\ar[r]^-i\ar[d] &
        E\ar[d]\\
        \tc \ar[r]_-{\vec{a}} &
        D
      }}
  \end{equation}
  and \cite[Prop.~1.24]{groth:ptstab} give an isomorphism $(\pi_{p^{-1}(\vec{a})})_! i^*\to (\vec{a})^*p_!$.
  Thus, $(p_! q^* X)_{\vec{a}}$ is a colimit over $p^{-1}({\vec{a}})$, which is a finite groupoid.
  Hence, by assumption and \autoref{thm:fingroupoiddual}, $(p_! q^* X)_{\vec{a}}$ is dualizable for each ${\vec{a}}$.
  But $\colim(X) \cong \colim(p_! q^* X)$, and $D$ is homotopy finite, so \autoref{thm:eccat} completes the proof of dualizability.

  We now extract the linearity formula from this construction.
  Suppose $f\colon X\to X$.
  The objects of $E$ are strings $\vec{\alpha} = (a_0 \xto{\alpha_1} \cdots \xto{\alpha_n} a_n)$ of composable noninvertible arrows in $A$, and we have $(q^*X)_{\vec \alpha} = X_{a_0}$.
  Now for any object $\vec{a}=(a_0<\cdots<a_n)$ of $D$, the isomorphism classes in $p^{-1} (\vec{a})$ are isomorphism classes $[\vec{\alpha}]$ connecting the objects $a_0,\dots,a_n$ in order.
  Thus we have
  \[(p_! q^* X)_{\vec{a}} = \bigoplus_{[\vec{\alpha}]} (X_{a_0} / \nAut(\vec{\alpha})) \]
  where $\nAut(\vec{\alpha})$ denotes the group of automorphisms of $\vec{\alpha}$ as in~\eqref{eq:autsimplex}, which act on $X_{a_0}$ via their first components $a_0 \toiso a_0$.
  Thus, by \autoref{thm:fingroupoiddual} and the simple additivity formula for coproducts, the trace of the induced endomorphism of $(p_! q^* X)_{\vec{a}}$ is
  \[ \sum_{[\vec{\alpha}]} \sum_{\vec{C}} \frac{\card {\vec{C}}}{\card{\nAut(\vec{\alpha})}} \tr(X_{C_0} \circ f_{a_0}) \]
  where $\vec{C}$ ranges over conjugacy classes in $\nAut(\vec{\alpha})$, and $X_{C_0}$ denotes the action of such a $\vec{C}$ on $X_{a_0}$ via its first component.
  Now \autoref{thm:eccat} yields the following formula for the trace of the induced endomorphism of $\colim(X) = \colim(p_! q^* X)$:
  \begin{multline}
    \tr(\colim(f)) = \sum_{\vec{a}}
    \left(\sum_{[\vec{\alpha}]} \sum_{\vec{C}} \frac{\card{\vec{C}}}{\card{\nAut(\vec{\alpha})}}\tr(X_{C_0} \circ f_{a_0})\right)\cdot \\
    \left(\sum_{k\ge 0} (-1)^k \cdot \card {\left\{ \parbox{4cm}{\centering composable strings of nonidentity face maps of length $k$ starting at $\vec{a} \in D$}\right\}}\right)
  \end{multline}
  However, by \autoref{thm:realiz-coeff} which we prove below, the last factor is simply equal to $(-1)^n$, where $n$ is the length of $\vec{a}$.
  Therefore, rearranging the summations, we obtain
  \[ \tr(\colim(f)) = \sum_{a} \sum_{C} \tr(X_{C} \circ f_{a}) \cdot
  \sum_n (-1)^n \sum_{[\vec{\alpha}]} \sum_{\vec{C}} \frac{\card{\vec{C}}}{\card{\nAut(\vec{\alpha})}} \]
  where $a$ ranges over objects of $A$, $C$ ranges over conjugacy classes of automorphisms of $a$, $[\vec{\alpha}]$ ranges over isomorphism classes of strings of $n$ composable noninvertible arrows starting at $a$, and $\vec{C}$ ranges over conjugacy classes of automorphisms of $\vec{\alpha}$ which restrict to $C$ on $a$.
\end{proof}

\begin{lem}\label{thm:realiz-coeff}
  For any $n$, we have
  \begin{equation}
    \sum_{k\ge 0} (-1)^k \cdot \card {\left\{
      \parbox{5.4cm}{\centering composable strings of $k$ nonidentity\\ face maps starting at $[n] \in \bbDelta\op$}
    \right\}}
    = (-1)^n.
  \end{equation}
\end{lem}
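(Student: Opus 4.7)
The plan is to recognize the alternating sum as a topological Euler characteristic. First, I would identify a composable string of $k$ nonidentity face maps $[n] \to [m_1] \to \cdots \to [m_k]$ in $\bbDelta\op$ (that is, strict injections $[m_k] \hookrightarrow \cdots \hookrightarrow [m_1] \hookrightarrow [n]$ in $\bbDelta$) with a strict chain $S_k \subsetneq S_{k-1} \subsetneq \cdots \subsetneq S_1 \subsetneq \{0, 1, \ldots, n\}$ of proper nonempty subsets: each injection $[m_i] \hookrightarrow [n]$ is determined by its image $S_i$, and composites of injections match composites of inclusions.

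Next, let $P$ denote the poset of proper nonempty subsets of $\{0, 1, \ldots, n\}$ ordered by inclusion, and let $\Delta(P)$ be its order complex. Length-$k$ chains in $P$ are exactly the $(k-1)$-simplices of $\Delta(P)$, so separating off the empty (length-zero) string yields
\[
\sum_{k \geq 0} (-1)^k N_k \;=\; 1 \;-\; \sum_{j \geq 0}(-1)^j f_j(\Delta(P)) \;=\; 1 - \chi(\lvert \Delta(P)\rvert),
\]
where $N_k$ denotes the number of length-$k$ strings and $f_j$ counts $j$-simplices.

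Finally, by construction $\Delta(P)$ is the barycentric subdivision of the boundary $\partial\Delta^n$ of the standard $n$-simplex, because the simplices of $\partial\Delta^n$ are precisely the elements of $P$. Hence $\lvert \Delta(P)\rvert \cong S^{n-1}$ for $n \geq 1$, with Euler characteristic $1 + (-1)^{n-1}$, and the sum evaluates to $1 - (1 + (-1)^{n-1}) = (-1)^n$. The case $n = 0$ is degenerate: $P$ is empty, no nontrivial strings exist, and the sum is simply $1 = (-1)^0$.

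The only step requiring real care is the combinatorial translation in the first paragraph: one must verify that ``face map'' in the statement permits arbitrary strict injections (rather than just the elementary coface maps $d_i$), which is what the definition of the category $D$ in the proof of \autoref{thm:eicatdual} demands — morphisms there correspond to arbitrary injections between strings of objects. Once that bijection is in place, the rest is a standard Euler-characteristic computation.
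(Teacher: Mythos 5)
Your proof is correct and is essentially the paper's own argument: the paper also identifies strings of nonidentity face maps with simplices of the barycentric subdivision of $\partial\Delta^n$ (phrased as the simplices of the subdivided $n$-simplex not containing the barycenter) and concludes via $\chi(S^{n-1}) = 1+(-1)^{n-1}$. Your version merely makes the combinatorial bijection with chains of proper nonempty subsets explicit, which is a welcome but not substantively different elaboration of the paper's sketch.
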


\begin{proof}
  This lemma has many proofs~\cite{mo:chains}.
  Here we sketch a topological one; in \S\ref{sec:uniqueness} on page~\pageref{proof:realiz-coeff} we will give another.
  Let $g(n,k)$ be the number of composable strings of $k$ nonidentity face maps starting at $[n]$; thus the claim is that
  \[ \sum_{k\ge 0} (-1)^k \, g(n,k) = (-1)^n. \]

  Let $\Delta$ be the standard $n$-simplex and $\Delta'$ its barycentric subdivision.
  Then the $k$-simplices of $\Delta'$ are composable strings of $k+1$ face maps starting at $[n] \in \bbDelta\op$ of which the first one might be an identity.
  Those for which the first map is not an identity are precisely those that do not contain the barycenter, i.e.\ that lie in the boundary $\partial\Delta'$.
  Therefore, for $k>0$, $g(n,k)$ is the number of $(k-1)$-simplices in $\partial\Delta'$.
  Since the Euler characteristic of a simplicial complex is the alternating sum of its simplices by dimension, we have
  \[\chi(\partial \Delta')=\sum_{k\geq 1}(-1)^{k-1}g(n,k)=-\left(\sum_{k\geq 1}(-1)^{k}g(n,k)\right)\]
  However, $\partial \Delta'$ is topologically an $(n-1)$-sphere, so this is also equal to $\chi(S^{n-1}) = 1+(-1)^{n-1}$.
  After adding in $g(n,0)=1$, we have the desired statement.
\end{proof}

  \autoref{thm:eicatdual} implies that if \dV is stable and \emph{rational} (i.e. $n$-divisible for all nonzero integers $n$), then colimits of all finite EI-categories are absolute.\footnote{To be precise, it only says that such colimits preserve dualizability, which is a \emph{consequence} of absoluteness.
    However, when we generalize to bicategories, preserving dualizability becomes equivalent to absoluteness; see \S\ref{sec:uniqueness}.}
  However, these are not the only absolute colimits in such a \dV.
  For instance, if $p\colon B\to A$ is a homotopy final functor, then the colimit of $X\in\dV(A)$ can be computed as the colimit of $p^*(X) \in \dV(B)$; thus if $B$-colimits are absolute in \dV, so are $A$-colimits.

  As a concrete example, if $A = \mathbf{B}F_n$ is the one-object groupoid corresponding to the free group on $n$ generators, then there is a homotopy final functor $p\colon B\to A$ where $B$ has two objects $b_1$ and $b_2$ with $n+1$ nonidentity arrows from $b_1$ to $b_2$.
  The functor $p$ sends one of these arrows to the identity and the others to the free generators.
  Since $B$ is finite, $B$-colimits are absolute in any stable \dV, and hence so are $A$-colimits.
  The formula~\eqref{eq:eccattr} gives $\tr(\colim(g)) = \tr(g_{b_2}) - n \tr(g_{b_1}) $ for any endomorphism $g$ of a pointwise dualizable $Y\in \dV(B)$, whence $\tr(\colim(f)) = \tr(\colim(p^*(f))) = (1-n)\tr(f_{\pt})$ for any endomorphism $f$ of a pointwise dualizable $X\in\dV(A)$.

  Thus, finiteness of the group $G$ is not necessary for $\bg$-colimits to be absolute or to have a linearity formula.
  In fact, when \dV is the homotopy category of chain complexes, $\bg$-colimits are absolute whenever $G$ is of \emph{type FP} as defined in~\cite[VIII.6]{brown:cog}.
  See also~\cite[Example 8.7]{PS3}.
  
  We might call a category \textbf{finally homotopy finite} if it admits a homotopy final functor from a homotopy finite category; these are a homotopical version of the duals of the \emph{L-finite categories} of~\cite{pare:sclim}.
  Thus, colimits over all such categories are absolute in stable derivators.
  More generally, we can consider categories admitting a homotopy final functor from a finite EI-category, whose colimits will be absolute in any rational stable derivator.

  However, even this does not exhaust the absolute colimits in such derivators.
  We have seen that splitting of idempotents is absolute in \emph{any} \dV, and the free-living idempotent is not finally homotopy finite (see~\cite[Example 4.4.5.1]{lurie:higher-topoi}).
  We do not know a characterization of all colimits that are absolute in any stable derivator or in any rational stable derivator.

\begin{rmk}
  A related question is whether there is any \dV in which \emph{all} colimits are absolute.
  We have seen in \autoref{thm:eccat} that finite colimits are absolute in any stable \dV, and in \autoref{eg:sup} that infinite coproducts are absolute in suplattices.
  However, these two properties are impossible to combine nontrivially, because of the ``Eilenberg swindle''.
  Specifically, if countably infinite coproducts are absolute in \dV, then we can ``add up countably many parallel morphisms'' $\{f_i \colon  X\to Y\}_{i\in \mathbb{N}}$ to get a single morphism $\sum_{i\in \mathbb{N}} f_i \colon  X\to Y$, just as we can add up finitely many parallel morphisms in a semi-additive category.
  But if \dV is also additive, so that we can subtract morphisms, then for any $f\colon X\to Y$ we have
  \[f = \left(f + \sum_{i\in \mathbb{N}} f\right) - \sum_{i\in \mathbb{N}} f = \sum_{i\in \mathbb{N}} f - \sum_{i\in \mathbb{N}} f = 0.\]
  Since stable derivators are additive, countable coproducts cannot be absolute in any nontrivial stable monoidal derivator.
\end{rmk}

There are other ways to organize the many terms  in  \autoref{thm:eccat-colim}, one of which is the formula of~\cite{souza:traces}.
We end this section by explaining how his formula is equivalent to ours.

Let $A$ be a finite EI-category.
Recall that according to \autoref{thm:eccat-colim}, the coefficient associated to a conjugacy class $C$ in $\nAut_A(a)$ is
\[ \sum_n (-1)^n \sum_{[\vec{\alpha}]} \sum_{\vec{C}} \frac{\card{\vec{C}}}{\card{\nAut(\vec{\alpha})}}. \]
Undoing the rearrangement at the end of the proof, we can rewrite this as a sum over finite sequences $\vec{a} = (a_0 < a_1 < \dots < a_n)$ of objects of $A$:
\[ \sum_{\vec{a}} (-1)^{|\vec{a}|} \sum_{[\vec{\alpha}]} \sum_{\vec{C}} \frac{\card{\vec{C}}}{\card{\nAut(\vec{\alpha})}}. \]
where $|\vec{a}|$ denotes the length of $\vec{a}$, and $\vec{\alpha}$ runs only over sequences of arrows whose underlying sequence of objects is $\vec{a}$.

The formula of~\cite{souza:traces} also begins with $\sum_{\vec{a}} (-1)^{|\vec{a}|}$, so from now on we fix a particular $\vec{a}$ of length $n$ and consider only the rest of the sum:
\begin{equation}
  \sum_{[\vec{\alpha}]} \sum_{\vec{C}} \frac{\card{\vec{C}}}{\card{\nAut(\vec{\alpha})}}\label{eq:ds1}
\end{equation}
Let $G_i = \nAut_A(a_i)$, let $G = \prod_{0\le i\le n} G_i$, and let $Z$ be the set of sequences of arrows $\vec{\alpha}$ with underlying sequence of objects $\vec{a}$.
Then the group $G$ acts on the set $Z$ as follows: the group element $(g_0,g_1,\dots,g_n)$ sends the sequence
\[a_0 \xto{\alpha_1} a_1 \xto{\alpha_2} \cdots \xto{\alpha_n}a_n\]
to the sequence
\[a_0 \xto{g_1\alpha_1 g_0^{-1}} a_1 \xto{g_2 \alpha_2 g_1^{-1}} \cdots \xto{g_n \alpha_ng_{n-1}^{-1}}a_n.\]
The isomorphism classes of $\vec{\alpha}$ are precisely the \emph{orbits} of this group action, and the automorphism group of an $\vec{\alpha}$ class is precisely its \emph{stabilizer group} $G_{\vec{\alpha}}$.
Thus, we can rewrite~\eqref{eq:ds1} as
\begin{equation}\label{eq:ds2}
  \sum_{[\vec{\alpha}] \in Z/G} \sum_{\vec{C}} \frac{\card{\vec{C}}}{\card{G_{\vec{\alpha}}}}
\end{equation}
Recall that $\vec{C}$ runs over conjugacy classes in $G_{\vec{\alpha}}$ which restrict to $C$ on $a_0$.
Thus, if we let $S = C \times \prod_{1\le i\le n} G_i \subseteq G$, then~\eqref{eq:ds2} is equal to
\begin{equation}\label{eq:ds3}
  \sum_{[\vec{\alpha}] \in Z/G} \frac{\card{(G_{\vec{\alpha}}\cap S)}}{\card{G_{\vec{\alpha}}}}.
\end{equation}
Now we can invoke the following lemma.

\begin{lem}
  Let a finite group $G$ act on a finite set $Z$, and let $S\subseteq G$ be a union of conjugacy classes.
  Then
  \[ \sum_{[z] \in Z/G} \frac{\card{(G_z \cap S)}}{\card{(G_z)}} = \sum_{g\in S} \frac{\card{(Z^g)}}{\card{G}}. \]
\end{lem}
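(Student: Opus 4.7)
The plan is to prove the equality by counting the set
\[ T = \{(g,z) \in S \times Z : gz = z\} \]
in two different ways, generalizing the standard double-counting proof of Burnside's lemma.

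First I would count $T$ by projecting onto the first factor: this immediately gives $\card T = \sum_{g\in S} \card{(Z^g)}$, which after dividing by $\card G$ yields the right-hand side.

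Next I would count $T$ by projecting onto the second factor, which gives $\card T = \sum_{z\in Z} \card{(G_z \cap S)}$. I would then regroup this sum over orbits: $\sum_{z\in Z}\card{(G_z\cap S)} = \sum_{[z]\in Z/G}\sum_{z'\in Gz}\card{(G_{z'}\cap S)}$. The key observation at this point — and really the only thing that requires the hypothesis on $S$ — is that stabilizers along an orbit are conjugate, $G_{gz} = gG_zg^{-1}$, so that $\card{(G_{gz}\cap S)} = \card{(gG_zg^{-1}\cap S)} = \card{(G_z\cap g^{-1}Sg)}$, and since $S$ is a union of conjugacy classes we have $g^{-1}Sg = S$. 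Thus $\card{(G_{z'}\cap S)}$ is constant as $z'$ ranges over the orbit of $z$.

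Combining this with the orbit-stabilizer theorem $\card{(Gz)} = \card G/\card{(G_z)}$, the inner sum evaluates to $\frac{\card G}{\card{(G_z)}}\cdot \card{(G_z\cap S)}$, giving
\[ \card T = \card G \cdot \sum_{[z]\in Z/G} \frac{\card{(G_z\cap S)}}{\card{(G_z)}}. \]
Equating the two expressions for $\card T$ and dividing by $\card G$ yields the claimed identity. I do not anticipate any real obstacle here; the only nontrivial input is the conjugation invariance of $S$, which is built into the hypothesis.
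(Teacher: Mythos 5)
Your proof is correct. It takes a somewhat different route from the one in the paper: there, both sides are observed to be additive under disjoint unions of $G$-sets, so the identity is checked only for a single orbit $Z=G/H$, where the left-hand side is immediately $\card{(H\cap S)}/\card H$ and the right-hand side is evaluated via the fixed-point counts of elements of $S$ on $G/H$. You instead run a global double count of the incidence set $T=\{(g,z)\in S\times Z: gz=z\}$, in the style of the usual proof of Burnside's lemma: projecting to $S$ gives the right-hand side, while projecting to $Z$, grouping by orbits, and using $G_{gz}=gG_zg^{-1}$ together with $g^{-1}Sg=S$ and orbit--stabilizer gives the left-hand side. The two arguments buy slightly different things: the paper's reduction-to-orbits step mirrors how the lemma is actually applied (orbitwise) and keeps the transitive computation short, whereas your version avoids the reduction entirely and makes completely explicit exactly where the hypothesis that $S$ is a union of conjugacy classes enters --- namely in the constancy of $\card{(G_{z'}\cap S)}$ along an orbit --- a point that is somewhat glossed over in the paper's pointwise description of the fixed-point counts on $G/H$ (which, term by term, is only correct after summing over the conjugation-closed set $S$). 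Either argument is acceptable.
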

The assumption on $S$ ensures that the left-hand summand depends only on $[z]\in Z/G$.
Note that if $S=G$, this reduces to the orbit-counting theorem.
\begin{proof}
  Both sides are additive under disjoint union of $G$-sets, so it suffices to consider the case of an orbit $Z=G/H$ for some subgroup $H\le G$.
  In this case the left-hand side becomes $\frac{\card{(H\cap S)}}{\card H}$.
  Now if $g\in H$ then $\card{((G/H)^g)} = \card{(G/H)}$, while otherwise $\card{((G/H)^g)} = 0$; thus the right-hand side also becomes
  \[ \sum_{g\in H\cap S} \frac{\card{(G/H)}}{\card G} = \card{(H\cap S)} \frac{\;\frac{\card{G}}{\card{H}}\;}{\card{G}} = \frac{\card{(H\cap S)}}{\card H}. \qedhere \]
\end{proof}

Returning to the situation at hand, the lemma tells us that~\eqref{eq:ds3} is equal to
\begin{align}
  \sum_{\vec{g}\in S} \frac{\card{(Z^{\vec{g}})}}{\card{G}}
  &= \sum_{g_0\in C} \sum_{g_1\in G_1} \dots \sum_{g_n \in G_n} \frac{\card{(Z^{\vec{g}})}}{(\card{G_0})(\card{G_1}) \cdots (\card{G_n})}\\
  &= \sum_{C_1} \dots \sum_{C_n} \left(\card{(Z^{\vec{g}})}\right) \frac{\card{C}}{\card{G_0}} \frac{\card{C_1}}{\card{G_1}} \cdots \frac{\card{C_n}}{\card{G_n}}\label{eq:ds4}
\end{align}
where $C_i$ ranges over conjugacy classes in $G_i$, and $\vec{g}=(g_0,g_1,\dots,g_n)$ for some $g_i\in C_i$ (it doesn't matter which for $\card{(Z^{\vec{g}})}$).
If we choose a subset $T_i \subseteq G_i$ for each $i$ containing exactly one element from each conjugacy class, with $h_0$ the element of $T_0$ in our chosen conjugacy class $C$, then~\eqref{eq:ds4} is equal to
\begin{align}
  \sum_{h_1 \in T_1} \dots \sum_{h_n \in T_n}
  \sum_{\alpha_1}
  \dots
  \sum_{\alpha_n}
  \frac{\card{[h_0]}}{\card{G_0}} \frac{\card{[h_1]}}{\card{G_1}} \cdots \frac{\card{[h_n]}}{\card{G_n}}\label{eq:ds4}
\end{align}
in which $\alpha_i$ ranges over those morphisms $a_{i-1} \to a_i$ such that $h_i \alpha_i (h_{i-1})^{-1} = \alpha_i$, or equivalently $h_i \alpha_i = \alpha_i h_{i-1}$.

This is essentially the formula of~\cite[Lemma 23 and Theorem 26]{souza:traces}.
There it is phrased as follows.
Let $\End(A)$ be the category whose objects are endomorphisms $h\colon a\to a$ in $A$, and whose morphisms from $h\colon a\to a$ to $k\colon b\to b$ are morphisms $\alpha\colon a\to b$ such that $\alpha h = k\alpha$.
Note that the isomorphism classes of objects in $\End(A)$ are precisely the conjugacy classes in automorphism groups of objects in $A$.
Let $J$ be a skeleton of $\End(A)$; thus it amounts to choosing a representative $h$ of each conjugacy class in each $\nAut_A(a)$.
Therefore,~\eqref{eq:ds4} can be written as a sum over composable strings of arrows in $J$
\begin{equation}\label{eq:ds5}
  \sum_{h_0 \xto{\alpha_1} h_1 \xto{\alpha_2} \cdots \xto{\alpha_n} h_n} \frac{\card{[h_0]}}{\card{G_0}} \frac{\card{[h_1]}}{\card{G_1}} \cdots \frac{\card{[h_n]}}{\card{G_n}}
\end{equation}
where $h_i$ is required to be an endomorphism of $a_i$.
However, since we are also adding these up over all $\vec{a}$, the end result can be interpreted as a sum over \emph{all} composable sequences of noninvertible arrows in $J$.
This is how it is written in~\cite{souza:traces}.

\part{Linearity in bicategories}
\label{part:bicat}

In \S\ref{sec:moncat}, we used bicategorical traces in the bicategory $\prof(\V)$ to prove linearity formulas for symmetric monoidal traces in a symmetric monoidal category \V.
We generalized to derivators in \S\ref{sec:der}.
In this part, we generalize further to the case when we \emph{start} with a bicategory (or a \emph{derivator bicategory}) instead of a symmetric monoidal category, thereby obtaining linearity formulas for \emph{bicategorical} traces.  

The ease of this generalization is one of the primary motivations for our general approach to linearity.
In particular, in \S\ref{sec:derbicat} we will use it to generalize the additivity formula of~\cite{add} to bicategorical traces, without having to generalize the complicated axioms of~\cite{add} to the case of bicategorical traces.
(This was done for the first four axioms (TC1)--(TC4) in~\cite[\S16.7]{maysig:pht}, but it is much more difficult to generalize the final axiom (TC5) to bicategories.
It should be possible to generalize the version of May's proof for monoidal derivators presented in~\cite{gps:additivity} to derivator bicategories, but the present approach avoids this question entirely.)

Just as the additivity formula of~\cite{add} applies to the Lefschetz number, which is a trace in the symmetric monoidal derivator of spectra, the bicategorical version in \S\ref{sec:derbicat} applies to an analogous invariant arising from the derivator bicategory of parametrized spectra called the \emph{Reidemeister trace}.
As remarked in \S\ref{sec:introduction}, we postpone the details of the application to Reidemeister trace to the companion paper~\cite{PS6}; in \S\ref{sec:derbicat} we will only sketch the argument.

As in \autoref{part:more}, we postpone some details and proofs until \autoref{part:formal}.

\section{Linearity in ordinary bicategories}
\label{sec:bicat}

Let \W be a closed bicategory equipped with a shadow valued in a category \T.
We assume that \W is \emph{locally complete and cocomplete}, i.e.\ its hom-categories $\W(R,S)$ are complete and cocomplete --- since the composition functor $\odot$ has both adjoints, it automatically preserves colimits in each variable.
We assume furthermore that \T is cocomplete and the shadow functors $\sh{-}\colon \W(R,R) \to \T$ are cocontinuous.

We now define a new closed bicategory $\prof(\W)$, with shadow also valued in $\T$, as follows.
\begin{itemize} 
\item An object is a pair $(A,R)$ where $A\in\cCat$ and $R$ is an object of $\W$.
\item The hom category from $(A,R)$ to $(B,S)$ is the functor category $\W({R,S})^{A\times B\op}$.
\item The composite of $H\colon (A,R)\hto(B,S)$ and $K\colon (B,S)\hto(C,T)$ is defined by
  \[ (H\odot K)(a,c) = \int^{b\in B} H(a,b) \odot K(b,c). \]
  Note that this is a coend in the cocomplete hom-category $\W(R,T)$.
\item The unit 1-morphism $\lI_{(A,R)} \colon  (A,R)\hto (A,R)$ defined by the copower
  \[ \lI_{(A,R)}(a,a') = A(a,a') \cdot \lI_R.\]
\item The shadow of $H\colon (A,R) \hto (A,R)$ is
  \[ \sh{H} = \int^{a\in A} \sh{H(a,a)}. \]
  Note that this is a coend in $\T$.
\item The internal hom $\rhd$ is defined for $H\colon (A,R)\hto (B,S)$ and $K\colon (C,T) \hto (B,S)$ by
  \[ (H\rhd K)(c,a) = \int_{b\in B} H(a,b) \rhd K(c,b), \]
  and similarly for $\lhd$.
\end{itemize}
The intent is to generalize as closely as possible the construction of $\prof(\V)$ from \S\ref{sec:moncat}.
Indeed, if we regard a monoidal category $\V$ as a bicategory with one object, the two constructions agree.
A version of \autoref{rmk:enriched-cats} also applies here. 

Now suppose given $X\colon A\to \W(R,S)$ and $\Phi\colon A\op \to \W(R,R)$, which we can regard respectively as morphisms $(A,R) \hto (\tc,S)$ and $(\tc,R) \hto (A,R)$ in $\prof(\W)$.
Thus, we have a composite $\Phi \odot X \colon  (\tc,R) \hto (\tc,S)$, which is essentially just a morphism $R\hto S$ in $\W$; we call this the \textbf{$\Phi$-weighted colimit of $X$} and denote it $\colim^\Phi(X)$.
If $\Phi$ is furthermore constant at the unit object $\lI_R$, then the $\Phi$-weighted colimit of $X$ is simply the ordinary colimit of $X\colon A \to \W(R,S)$ in the category $\W(R,S)$.
Now we can apply \autoref{thm:compose-duals} and \autoref{thm:compose-traces}.

\begin{defn}\
  \begin{itemize}
  \item A functor $X\colon A\to \W(R,S)$ is \textbf{pointwise right dualizable} if it is right dualizable as a morphism $(A,R) \hto (\tc,S)$ in $\prof(\W)$.
  \item A weight $\Phi\colon A\op \to \W(R,R)$ is \textbf{absolute} if it is right dualizable as a morphism $(\tc,R) \hto (A,R)$ in $\prof(\W)$.
  \end{itemize}
\end{defn}

\begin{lem}\label{thm:bicatpwdual}
  A functor $X\colon A\to \W(R,S)$ is pointwise right dualizable if and only if each 1-cell $X(a) \in\W(R,S)$ is right dualizable in $\W$.
\end{lem}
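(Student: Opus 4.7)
The plan is to adapt the proof of \autoref{thm:smcpwdual} verbatim, taking advantage of the fact that the intermediate category occurring in both $U\odot \rdual X$ and $X\rhd U$ is $\tc$, so the coend and end trivialize and everything is computed pointwise in $\W$. Regarding $X$ as a 1-cell $(A,R)\hto(\tc,S)$, a candidate right dual is a functor $A\op\to \W(S,R)$; when each $X(a)$ is right dualizable in $\W$, one defines $Y\colon A\op \to \W(S,R)$ by $Y(a) = \rdual{X(a)}$, with contravariant functoriality coming from applying $\rdual{(-)}$ to the morphisms $X(f)$ in the usual 2-categorical way.

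The next step is to identify the components of the canonical map. For any $U\colon (B,T)\hto (\tc,S)$, i.e.\ $U\colon B\to \W(T,S)$, the definitions of composition and internal hom in $\prof(\W)$ unpack (using that the middle category is $\tc$) to
\[
(U\odot Y)(b,a) = U(b)\odot \rdual{X(a)},
\qquad
(X\rhd U)(b,a) = X(a)\rhd U(b).
\]
By construction, $\mu_{X,U}$ is the adjunct of the composite $(U\odot \rdual X\odot X)\to U$ built from the counit $\epsilon\colon \rdual X\odot X\to \lI_S$ in $\prof(\W)$. This counit is assembled from the pointwise counits $\epsilon_{X(a)}\colon \rdual{X(a)}\odot X(a)\to \lI_S$ in $\W$ via the coend, and tracing through the adjunction isomorphisms shows that the $(b,a)$-component of $\mu_{X,U}$ is exactly the canonical map $\mu_{X(a),U(b)}\colon U(b)\odot \rdual{X(a)}\to X(a)\rhd U(b)$ in $\W$.

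From here both directions are immediate, since isomorphisms of functors into $\W(T,R)$ are detected pointwise. If each $X(a)$ is right dualizable in $\W$, then every component $\mu_{X(a),U(b)}$ is an isomorphism, so $\mu_{X,U}$ is an isomorphism for every $U$, showing $X$ is pointwise right dualizable with dual $Y$. Conversely, if $X$ is pointwise right dualizable, it suffices (by the general fact recalled in \S\ref{sec:traces}) to know that $\mu_{X,X}$ is an isomorphism, and evaluating at the diagonal component $(a,a)$ gives that $\mu_{X(a),X(a)}$ is an isomorphism in $\W$, whence each $X(a)$ is right dualizable.

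The only real step requiring care — and the place where I expect the main obstacle to lie — is verifying that the bicategorical canonical map $\mu_{X,U}$ really does restrict, component by component, to the canonical maps $\mu_{X(a),U(b)}$ in $\W$. This is a diagram chase through the definitions of $\odot$, $\rhd$, and the counit $\epsilon$ in $\prof(\W)$, entirely analogous to the one implicit in \autoref{thm:smcpwdual}; once it is in place the rest of the argument is formal.
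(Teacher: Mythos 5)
Your proposal is correct and follows essentially the same route as the paper, which proves this lemma exactly as it proves \autoref{thm:smcpwdual}: since the middle category is $\tc$, the coend/end trivialize and the $(b,a)$-component of $\mu_{X,U}$ is $\mu_{X(a),U(b)}$, so everything is detected pointwise. (Your preliminary construction of a candidate dual $Y$ is harmless but unnecessary, since the canonical dual $X\rhd\lI_{(\tc,S)}$ already has components $\rdual{X(a)}$.)
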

\begin{proof}
  Just like \autoref{thm:smcpwdual}.
\end{proof}

\begin{thm}
  If $X\colon A\to \W(R,S)$ is pointwise right dualizable and $\Phi\colon A\op\to \W(R,R)$ is absolute, then $\colim^\Phi(X)$ is right dualizable.
  In this case, for any $P\colon \tc\to \W(S,S)$ and endomorphism $f\colon X\to X\odot P$, we have
  \begin{equation}\label{eq:bicatlin}
    \tr(\colim^\Phi(f)) = \tr(f) \circ \tr(\id_\Phi).
  \end{equation}
\end{thm}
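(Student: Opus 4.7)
The plan is to read this off directly from the formal theorems about composition of dualizable 1-cells and composition of traces (Theorems~\ref{thm:compose-duals} and~\ref{thm:compose-traces}), once we recognize that in $\prof(\W)$ the weighted colimit is literally a composition of 1-cells. The setup of $\prof(\W)$ was designed precisely so that such identifications go through verbatim as in the monoidal case (\autoref{thm:compose-traces-smc}), so the work consists of matching the data of the statement to the hypotheses of \autoref{thm:compose-traces}, and checking that right dualizability and traces along the 1-cells of the form $(\tc,R)\hto(\tc,S)$ agree with those in $\W$.

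For the first conclusion, since $\Phi\colon(\tc,R)\hto(A,R)$ is absolute (right dualizable in $\prof(\W)$) and $X\colon(A,R)\hto(\tc,S)$ is pointwise right dualizable (right dualizable in $\prof(\W)$ by \autoref{thm:bicatpwdual}), \autoref{thm:compose-duals} applied in $\prof(\W)$ shows that the composite $\Phi\odot X$ is right dualizable as a 1-cell $(\tc,R)\hto(\tc,S)$. The hom-category $\prof(\W)((\tc,R),(\tc,S))$ is just $\W(R,S)$, and composition along $\tc$ reduces to composition in $\W$, so right dualizability in $\prof(\W)$ here coincides with right dualizability of $\colim^\Phi(X)=\Phi\odot X$ in $\W$; moreover the shadow on $\prof(\W)$ restricts on $(\tc,R)\hto(\tc,R)$ to the shadow on $\W$, so traces in the two bicategories also coincide on these 1-cells.

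For the linearity formula, I would apply \autoref{thm:compose-traces} in $\prof(\W)$ with the substitutions $Y:=\Phi$, $X:=X$, $g:=\id_\Phi$, and $L:=P$ (viewing the given 2-cell $f\colon X\to X\odot P$ as a target-twisted endomorphism in $\prof(\W)$, after the trivial identification $\lI_{(\tc,R)}\odot X\cong X$). Since $g$ is the identity, its source and target twistings $Q$ and the middle 1-cell are both unit 1-cells, so the composite
\[(\id_\Phi\odot f)(g\odot\id_X)\colon \Phi\odot X\too \Phi\odot X\odot P\]
of \autoref{thm:compose-traces} reduces to $\id_\Phi\odot f$, which is $\colim^\Phi(f)$ by definition of the functoriality of $\colim^\Phi$ (composition in $\prof(\W)$ with the 1-cell $\Phi$ on the left). \autoref{thm:compose-traces} then yields
\[\tr(\colim^\Phi(f))=\tr(f)\circ \tr(\id_\Phi),\]
as required.

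The only potential obstacle is the bookkeeping around twisting: one must verify that the twisting $P\colon\tc\to\W(S,S)$ in the statement matches the role played by $L$ in \autoref{thm:compose-traces}, and that the implicit unit-isomorphisms $\lI\odot X\cong X$ and $\lI\odot\Phi\cong \Phi$ correctly identify $\id_\Phi$ with a (trivially) twisted endomorphism in the sense of~\cite{PS2}. This is entirely formal, and is exactly the bicategorical analogue of the monoidal argument we already used to derive \autoref{thm:compose-traces-smc}; in particular no new properties of the shadow or of $\prof(\W)$ are needed beyond those recorded before the statement. As in the monoidal case (\autoref{rmk:twisting}), it is also straightforward to upgrade this proof to allow source twisting $Q\colon\tc\to\W(R,R)$ on $f$, simply by not taking $g$ to be the identity.
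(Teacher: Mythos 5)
Your proof is correct and is essentially identical to the paper's: the paper proves this theorem simply by citing \autoref{thm:compose-duals} and \autoref{thm:compose-traces} applied in $\prof(\W)$, exactly as you do. Your additional bookkeeping (identifying the hom-category $\prof(\W)((\tc,R),(\tc,S))$ with $\W(R,S)$ and handling the twist $P$) is the routine verification the paper leaves implicit.
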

\begin{proof}
  By \autoref{thm:compose-duals} and \autoref{thm:compose-traces}.
\end{proof}

Note that we are now including a twisting in the target of $f$; this is because the application to Reidemeister trace requires it.
In this section, however, $P$ will always be the unit $\lI_S$.
As observed in \autoref{rmk:twisting}, we could also include a source twisting if desired.

As before, in order to make use of this, we analyze the two factors further.
The shadow of $(A,R)$ in $\prof(\W)$ is 
\begin{equation}
  \sh{(A,R)} = \left(\int^{a\in A} A(a,a) \right) \cdot \sh{R},
\end{equation}
i.e.\ the copower of the shadow of (the identity 1-cell of) $R$ by the set of conjugacy classes in the ordinary category $A$.
Thus, for $X\colon A \to \W(R,S)$, $P\colon \tc\to \W(S,S)$ and $f\colon X\to X\odot P$, the trace $\tr(f) \colon  \sh{(A,R)}\to \sh{P}$ is determined by one morphism $\sh{R} \to \sh{P}$ for each conjugacy class of $A$, which we can identify using a component lemma. 

\begin{lem}[The  component lemma for bicategories]\label{thm:bicatomega}
  For any right dualizable $X\colon (A,R) \hto (\tc,S)$ with $f\colon X\to X\odot P$, and any morphism $\alpha\in A(a,a)$, the component $\tr(f)_{[\alpha]}$ is the trace in \W of the composite
  \begin{equation}
    \xymatrix{X_a \ar[r]^-{X_{\alpha}} & X_a \ar[r]^-{f_a} & X_a\odot P}
  \end{equation}
\end{lem}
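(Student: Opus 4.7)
The plan is to imitate the proof of Lemma~\ref{thm:smcomega2a}, adapting the base change objects of Proposition~\ref{thm:bcodual} to the bicategory $\prof(\W)$. For each object $a\in A$ and each object $R$ of $\W$, I would construct a 1-cell $H_a^R\colon (\tc,R)\hto (A,R)$ in $\prof(\W)$ whose value at $b\in A$ is the copower $A(b,a)\cdot \lI_R$, together with its canonical right dual $\widecheck{H}_a^R\colon (A,R)\hto(\tc,R)$ whose value at $b$ is $A(a,b)\cdot\lI_R$. The evaluation and coevaluation witnessing this duality are built directly from composition in $A$ paired with the unit composition $\lI_R\odot\lI_R\cong\lI_R$, and from the action of $a\colon\tc\to A$ on arrows paired with the unit map of $\lI_R$, in exact parallel with Proposition~\ref{thm:bcodual}.

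Two further properties of these base change objects are needed. First, for any $X\colon A\to\W(R,S)$ there is a canonical isomorphism $H_a^R\odot X\cong X_a$, coming from the fact that $a$ is terminal in $a/A$; this is the direct analogue of Proposition~\ref{thm:bcorestr}. Second, for any $\alpha\in A(a,a)$, post-composition with $\alpha$ induces an endomorphism $\mu_\alpha\colon H_a^R\to H_a^R$, and its trace $\tr(\mu_\alpha)\colon \sh{R}\to\sh{(A,R)}$ in $\prof(\W)$ is exactly the coprojection determined by the conjugacy class $[\alpha]$, in parallel with Proposition~\ref{thm:bcodual2}.

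Given these ingredients, the proof concludes by applying the composition theorem for bicategorical traces, Theorem~\ref{thm:compose-traces}, to the right dualizable 1-cells $H_a^R$ and $X$, with endomorphism $\mu_\alpha\odot \id$ on the first factor and twisted endomorphism $f\colon X\to X\odot P$ on the second. The resulting twisted endomorphism of $H_a^R\odot X$ is identified under $H_a^R\odot X\cong X_a$ with $f_a\circ X_\alpha\colon X_a\to X_a\odot P$, whence Theorem~\ref{thm:compose-traces} yields
\[ \tr(f_a\circ X_\alpha) \;=\; \tr(f)\circ \tr(\mu_\alpha) \;=\; \tr(f)\circ[\alpha] \;=\; \tr(f)_{[\alpha]}, \]
which is exactly the claim.

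The main obstacle is the careful verification of the bicategorical analogues of Propositions~\ref{thm:bcodual} and~\ref{thm:bcodual2}. These involve manipulations of coends in the hom-categories $\W(R,R)$ and interactions between the copower structure $A(b,a)\cdot\lI_R$, the composition $\odot$, and the shadow $\sh{-}$. Because the relevant constructions all factor through copowers of the unit $\lI_R$, these verifications should reduce, modulo the unit and associativity coherences of $\W$ and of $\sh{-}$, to the computations already carried out in the monoidal case. A unified treatment of base change objects for bicategories is promised in \S\ref{sec:basechangebicat}, so I would defer the detailed verification to that section.
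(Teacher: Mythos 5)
Your proposal is correct, and it follows the route the paper itself flags as available: the text accompanying \autoref{thm:bicatomega} notes that it ``can be proven explicitly like'' \autoref{thm:smcomega2a}, which is precisely your plan. Your $H_a^R$ is the representable $(A,R)(\id,a)$ of $\prof(\W)$, and your three ingredients --- its duality, the identification $H_a^R\odot X\cong X_a$, and $\tr(\mu_\alpha)=[\alpha]$ --- are the bicategorical analogues of \autoref{thm:bcodual}, \autoref{thm:bcorestr} and \autoref{thm:bcodual2}; given these, the appeal to \autoref{thm:compose-traces} closes the argument exactly as in the monoidal case. The paper's written proof (the restatement \autoref{thm:bicatomegap} in \S\ref{sec:basechangebicat}) has the same skeleton but does not verify these facts by hand: it realizes $(A,R)(\id,a)$ as a base change object in the framed bicategory $\lProf(\W)$, so that duality is automatic (\autoref{lem:basechangeder}), the identification of the composite endomorphism with $f_a\circ X_\alpha$ comes from \autoref{thm:bco-2cat-restriction}, and $\tr((A,R)(\id,\alpha))=[\alpha]$ is a special case of the abstract \autoref{thm:general-omega}. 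What the abstract route buys is reusability --- the same machinery immediately yields the derivator versions (\autoref{thm:smderomega}, \autoref{thm:derbicatomega}), where explicit coend manipulations would be far more painful --- while your explicit route is self-contained for ordinary bicategories, needing only local cocompleteness of $\W$ and cocontinuity of the shadow to push the copower/coend computations through. Two small remarks: the isomorphism $H_a^R\odot X\cong X_a$ is the representable-weight (co-Yoneda) computation, and the slice with a terminal object is $A/a$ (terminal object $\id_a$), not $a/A$ as you wrote; and deferring the ``detailed verification'' to \S\ref{sec:basechangebicat} effectively collapses your proof into the paper's, so for a genuinely independent argument you should carry out the analogues of \autoref{thm:bcodual}--\autoref{thm:bcodual2} explicitly --- which is routine but is exactly where the content lies.
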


\begin{proof}
This lemma can be proven explicitly like \autoref{thm:smcomega2a}, but we will deduce it from a more abstract result in \S\ref{sec:basechangebicat} on page \pageref{thm:bicatomegap}.
\end{proof}

Continuing with the analogy, for any absolute $\Phi\colon A\op\to \W(R,R)$ we refer to $\tr(\id_\Phi)\colon  \sh{R} \to \sh{(A,R)}$ as its \textbf{coefficient vector}.
When the target category \T of the shadow is semi-additive and $A$ is finite,  $\sh{(A,R)}$ is a direct sum of copies of $\sh{R}$ indexed by the conjugacy classes of $A$.
Thus, $\tr(\id_\Phi)$ is a column vector with entries in the semiring $\T\big(\sh{R},\sh{R}\big)$; as before, we denote these entries by $\phi_{[\alpha]}$ and call them the \textbf{coefficients} of $\Phi$.
Thus, we have a linearity formula for bicategorical traces:

\begin{cor}\label{thm:bicatlin2}
  If the target category $\T$ of the shadow is semi-additive, $A$ is finite, and $\Phi\colon A\op\to \W(R,R)$ is absolute, then we have
  \begin{equation}\label{eq:bicatlin2}
    \tr(\colim^\Phi f) = \sum_{[\alpha]} \phi_{[\alpha]} \cdot \tr(f_a \circ X_\alpha).
  \end{equation}
  for any pointwise dualizable $X\colon A\to \W(R,S)$, any 1-cell $P\in \W(S,S)$, and any 2-cell $f\colon X\to X\odot P$.
\end{cor}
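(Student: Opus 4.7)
The plan is to simply combine the three ingredients already in place: the abstract linearity formula \eqref{eq:bicatlin}, the component lemma \autoref{thm:bicatomega}, and the semi-additive decomposition of the shadow $\sh{(A,R)}$. The whole argument is parallel to the derivation of \autoref{thm:smclin2} from \autoref{thm:compose-traces-smc} and \autoref{thm:smcomega2a}, so there is no genuinely new work to do; the point is just to assemble the pieces in the bicategorical setting with a target twist $P$.

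First, I would apply the abstract formula \eqref{eq:bicatlin}, which gives
\[ \tr(\colim^{\Phi}(f)) = \tr(f) \circ \tr(\id_\Phi) \]
as a composite $\sh{R} \to \sh{(A,R)} \to \sh{P}$ in $\T$. Next, since $A$ is finite and $\T$ is semi-additive, I would use the description of the shadow of $(A,R)$ as a copower
\[ \sh{(A,R)} \;\cong\; \Big(\textstyle\int^{a\in A} A(a,a)\Big) \cdot \sh{R} \;\cong\; \bigoplus_{[\alpha]} \sh{R}, \]
where the second isomorphism uses that a finite copower of a fixed object in a semi-additive category is a direct sum indexed by the underlying set, together with the fact that $\int^{a\in A} A(a,a)$ is the (finite) set of conjugacy classes of $A$. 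Under this decomposition, $\tr(\id_\Phi)$ is by definition the column vector whose entries are the coefficients $\phi_{[\alpha]} \in \T(\sh{R},\sh{R})$.

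For the other factor, I would apply the component lemma \autoref{thm:bicatomega} to identify, for each conjugacy class $[\alpha]$ with $\alpha \in A(a,a)$, the component
\[ \tr(f)_{[\alpha]} \;=\; \tr(f_a \circ X_\alpha) \;\in\; \T(\sh{R},\sh{P}). \]
Thus $\tr(f)$ is the ``row vector'' with entries $\tr(f_a \circ X_\alpha)$. Composing the row vector $\tr(f)$ with the column vector $\tr(\id_\Phi)$ in the semi-additive category $\T$ yields the sum
\[ \tr(\colim^{\Phi}(f)) \;=\; \sum_{[\alpha]} \phi_{[\alpha]} \cdot \tr(f_a \circ X_\alpha), \]
which is \eqref{eq:bicatlin2}.

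There is no real obstacle here: both nontrivial inputs, \eqref{eq:bicatlin} and \autoref{thm:bicatomega}, are already in hand, and the only thing one must be careful about is bookkeeping the fact that $\tr(f)$ and $\tr(\id_\Phi)$ live in different hom-sets of $\T$ (namely $\T(\sh{R},\sh{P})$ and $\T(\sh{R},\sh{R})$ respectively), so that the ``linear combination'' is really composition in $\T$ of a row vector with a column vector indexed by $[\alpha]$. The only mildly delicate point is the appeal to the component lemma in the presence of the target twisting by $P$; since \autoref{thm:bicatomega} is stated in exactly that generality, this causes no difficulty.
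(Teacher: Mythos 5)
Your proposal is correct and is exactly the argument the paper intends: combine the abstract formula \eqref{eq:bicatlin} with the component lemma (\autoref{thm:bicatomega}) and the direct-sum decomposition of $\sh{(A,R)}$ over conjugacy classes, just as \autoref{thm:smclin2} follows in the symmetric monoidal case. The paper leaves this assembly implicit in the surrounding discussion, and your handling of the twist $P$ and of the hom-sets in $\T$ matches it.
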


This formula is syntactically identical to~\eqref{eq:smclin2}; the only difference is that now $X$ is a functor $A\to \W(R,S)$, $\Phi$ is a functor $A\op \to \W(R,R)$, $f$ is an endomorphism of $X$, and the equation is between morphisms $\sh{R}\to\sh{P}$ in \T.

We now have essentially all the same examples of absolute weights and coefficient vectors that we had in \S\ref{sec:moncat}.

\begin{eg}
  Let $A$ be the empty category, and $\Phi\colon A\to \W(R,R)$ the unique \W-profunctor for some object $R$ of \W.
  If $U\colon A\to \W(T,R)$ is the unique \W-profunctor for some other object $T$, then $\mu_{\Phi,U}$ is the unique map from an initial to a terminal object of $\W(T,R)$, 
  which is an isomorphism for all of these categories  just when \W is {\bf locally pointed}, i.e.\ each category $\W(R,S)$ is pointed.
  In this case, the $\Phi$-weighted colimit of the unique $X\colon A\hto \W(R,S)$ is the zero object of $\W(R,S)$, which is therefore pointwise dualizable.
  The shadow of $A$ is the initial object of \T, so when \T is pointed as well (as is usually the case), the trace of the identity morphism of $0\in\W(R,S)$ is the zero morphism $\sh{R} \to \sh{S}$ in \T.
\end{eg}

\begin{eg}\label{eg:bicatadditive}
  Let $R$ be an object of \W, and let $A$ be the discrete category with two objects.
  Then $X\colon A \to \W(R,S)$ consists of a pair of 1-cells $X_a, X_b \in \W(R,S)$, and is pointwise right dualizable just when $X_a$ and $X_b$ are right dualizable in \W.
  If $\Phi\colon A\op \hto \W(R,R)$ is constant at $\lI_R$, then the weighted colimit $\colim^\Phi(X) = \Phi\odot X$ is the local coproduct $X_a \sqcup X_b$.
  Just as in \autoref{eg:dirsum}, we conclude that if \W is locally pointed, then $\mu_{\Phi,U}$ is the canonical map from a binary coproduct to a binary product, which is an isomorphism if \W is \emph{locally semi-additive}.
  In this case, our $\Phi$ is absolute, so that local binary coproducts preserve right dualizability.

  Now the shadow of $(A,R)$ is the coproduct $\sh{R} \oplus \sh{R}$ in \T, and for a pointwise right dualizable $X\colon A \to \W(R,S)$ and an endomorphism $f\colon X\to X$, the trace $\tr(f)\colon \sh{R} \oplus \sh{R} \to \sh{S}$ has components $\tr(f_a)$ and $\tr(f_b)$.
  If \T is also semi-additive, then the coefficient vector of $\Phi$ is determined by two components $\phi_a,\phi_b\colon \sh{R} \to\sh{R}\oplus \sh{R}$, which as before we can determine to both be $1$ by a judicious choice of $X$.
  Thus, we have the linearity formula
  \[ \tr (f_a \oplus f_b) = \tr(f_a) + \tr(f_b) \]
  exactly as in \autoref{eg:dirsum}, but now for bicategorical traces.
  For instance, when \W is the bicategory of rings and bimodules, this yields the additivity of the Hattori-Stallings trace under direct sums.
\end{eg}

All the other examples from \S\ref{sec:moncat-examples} generalize to bicategories in an entirely analogous way.
We leave the details to the reader.

\section{Linearity in derivator bicategories}
\label{sec:derbicat}

We now combine the theory of linearity for monoidal derivators (\S\ref{sec:der}) with that for ordinary bicategories (\S\ref{sec:bicat}) to obtain a theory of linearity for \emph{derivator bicategories}.
This is necessary for the application to Reidemeister trace~\cite{PS6}, which is a bicategorical trace but is linear in the \emph{stable} sense of \S\ref{sec:der}.
It is also necessary for the uniqueness theorem in \S\ref{sec:uniqueness}.

The definition of derivator bicategory is obtained from the definition of a bicategory by simply replacing all hom-categories with derivators.

\begin{defn}\label{def:derivbicat}
  A \textbf{derivator bicategory} $\dW$ consists of the following data.
  \begin{itemize}
  \item A collection of objects $R$, $S$, $T$, $\ldots$.
  \item For each pair of objects $R$ and $S$ a derivator $\dW({R,S})$.
    We think of the category $\dW(R,S)(A)$ as the homotopy category of $A$-shaped diagrams in $\dW(R,S)$.
  \item For each triple of objects $R$, $S$, and $T$, a morphism of derivators
    \[\odot \colon \dW({R,S})\times \dW({S,T})\to \dW({R,T}).\]
    That is, we have a pseudonatural transformation between 2-functors $\cCat\op\to\cCAT$, which has components
    \[ \dW({R,S})(A)\times \dW({S,T})(A)\to \dW({R,T})(A).\]
  \item We require these morphisms $\odot$ to be cocontinuous in each variable separately \cite[Definition~3.19]{gps:additivity}.
  \item For each object $R$, a morphism of derivators $\lI\colon y(\tc)\to \dW(R,R)$ (hence an object $\lI_{R,A}\in \dW(R,R)(A)$, varying pseudonaturally in $A\in\cCat$).
  \item Natural unit and associativity isomorphisms, i.e.\ invertible modifications
    \begin{equation}
      \vcenter{\xymatrix{
          \dW(R,S) \times \dW(S,T) \times \dW(T,U)\ar[r]^-{\id\times\odot}\ar[d]_{\odot\times\id} \drtwocell\omit{\cong} &
          \dW(R,S) \times \dW(S,U)\ar[d]^\odot\\
          \dW(R,T) \times \dW(T,U)\ar[r]_-{\odot} &
          \dW(R,U)
        }}
    \end{equation}
    \begin{equation}
      \xymatrix{\dW(R,S) \ar[r]^-{(\id,\lI)} \drlowertwocell{\cong} &
        \dW(R,S) \times \dW(S,S)\ar[d]^\odot \\
        & \dW(R,S)
      }
      \qquad
      \xymatrix{\dW(R,S) \ar[r]^-{(\lI,\id)} \drlowertwocell{\cong} &
        \dW(R,R) \times \dW(R,S)\ar[d]^\odot \\
        & \dW(R,S).
      }
    \end{equation}
  \item The usual pentagon and unit axioms for a bicategory hold.
  \end{itemize}
  A derivator bicategory is \textbf{closed} if the morphisms $\odot$ participate in a two-variable adjunction of derivators.
\end{defn}

As in \cite{gps:additivity}, we define the {\bf external composition} to be the composite
\[\dW({R,S})(A)\times \dW({S,T})(B)\to \dW({R,S})(A\times B)\times \dW({S,T})(A \times B) \to \dW({R,T})(A\times B)\]
where the first maps are restrictions induced by the projections.
Joint cocontinuity is defined in terms of this composition rather than the original \emph{internal} composition.

Unsurprisingly, we also need to extend the notion of shadow to the derivator case.

\begin{defn}
  A \textbf{shadow} on a derivator bicategory \dW consists of a derivator \dT and cocontinuous morphisms of derivators
  \[\sh{-} \colon \dW(R,R) \xto{} \dT\]
  for each object $R$, together with invertible modifications
  \begin{equation}
    \vcenter{\xymatrix{
        \dW(R,S)\times \dW(S,R)\ar[rr]^{\cong} \ar[d]_{\odot} \drrtwocell\omit{\cong} &&
        \dW(S,R)\times \dW(R,S)\ar[d]^{\odot}\\
        \dW(R,R)\ar[r]_-{\sh{-}} &
        \dT\ar@{<-}[r]_-{\sh{-}} &
        \dW(S,S)
      }}
  \end{equation}
  satisfying the usual compatibility axioms for a shadow (\cite[Defn.~4.1]{PS2}).
\end{defn}

Note that just as we did in \S\ref{sec:bicat}, we require the shadow functors to be cocontinuous (in the appropriate sense).

A derivator bicategory \dW has an {\bf underlying ordinary bicategory} with the same objects, and whose hom-category from $R$ to $S$ is $\dW(R,S)(\tc)$.
If \dW has a shadow, then so does its underlying ordinary bicategory.

One obvious way to construct derivator bicategories is by taking the homotopy bicategory of a \emph{model bicategory}.
Recall that from a model category $C$ with weak equivalences $W$,
we define a derivator $\ho(C)$ by \[\ho(C)(A)\coloneqq (C^A)[(W^A)^{-1}].\]
Motivated by this, we say a  \textbf{model bicategory} 
is a closed bicategory $\sB$ with model structures on each of the
hom-categories $\sB(R,S)$ that satisfy the pushout-product and unit axioms.

\begin{itemize} 
\item (Pushout-Product Axiom)
If $X\rightarrow Y$ and $K\rightarrow L$ are cofibrations,
then the map \[(X\odot L) +_{(X\odot K)}(Y\odot K)\rightarrow (Y\odot L)\] is a
cofibration, which is a weak equivalence if either of the maps  $X\rightarrow Y$
or $K\rightarrow L$ are.

\item (Unit Axiom)
If $Q\lI_R\rightarrow \lI_R$ is a cofibrant
replacement for a bicategorical unit $\lI_R$, then
\begin{align*}
  Q\lI_R\odot Y \rightarrow \lI_R\odot Y\cong Y \qquad\text{and}\qquad
  X\odot Q\lI_R \rightarrow X\odot \lI_R\cong X
\end{align*}
are weak equivalences for any cofibrant $X$ and $Y$.
\end{itemize}
If $\sB$ additionally has a shadow, we call it a \textbf{Quillen shadow} if it takes values in a model category $\sT$ and each functor $\sh{-}\colon \sB(R,R) \to \sT$ is left Quillen.

\begin{eg}
  Any monoidal model category can be regarded as a model bicategory with one object.
  If it is symmetric, then its identity functor is a Quillen shadow.
\end{eg}

\begin{eg}
  There is a model bicategory whose objects are noncommutative rings, and where $\sB(R,S)$ is the category of unbounded chain complexes of $R$-$S$-bimodules with a projective model structure.
  The pushout product and unit axioms can be proven by adapting the arguments of~\cite{hovey:modelcats} from the monoidal case.
  It also has a Quillen shadow with values in chain complexes of abelian groups, which coequalizes the left and right actions.
\end{eg}

Now we can state the following theorem.

\begin{thm}\label{thm:modelbicat}
  For any model bicategory $\sB$, there is a closed derivator bicategory $\ho(\sB)$ whose objects are
  the objects of $\sB$, and whose hom-derivator $\ho(\sB)(R,S)$ is the derivator determined by the model category $\sB(R,S)$.
  Moreover, if $\sB$ has a Quillen shadow, then $\ho(\sB)$ has a shadow.
\end{thm}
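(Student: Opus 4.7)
The plan is to verify that every piece of structure of the model bicategory $\sB$ descends to (or ``left-derives to'') the corresponding piece of structure on the homotopy derivator bicategory $\ho(\sB)$. The strategy mirrors the familiar passage from a monoidal model category to a monoidal derivator, but with the base $\cCat$ of diagram shapes replaced by a labeled version that tracks the source/target objects.

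First I would construct the hom-derivators: since each $\sB(R,S)$ is a model category, it determines a derivator $\ho(\sB(R,S))(A) = \sB(R,S)^A[(W^A)^{-1}]$ by standard results. Next, I would left-derive the composition $\odot$: the pushout-product axiom says exactly that $\odot\colon \sB(R,S)\times \sB(S,T)\to \sB(R,T)$ is a Quillen bifunctor, so applying it objectwise to a pair of Reedy cofibrantly replaced diagrams yields a functor that descends to the product derivator and respects restriction and (homotopy) Kan extensions up to coherent isomorphism. The same technique using cofibrant replacement shows that the strict associator and unitors of $\sB$ induce invertible modifications on $\ho(\sB)$ satisfying the pentagon and triangle identities, because these are equations of natural transformations between composites of Quillen bifunctors and such equations are preserved by total left derivation (the various ways to derive a long composite are canonically isomorphic). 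The unit axiom on $\sB$ is precisely the statement required to conclude that a cofibrant replacement of $\lI_R$ serves as a left unit after derivation, giving us the morphism of derivators $\lI\colon y(\tc)\to \ho(\sB)(R,R)$ with derived unitors.

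To get joint cocontinuity of $\odot$ in each variable, I would use that for any cofibrant object $X\in\sB(R,S)$, the functor $X\odot(-)$ is left Quillen (again by the pushout-product axiom applied to $\emptyset\to X$), hence descends to a cocontinuous morphism of derivators; and symmetrically in the other variable. This yields the joint cocontinuity required by \autoref{def:derivbicat}. For closedness, observe that $\odot$ being a Quillen bifunctor equips us with right Quillen adjoints $\rhd$ and $\lhd$ satisfying the appropriate two-variable adjunction at the model-category level; left-deriving $\odot$ and right-deriving $\rhd,\lhd$ gives a two-variable adjunction of derivators in the sense of \cite{gps:additivity}. For the shadow, the assumption that each $\sh{-}\colon \sB(R,R)\to \sT$ is left Quillen lets us derive it to a cocontinuous morphism $\ho(\sB)(R,R)\to \ho(\sT)$; the cyclic isomorphism $\sh{X\odot Y}\cong \sh{Y\odot X}$ involves the composite of two Quillen bifunctors and so descends to an invertible modification after cofibrantly replacing the inputs, and its coherence with unit and associativity descends for the same reason.

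The only real obstacle is bookkeeping: one must choose functorial cofibrant replacements compatibly on every diagram category $\sB(R,S)^A$ and check that all comparison maps (derived associators, unitors, the shadow cyclicity) are genuinely coherent as modifications of morphisms of derivators, not merely isomorphisms after evaluating at $A=\tc$. This is a pseudofunctoriality check that proceeds exactly as in the proof that a monoidal model category yields a monoidal derivator \cite{gps:additivity}, with the parameters $(R,S)$ simply carried along throughout. No new idea is required beyond carefully indexing by the objects of $\sB$.
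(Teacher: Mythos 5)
Your proposal is correct and follows exactly the route the paper intends: the paper's own ``proof'' consists of the single remark that one extends the results of~\cite{gps:additivity} (monoidal model category $\Rightarrow$ closed monoidal derivator, Quillen bifunctors $\Rightarrow$ two-variable morphisms of derivators) from the monoidal case to the bicategorical one, carrying the object labels $(R,S)$ along, with all details omitted. Your sketch simply fills in that outline, so it matches the paper's approach.
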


One can prove this by extending results of \cite{gps:additivity} from the monoidal case. 
We omit the details.

\begin{rmk}
\autoref{thm:modelbicat} does have its limitations, however.
In particular, we cannot use it to construct the derivator bicategory \Ex of parametrized spectra, which is the example relevant for~\cite{PS6}.
In that paper we instead enhance the construction of~\cite{shulman:frbi,PS3} to construct a derivator bicategory from an \emph{indexed monoidal derivator}, the latter of which can be obtained from an \emph{indexed monoidal model category}.
\end{rmk}

We now proceed with a straightforward generalization of \S\ref{sec:der}.
In fact, now that we have the notion of derivator bicategory, we can prove a stronger result: the bicategory $\prof(\dW)$ in fact underlies a new derivator bicategory $\dprof(\dW)$.

\begin{thm}\label{thm:derivbicat-prof}
  Given a derivator bicategory \dW, we can construct a derivator bicategory $\dprof(\dW)$, with underlying ordinary bicategory denoted $\prof(\dW)$.  The latter is described as follows:
  \begin{itemize} 
  \item An object is a pair $(A,R)$ where $A\in\cCat$ and $R$ is an object of $\dW$.
  \item The hom category from $(A,R)$ to $(B,S)$ is $\dW({R,S})(A\times B\op)$.
  \item The composition functors are 
    \begin{align*}
      \dW({R,S})(A\times B\op)\times \dW({S,T})(B\times C\op)&
      \xto{\odot} \dW({R,T})(A\times B\op\times B\times C\op)\\
      &\xto{\int^B} \dW({R,T})(A\times C\op).
    \end{align*}
  \item The unit object of $(A,R)$ is $\lI_{(A,R)} = (t,s)_! \lI_{R,\tw(A)} \in \dW(R,R)(A\times A\op)$.
  \end{itemize}
  For the derivator bicategory $\dprof(\dW)$, the hom-derivators are defined by
  \[\dprof(\dW)((A,R),(B,S))(C) = \dW(R,S)(A\times B\op\times C),\]
  i.e.\ $\dprof(\dW)((A,R),(B,S))$ is the shifted derivator $\shift{\dW(R,S)}{A\times B\op}$.
  The composition and units are defined analogously.
  If \dW is closed, then so is $\dprof(\dW)$ (and hence also $\prof(\dW)$).

  Finally, if \dW has a shadow valued in a derivator \dT, then so does $\dprof(\dW)$, defined for $H\in \prof(\dW)((A,R),(A,R)) = \dW({R,R})(A\times A\op)$ by
  \begin{equation}\label{eq:derivbicatshadow}
    \sh{H} = \int^A \symm^* \sh{H}_{A\times A\op}.
  \end{equation}
  Here $\sh{-}_{A\times A\op}$ denotes the shadow functor $\dW(R,R)(A\times A\op) \to \dT(A\times A\op)$, and $\symm$ is the symmetry as before.
  It follows that $\prof(\dW)$ also has a shadow valued in $\dT(\tc)$.
\end{thm}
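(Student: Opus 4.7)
The plan is to closely parallel the construction of $\dprof(\dV)$ from a closed symmetric monoidal derivator in \cite[Theorem 5.9]{gps:additivity}, simply replacing the monoidal product of $\dV$ with the bicategorical composition $\odot$ of $\dW$. The key point is that nearly all the arguments in that construction depend only on the cocontinuity of $\otimes$ in each variable and on the coherence isomorphisms for a monoidal structure, both of which have direct analogues in our setting by the axioms of a derivator bicategory (joint cocontinuity and the pentagon/unit coherences in \autoref{def:derivbicat}). Thus the proof proceeds by carefully tracking these analogues and verifying that the constructions live in the appropriate hom-derivators.

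First I would verify that the hom-derivators $\dprof(\dW)((A,R),(B,S)) = \shift{\dW(R,S)}{A\times B\op}$ are genuine derivators; this is the standard shifting construction and depends only on the fact that $\dW(R,S)$ is itself a derivator. Next I would define the composition morphism of derivators: on a category $C$, this sends a pair in $\dW(R,S)(A\times B\op \times C) \times \dW(S,T)(B\times C\op\times C)$ first to $\dW(R,T)(A\times B\op\times B\times C\op\times C)$ via the external composition in $\dW$, and then applies the derivator coend $\int^B$ from \cite[Section 5]{gps:additivity}. Joint cocontinuity ensures this is functorial in the shape, making it a morphism of derivators. Associativity of composition then reduces, via pentagon coherence in $\dW$ applied pointwise, to the Fubini theorem for coends \cite[Lemma 5.3]{gps:additivity}, just as in \cite[Lemma 5.12]{gps:additivity}. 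The identification of $\lI_{(A,R)} = (t,s)_! \lI_{R,\tw(A)}$ as a two-sided unit uses the same argument as in \cite[Lemma 5.11]{gps:additivity}, where the essential input is that the counit $\tw(A) \to A\times A\op$ satisfies a homotopy exactness property that reduces the composite with $\lI_{(A,R)}$ to an identity after Kan extension.

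For the closed structure, when $\dW$ itself is closed, each hom-derivator $\dW(R,S)$ admits the relevant two-variable adjoints, and we can define $\rhd$ and $\lhd$ on $\dprof(\dW)$ by the analogues of the end formulas of \S\ref{sec:bicat}, replacing ordinary ends with derivator ends (right Kan extensions along the twisted arrow projection). The adjunction isomorphism is obtained pointwise from the adjunction in $\dW$ combined with the fact that coends are left adjoint to ends. For the shadow, formula~\eqref{eq:derivbicatshadow} is well-defined as a morphism of derivators because $\sh{-}$ is cocontinuous and hence commutes with the coend; the shadow coherence isomorphism $\sh{H\odot K}\cong\sh{K\odot H}$ follows, as in \cite[Theorem 5.9]{gps:additivity}, from Fubini combined with the corresponding isomorphism in $\dW$. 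The shadow axiom (compatibility with triple composites) then reduces to an iterated Fubini argument exactly parallel to that used to establish associativity of composition.

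The main obstacle will be bookkeeping: although none of the individual verifications is difficult, one must carefully check that all constructions are natural in the shape category $C$ (so that they define morphisms of derivators rather than merely ordinary functors), and that all the coherence modifications introduced are invertible and satisfy the required axioms. This essentially amounts to observing that the proofs in \cite[\S5]{gps:additivity} never use the symmetry of the monoidal structure on $\dV$ (except in the construction of the shadow, where it is replaced here by the bicategorical symmetry $\dW(R,S)\times\dW(S,R) \simeq \dW(S,R)\times\dW(R,S)$), and that all references to the unit $\lS$ can be replaced by $\lI_R$ for appropriate $R$. Once this is confirmed, the statement about the underlying ordinary bicategory $\prof(\dW)$ follows immediately by evaluating at $C=\tc$, since the definitions there reduce to those given in \S\ref{sec:bicat}.
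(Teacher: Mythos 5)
Your proposal is correct and follows essentially the same route as the paper: both proofs transport \cite[Theorem 5.9]{gps:additivity} verbatim with $\otimes$ replaced by $\odot$ (noting symmetry is never used except where the shadow of $\dW$ replaces it), obtain closedness from the shifted/cancelling two-variable adjunction machinery of \cite[\S8]{gps:additivity}, and get the shadow isomorphism and axiom from cocontinuity of $\sh{-}$ plus Fubini, exactly as the paper does. The only cosmetic difference is organizational: the paper handles the derivator-level coherence by applying the ordinary $\prof$ construction to the shifted derivator bicategory $\shift{\dW}{A}$ and letting $A$ vary, whereas you verify pseudonaturality in the shape category directly, which amounts to the same bookkeeping.
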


\begin{proof}
  For the ordinary bicategory $\prof(\dW)$, the proof is essentially identical to the proof of~\cite[Theorem~5.9]{gps:additivity}.

  For the derivator bicategory, we note that just as a derivator \D induces a \emph{shifted} derivator $\shift\D A$ for any $A\in\cCat$, with $\shift\D A(B) = \D(A\times B)$, a closed derivator bicategory $\dW$ induces a shifted version $\shift\dW A$ with $\shift \dW A(R,S)(B) = \dW(R,S)(A\times B)$.
  This follows from~\cite[Example~8.14]{gps:additivity}.
  Thus, applying \autoref{thm:derivbicat-prof} 
  to $\shift\dW A$, and then letting $A$ vary, we obtain all the data and coherence axioms of $\dprof(\dW)$.

  For closedness of $\dprof(\dW)$, we must show that the composition morphisms of $\dprof(\dW)$ are two-variable left adjoints.
  However, the two-variable morphism
  \[ \dprof(\dW)((A,R),(B,S)) \times \dprof(\dW)((B,S),(C,T)) \xto{\odot} \dprof(\dW)((A,R),(C,T)) \]
  can be regarded as a shifted version of the composition morphism of $\dW$:
  \[ \shift{\dW(R,S)}{A\times B\op} \times \shift{\dW(S,T)}{B\times C\op} \to \shift{\dW(R,T)}{A\times C\op} \]
  in which $B$ is canceled but $A$ and $C$ are treated externally.
  This is a two-variable left adjoint by~\cite[Examples 8.15 and 8.16]{gps:additivity}.

  Suppose \dW has a shadow valued in \dT.
  By cocontinuity of this shadow,~\eqref{eq:derivbicatshadow} is equivalent to
  \begin{equation}
    \sh{H} = \Bigsh{ \int^A \symm^* H}_{\tc}.
  \end{equation}
  with $\int^A$ now denoting the coend in the derivator $\dW(R,R)$ rather than in $\dT$.
  For $H\in\prof(W)((A,R),(B,S))$ and $K\in \prof(\dW)((B,S),(A,R))$, we take the shadow isomorphism $\sh{H\odot K} \cong \sh{K\odot H}$ to be the composite
  \begin{align*}
    \sh{H\odot K}
    &\cong \int^A \Bigsh{\int^B (H\odot K)}_{A\times A\op}\\
    &\cong \int^A \int^B \sh{H\odot K}_{A\times A\op\times B\times B\op}\\
    &\cong \int^B \int^A \sh{K\odot H}_{A\times A\op\times B\times B\op}\\
    &\cong \int^B \Bigsh{ \int^A (K\odot H)}_{B\times B\op}\\
    &\cong \sh{K\odot H}
  \end{align*}
  (We have omitted the symmetry isomorphisms for brevity).
  Combining the argument that proves the associativity of composition in $\prof(\dV)$,~\cite[Lemma~5.12]{gps:additivity} with the shadow axiom for \dW proves the shadow axiom for $\prof(\dW)$.
  We extend this construction to $\dprof(\dW)$ by shifting, i.e.\ we define
  \[ \dprof(\dW)((A,R),(A,R)) \to \dT \]
  to be the composite
  \[ \shift{\dW(R,R)}{A\times A\op} \xto{\sh{-}} \shift{\dT}{A\times A\op} \xto{\int^A} \dT. \]
  Since shifting preserves cocontinuity, both of these morphisms are cocontinuous.
\end{proof}

In particular, this strengthens the result of~\cite[Theorem~5.9]{gps:additivity} which we cited in \S\ref{sec:der}: if \dV is a closed symmetric monoidal derivator, then not only do we have a bicategory $\prof(\dV)$, but we have a derivator bicategory $\dprof(\dV)$.

We suppose from now on that \dW is a closed derivator bicategory.
As before, we define the \textbf{$\Phi$-weighted colimit} of $X\in\dW(R,S)(A)$ by $\Phi\in\dW(R,R)(A\op)$ to be the composite $\colim^\Phi(X) = \Phi\odot X$ in $\prof(\dW)$, where $\Phi$ and $X$ are regarded as 1-cells $(\tc,R)\hto (A,R)$ and $(A,R)\hto (\tc,S)$ in $\prof(\dW)$, respectively.
We then have the analogue of \autoref{thm:colim-is-colim}.

\begin{prop}\label{thm:bicat-colim-is-colim}
  For any $X\in\dW(R,S)(A)$, if $\Phi = (\pi_{A\op})^*\lI_R$ is constant at the unit 1-cell of $R$, then we have $\colim^{\Phi}(X) \cong \colim(X)$, where $\colim(X)$ denotes the usual colimit $(\pi_A)_!(X)$ in the derivator $\dW(R,S)$.
\end{prop}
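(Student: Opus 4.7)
My plan is to mimic the proof of \autoref{thm:colim-is-colim} almost verbatim, replacing the external monoidal tensor product by the external bicategorical composition $\odot$ and the monoidal unit $\lS$ by the bicategorical unit 1-cell $\lI_R$. The entire content of the earlier proof is a one-step reduction via \cite[Corollary~5.8]{gps:additivity}; the work for the bicategorical version consists of noting that this reduction goes through at the level of a derivator bicategory.

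First, I would unfold the definition. Regarding $\Phi \in \dW(R,R)(A\op)$ as a 1-cell $(\tc,R) \hto (A,R)$ and $X \in \dW(R,S)(A)$ as a 1-cell $(A,R) \hto (\tc,S)$ in $\dprof(\dW)$, the description of composition given in \autoref{thm:derivbicat-prof} expresses $\colim^\Phi(X) = \Phi \odot X$ as the coend over $A$ of the external composition $\Phi \odot X \in \dW(R,S)(A\op \times A)$.

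Next, I would invoke the direct bicategorical analogue of \cite[Corollary~5.8]{gps:additivity}, namely: for any 1-cell $U \in \dW(R,R)(\tc)$ and any $X \in \dW(R,S)(A)$,
\[
\int^A \bigl((\pi_{A\op})^* U \bigr) \odot X \;\cong\; U \odot (\pi_A)_! X.
\]
This is the statement that external composition with a $\pi_{A\op}^*$-constant first argument commutes with the coend over $A$. Its proof is formal and identical to the monoidal case, using only (i) that $\odot$ is cocontinuous in each variable, which is part of \autoref{def:derivbicat}, and (ii) the Fubini-type manipulations of coends in a derivator from \cite[Lemma~5.3]{gps:additivity}, which are purely derivator-theoretic and agnostic to whether the coefficients live in a monoidal derivator or in a hom-derivator of a derivator bicategory. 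Applying this with $U = \lI_R$ and using the unit isomorphism $\lI_R \odot (-) \cong \id$ from \autoref{def:derivbicat} yields
\[
\colim^{\Phi}(X) \;\cong\; \int^A \bigl((\pi_{A\op})^* \lI_R\bigr) \odot X \;\cong\; \lI_R \odot (\pi_A)_!(X) \;\cong\; (\pi_A)_!(X) \;=\; \colim(X).
\]

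There is no real obstacle here; the proposition is a formal consequence of the coherence built into the definition of a closed derivator bicategory. The only mild subtlety is bookkeeping: one must keep track of the fact that, in $\dprof(\dW)$, the ``tensor product'' being manipulated is the external composition of 1-cells rather than an external monoidal product, and that cocontinuity is asserted for this external composition (via the cocontinuity clause of \autoref{def:derivbicat} and \cite[Examples~8.15 and~8.16]{gps:additivity}, as already used in the proof of \autoref{thm:derivbicat-prof}). Once this is in place, the argument is a direct transcription of the monoidal proof.
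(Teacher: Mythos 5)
Your proposal is correct and is essentially the paper's own argument: the paper proves this proposition by saying ``Just like \autoref{thm:colim-is-colim}'', i.e.\ by transcribing the monoidal proof via the analogue of \cite[Corollary~5.8]{gps:additivity} with $\otimes$ replaced by the external composition $\odot$ and $\lS$ by $\lI_R$, exactly as you do.
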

\begin{proof}
  Just like \autoref{thm:colim-is-colim}.
\end{proof}

\begin{defn}\ 
  \begin{itemize}
  \item A coherent diagram $X\in\dW(R,S)(A)$ is \textbf{pointwise dualizable} if it is right dualizable when regarded as a 1-cell $(A,R)\hto (\tc,S)$ in $\prof(\dW)$.
  \item A coherent diagram $\Phi\in\dW(R,R)(A\op)$ is \textbf{absolute} if it is right dualizable when regarded as a 1-cell $(\tc,R)\hto (A,R)$ in $\prof(\dW)$.
  \end{itemize}
\end{defn}

Thus, by \autoref{thm:compose-duals}, we have:

\begin{thm}
  If $X\in\dW(R,S)(A)$ is pointwise dualizable and $\Phi\in\dW(R,R)(A\op)$ is absolute, then $\colim^\Phi(X)$ is dualizable.
\end{thm}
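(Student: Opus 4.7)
The proof is essentially immediate from the machinery already in place; it is the bicategorical/derivator analogue of the argument proving the corresponding statements in \S\ref{sec:moncat} and \S\ref{sec:der}. The plan is to realize $\colim^\Phi(X)$ as a composite of two right dualizable 1-cells in $\prof(\dW)$ and then invoke \autoref{thm:compose-duals}.

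More concretely, the first step is to view $\Phi$ and $X$ as 1-cells $\Phi \colon (\tc,R) \hto (A,R)$ and $X \colon (A,R) \hto (\tc,S)$ in the bicategory $\prof(\dW)$ constructed in \autoref{thm:derivbicat-prof}, so that their composite in $\prof(\dW)$ is precisely $\colim^\Phi(X) = \Phi \odot X$, regarded now as a 1-cell $(\tc,R)\hto (\tc,S)$. By the definition of pointwise dualizability, $X$ is right dualizable in $\prof(\dW)$; by the definition of absoluteness, $\Phi$ is right dualizable in $\prof(\dW)$. Applying \autoref{thm:compose-duals} in $\prof(\dW)$ then yields that $\Phi \odot X$ is right dualizable in $\prof(\dW)$, with right dual $\rdual X \odot \rdual \Phi$.

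The final step is simply to unravel what right dualizability of a 1-cell $(\tc,R) \hto (\tc,S)$ in $\prof(\dW)$ says about the underlying bicategory of $\dW$. The hom-category $\prof(\dW)((\tc,R),(\tc,S))$ is, by construction, $\dW(R,S)(\tc\times\tc\op) \simeq \dW(R,S)(\tc)$, the underlying category of the hom-derivator, i.e.\ the hom-category of the underlying bicategory of $\dW$. Moreover, for 1-cells between trivial categories, the composition $\odot$ in $\prof(\dW)$ reduces (via the Fubini-style reductions used in \autoref{thm:bicat-colim-is-colim}) to ordinary bicategorical composition in $\dW$, and the unit 1-cell $\lI_{(\tc,R)}$ reduces to $\lI_R$. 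Consequently, the evaluation and coevaluation 2-cells furnished by the dual pair in $\prof(\dW)$ are precisely those of a dual pair in the underlying bicategory of $\dW$, and the triangle identities transfer verbatim. Hence $\colim^\Phi(X)$ is dualizable in $\dW$ in the sense of \S\ref{sec:traces}.

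No step is particularly hard; the only point that requires a moment of care is the last identification, namely that dualizability in $\prof(\dW)$ for 1-cells with trivial domain and codomain categories coincides with dualizability in the underlying bicategory of $\dW$. This is essentially bookkeeping, relying on the compatibility of the bicategorical structure of $\prof(\dW)$ with that of $\dW$ at the level of $\tc$-indexed hom-objects, which was already established in \autoref{thm:derivbicat-prof}.
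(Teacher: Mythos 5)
Your proposal is correct and matches the paper's proof: the paper also obtains this theorem by regarding $\Phi$ and $X$ as right dualizable 1-cells $(\tc,R)\hto(A,R)$ and $(A,R)\hto(\tc,S)$ in $\prof(\dW)$ and applying \autoref{thm:compose-duals}. Your extra care in identifying dualizability of 1-cells $(\tc,R)\hto(\tc,S)$ in $\prof(\dW)$ with dualizability in the underlying bicategory of $\dW$ is a point the paper leaves implicit, but it is correct bookkeeping and does not change the argument.
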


We also have a version of \autoref{thm:smcpwdual}, whose proof is essentially identical.

\begin{lem}
  $X\in\dW(R,R)(A)$ is pointwise dualizable if and only if each object $X_a\in\dW(R,R)(\tc)$ is right dualizable in the underlying bicategory of \dW.
\end{lem}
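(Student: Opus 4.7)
\emph{Plan.} The approach parallels that of \autoref{thm:smcpwdual} and \autoref{thm:derptdual}: reduce the dualizability of $X$ in $\prof(\dW)$ to a pointwise check in the underlying bicategory of $\dW$, leveraging the fact that when $X$ is viewed as a 1-cell $(A,R)\hto(\tc,S)$ its target has trivial shape, so that the coends and ends entering the definitions of $\odot$ and $\rhd$ against $X$ are indexed by $\tc$ and hence degenerate.

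First I would recall that $X$ is right dualizable as a 1-cell in $\prof(\dW)$ precisely when the canonical map $\mu_{X,U}\colon U\odot\rdual X\to X\rhd U$ is an isomorphism for every $U\colon(B,T)\hto(\tc,S)$, with $U=X$ sufficing. Since $\mu_{X,U}$ is a morphism in $\dW(T,R)(B\times A\op)$, the conservativity axiom of the derivator $\dW(T,R)$ reduces checking that $\mu_{X,U}$ is an isomorphism to checking that each of its components $(\mu_{X,U})_{(b,a)}\in\dW(T,R)(\tc)$ is one. Using the formulas from \autoref{thm:derivbicat-prof}, I would then compute these components: the end defining $\rdual X = X\rhd\lI_{(\tc,S)}$ is taken over $\tc$ and therefore trivial, yielding $(\rdual X)_a\cong\rdual{(X_a)}$; similarly $(U\odot\rdual X)_{(b,a)}\cong U_b\odot\rdual{(X_a)}$ and $(X\rhd U)_{(b,a)}\cong X_a\rhd U_b$ in the underlying bicategory. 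Chasing the universal property of $\mu$ through these isomorphisms identifies the pointwise component $(\mu_{X,U})_{(b,a)}$ with $\mu_{X_a,U_b}$ in the underlying bicategory of $\dW$.

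Both directions now follow. If each $X_a$ is right dualizable in the underlying bicategory, then $\mu_{X_a,V}$ is an isomorphism for every 1-cell $V$, in particular for $V=U_b$; so every component of $\mu_{X,U}$ is an isomorphism, and hence $\mu_{X,U}$ itself is, by conservativity, exhibiting $X$ as pointwise dualizable. Conversely, if $X$ is pointwise dualizable, then taking $U=X$ and restricting to the diagonal component at $(a,a)$ shows that $\mu_{X_a,X_a}$ is an isomorphism, which by the standard criterion recalled in \S\ref{sec:traces} suffices for $X_a$ to be right dualizable in the underlying bicategory.

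The main obstacle lies in the middle step: rigorously justifying the pointwise computations of $\rdual X$, $U\odot\rdual X$, and $X\rhd U$. These require that the restriction functors commute with the Kan extensions defining the relevant coends and ends in $\dprof(\dW)$; the derivator axioms, in particular Beck--Chevalley compatibility along the trivial projection out of $\tc$, make this automatic when the canceling index is $\tc$ as it is here, but packaging the argument cleanly amounts to invoking the formal infrastructure of \autoref{thm:derivbicat-prof} and verifying that the shifted derivator structures behave as expected.
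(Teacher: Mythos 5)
Your proof is correct and follows essentially the same route as the paper, which proves this lemma "exactly as" \autoref{thm:smcpwdual}: identify the $(b,a)$-component of $\mu_{X,U}$ with $\mu_{X_a,U_b}$ in the underlying bicategory and invoke conservativity of the underlying-diagram functor (the derivator analogue of "an isomorphism as soon as all components are"). The pointwise identification you flag as the main obstacle is exactly what the formal structure of $\dprof(\dW)$ (as in \autoref{thm:derivbicat-prof} and \cite[Lemma~11.5]{gps:additivity}) is designed to supply, so no new idea is needed.
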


Thus, by \autoref{thm:compose-traces}, we have a linearity formula.

\begin{thm}\label{thm:bicatderivlinearity}
  If $X\in\dW(R,S)(A)$ is pointwise dualizable and $\Phi\in\dW(R,R)(A\op)$ is absolute, then for any $P\in \dW(S,S)(\tc)$ and $f\colon X\to X\odot P$ we have
  \[ \tr(\colim^\Phi (f)) = \tr(f) \circ \tr(\id_\Phi). \]
\end{thm}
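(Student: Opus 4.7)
The plan is to reduce this statement to a direct application of \autoref{thm:compose-traces} inside the bicategory $\prof(\dW)$, whose shadow and closed structure have been provided by \autoref{thm:derivbicat-prof}. The pattern is exactly the same as in the proofs of \autoref{thm:compose-traces-smc} and \autoref{thm:dercomposite}: once the correct ambient bicategory with shadow has been set up, the linearity formula is essentially a one-line consequence of the composition theorem for bicategorical trace.

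First I would recall the interpretation of all the data as 1-cells and 2-cells in $\prof(\dW)$. The diagram $X \in \dW(R,S)(A)$ is a 1-cell $X \colon (A,R) \hto (\tc,S)$, the weight $\Phi \in \dW(R,R)(A\op)$ is a 1-cell $\Phi \colon (\tc,R) \hto (A,R)$, and the twisting 1-cell $P\in \dW(S,S)(\tc)$ is a 1-cell $P \colon (\tc,S)\hto (\tc,S)$. By the definition of weighted colimit we have $\colim^\Phi(X) = \Phi \odot X$ as 1-cells $(\tc,R)\hto (\tc,S)$, and the 2-cell $\colim^\Phi(f)\colon \Phi\odot X \to \Phi\odot X\odot P$ is identified with $\id_\Phi \odot f$ under the isomorphism $\Phi \odot (\lI_{(A,R)} \odot X) \cong \Phi \odot X$.

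Next I would verify the dualizability hypotheses needed to feed into \autoref{thm:compose-traces}. By definition, the assumption that $\Phi$ is absolute means $\Phi$ is right dualizable in $\prof(\dW)$, and the assumption that $X$ is pointwise dualizable means $X$ is right dualizable in $\prof(\dW)$. Now I would apply \autoref{thm:compose-traces} directly in $\prof(\dW)$, with the choices $Y \coloneqq \Phi$, $X\coloneqq X$, the ``source'' twisting 1-cell equal to $\lI_{(\tc,R)}$, the intermediate twisting 1-cell equal to $\lI_{(A,R)}$, the ``target'' twisting 1-cell $L\coloneqq P$, the 2-cell $g$ equal to $\id_\Phi$ (regarded, via the unit isomorphisms of $\prof(\dW)$, as a 2-cell $\lI_{(\tc,R)}\odot \Phi \to \Phi \odot \lI_{(A,R)}$), and the 2-cell $f$ as given (regarded as $\lI_{(A,R)}\odot X \to X\odot P$). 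The conclusion of \autoref{thm:compose-traces} then reads
\[ \tr\bigl((\id_\Phi \odot f)(g \odot \id_X)\bigr) = \tr(f)\circ \tr(g), \]
and under our identifications the left-hand side is $\tr(\colim^\Phi(f))$ while $\tr(g) = \tr(\id_\Phi)$.

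There is no real obstacle to this argument; the work has already been done in \autoref{thm:derivbicat-prof}, which provides $\prof(\dW)$ with a closed bicategory structure and a shadow valued in $\dT(\tc)$, together with the requisite cocontinuity of shadow and composition so that the defining formula for $\colim^\Phi$ agrees with the $\odot$ in $\prof(\dW)$. The only thing one should take care with is the bookkeeping around the unit isomorphisms identifying $\lI\odot(-) \cong (-)\cong(-)\odot\lI$, so that the ``composite'' 2-cell appearing in \autoref{thm:compose-traces} coincides on the nose with $\id_\Phi \odot f = \colim^\Phi(f)$; this is routine coherence and not a substantive step. Hence the theorem follows, and the traces $\tr(\id_\Phi)$ and $\tr(f)$ can then be expressed componentwise via the derivator bicategory analog of the component lemma (to be developed for bicategorical applications in \S\ref{sec:basechangebicat}) to obtain more concrete linearity formulas in examples.
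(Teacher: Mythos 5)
Your proposal is correct and follows essentially the same route as the paper: the paper also deduces this theorem immediately from \autoref{thm:compose-traces} applied in $\prof(\dW)$ (as furnished with its closed structure and shadow by \autoref{thm:derivbicat-prof}), identifying $\colim^\Phi(f)$ with $\id_\Phi\odot f$ just as in the proofs of \autoref{thm:compose-traces-smc} and \autoref{thm:dercomposite}. The unit-isomorphism bookkeeping you flag is indeed the only point requiring care, and the paper treats it as routine as well.
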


In other words, the following diagram commutes in $\dT(\tc)$:
\begin{equation}
  \vcenter{\xymatrix@C=3pc{
      \sh{R}\ar[r]^-{\tr(\id_\Phi)}\ar[dr]_{\tr(\colim^\Phi (f))} &
      \sh{(A,R)}\ar[d]^{\tr(f)}\\
      &
      \sh{P}
      }}
\end{equation}
Here we have identified the shadow $\sh{(\tc,R)}$ in $\prof(\dW)$ with the shadow $\sh{R}$ in \dW, which as usual is the shadow of the unit 1-cell $\lI_R$.
Our analysis of the shadows of units leading to eq.~\eqref{eq:LA} can be repeated essentially verbatim to conclude that here we have
\begin{align}
  \sh{(A,R)} &= (\pi_{\Lambda A})_!(\pi_{\Lambda A})^* \sh{R}\notag\\
  &= |NA| \tens \sh{R}.\label{eq:derLA}
\end{align}
(computed in the derivator \dT).
Similarly, every conjugacy class $[\alpha]$ in $A$ yields a uniquely determined morphism $\sh{R} \xto{[\alpha]} \sh{(A,R)}$ in $\dT(\tc)$.

For this theorem to be useful we need to be able to compute $\tr(f)$ and $\tr(\id_\Phi)$.  As before, we have a component lemma 
that enables us to compute $\tr(f)$.

\begin{lem}[The  component lemma for derivator bicategories]\label{thm:derbicatomega}
  If $X\in\dW(R,S)(A)$ is pointwise dualizable and $f\colon X\to X\odot P$, then for any conjugacy class $[a\xto{\alpha} a]$ in $A$, the composite
  \begin{equation}
    \xymatrix{ \sh{R} \ar[r]^-{[\alpha]} & \sh{(A,R)} \ar[r]^-{\tr(f)} & \sh{P} }
  \end{equation}
  is equal to the trace in $\dW(R,S)(\tc)$ of the composite
  \begin{equation}
    \xymatrix{ X_a \ar[r]^-{X_\alpha} & X_a \ar[r]^-{f_a} & X_a\odot P_a .}
  \end{equation}
\end{lem}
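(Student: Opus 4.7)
The plan is to deduce this lemma from the general composition theorem for bicategorical traces (\autoref{thm:compose-traces}) applied to suitable base change profunctors in $\dprof(\dW)$, generalizing the proof of \autoref{thm:smcomega2p} given at the end of \S\ref{sec:moncat-examples} and promised generalizations of \autoref{thm:smderomega} and \autoref{thm:bicatomega}.

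First, I would set up base change profunctors in the derivator bicategory $\dprof(\dW)$. Given the functor $a\colon \tc \to A$ picking out the object $a$, one constructs 1-cells $A(\id,a)\colon (\tc,R) \hto (A,R)$ and $A(a,\id)\colon (A,R) \hto (\tc,R)$ in $\prof(\dW)$ obtained by restricting the unit $\lI_{(A,R)}\in\dW(R,R)(A\times A\op)$ along $\id\times a\op$ and $a\times\id$, respectively. These are the derivator-bicategorical analogues of the base change objects of \autoref{thm:bcodual}, and the relevant machinery is developed in the full generality of derivator bicategories in \S\ref{sec:bco-der}.

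Next, I would establish three key properties of these base change objects, each of which is the derivator-bicategorical analogue of a fact proved in \S\ref{sec:moncat-examples}:
\begin{itemize}
\item (Restriction) A canonical isomorphism $A(\id,a)\odot X \simeq X_a$ in $\dW(R,S)(\tc)$, analogous to \autoref{thm:bcorestr}.
\item (Duality) The 1-cells $A(\id,a)$ and $A(a,\id)$ form a dual pair in $\prof(\dW)$, analogous to \autoref{thm:bcodual}, so that in particular $A(\id,a)$ is absolute as a weight.
\item (Trace of $\alpha$) For any endomorphism $\alpha\colon a \to a$, the induced endomorphism $A(\id,\alpha)\colon A(\id,a)\to A(\id,a)$ has $\tr(A(\id,\alpha))$ equal to the canonical map $[\alpha]\colon \sh{R} \to \sh{(A,R)}$ determined by the conjugacy class of $\alpha$ via the description~\eqref{eq:derLA}. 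This is the derivator-bicategorical analogue of \autoref{thm:bcodual2}.
\end{itemize}

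With these facts in hand, the proof proceeds exactly as in \autoref{thm:smcomega2p}. Applying the composition theorem (\autoref{thm:compose-traces}) to the composable dualizable 1-cells $A(\id,a)$ and $X$, together with the 2-cells $A(\id,\alpha)\colon A(\id,a)\to A(\id,a)$ and $f\colon X\to X\odot P$, the trace of the composite
\[ A(\id,a)\odot X \xto{A(\id,\alpha)\odot \id} A(\id,a)\odot X \xto{\id\odot f} A(\id,a)\odot X\odot P \]
equals $\tr(f)\circ \tr(A(\id,\alpha)) = \tr(f)\circ[\alpha]$, which is the left-hand composite in the statement of the lemma. On the other hand, under the restriction isomorphism $A(\id,a)\odot X\simeq X_a$, this composite is identified with $f_a \circ X_\alpha\colon X_a \to X_a\odot P_a$, whose trace in $\dW(R,S)(\tc)$ is the right-hand side of the lemma.

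The main obstacle is not the final assembly, which is purely formal once \autoref{thm:compose-traces} is invoked, but rather the construction of base change objects in a derivator bicategory and the verification of the three properties above, especially the trace identification. These require careful bookkeeping with the coend-of-tensor formula for composition in $\dprof(\dW)$ and with the 2-functoriality of $\dW$ applied to the comma squares that define $[\alpha]$; this is precisely the content postponed to \autoref{part:formal}, and once it is in place the component lemma follows immediately by the argument above.
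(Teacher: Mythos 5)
Your overall assembly is exactly the paper's: reduce to the composition theorem \autoref{thm:compose-traces} applied to the dualizable base change 1-cell $(A,R)(\id,a)$ and $X$, identify $(A,R)(\id,a)\odot X$ with $X_a$, and identify $\tr((A,R)(\id,\alpha))$ with $[\alpha]$; this is how the proof in \S\ref{sec:bco-der} concludes. The only real divergence is in how you propose to establish the three auxiliary facts. You plan to prove derivator-level analogues of \autoref{thm:bcodual}, \autoref{thm:bcorestr} and \autoref{thm:bcodual2} by direct coend/mate bookkeeping, whereas the paper routes everything through the framed bicategory $\lProf(\dW)$ of \autoref{thm:der-frbi}: dualizability of $(A,R)(\id,a)$ is automatic from the framed structure (\autoref{lem:basechangeder}), the identification of the composite with $f_a\circ X_\alpha$ comes from \autoref{thm:der-bco-2cat-restr} and \autoref{thm:bco-2cat-restriction}, and --- crucially --- the trace identification is not computed directly at all: \autoref{thm:general-omega} converts $\tr((A,R)(\id,\alpha))$ into the shadow of the vertical 2-cell $(\alpha,R)$, which \autoref{thm:derbi-2cell-sh} then computes via a homotopy-exact-square argument to be the map induced by $\Lambda\alpha$. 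Be aware that this last step is where the bulk of the difficulty sits: in the ordinary monoidal case \autoref{thm:bcodual2} is proved ``by inspection'', but in the coherent derivator setting a direct inspection would require chasing the mate-transformations through the unit isomorphisms of $\dprof(\dW)$, which is essentially the calculation the paper isolates once and for all in \autoref{thm:general-omega} and \autoref{thm:derbi-2cell-sh}. So your route is viable, but the ``careful bookkeeping'' you defer is not routine; the abstract framed-bicategory detour is what buys the paper a clean, reusable form of that computation (shared with \autoref{thm:smcomega2p} and \autoref{thm:bicatomega}).
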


\begin{proof}We will prove a generalization of this result in \S\ref{sec:basechangebicat} on page \pageref{thm:compderivbicat}.
\end{proof}

Finally, in the semi-additive case we can deduce a more familiar-looking formula.

\begin{defn}
  If $\Phi\in\dW(R,R)(A\op)$ is absolute, then $\tr(\id_\Phi)\colon \sh{R} \to \sh{(A,R)}$ is called its \textbf{coefficient vector}.
  If the target derivator \dT of the shadow is semi-additive and we can express the coefficient vector of $\Phi$ as a linear combination
  \begin{equation}
    \tr(\id_\Phi) = \sum_{[\alpha]} \phi_{[\alpha]} \cdot [\alpha],\label{eq:philin}
  \end{equation}
  for $\phi_{[\alpha]}\colon \sh{R} \to \sh{R}$ in $\dT(\tc)$, then we refer to the $\phi_{[\alpha]}$ as the \textbf{coefficients} of $\Phi$ and say that $\Phi$ has a \textbf{coefficient decomposition}.
\end{defn}

\begin{cor}\label{thm:derbilin2}
  If $\dT$ is semi-additive and $\Phi\in \dW(R,R)(A\op)$ is absolute and has a coefficient decomposition, then we have
  \begin{equation}\label{eq:bicatlin2}
    \tr(\colim^\Phi f) = \sum_{[\alpha]} \phi_{[\alpha]} \cdot \tr(f_a \circ X_\alpha).
  \end{equation}
  for any pointwise dualizable $X\in \dW(R,S)(A)$, $P\in \dW(S,S)(\tc)$ and $f\colon X\to X\odot P$.
\end{cor}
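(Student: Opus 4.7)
The plan is to deduce this corollary by combining three ingredients already established in the paper: the general linearity formula \autoref{thm:bicatderivlinearity}, the coefficient decomposition hypothesis on $\Phi$, and the component lemma \autoref{thm:derbicatomega}. The overall strategy mirrors the derivation of \autoref{thm:smderlin2} from \autoref{thm:dercomposite} and \autoref{thm:smderomega}, but now interpreted in the target derivator $\dT$ of the shadow on $\dprof(\dW)$ rather than in $\dV(\tc)$.

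First I would apply \autoref{thm:bicatderivlinearity} to obtain the equation $\tr(\colim^\Phi(f)) = \tr(f) \circ \tr(\id_\Phi)$ as a morphism $\sh{R}\to\sh{P}$ in $\dT(\tc)$. Next, using the assumption that $\Phi$ has a coefficient decomposition, I would substitute
\[ \tr(\id_\Phi) \;=\; \sum_{[\alpha]} \phi_{[\alpha]} \cdot [\alpha] \]
into the right-hand side. Here each $[\alpha]\colon \sh{R}\to\sh{(A,R)}$ is the map associated to the conjugacy class of $\alpha\in A(a,a)$ as constructed after equation~\eqref{eq:derLA}, and each $\phi_{[\alpha]}\in \dT(\tc)(\sh{R},\sh{R})$ acts on morphisms by the semi-additive structure of $\dT(\tc)$.

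Then I would invoke the semi-additivity of $\dT$ to distribute post-composition with $\tr(f)\colon \sh{(A,R)} \to \sh{P}$ over the finite sum, and to commute it past the scalar action of $\phi_{[\alpha]}$ (which is defined via tensoring with $\phi_{[\alpha]}\colon \sh{R}\to\sh{R}$ and is therefore natural in its target). This yields
\[ \tr(\colim^\Phi(f)) \;=\; \sum_{[\alpha]} \phi_{[\alpha]} \cdot \bigl(\tr(f) \circ [\alpha]\bigr). \]
Finally, \autoref{thm:derbicatomega} identifies each composite $\tr(f)\circ[\alpha]$ with the bicategorical trace $\tr(f_a\circ X_\alpha)$ in $\dW(R,S)(\tc)$, which produces the claimed formula.

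The only step that requires any care is the distributivity step: one must verify that scalar multiplication by $\phi_{[\alpha]}$ commutes with post-composition by $\tr(f)$. This is a formal consequence of the fact that the action of $\dT(\tc)(\sh{R},\sh{R})$ on homsets of $\dT(\tc)$ (as in \S\ref{sec:der}) is defined via the monoidal structure implicit in semi-additivity and is natural in both variables. Once this bookkeeping is in place, the corollary follows immediately, and no further derivator-theoretic input is needed beyond what has already been developed.
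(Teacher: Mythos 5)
Your proposal is correct and is essentially the argument the paper intends: the corollary is an immediate consequence of \autoref{thm:bicatderivlinearity}, the coefficient decomposition of $\tr(\id_\Phi)$, and the component lemma \autoref{thm:derbicatomega}, with semi-additivity of $\dT$ (bilinearity of composition) handling the distribution over the finite sum — exactly as \autoref{thm:smderlin2} follows from \autoref{thm:dercomposite} and \autoref{thm:smderomega}. The "scalar commutation" step you flag is just associativity of composition once one notes the coefficients $\phi_{[\alpha]}$ are morphisms $\sh{R}\to\sh{R}$ acting by (pre)composition, so no extra verification is needed.
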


All the examples from previous sections have generalizations to derivator bicategories.
To give some idea of these generalizations, and because we have a use for it in \cite{PS6}, we will give an outline of the 
generalization of \autoref{eg:cofiber}.

We say that a derivator bicategory \dW is \textbf{locally} semi-additive, stable, or $n$-divisible if each derivator $\dW(R,S)$ has the corresponding property.

\begin{thm}\label{eg:bicatcofib}
If \dW is a locally-stable closed derivator bicategory with a shadow valued in a stable derivator \dT, $X\in \dW(R,S)(\bbtwo)$ is pointwise dualizable and $f\colon X\to X$, then
  \[ \tr(\colim(f)) = \tr(f_b) - \tr(f_a). \]
\end{thm}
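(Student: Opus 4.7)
The plan is to mimic the proof of \autoref{eg:cofiber} essentially verbatim, with the observation that all the derivator-level manipulations happen in the hom-derivators of $\dW$, so local stability is exactly what is needed.

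First, cofibers in $\dW(R,S)$ are a weighted colimit: let $\Phi\in\dW(R,R)(\bbtwo\op)$ be obtained (by right Kan extension along $b\colon\tc\to\bbtwo\op$) so that its underlying diagram is $(0\ot \lI_R)$. Exactly as on the way to \autoref{eg:cofiber}, for any $X\in\dW(R,S)(\bbtwo)$ with underlying diagram $(X_a\to X_b)$, we identify $\colim^\Phi(X)=\Phi\odot X$ with the pushout of
\[
\vcenter{\xymatrix{X_a\ar[r]\ar[d] & X_b\\ 0}}
\]
computed in $\dW(R,S)$, i.e.\ the cofiber of $X$. Pointwise dualizability of $X$ in $\prof(\dW)$ means, by the bicategorical analogue of \autoref{thm:derptdual}, that $X_a$ and $X_b$ are dualizable 1-cells.

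Second, I would show $\Phi$ is absolute. By closedness of $\dprof(\dW)$ the canonical right dual $\rdual{\Phi}$ is computed with the same diagram chase as on page \pageref{eq:stable-cofib-iso}, but with $\lS$ replaced by $\lI_R$ throughout and limits/colimits formed in $\dW(R,R)$; one obtains $\rdual\Phi$ with underlying diagram $(\Omega \lI_R \to 0)$, where $\Omega$ is computed in the hom-derivator $\dW(R,R)$. Running through the comparison $\mu_{\Phi,U}\colon U\odot\rdual\Phi\to\Phi\rhd U$ for $U=\Phi$ reduces to the question of whether the counit
\[
\Sigma\Omega\lI_R \too \lI_R
\]
is an isomorphism in $\dW(R,R)(\tc)$. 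Since $\dW(R,R)$ is stable by hypothesis, this counit is an isomorphism, so $\Phi$ is absolute. Therefore by \autoref{thm:compose-duals} the cofiber $\colim^\Phi(X)$ is dualizable.

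Third, I would extract the coefficients. The category $\Lambda\bbtwo$ is discrete with two objects, so $\sh{(\bbtwo,R)}\cong \sh{R}\oplus\sh{R}$ and $\Phi$ automatically has a coefficient decomposition $\tr(\id_\Phi)=\phi_a\cdot[a]+\phi_b\cdot[b]$ with $\phi_a,\phi_b\in\dT(\tc)(\sh{R},\sh{R})$. Then \autoref{thm:derbilin2} together with the component lemma \autoref{thm:derbicatomega} yields
\[
\tr(\colim(f)) = \phi_a\cdot\tr(f_a) + \phi_b\cdot\tr(f_b).
\]
To pin down $\phi_a$ and $\phi_b$, I would evaluate both sides on two test diagrams over $R$ taken with $S=R$: the constant diagram $\lI_{R,\bbtwo}$ on the identity 1-cell, whose cofiber is $0$ and whose identity endomorphism has trace $\id_{\sh{R}}$ on both components, giving $0=\phi_a+\phi_b$; and the diagram $(0\to\lI_R)$, whose cofiber is $\lI_R$ and whose identity has traces $0$ and $\id_{\sh R}$, giving $\id_{\sh R}=\phi_b$. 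Since \dT is stable and hence additive, we solve $\phi_b=\id_{\sh R}$ and $\phi_a=-\id_{\sh R}$, yielding the claimed formula.

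The main obstacle is the absoluteness step: verifying that the calculation of $\rdual\Phi$ and of $\mu_{\Phi,U}$ in the bicategorical context $\dprof(\dW)$ really does reduce to a local statement in $\dW(R,R)$. This is where the closedness of $\dprof(\dW)$ (\autoref{thm:derivbicat-prof}) and the cocontinuity of $\odot$ in each variable are essential, and once set up, the diagram chases are formally identical to those in the proof of \autoref{eg:cofiber}. The coefficient computation is then routine, paralleling \autoref{eg:dirsum} and \autoref{eg:cofiber}, provided one has \autoref{rmk:additive} available locally to ensure $\dT(\tc)$ is additive so that $-\id_{\sh R}$ makes sense.
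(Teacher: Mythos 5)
Your proposal is correct and follows essentially the same route as the paper's own (sketched) proof: identify $\Phi$-weighted colimits with local cofibers, deduce absoluteness of $\Phi$ from local stability exactly as in \autoref{eg:cofiber}, use $\sh{(\bbtwo,R)}\cong\sh{R}\oplus\sh{R}$ to get a coefficient decomposition, and pin down $\phi_a=-1$, $\phi_b=1$ via the test diagrams $(\lI_R\to\lI_R)$ and $(0\to\lI_R)$. The only difference is that you spell out the details the paper leaves implicit in its outline.
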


 Note that this is an equality of morphisms $\sh{R}\to \sh{S}$ in $\dT(\tc)$.
  We also have the twisted version, which applies to traces of any $f\colon X\to X\odot P$ with $P\in\dW(S,S)(\tc)$.

\begin{proof}[Outline of proof]
For any object $R$, let $\Phi\in \dW(R,R)(\bbtwo\op)$ be the essentially unique diagram of the form $(0\ot \lI_R)$.
  Just as in \autoref{eg:cofiber}, we can conclude that $\Phi$-weighted colimits are {\bf local cofibers}, i.e.\ cofibers in the hom-derivators of \dW, and that $\Phi$ is absolute whenever \dW is stable.

  As before, since $\Lambda \bbtwo $ is discrete on two objects, we have $\sh{(\bbtwo,R)} = \sh{R} \oplus \sh{R}$.
  Thus, $\tr(\id_\Phi)$ is determined by two morphisms $\phi_a,\phi_b \colon  \sh{R} \to \sh{R}$, and we have
  \[ \tr(\colim^\Phi (f)) = \phi_a \cdot \tr(f_a) + \phi_b \cdot \tr(f_b)\]
  for any $f\colon X\to X$.
  We calculate the coefficients exactly as before: the traces of identity maps of the cofibers of $(\lI_R\to \lI_R)$ and $(0\to\lI_R)$ are $0$ and $1$, respectively, while the trace of the identity of $\lI_R$ is $1$, so we have $0 = \phi_a+\phi_b$ and $1=0+\phi_b$, whence $\phi_b = 1$ and $\phi_a = -1$.
\end{proof}

We now sketch the promised application to the Reidemeister trace; see~\cite{PS6} for the details.

By a {\bf parametrized space} over a topological space $B$, we mean a space $X$, called the {\bf total space}, together with maps $B\xto{s}X\xto{p}B$ so that $p\circ s$ is the identity.
A bicategory whose 0-cells are topological spaces, whose 1-cells from $A$ to $B$ are parametrized spaces over $A\times B$ (or more precisely, parametrized spectra), and whose 2-cells are stable homotopy classes of maps of total spaces that commute with the maps $s$ and $p$ is constructed in~\cite{maysig:pht}. 
The bicategorical product, denoted $\odot$, is defined in terms of the pullback.
In~\cite{PS6} we will extend this to a derivator bicategory.

Let $i\colon X\into Y$ be an inclusion of spaces and suppose $f\colon Y\to Y$ is a continuous map so that $f(X)\subset X$.
Then we have induced maps of parametrized spaces
\[\widehat{f|_X}\colon S^0_X\to S^0_X\odot S_{f|_X}\text{ and }\widehat{f}\colon S^0_Y\to S^0_Y\odot S_{f}.\]
Here $S^0_Y$ is the parametrized space $Y\amalg Y$ over $Y$, and $S_f$ is the {\bf twisted path space} of $f$ (the space of triples $(y_1,y_2,\lambda)$ where $y_1,y_2\in Y$ and $\lambda$ is a path from $y_1$ to $f(y_2)$), regarded as a space over $Y\times Y$ (technically, with a disjoint section adjoined).
The definitions of $S^0_X$ and $S_{f|_X}$ are similar.

By composing with the inclusion $i$, we can define a parametrized space $i_!(S^0_X)$ over $Y$, with an induced map $i_!(\widehat{f|_X})\colon i_!(S^0_X)\to i_!(S^0_X)\odot S_f$.
We then have a map of $\bbtwo$-diagrams of parametrized spaces over $Y$:
\[\widehat{f_i}\colon \big(i_!(S^0_X)\to S^0_Y\big)\to \big(i_!(S^0_X)\to S_Y^0\big)\odot S_f\]
whose components are $i_!(\widehat{f|_X})$ and $\widehat{f}$.

It is shown in ~\cite{maysig:pht} that if $X$ and $Y$ are closed smooth manifolds or compact ENRs, then $S^0_X$ and $S^0_Y$ are dualizable in the derivator bicategory of parametrized spectra.
In this case, \autoref{eg:bicatcofib} implies 
\[\tr(\widehat{f})-\tr(i_!(\widehat{f|_X}))=\tr(\colim \widehat{f_i})\]
as maps $S^0 \to \sh{S_f}$.
Note that this is a twisted trace, with twisting object $P=S_f$.
The shadow of a parametrized space over $B\times B$ is its pullback along the diagonal (technically, with the section also quotiented out), so $\sh{S_f}$ is the {\bf twisted loop space} of $f$:
\[\Lambda^{f}Y\coloneqq \{\lambda\in Y^I\mid f(\lambda(0))=\lambda(1)\}.\]
Thus, $\tr(\widehat{f})$ is an element of the zeroth stable homotopy of $\Lambda^fY$; it can be identified with the Reidemeister trace of $f$.
Similarly, $\tr(i_!(\widehat{f_X}))$ can be identified with the image of the Reidemeister trace of $f|_X$ under the map
$\Lambda^{f|_X}X\to \Lambda^{f}Y$ induced by $i$.
The remaining piece, $\tr(\colim \widehat{f_i})$, is the \emph{relative Reidemeister trace} of $f$ from \cite{kate:relative}; it is a refinement of the Reidemeister trace of the induced map $f/X \colon  Y/X \to Y/X$ on the quotient space.
Thus, in the end we obtain the formula mentioned in the introduction:
\[R(f)-i(R(f|_X))=R_{Y|X}(f).\]

\begin{rmk}\label{eg:bicatpushout}
  The other examples from \S\S\ref{sec:eccat}--\ref{sec:ei-categories} also generalize directly to the bicategorical case.
  For instance, if \dW is locally stable and \dT is stable, and $X\in \dW(R,S)(\mathord\ulcorner)$ is a pointwise right dualizable span, then its pushout $\colim(X)$ is also right dualizable, and if $f\colon X\to X$ is an endomorphism, then
  \[ \tr(\colim(f)) = \tr(f_{c}) + \tr(f_{b}) - \tr(f_{a}) \]
  as morphisms $\sh{R} \to \sh{S}$.

  Similarly, if $A$ is a finite EI-category, and that \dW is locally stable and locally $\card{\nAut(\vec{\alpha})}$-divisible for each $\vec{\alpha}$, with a shadow valued in a stable and $\card{\nAut(\vec{\alpha})}$-divisible derivator \dT, then for any pointwise dualizable $X\in\dW(R,S)(A)$, its colimit $\colim(X)\in\dW(R,S)(\tc)$ is dualizable, and for any $f\colon X\to X$ we have
  \[ \tr(\colim(f)) = \sum_{[a]} \sum_{C} \tr(X_{C} \circ f_{a}) \cdot
  \sum_n (-1)^n \sum_{[\vec{\alpha}]} \sum_{\vec{C}} \frac{\card{\vec{C}}}{\card{\nAut(\vec{\alpha})}}. \]
  The notation is as in \autoref{thm:eicatdual}.

  We omit the proofs of these generalizations, since they are remarkably similar to those of \autoref{thm:pushouts} and \autoref{thm:eicatdual}.
\end{rmk}

\section{The uniqueness of linearity formulas}
\label{sec:uniqueness}

As an additional application of the extension of linearity to bicategorical trace, we now prove a uniqueness theorem for linearity formulas.
So far, we have shown that \emph{if} a weight $\Phi$ is absolute, \emph{then} $\Phi$-weighted colimits preserve dualizability, and moreover the coefficient vector of $\Phi$ yields \emph{a} linearity formula.
Now we will reverse these implications: we show that \emph{if} $\Phi$-weighted colimits preserve dualizability, \emph{then} $\Phi$ is absolute; and that moreover in this case, the coefficient vector of $\Phi$ is the \emph{only} linearity formula.

The main idea involved in the proof of the latter statement should not be surprising since we have already seen it in many examples.
Namely, once we know that a linearity formula exists, we can deduce what its coefficients \emph{must} be, by considering some simple examples.
There are even a canonical set of examples to consider, namely the representable diagrams.
This naive method doesn't quite work in all cases (for instance, we weren't able to use it to calculate the coefficients for the orbit-counting theorem), but a refinement of it does: we must consider not the individual representable diagrams, but their totality, as a 1-cell in the bicategory of profunctors.

For this we need linearity formulas that apply to bicategorical traces.
It follows that the generality of linearity for bicategorical trace is necessary even to \emph{state} the uniqueness theorem.

First we introduce a bit of terminology, to make precise the location in which our ``coefficients'' live when comparing linearity formulas in different categories.
Let $Z$ be a commutative semiring; a \textbf{$Z$-module} is a commutative monoid equipped with an associative, unital, bilinear action of $Z$.
We say that a derivator \D is \textbf{$Z$-linear} if each category $\D(A)$ is $Z$-linear (i.e.\ enriched over $Z$-modules) and each functor $u^*\colon \D(B) \to \D(A)$ is likewise $Z$-linear.
This implies that the functors $u_!$ and $u_*$ are also $Z$-linear.

We say that a derivator bicategory \dW is \textbf{locally $Z$-linear} if each derivator $\dW(R,S)$ is $Z$-linear and the composition morphisms are $Z$-bilinear.
In particular, for any object $R$ of \dW, we have a semiring homomorphism $Z \to \dW(R,R)(\tc)(\lI_R,\lI_R)$, defined by multiplying by the identity.
If \dW has a shadow valued in \dT, we say that the shadow is \textbf{$Z$-linear} if $\dT$ is $Z$-linear and the shadow morphisms $\dW(R,R) \to \dT$ are $Z$-linear.

For example, any locally semi-additive \dW is $\mathbb{N}$-linear.
If it is locally additive (such as if it is locally stable), then it is $\mathbb{Z}$-linear.
And it is $n$-divisible, as defined in \S\ref{sec:deriv-burnside}, exactly when it is $\mathbb{N}[\frac{1}{n}]$-linear.
(In these cases linearity is a mere property rather than a structure, since the unique map $\mathbb{N}\to Z$ from the initial semiring $\mathbb{N}$ is an epimorphism.)

Now we can state the uniqueness theorem.
For simplicity, we consider only the case of constant weights (``conical colimits''); for the general case we would need a way to say in what sense two weights in different bicategories are ``the same''.

\begin{thm}\label{thm:uniqueness}
  Let $Z$ be a commutative semiring and $\fB$ be a class of closed, locally semi-additive, $Z$-linear derivator bicategories with $Z$-linear shadows.
  Assume that \fB is closed under $\dprof$, i.e.\ if $\dW\in\fB$ then $\dprof(\dW)\in\fB$.
  Let $A$ be a finite category, and assume the following.
  \begin{enumerate}
  \item If $\dW\in\fB$ and $X\in \dW(R,S)(A)$ is pointwise right dualizable, then $\colim(X)$ is right dualizable.\label{item:uniqh1}
  \item There are elements $\phi_{[\alpha]}\in Z$ indexed by the conjugacy classes of $A$ such that for any $\dW$ and $X$ as in~\ref{item:uniqh1} and any $f\colon X\to X$, we have\label{item:uniqh2}
    \[ \tr(f) = \sum_{[\alpha]} \phi_{[\alpha]} \tr(f_a \circ X_\alpha). \]
  \end{enumerate}
  Then for any $\dW\in\fB$ and any object $R$ of $\dW$, the constant diagram $(\pi_{A\op})^*{\lI_R} \in \dW(R,R)(A\op)$ is absolute and has a coefficient decomposition, and its coefficients are (the images in \dT of) the $\phi_{[\alpha]}$.
\end{thm}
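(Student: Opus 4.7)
The plan is to exploit the closure of $\fB$ under $\dprof$: given $\dW \in \fB$, we work inside $\dprof(\dW) \in \fB$, applying hypotheses (i) and (ii) to a universal test diagram $Y$ whose colimit in $\dprof(\dW)$ recovers $\Phi$. This simultaneously yields both the absoluteness of $\Phi$ and its coefficient decomposition.

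To construct $Y$, first regard $\Phi \in \dW(R,R)(A\op)$ as a 1-cell $(\tc, R) \hto (A, R)$ in $\prof(\dW)$. Under the identification $\dprof(\dW)((\tc,R),(A,R))(A) \cong \dW(R,R)(A\op \times A)$, I take $Y$ to be the reinterpretation of the identity 1-cell $\lI_{(A,R)} \cong A(-,-)\cdot \lI_R$, treating one $A$-factor as the diagram shape and the other as the profunctor variable. Then each $Y_a$ is the representable profunctor $A(-, a)\cdot \lI_R \in \prof(\dW)((\tc,R),(A,R))$, which is right dualizable by the derivator-bicategorical base-change theory of \S\ref{sec:bco-der} (the analog of \autoref{thm:bcodual}); hence $Y$ is pointwise right dualizable. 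A co-Yoneda-style identity---the $A$-colimit of representables in $\dW(R,R)(A\op)$ is the constant 1-cell at $\lI_R$---gives $\colim(Y) \cong \Phi$ in $\dprof(\dW)((\tc,R),(A,R))(\tc)$.

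Absoluteness now follows at once: hypothesis (i), applied to $\dprof(\dW)$ and the pointwise right dualizable diagram $Y$, asserts that $\colim(Y) \cong \Phi$ is right dualizable in $\dprof(\dW)$, which is by definition the statement that $\Phi$ is absolute in $\prof(\dW)$. For the coefficient decomposition, apply hypothesis (ii) with $f = \id_Y$: the left-hand side becomes $\tr(\id_\Phi)$, the coefficient vector of $\Phi$ by definition, while the right-hand side is $\sum_{[\alpha]} \phi_{[\alpha]} \cdot \tr(Y_\alpha)$. The remaining step is to identify each $\tr(Y_\alpha)$ with the canonical morphism $[\alpha]\colon \sh{R} \to \sh{(A,R)}$, after which we obtain the desired decomposition $\tr(\id_\Phi) = \sum_{[\alpha]} \phi_{[\alpha]} \cdot [\alpha]$.

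The main obstacle is this last identification $\tr(Y_\alpha) = [\alpha]$, which is the derivator-bicategorical analog of \autoref{thm:bcodual2} for representable profunctors. I expect it to follow either by invoking \autoref{thm:derbicatomega} for the representable $Y_a$ with endomorphism induced by $\alpha$, or more directly from the base-change trace formula developed in \S\ref{sec:bco-der}. The $Z$-linearity hypothesis then ensures that the scalars $\phi_{[\alpha]} \in Z$ act well-definedly on morphisms in $\dT(\tc)$, so that the resulting coefficients of $\Phi$ are exactly the images of $\phi_{[\alpha]}$ under the structural map $Z \to \dT(\tc)(\sh{R}, \sh{R})$.
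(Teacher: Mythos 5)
Your proposal is correct and takes essentially the same route as the paper: the paper likewise regards the unit $\lI_{(A,R)}$ as the $A$-shaped diagram of representables $(A,R)(\id,-)$ in $\dprof(\dW)$, observes it is pointwise right dualizable with colimit the constant weight $(\pi_{A\op})^*\lI_R$, and then applies hypotheses (i) and (ii) to it. The one identification you leave open, $\tr((A,R)(\id,\alpha)) = [\alpha]$, is supplied exactly by your second suggested route, namely \autoref{thm:general-omega} combined with \autoref{thm:derbi-2cell-sh} from \S\ref{sec:bco-der} (the derivator-bicategorical analogue of \autoref{thm:bcodual2}), rather than by \autoref{thm:derbicatomega}, whose own proof rests on that identification.
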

\begin{proof}
  See page~\pageref{proof:uniqueness} in \S\ref{sec:bco-der}.
\end{proof}

To explain the role of $Z$ and $\fB$, we show how this theorem applies to a few of our previous examples.

\begin{eg}\label{eg:uniqueness-dirsum}
  Consider the case $A=\{a,b\}$ of direct sums, as in \autoref{eg:dirsum}.
  Let $Z=\mathbb{N}$ and let $\fB$ be the class of \emph{all} closed, locally semi-additive derivator bicategories with shadows.
  This is clearly closed under $\dprof$.
  As remarked in \autoref{eg:dirsum} (in the monoidal case), it is not hard to prove by explicit calculation that if $X$ and $Y$ are right dualizable in such a bicategory, then so is $X\oplus Y$, and that for any $f\colon X\to X$ and $g\colon Y\to Y$ we have $\tr(f\oplus g) = \tr(f)+\tr(g)$.
  Therefore, \autoref{thm:uniqueness} implies that the weights $(\pi_{A\op})^*{\lI_R}$ for coproducts are right dualizable in any locally semi-additive derivator bicategory, and their coefficients are $1$ and $1$.
\end{eg}

\begin{eg}\label{eg:uniqueness-pushout}
  Let $A=\mathord\ulcorner$ be the diagram for pushouts, as in \autoref{thm:pushouts} and \autoref{eg:bicatpushout}.
  Let $Z=\mathbb{Z}$ and let $\fB$ be the class of \emph{locally stable} closed derivator bicategories with shadows valued in a stable derivator; this is also closed under $\dprof$.
  Combining the Mayer-Vietoris sequence of~\cite{gps:stable} with~\cite[Lemma 11.6]{gps:additivity}, it follows that the pushout of right dualizable 1-cells in such a bicategory is again right dualizable.
  Now~\cite{gps:additivity} showed, by generalizing the method of~\cite{add}, that the additivity formula~\eqref{eq:cofiber-add} holds in any stable closed symmetric monoidal derivator.
  It might be possible to generalize this method to apply to derivator bicategories as well.
  If so, then in \autoref{thm:pushouts}, from this and the additivity of direct sums we would obtain the additivity formula~\eqref{eq:pushout-add} for pushouts.
  Therefore, \autoref{thm:uniqueness} would imply that the weights for pushouts are right dualizable in any locally stable derivator bicategory, and their coefficients are $1$, $1$, and $-1$ as in \autoref{thm:pushouts}.
\end{eg}

Note that in both of these cases, the uniqueness theorem is calculating the coefficient vector in essentially the same way that we did before: namely, once we know that a linearity formula exists, we can look at simple examples to identify its coefficients.
Examples~\ref{eg:uniqueness-dirsum} and~\ref{eg:uniqueness-pushout} do this in a more redundant way, deriving the general linearity formula first and then restricting it to the simple examples to conclude that its coefficients coincide with those of the coefficient vector.

There are other examples, however, where it does seem to be easier to proceed in this way.
Specifically, if $A$-colimits can be constructed out of smaller colimits in some concrete way, then it is sometimes easier to derive the linearity formula for $A$-colimits directly from those for the smaller colimits, rather than to explicitly analyze the coefficient vector of $A$.
\autoref{thm:uniqueness} tells us that in this case, the coefficients in the resulting linearity formula are actually automatically the components of the coefficient vector itself.

\begin{eg}
  In \autoref{thm:eccat} we proved a linearity formula for homotopy finite colimits that holds in any stable, closed symmetric monoidal derivator, and in \autoref{eg:bicatpushout} we observed that essentially the same proofs apply to any locally-stable closed derivator bicategory.
  The proof involved constructing such colimits out of pushouts, rather than directly analyzing the weight for absoluteness and calculating its coefficient vector.
  However, if we let $Z=\mathbb{Z}$ and $\fB$ be the class of locally-stable closed derivator bicategories, then \autoref{thm:uniqueness} implies that in fact, for any homotopy finite $A$, the weight $(\pi_{A\op})^*{\lI_R}$ is absolute in any $\dW\in\fB$, and its coefficients are precisely those appearing in \autoref{thm:eccat}.

  Similarly, let $A$ be a finite EI-category, let $Z=\mathbb{Z}[S^{-1}]$ where $S$ is the set of cardinalities of automorphism groups of objects of $A$, and let $\fB$ be the class of $S$-divisible locally-stable closed derivator bicategories.
  Then from \autoref{thm:eicatdual} and \autoref{thm:uniqueness}, we conclude that $(\pi_{A\op})^*{\lI_R}$ is absolute in any $\dW\in\fB$, and its coefficients are those appearing in \autoref{thm:eicatdual}.
\end{eg}

To be honest, in concrete applications the uniqueness theorem is not very important: usually what we care about is \emph{having} a linearity formula, not about whether it comes from a coefficient vector.
However, it does reassure us that our theory is ``complete'', in the sense that it includes all sufficiently general linearity formulas.

We end this section with two amusing applications.
First, we give a quick proof that stable implies semi-additive in the monoidal or bicategorical case.
(Compare to the proof of~\cite[Proposition 4.7]{groth:ptstab}, which applies in any stable derivator.)

\begin{prop}\label{thm:semiadditive}
  Any locally-stable closed derivator bicategory is locally semi-additive (and hence locally additive, by \autoref{rmk:additive}).
\end{prop}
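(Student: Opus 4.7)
The plan is to reduce the claim to Groth's theorem that every stable derivator is semi-additive. Both ``locally stable'' and ``locally semi-additive'' are conditions purely on the individual hom-derivators $\dE \coloneqq \dW(R,S)$: the first says each $\dE$ is a stable derivator, the second says each $\dE$ is semi-additive (i.e.\ each $\dE(A)$ is semi-additive and restriction preserves biproducts). So the statement amounts to the assertion that every stable derivator is semi-additive, which is~\cite[Proposition~4.7]{groth:ptstab}.

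To give an argument more in the spirit of the present paper (and thereby answer the ``compare to'' parenthetical), I would work inside a single hom-derivator $\dE$, which is stable and in particular pointed. Given $X, Y \in \dE(\tc)$, the square
\begin{equation*}
\vcenter{\xymatrix{
0 \ar[r] \ar[d] & Y \ar[d] \\
X \ar[r] & X \sqcup Y
}}
\end{equation*}
is coCartesian by the definition of the binary coproduct in a pointed derivator, and by stability it is also Cartesian. The standard manipulation of such biCartesian squares (extending the square to a $3{\times}3$ grid built from $X$, $Y$, $0$, and their coprojections, and reading off biCartesianness of the outer square) then identifies the canonical ``identity matrix'' map $X \sqcup Y \to X \times Y$, constructed using the pointed structure, as an equivalence. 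This gives semi-additivity of $\dE(\tc)$, and the same argument applied in each shifted derivator $\shift{\dE}{A}$ yields semi-additivity of $\dE(A)$ and the compatibility with restriction, establishing local semi-additivity of \dW.

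The parenthetical claim of local additivity then follows from Remark~\autoref{rmk:additive} applied inside each hom-derivator: once semi-additivity is available, the bicategorical cofiber linearity formula of Theorem~\autoref{eg:bicatcofib} produces a 2-cell $\phi_a\colon \lI_R \to \lI_R$ with $\phi_a + \id = 0$, and horizontal composition with $\phi_a$ supplies additive inverses in every $\dW(R,S)(A)$. The only real obstacle is the biCartesian-square manipulation identifying the biproduct map as an equivalence; this is genuinely the content of Groth's original proof and does not appear to admit a significant shortcut through the linearity machinery developed here, since stating any linearity formula already presupposes the semi-additivity one is trying to produce.
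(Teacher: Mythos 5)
Your argument is correct as far as it goes, but it proves the proposition by outsourcing it. You are right that both hypotheses are levelwise conditions on the hom-derivators, so the statement does follow from Groth's theorem that stable derivators are semi-additive, and the paper explicitly concedes this in the sentence preceding the proposition. The paper's own proof is deliberately different: it is an ``amusing application'' of the uniqueness theorem. Concretely, for right dualizable $X,Y\in\dW(R,S)(\tc)$ one takes the coproduct of the defining cartesian square for $\Omega X$ (with corners $\Omega X,0,0,X$, which is also cocartesian by stability) and the trivial cocartesian square with corners $0,Y,0,Y$; this exhibits $X+Y$ as the cofiber of a coherent map $\Omega X\to Y$. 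Since $\Omega X\cong \Omega\lI_R\odot X$ is right dualizable by stability, the bicategorical cofiber theorem (\autoref{eg:cofiber}, in the form \autoref{eg:bicatcofib}) shows $X+Y$ is right dualizable. The class of locally-stable closed derivator bicategories is closed under $\dprof$ (the hom-derivators of $\dprof(\dW)$ are shifts of those of \dW, and shifts of stable derivators are stable), so the first half of \autoref{thm:uniqueness} applies and shows that the constant weight for binary coproducts is absolute; and absoluteness of that weight \emph{is} local semi-additivity, by the analysis of \autoref{eg:dirsum}/\autoref{eg:bicatadditive}. What the paper's route buys is a proof internal to its own machinery; what your route buys is generality, since Groth's argument needs no monoidal or bicategorical structure --- a trade-off the paper itself points out.

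Two corrections. First, your closing claim that the linearity machinery cannot shortcut the biCartesian-square argument ``since stating any linearity formula already presupposes the semi-additivity one is trying to produce'' is exactly the point you missed: the dualizability-preservation $\Rightarrow$ absoluteness half of \autoref{thm:uniqueness} uses neither semi-additivity, $Z$-linearity, nor the shadow (only the coefficient-identification half does), and absoluteness of the coproduct weight is itself the semi-additivity being produced. Second, in your treatment of the parenthetical, the coefficient $\phi_a$ produced by \autoref{eg:bicatcofib} is a morphism $\sh{R}\to\sh{R}$ in $\dT(\tc)$, not a 2-cell $\lI_R\to\lI_R$, so ``horizontal composition with $\phi_a$'' does not typecheck as written (and the proposition does not even assume \dW carries a shadow); on your route the cleanest fix is simply to quote Groth's additivity statement hom-wise as well.
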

\begin{proof}
  Suppose \dW is locally stable.
  Then for any 1-cells $X,Y\in\dW(R,S)(\tc)$, we have two cocartesian squares
  \begin{equation}
    \vcenter{\xymatrix{
        \Omega X\ar[r]\ar[d] &
        0\ar[d]\\
        0\ar[r] &
        X
      }}
    \qquad\text{and}\qquad
    \vcenter{\xymatrix{
        0\ar[r]\ar[d] &
        Y\ar[d]\\
        0\ar[r] &
        Y
      }}
  \end{equation}
  (the first being cocartesian in addition to cartesian because of stability).
  Taking their coproduct, we have a cocartesian square
  \begin{equation}
    \vcenter{\xymatrix{
        \Omega X\ar[r]\ar[d] &
        Y\ar[d]\\
        0\ar[r] &
        X+Y,
      }}
  \end{equation}
  so $X+Y$ is the cofiber of a coherent morphism $\Omega X \to Y$.
  Now if $X$ and $Y$ are right dualizable, so is $\Omega X \cong \Omega \lI_R \odot X$ by stability; thus by \autoref{eg:cofiber} $X+Y$ is also right dualizable.

  We have shown that in any locally-stable closed derivator bicategory, the coproduct of two right dualizable 1-cells is right dualizable.
  Therefore, the first part of the uniqueness theorem implies that coproducts are absolute in any such $\dW$, which is to say that it is locally semi-additive.
\end{proof}

Second, we give another proof of \autoref{thm:realiz-coeff}.

\label{proof:realiz-coeff}
\begin{replem}{thm:realiz-coeff}
  For any $n$, we have
  \begin{equation}
    \sum_{k\ge 0} (-1)^k \cdot \card {\left\{
      \parbox{5.4cm}{\centering composable strings of $k$ nonidentity\\ face maps starting at $[n] \in \bbDelta\op$}
    \right\}}
    = (-1)^n.
  \end{equation}
\end{replem}
\begin{proof}
  Applying formula~\eqref{eq:eccattr} to the strictly homotopy finite category $(\bbDelta_m')\op$ for some $m\ge n$, we conclude that for any truncated semisimplicial diagram $Y$ and $g\colon Y\to Y$ we have
  \begin{equation}
    \tr(\colim(g))
    = \sum_{n\le m} \tr(g_n) \cdot \sum_{k\ge 0} (-1)^k \cdot \card {\left\{
      \parbox{5.2cm}{\centering composable strings of nonidentity\\arrows of length $k$ starting at $[n]$}
    \right\}}
  \end{equation}
  However, by~\eqref{eq:realiztr} we also have
  \begin{equation}
    \tr(\colim(g)) = \sum_{n \le m} (-1)^k \tr(g_n).
  \end{equation}
  The claim now follows from the Uniqueness \autoref{thm:uniqueness}.
\end{proof}

Karol Szumi\l{}o has pointed out that this proof is basically the same as that given in \S\ref{sec:ei-categories}, since the construction of $(\bbDelta_m')\op$-colimits by the method of \autoref{thm:hofin-constr} amounts to barycentrically subdividing it.

\part{Base change objects and component lemmas}
\label{part:formal}

In this fourth and final part of the paper, we complete the proofs of the component lemmas.
This consists mainly of calculations with derivators using the tools of~\cite{groth:ptstab,gps:stable,gps:additivity}.
However, to simplify some of these calculations, we first introduce a slightly more abstract framework for bicategorical trace.

\section{The abstract theory of base change objects}
\label{sec:framed}
\label{sec:base_change}

In this section we introduce an abstract framework that will enable us to give a high-level proof of \autoref{thm:smderomega}, \autoref{thm:bicatomega}, \autoref{thm:derbicatomega}, and other similar statements.
It is based on the notion of \emph{framed bicategory}~\cite{shulman:frbi} (also known as a \emph{proarrow equipment}~\cite{wood:proarrows-i}), which is an enhancement of a bicategory that includes \emph{base change objects} such as the representable profunctors $B(\id,f)$ and $B(f,\id)$.
This will allow us to establish a very general form of the component lemmas, 
in \autoref{thm:general-omega}.
In the next two sections we will apply this to bicategories and derivators.

We start by recalling the definitions.

\begin{defn}
  A \textbf{(pseudo) double category} \lW consists of:
  \begin{itemize}
  \item A category $\lW_0$, whose objects we call objects of \lW and whose morphisms we call vertical arrows.
  \item A category $\lW_1$, whose objects we call horizontal arrows and whose morphisms we call $2$-cells.
  \item Functors $S,T\colon \lW_1\to \lW_0$ called horizontal source and horizontal target.
  \item A {composition} functor $\odot\colon  \lW_1 \times_{\lW_0} \lW_1 \to \lW_0$.
  \item A {unit} functor $\lI\colon \lW_0 \to \lW_1$.
  \item Associativity and unit isomorphisms for composition, satisfying appropriate axioms.
  \end{itemize}
\end{defn}

We draw a 2-cell $\phi\colon M\to N$ as
\begin{equation}
  \vcenter{\xymatrix{
      S(M)\ar[r]|{|}^M\ar[d]_{S(\phi)} \drtwocell\omit{\phi} &
      T(M)\ar[d]^{T(\phi)}\\
      S(N)\ar[r]|{|}_{N} &
      T(N)
      }}
\end{equation}
Composition in $\lW_1$ is vertical pasting of such squares, while the composition functor $\odot$ yields a horizontal pasting.

Every double category has a \textbf{horizontal bicategory} obtained by neglecting the vertical arrows.
If \lW is a double category, we denote its horizontal bicategory by \W.
Conversely, any bicategory can be regarded as a double category whose only vertical arrows are identities; thus any monoidal category can similarly be regarded as a double category with one object and one vertical arrow.

Just as any bicategory is equivalent to a strict 2-category, any pseudo double category is equivalent to a strict one (see~\cite{gp:double-limits}).
Thus, we will write as if our double categories were strict, even though the examples we care about are not.

\begin{defn}[{\cite[Theorem~4.1]{shulman:frbi}}]
  The following three conditions on a double category \lW are equivalent; when they hold we call it a \textbf{framed bicategory}.
  \begin{itemize}
  \item $(S,T)\colon \lW_1 \to \lW_0 \times \lW_0$ is a categorical fibration.
  \item $(S,T)\colon \lW_1 \to \lW_0 \times \lW_0$ is a categorical opfibration.
  \item For every vertical arrow $f\colon A\to B$, there exist horizontal arrows $B(\id,f)\colon A\hto B$ and $B(f,\id)\colon B\hto A$ and 2-cells
    \begin{equation}
      \vcenter{\xymatrix{
          A\ar[r]|{|}^{\lI_A}\ar@{=}[d] \drtwocell\omit{\alpha} &
          A\ar[d]^f
          &
          A\ar[r]|{|}^{B(\id,f)}\ar[d]_f \drtwocell\omit{\beta} &
          B\ar@{=}[d]
          &
          A\ar[r]|{|}^{\lI_A} \ar[d]_f \drtwocell\omit{\eta} &
          A\ar@{=}[d]
          &
          B\ar[r]|{|}^{B(f,\id)} \ar@{=}[d] \drtwocell\omit{\epsilon} &
          A\ar[d]^f
          \\
          A\ar[r]|{|}_{B(\id,f)} &
          B
          &
          B\ar[r]|{|}_{\lI_B} &
          B
          &
          B\ar[r]|{|}_{B(f,\id)} &
          A
          &
          B\ar[r]|{|}_{\lI_B} &
          B
        }}
    \end{equation}
    such that the following four composites are identities.
    \begin{equation}
      \vcenter{\xymatrix{
          \ar@{=}[r]\ar@{=}[d] \drtwocell\omit{\alpha} &
          \ar[r]\ar[d] \drtwocell\omit{\beta} &
          \ar@{=}[d]\\
          \ar[r] &
          \ar@{=}[r] &
        }}
      \qquad
      \vcenter{\xymatrix{
          \ar@{=}[r]\ar@{=}[d] \drtwocell\omit{\alpha} &
          \ar[d]\\
          \ar[r]\ar[d] \drtwocell\omit{\beta}&
          \ar@{=}[d]\\
          \ar@{=}[r] &
        }}
      \qquad
      \vcenter{\xymatrix{
          \ar[r]\ar@{=}[d] \drtwocell\omit{\epsilon} &
          \ar@{=}[r]\ar[d] \drtwocell\omit{\eta} &
          \ar@{=}[d]\\
          \ar@{=}[r] &
          \ar[r] &
        }}
      \qquad
      \vcenter{\xymatrix{
          \ar@{=}[r]\ar[d] \drtwocell\omit{\eta} &
          \ar@{=}[d]\\
          \ar[r]\ar@{=}[d] \drtwocell\omit{\epsilon}&
          \ar[d]\\
          \ar@{=}[r] &
        }}
    \end{equation}
  \end{itemize}
\end{defn}

Any bicategory (or monoidal category), regarded as a double category, is a framed bicategory.
The other primary examples we have in mind are extensions of the bicategories $\prof(\V)$ and $\prof(\W)$ from previous sections, whose vertical arrows come from functors between small categories.
These can be obtained by applying the following two general constructions.

\begin{thm}[{\cite[Prop.~11.10]{shulman:frbi}}]
  If \lW is a framed bicategory with local coequalizers (i.e.\ coequalizers in each category $\W(A,B)$ that are preserved by $\odot$ in each variable), then there is a framed bicategory $\lMod(\lW)$ described as follows.
  \begin{itemize}
  \item Its objects are monads in \W: endo-1-morphisms $M\colon A\hto A$ together with a multiplication $M\odot M\to M$ and unit $\lI_A \to M$ satisfying associativity and unit axioms.
  \item Its horizontal arrows from $M\colon A\hto A$ to $N\colon B\hto B$ are modules in \W: horizontal arrows $H\colon A\hto B$ together with actions $M\odot H\to H$ and $H\odot N\to H$ that are associative and unital and commute with each other.
  \item Its vertical arrows from $M\colon A\hto A$ to $N\colon B\hto B$ are pairs $(f,\phi)$ of a vertical arrow $f\colon A\to B$ and a 2-cell
    \begin{equation}
      \vcenter{\xymatrix{
          R\ar[r]|{|}^M\ar[d]_f \drtwocell\omit{\phi} &
          R\ar[d]^f\\
          S\ar[r]|{|}_N &
          S
        }}
    \end{equation}
    in \lW, which commute with the monad structures of $M$ and $N$.
  \item Its 2-cells are 2-cells in \lW which commute with the module actions of their domain and codomain.
  \item The composite of modules $H\colon M\hto N$ and $K\colon N\hto P$ is the local coequalizer of the two actions
    \[ H\odot N\odot K \toto K\odot K. \]
  \item The horizontal unit $\lI_M$ of a monad $M\colon A\hto A$ is the horizontal arrow $M$ itself, regarded as a module.
  \end{itemize}
\end{thm}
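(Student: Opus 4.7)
The plan is to verify first that the asserted data assembles into a pseudo double category, and then that this double category satisfies the framed bicategory condition by lifting the base change objects of \lW.

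For the double category structure, the key issue is the well-definedness and coherence of horizontal composition. Given modules $H\colon M\hto N$ and $K\colon N\hto P$, we form $H\odot_N K$ as the coequalizer in $\W(A,C)$ of the parallel pair $H\odot N\odot K \toto H\odot K$ induced by the right $N$-action on $H$ and the left $N$-action on $K$. This coequalizer exists by hypothesis, and inherits a canonical left $M$- and right $P$-action: because $\odot$ preserves local coequalizers in each variable, tensoring the defining coequalizer with $M$ on the left (resp.\ $P$ on the right) again yields a coequalizer, into which the given $M$- (resp.\ $P$-)actions on $H$ and $K$ descend. Associativity of $\odot_N$ then follows because the triple composite $(H\odot_N K)\odot_P L$ can, in either order of bracketing, be identified with the coequalizer of a single diagram $H\odot N\odot K\odot P\odot L \toto H\odot K\odot L$, once more using cocontinuity of $\odot$ in each slot. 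The horizontal units $\lI_M = M$ are absorbed via a split coequalizer argument using the monad multiplication and unit, yielding the required unit isomorphisms. All coherence axioms reduce to the corresponding axioms in \lW together with these split coequalizer arguments.

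For the framed property, fix a vertical arrow $(f,\phi)\colon M\to N$ in $\lMod(\lW)$, so $f\colon A\to B$ is a vertical arrow of \lW and $\phi\colon M\to N$ is a 2-cell over $(f,f)$ compatible with the monad structures. I would take the base change arrows $B(\id,f)\colon A\hto B$ and $B(f,\id)\colon B\hto A$ of \lW and promote them to modules. Using that $(S,T)\colon \lW_1\to\lW_0\times\lW_0$ is an opfibration, the 2-cell $\phi$ transports to a 2-cell $M\odot B(\id,f)\to B(\id,f)\odot N$ in \W; composing with the multiplications of $M$ and $N$ endows $B(\id,f)$ with an $(M,N)$-bimodule structure, whose associativity and unit laws follow from those of $\phi$ together with the monad axioms. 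An entirely analogous construction on the other side makes $B(f,\id)$ into an $(N,M)$-bimodule. The cells $\alpha,\beta,\eta,\epsilon$ for $\lMod(\lW)$ are then inherited from those in \lW: each is a 2-cell of \lW between the underlying horizontal arrows, and one checks module compatibility using the defining relation of $\phi$ together with the framed triangle identities in \lW. The four triangle identities in $\lMod(\lW)$ are immediate consequences of those in \lW.

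The main obstacle is not any single step but sustained bookkeeping: every candidate module must be checked to remain a module after each operation, and every coequalizer must be checked to survive further tensoring. All of these verifications are mechanical once local cocontinuity of $\odot$ is in hand, so the only conceptually substantive point is the observation that the base change objects of $\lMod(\lW)$ are literally the base change objects of \lW equipped with bimodule structures transported from $\phi$ through the opfibration bijection guaranteed by framedness.
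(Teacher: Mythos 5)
The paper itself does not prove this statement---it simply cites \cite[Prop.~11.10]{shulman:frbi}---so the only question is whether your sketch would stand on its own. The first half (the pseudo double category structure) is fine: defining $H\odot_N K$ as the local coequalizer of $H\odot N\odot K\toto H\odot K$, transporting the outer actions through the coequalizer using cocontinuity of $\odot$ in each variable, getting associativity from the preserved coequalizers, and absorbing units $\lI_M=M$ by a split coequalizer using the monad multiplication and unit is exactly the standard argument.

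The framedness step, however, contains a genuine error. You propose to take the base change objects of $\lMod(\lW)$ to be $B(\id,f)$ and $B(f,\id)$ themselves, ``promoted'' to bimodules. But $B(\id,f)$ does not admit a right $N$-action in general: the transported cell $\phi$ gives a map $M\odot B(\id,f)\to B(\id,f)\odot N$, and composing with multiplications of $M$ and $N$ only produces maps \emph{out of} composites with $B(\id,f)$, never a map $B(\id,f)\odot N\to B(\id,f)$. Already in the simplest instance---$\lW$ the one-object framed bicategory on the monoidal category of abelian groups, so that monads are rings and a vertical arrow is a ring homomorphism $\phi\colon M\to N$ over $f=\id$---your candidate is $B(\id,f)=\mathbb{Z}$, which for $N=M_2(\mathbb{Z})$ carries no unital right $N$-module structure at all. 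The correct base change objects are the \emph{restricted} copies of $N$: the companion is $B(\id,f)\odot N$ (classically ${}_\phi N_N$), with right $N$-action from the multiplication of $N$ and left $M$-action obtained from the transported $\phi$ followed by multiplication, and dually the conjoint is $N\odot B(f,\id)$. With that correction the cells $\alpha,\beta,\eta,\epsilon$ are built from the unit and multiplication of $N$ together with the framed cells of $\lW$, and the triangle identities are not quite ``immediate'': they are verified only after passing through the coequalizer description of composition in $\lMod(\lW)$, e.g.\ using the canonical isomorphism $(B(\id,f)\odot N)\odot_N K\cong B(\id,f)\odot K$ coming from a split coequalizer. (Equivalently, one can avoid constructing companions and conjoints explicitly by checking the fibration characterization: the restriction of a module $H\colon M'\hto N'$ along vertical arrows $(f,\phi)$, $(g,\psi)$ is its restriction in $\lW$ with actions restricted along $\phi$ and $\psi$.)
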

\begin{proof}
 See~\cite[Prop.~11.10]{shulman:frbi}.
\end{proof}

\begin{thm}
  If \lW is a framed bicategory with local coproducts, then there is a framed bicategory $\lMat(\lW)$ described as follows.
  \begin{itemize}
  \item Its objects are set-indexed families $(A_i)_{i\in I}$ of objects of \lW.
  \item Its horizontal arrows from $(A_i)_{i\in I}$ to $(B_j)_{j\in J}$ are matrices in \W: families of horizontal arrows $(H_{ij}\colon A_i\hto B_j)_{i\in I, j\in J}$.
  \item Its vertical arrows from $(A_i)_{i\in I}$ to $(B_j)_{j\in J}$ $M\colon A\hto A$ are pairs $(f,\phi)$ of a function $f\colon I\to J$ and a family $(\phi_i\colon A_i \to B_{f(i)})_{i\in I}$ of vertical arrows in \lW.
  \item Its 2-cells are families of 2-cells in \lW.
  \item The composite of matrices $(H_{ij}\colon A_i\hto B_j)_{i\in I, j\in J}$ and $(K_{jl}\colon B_j\hto C_l)_{j\in J, l\in L}$ is the family of local coproducts
    \[ \Big( \coprod_{j\in J} H_{ij} \odot K_{jl} \Big)_{i\in I, l\in L}. \]
  \item The horizontal unit $\lI_A$ of a family $(A_i)_{i\in I}$ is the identity matrix defined by
    \[ (\lI_A)_{ii'} =
    \begin{cases}
      \lI_{A_i} &\quad i=i'\\
      \emptyset & \quad i\neq i'.
    \end{cases}
    \]
  \end{itemize}
\end{thm}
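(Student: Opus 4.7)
The plan is to verify separately the underlying pseudo double category structure and then the framing.

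For the pseudo double category structure, the category $\lMat(\lW)_0$ of objects and vertical arrows is immediate: composition of $(f, (\phi_i))\colon (A_i)_I \to (B_j)_J$ with $(g, (\psi_j))\colon (B_j)_J \to (C_k)_K$ is $(g \circ f, (\psi_{f(i)} \circ \phi_i))$. Composition of 2-cells is entrywise, so the category $\lMat(\lW)_1$ is visibly functorial in this data. The central step is to verify that matrix composition is a well-behaved functor with coherent associativity and unit isomorphisms. The $(i,m)$-entry of both $(H \odot K) \odot L$ and $H \odot (K \odot L)$ reduces to $\coprod_{j,l} H_{ij} \odot K_{jl} \odot L_{lm}$ via associativity of $\odot$ in $\lW$ combined with the commutation of local coproducts with $\odot$; as in the analogous $\lMod$ theorem, the latter compatibility should be taken as implicit in the hypothesis of local coproducts. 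The pentagon and unit axioms then reduce to the corresponding axioms in $\lW$.

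For the framing, I would construct base change objects explicitly. Given a vertical arrow $(f, (\phi_i))\colon (A_i)_I \to (B_j)_J$, set
\[ B(\id, (f, \phi))_{ij} = \begin{cases} B_{f(i)}(\id, \phi_i) & j = f(i), \\ \emptyset & \text{otherwise}, \end{cases} \]
and dually for $B((f, \phi), \id)$, where the nonempty entries use the framing of $\lW$ and $\emptyset$ denotes an initial object of $\lW(A_i, B_j)$ (which is available as an empty local coproduct). The required 2-cells $\alpha, \beta, \eta, \epsilon$ are assembled componentwise from their $\lW$-counterparts together with the coprojection into the unique nonzero summand of the relevant local coproduct (for $\eta$, the one at $j = f(i)$) and the universal maps out of initial objects (for the other entries). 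The four triangle identities are then checked entry by entry: the nontrivial entries reduce directly to the corresponding triangle identity for $\phi_i$ in $\lW$, while entries containing an $\emptyset$ factor are forced to commute by the universal property of the initial object.

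The main obstacle is purely bookkeeping: verifying the triangle identities and associativity coherence requires carefully tracking which summands of the various coproducts collapse to the distinguished singleton $j = f(i)$ and which collapse to $\emptyset$. Once this is organized, everything reduces either to structure already present in $\lW$ or to the universal properties of $\emptyset$ and $\coprod$. Thus no new ideas are needed beyond those in~\cite{shulman:frbi}, and the proof amounts to an extended diagram chase carried out indexwise.
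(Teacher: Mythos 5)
Your proposal is correct and is essentially the proof the paper intends: the paper's own ``proof'' is just a citation to a straightforward modification of~\cite[Prop.~11.10]{shulman:frbi}, and your entrywise verification of the double-category axioms (using that $\odot$ preserves the local coproducts, including the empty one) together with the diagonal base change matrices having $B_{f(i)}(\id,\phi_i)$, resp.\ $B_{f(i)}(\phi_i,\id)$, in the $j=f(i)$ slot and $\emptyset$ elsewhere is exactly that modification. One small slip that does not affect correctness: no coproducts occur in the definitions of $\alpha$, $\beta$, $\eta$, $\epsilon$ themselves, since the entries of $\lI_A$ and of the base change matrices are single objects; the coprojections you mention only enter when checking the four identities against composites such as $\lI_A\odot B(\id,(f,\phi))$, whose middle-index coproduct has a single non-initial summand.
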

\begin{proof}
  A straightforward modification of the proof of~\cite[Prop.~11.10]{shulman:frbi}.
\end{proof}

Any local colimits possessed by \lW are inherited by $\lMod(\lW)$ and $\lMat(\lW)$.
In particular, if \lW is locally cocomplete, we can construct $\lMod(\lMat(\lW))$.

If we regard a cocomplete closed symmetric monoidal category \V as a framed bicategory, then the objects, vertical arrows, and horizontal arrows of $\lMod(\lMat(\V))$ are exactly \V-enriched categories, functors, and profunctors respectively.
Recall that a \V-profunctor between \V-categories $A\hto B$ is equivalently a \V-enriched functor $A\otimes B\op \to \V$.
In \S\ref{sec:moncat} we defined \V-profunctors between unenriched categories, but a \V-profunctor $A\hto B$ in this sense can be identified with a \V-profunctor in the enriched sense from $\V[A]$ to $\V[B]$, where $\V[A]$ is the \V-category ``freely generated'' by the ordinary category $A$, with $\V[A](a,a') = A(a,a') \cdot \lS$.
Thus, the bicategory $\prof(\V)$ from \S\ref{sec:moncat} is equivalent to the full sub-bicategory of the horizontal bicategory of $\lMod(\lMat(\lW))$ determined by the objects $\V[A]$.
In this case, the representable horizontal arrows $B(\id,f)$ and $B(f,\id)$ defined in the abstract framed-bicategory context agree with the representable profunctors from \S\ref{sec:moncat}.

With these examples in mind, for a general locally cocomplete framed bicategory \lW we denote $\lMod(\lMat(\lW))$ by $\lProf(\lW)$ and refer to its objects, vertical arrows, and horizontal arrows as \textbf{\lW-enriched categories}, \textbf{functors}, and \textbf{profunctors} respectively.
The more classical case is when \lW is a locally cocomplete bicategory \W; see~\cite{walters:sheaves-cauchy-1,street:cauchy-enr,ckw:axiom-mod}.
In this case, the bicategory $\prof(\W)$ constructed in \S\ref{sec:bicat} sits inside $\lProf(\W)$ similarly to the monoidal case, where the pair $(A,R)$ corresponds to the \W-category whose object family is $(R)_{a\in \mathrm{ob}(A)}$ and whose matrix monad is $(A(a,a')\cdot \lI_R)_{a,a'\in \mathrm{ob}(A)}$.

We now recall some basic properties of framed bicategories.

\begin{lem}\label{lem:basechangeder}
  For any vertical arrow $f\colon A\to B$ in a framed bicategory, $B(f,\id)$ is right dual to $B(\id,f)$ in the horizontal bicategory.
\end{lem}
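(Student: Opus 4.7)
The plan is to exhibit explicit unit and counit 2-cells in the horizontal bicategory \W and reduce the triangle identities to the four coherence identities of a framed bicategory via the interchange law.

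First I would define the coevaluation $\lI_A \to B(\id,f) \odot B(f,\id)$ as the horizontal composite $\alpha \odot \eta$: the right vertical edge of $\alpha$ and the left vertical edge of $\eta$ are both $f$ and so concatenate, while the outer vertical edges are both $\id_A$, making $\alpha \odot \eta$ a globular 2-cell after identifying $\lI_A \odot \lI_A \cong \lI_A$. Dually, the evaluation $B(f,\id) \odot B(\id,f) \to \lI_B$ is built as $\epsilon \odot \beta$ after applying the unitor $\lI_B \odot \lI_B \cong \lI_B$.

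Next I would verify the first triangle identity directly. The composite
\[\lI_A \odot B(\id,f) \xto{(\alpha\odot\eta)\odot\id} B(\id,f)\odot B(f,\id)\odot B(\id,f) \xto{\id\odot(\epsilon\odot\beta)} B(\id,f)\odot\lI_B\]
splits column-wise via the interchange law into $\alpha \odot (\eta \cdot \epsilon) \odot \beta$, where $\eta \cdot \epsilon$ denotes the vertical composite (and the two outer columns reduce to $\alpha$ and $\beta$ because the identity 2-cell $\id_{B(\id,f)}$ acts trivially under vertical composition). By the fourth coherence identity in the framed-bicategory definition, $\eta \cdot \epsilon$ equals $\lI(f)$, the image of $f$ under the unit functor $\lI\colon \lW_0 \to \lW_1$. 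Horizontal composition with $\lI(f)$ is absorbed by the unitors of the pseudo double category, so modulo coherence the composite reduces to $\alpha \odot \beta$, which is the identity 2-cell of $B(\id,f)$ by the first coherence identity.

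The second triangle identity is proven symmetrically: the interchange law rewrites the composite as $\epsilon \odot (\alpha \cdot \beta) \odot \eta$, the second coherence identity identifies the middle cell with $\lI(f)$, and then absorption by unitors together with the third coherence identity produces $\id_{B(f,\id)}$. The only real subtlety is the bookkeeping of unitor isomorphisms for the pseudo double category, which is entirely routine and can be sidestepped by passing to an equivalent strict double category. The statement itself is a well-known feature of framed bicategories (equivalently, proarrow equipments), expressing the fact that base change objects always form a dual pair.
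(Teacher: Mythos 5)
Your proof is correct and matches the paper's: the paper defines the coevaluation and evaluation as exactly the same horizontal pastings $\alpha\odot\eta$ and $\epsilon\odot\beta$, and simply asserts that the triangle identities follow from the four defining identities of $\alpha$, $\beta$, $\eta$, $\epsilon$. Your interchange-law verification, with the vertical composites $\epsilon\cdot\eta$ and $\beta\cdot\alpha$ collapsing to the unit 2-cell on $f$ and being absorbed by the unitors, is precisely the omitted routine check.
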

\begin{proof}
  The evaluation and coevaluation are the composites
  \begin{equation}
    \vcenter{\xymatrix{
        \ar[r]\ar@{=}[d] \drtwocell\omit{\epsilon} &
        \ar[r]\ar[d] \drtwocell\omit{\beta} &
        \ar@{=}[d]\\
        \ar@{=}[r] &
        \ar@{=}[r] &
      }}
    \qquad\text{and}\qquad
    \vcenter{\xymatrix{
        \ar@{=}[r]\ar@{=}[d] \drtwocell\omit{\alpha} &
        \ar@{=}[r]\ar[d] \drtwocell\omit{\eta} &
        \ar@{=}[d]\\
        \ar[r] &
        \ar[r] &.
      }}
  \end{equation}
  The required identities follow from those for $\alpha$, $\beta$, $\eta$, and $\epsilon$.
\end{proof}

\begin{lem}\label{thm:bco-ff}
  In a framed bicategory, there is a natural bijection between 2-cells
  \begin{equation}
    \vcenter{\xymatrix{
        R\ar[r]|{|}^{H}\ar[d]_g \drtwocell\omit{} &
        R'\ar[d]^f\\
        S\ar[r]|{|}_{K} &
        S'
      }}
  \end{equation}
  and 2-cells $H\odot B(\id,f)\to B(\id,g) \odot K$ in $\W(R,S)$.
\end{lem}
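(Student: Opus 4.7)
The plan is to build the bijection explicitly using the companion cells $\alpha$ and $\beta$ provided by the framed bicategory axioms, applied to the vertical arrows $g$ and $f$.

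For the forward map, given a square 2-cell $\phi$ with top $H$, bottom $K$, and verticals $g, f$, I would set $\widehat{\phi}$ to be the horizontal paste $\alpha_g \odot \phi \odot \beta_f$ in $\lW_1$, followed by the unitor isomorphisms $\lI_R \odot H \cong H$ on the top row and $B(\id, g) \odot K \odot \lI_{S'} \cong B(\id, g) \odot K$ on the bottom row. The outermost verticals of $\alpha_g$ and $\beta_f$ are $\id_R$ and $\id_{S'}$, while the inner verticals glue with $g$ on the left and $f$ on the right of $\phi$, so the result is a globular 2-cell with top $H \odot B(\id, f)$ and bottom $B(\id, g) \odot K$, as required.

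For the inverse map, given a globular $\psi \colon H \odot B(\id,f) \to B(\id,g) \odot K$, I would define $\widecheck{\psi}$ as the vertical composite in $\lW_1$ of five 2-cells: the inverse right unitor $H \to H \odot \lI_{R'}$ (globular), the cell $\id_H \odot \alpha_f \colon H \odot \lI_{R'} \to H \odot B(\id,f)$ with verticals $(\id_R, f)$, the globular cell $\psi$, the cell $\beta_g \odot \id_K \colon B(\id,g) \odot K \to \lI_S \odot K$ with verticals $(g, \id_{S'})$, and the left unitor $\lI_S \odot K \to K$ (globular). Composing the vertical sources and targets through the five stages yields $(g, f)$, so this is a square from $H$ to $K$ over $(g, f)$.

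Both constructions are manifestly natural in $H$ and $K$, since each is defined by pasting with 2-cells depending only on $g$ and $f$ and on the fixed base change cells. The main step, which I would carry out last, is to verify that $\widehat{\,\cdot\,}$ and $\widecheck{\,\cdot\,}$ are mutually inverse. Each composite, after expansion by interchange and naturality of unitors, reduces to a pasting of $\alpha$ and $\beta$ on $B(\id, f)$ or $B(\id, g)$; the first two zigzag identities in the framed bicategory definition (those involving only $\alpha$ and $\beta$) collapse these composites to identities, giving the desired result. The diagram chase itself is routine but involves several applications of interchange, and this bookkeeping is the primary obstacle.
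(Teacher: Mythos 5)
Your construction is exactly the paper's: the forward map is the horizontal pasting of $\alpha$ for $g$, the given square, and $\beta$ for $f$ (modulo unitors), which is the single picture the paper gives, and your inverse and the reduction of both round-trips to the first two defining identities for $\alpha$ and $\beta$ via interchange is the standard verification the paper leaves implicit. So the proposal is correct and follows essentially the same route as the paper's proof.
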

\begin{proof}
  The bijection is defined via pasting 
  \begin{equation}
    \raisebox{.7cm}{$
    \vcenter{\xymatrix{
        R\ar[r]|{|}^{H}\ar[d]_g \drtwocell\omit{} &
        R'\ar[d]^f\\
        S\ar[r]|{|}_{K} &
        S'
      }}\quad\mapsto \quad
    \vcenter{\xymatrix{R\ar[r]|{|}^{\lI_R}\ar@{=}[d] \drtwocell\omit{\alpha} &
        R\ar[r]|{|}^{H}\ar[d]_g \drtwocell\omit{} &
        R'\ar[d]^f\ar[r]|{|}^{B(\id,f)}\drtwocell\omit{\beta}&S'\ar@{=}[d]\\
        R\ar[r]|{|}_{B(\id,g)}&S\ar[r]|{|}_{K} &
        S'\ar[r]|{|}_{\lI_{S'}}&S'
      }}
    $}\qedhere
  \end{equation}
\end{proof}

Next we extend the definition of a shadow from bicategories to framed bicategories.
Given a double category \lW, let $\loops\lW$ denote the \textbf{category of endo-1-cells} in \lW, defined as follows:
\begin{itemize}
\item Its objects are pairs $(R,H)$ where $R$ is an object of \lW and $H\colon R\hto R$ is a horizontal arrow.
\item Its morphisms from $(R,H)$ to $(S,K)$ are pairs $(f,\phi)$ where $f\colon R\to S$ is a vertical arrow and $\phi$ is a 2-cell
  \begin{equation}
    \vcenter{\xymatrix{
        R\ar[r]|{|}^H\ar[d]_f \drtwocell\omit{\phi} &
        R\ar[d]^f\\
        S\ar[r]|{|}_K &
        S.
      }}
  \end{equation}
\end{itemize}

Note that for any object $R$, the horizontal hom-category $\W(R,R)$ is a non-full subcategory of $\loops\lW$.

\begin{defn}
  A \textbf{shadow} on a double category \lW is a functor
  \[ \sh{-} \colon  \loops\lW \to \T \]
  to some other category $\T$, such that
  \begin{itemize}
  \item the composite functors $\W(R,R) \to \loops\lW \to \T$ equip the bicategory $\W$ with a shadow, and
  \item the isomorphisms $\sh{H\odot K} \cong \sh{K\odot H}$ of this shadow are natural with respect to all morphisms in $\loops\lW$.
    By this we mean that given 2-cells
    \begin{equation}
      \vcenter{\xymatrix{
          R\ar[r]|{|}^H\ar[d]_f \drtwocell\omit{\phi} &
          S\ar[d]^g\\
          R'\ar[r]|{|}_{H'} &
          S'
        }}
      \qquad\text{and}\qquad
      \vcenter{\xymatrix{
          S\ar[r]|{|}^K\ar[d]_g \drtwocell\omit{\psi} &
          R\ar[d]^f\\
          S'\ar[r]|{|}_{K'} &
          R'
        }}
    \end{equation}
    in \lW, so that
    \begin{align*}
      (f,\phi\odot\psi) &\colon (R,H\odot K) \to (R',H'\odot K') \qquad\text{and}\\
      (g,\psi\odot\phi) &\colon (S,K\odot H) \to (S',K'\odot H')
    \end{align*}
    are morphisms in $\loops\lW$, the following square commutes in \T:
    \begin{equation}
      \vcenter{\xymatrix{
          \sh{H\odot K}\ar[r]^{\cong}\ar[d]_{\sh{(f,\phi\odot\psi)}} &
          \sh{K\odot H}\ar[d]^{\sh{(g,\psi\odot\phi)}}\\
          \sh{H'\odot K'}\ar[r]_{\cong} &
          \sh{K'\odot H'}.
        }}
    \end{equation}
  \end{itemize}
\end{defn}

Our central examples inherit this structure.

\begin{thm}
  If \lW is a locally cocomplete framed bicategory with a cocontinuous shadow valued in a cocomplete category \T, then $\lMod(\lW)$ and $\lMat(\lW)$ (hence also $\lProf(\lW)$) also have a shadow valued in \T.
\end{thm}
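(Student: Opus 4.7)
The plan is to transport the shadow on \lW through the constructions of $\lMat(\lW)$ and $\lMod(\lW)$ by, respectively, indexed coproducts and coequalizers in $\T$. Since compositions in $\lMat(\lW)$ are local coproducts and in $\lMod(\lW)$ are local coequalizers, the cocontinuity of $\sh{-}\colon \W(R,R)\to\T$ is precisely what is needed to make the shadow compatible with both constructions.

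First I would handle $\lMat(\lW)$. Given an object $(A_i)_{i\in I}$ and an endo-horizontal-arrow $H = (H_{ij})_{i,j\in I}$, define
\[ \sh{H}_{\lMat} \coloneqq \coprod_{i\in I} \sh{H_{ii}}. \]
A morphism $(f,\phi)\colon ((A_i),H)\to ((B_j),K)$ in $\loops\lMat(\lW)$ consists of $f\colon I\to J$ and 2-cells $\phi_{ij}\colon H_{ij}\to K_{f(i),f(j)}$ over $\phi_i\colon A_i\to B_{f(i)}$; the diagonals $\phi_{ii}$ are morphisms in $\loops\lW$ and induce the required map on shadows after coprojecting along $f$. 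The shadow isomorphism unfolds as
\[ \coprod_{i,j}\sh{H_{ij}\odot K_{ji}} \;\cong\; \coprod_{j,i} \sh{K_{ji}\odot H_{ij}}, \]
obtained by reindexing and applying the pointwise shadow isomorphism of \lW; the shadow axioms follow levelwise.

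Next I would treat $\lMod(\lW)$. For a monad $M$ on $R$ and an $M$-$M$-bimodule $H$ with actions $\lambda\colon M\odot H\to H$ and $\rho\colon H\odot M\to H$, define $\sh{H}_{\lMod}$ to be the coequalizer in $\T$ of
\[ \sh{M\odot H} \;\toto\; \sh{H}, \]
where one leg is $\sh{\lambda}$ and the other is the composite $\sh{M\odot H}\xto{\cong}\sh{H\odot M}\xto{\sh{\rho}}\sh{H}$ using the shadow isomorphism of \lW. Functoriality on $\loops\lMod(\lW)$ follows from functoriality of $\sh{-}$ on $\loops\lW$ and naturality of the shadow isomorphism, which together make both parallel legs natural in morphisms of bimodules. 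For the shadow isomorphism, recall that for $H\colon M\hto N$ and $K\colon N\hto M$ the composite $H\odot_N K$ is the local coequalizer of $H\odot N\odot K \toto H\odot K$ in $\W(R,R)$; cocontinuity of $\sh{-}$ presents $\sh{H\odot_N K}$ as the coequalizer in $\T$ of $\sh{H\odot N\odot K}\toto\sh{H\odot K}$. Feeding this into the definition of $\sh{-}_{\lMod}$ gives $\sh{H\odot_N K}_{\lMod}$ as an iterated coequalizer; the same computation for $K\odot_M H$ produces a parallel iterated coequalizer, and the existing shadow isomorphism of \lW together with its naturality induces a comparison whose universal property yields the desired $\sh{H\odot_N K}_{\lMod} \cong \sh{K\odot_M H}_{\lMod}$. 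The pentagon and unit axioms for this isomorphism descend from those of \lW by invoking uniqueness in the universal properties of the iterated coequalizers. Finally, since $\lMat(\lW)$ inherits local cocompleteness and (by the first step) a cocontinuous shadow, applying Step~II to $\lMat(\lW)$ produces the shadow on $\lProf(\lW) = \lMod(\lMat(\lW))$.

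The main obstacle is the construction for $\lMod(\lW)$, where one must show that the parallel pairs of maps defining $\sh{H}_{\lMod}$ assemble correctly under the coequalizers that define composition of bimodules. This rests on the interplay of two ingredients: cocontinuity of $\sh{-}$, which permits shadowing to commute with the coequalizer forming $\odot_N$; and naturality of the shadow isomorphism $\sh{X\odot Y}\cong\sh{Y\odot X}$ with respect to the bimodule action maps, which is guaranteed by the framed-bicategory definition of a shadow on \lW. Once these interlocking universal properties are identified the remaining verifications are routine diagram chases and reduce to the corresponding axioms in \lW.
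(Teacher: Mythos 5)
Your proposal is correct and follows essentially the same route as the paper: the shadow on $\lMat(\lW)$ is the coproduct of the diagonal shadows, and the shadow on $\lMod(\lW)$ is the coequalizer of the two action maps $\sh{M\odot H}\toto\sh{H}$ (one leg being the left action, the other the right action precomposed with the shadow isomorphism), with functoriality and the cyclicity isomorphism deduced from cocontinuity and naturality of the shadow on \lW exactly as in the paper's (much terser) argument.
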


This generalizes the construction in \cite[\S9.3]{kate:traces}.  

\begin{proof}

  For $\lMod(\lW)$, we define the shadow of a module $H\colon M\hto M$ to be the coequalizer of the two maps
  \[ \sh{M\odot H} \toto \sh{H} \]
  in \T.
  The first of these two maps is the left action of $M$ on $H$, while the second is the right action composed with the shadow isomorphism.
  The shadow of a module 2-cell is induced using the universal property of coequalizers and the fact that the shadow of \lW acts on 2-cells in \lW.
  The axioms are straightforward to verify.

  Similarly, for $\lMat(\lW)$, we define the shadow of a matrix $(H_{ii'}\colon A_i\hto A_{i'})_{i,i'\in I}$ to be the coproduct
  \[ \coprod_{i\in I} \sh{H_{ii}}. \]
  The rest of the structure is analogous.
\end{proof}

The following is the central abstract result  we will use to deduce all the variants of the component lemma. 

\begin{thm}\label{thm:general-omega}
  Suppose given a 2-cell in a framed bicategory with a shadow:
  \begin{equation}
    \vcenter{\xymatrix{
        R\ar[r]|{|}^H\ar[d]_f \drtwocell\omit{\phi}&
        R\ar[d]^f\\
        S\ar[r]|{|}_K &
        S
      }}
  \end{equation}
  and let $B(\id,\phi)\colon  H\odot B(\id,f) \to B(\id,f)\odot K$ denote the corresponding 2-cell obtained from \autoref{thm:bco-ff}.
  Since $B(\id,f)$ is right dualizable, $B(\id,\phi)$ has a trace in the horizontal bicategory \W.
  On the other hand, we can directly apply the shadow $\sh{-}\colon \loops\lW \to \T$ to $(f,\phi)$; then
  \[ \tr(B(\id,\phi)) = \sh{(f,\phi)} \]
  as morphisms $\sh{H} \to \sh{K}$ in $\T$.
\end{thm}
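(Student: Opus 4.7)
The plan is to compute the trace of $B(\id,\phi)$ explicitly using the triangle identities for $(\alpha,\beta,\eta,\epsilon)$ from the framed-bicategory structure, and then recognize the result as $\sh{(f,\phi)}$ by invoking the naturality axiom of the shadow on $\loops\lW$.

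First I would unpack $B(\id,\phi)$ via Lemma~\ref{thm:bco-ff}: it is obtained by pasting $\phi$ horizontally with the unit 2-cells $\alpha\colon \lI_R \Rightarrow B(\id,f)\odot f^*$ and $\beta\colon f^*\odot B(\id,f)\Rightarrow \lI_S$ (in the notation of the definition), so $B(\id,\phi) = (\alpha\odot K)\circ(H\odot \beta\inv)\circ \cdots$ suitably arranged. Next I would write out $\tr(B(\id,\phi))$ as the six-step composite
\[
\sh{H}\too \sh{H\odot B(\id,f)\odot B(f,\id)} \xto{\sh{B(\id,\phi)\odot \id}} \sh{B(\id,f)\odot K\odot B(f,\id)}
\too \sh{K}
\]
where the first map uses the coevaluation $\eta$ from Lemma~\ref{lem:basechangeder} (combined with the unit isomorphism of $\odot$), and the last composite consists of the shadow cyclic isomorphism followed by the evaluation $\epsilon$. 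Both $\eta$ and $\epsilon$ are themselves built, as in Lemma~\ref{lem:basechangeder}, by pasting $\alpha,\beta$ with $\eta$ and $\epsilon$ of the framed-bicategory structure.

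The core step is to use the triangle identities for $(\alpha,\beta,\eta,\epsilon)$ to collapse the pasted diagram. Substituting the explicit forms of $B(\id,\phi)$, $\eta_{B(\id,f)}$, and $\epsilon_{B(\id,f)}$ into the composite, every occurrence of $\alpha$ appears adjacent to a $\beta$ or every $\eta$ appears adjacent to an $\epsilon$, and the four triangle identities allow these pairs to cancel. What remains after cancellation is essentially the 2-cell $\phi$ itself, horizontally composed on one side with an identity and sandwiched between the shadow isomorphisms $\sh{H}\cong \sh{H\odot \lI_R}$ and $\sh{\lI_S\odot K}\cong\sh{K}$, whose vertical structure maps are $f$ on both source and target.

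Finally, by the naturality axiom on the shadow of $\loops\lW$ (more precisely, the clause that the cyclic isomorphisms $\sh{H\odot K}\cong \sh{K\odot H}$ are natural with respect to morphisms in $\loops\lW$), this surviving expression equals $\sh{(f,\phi)}$ applied to $(R,H)\to(S,K)$. The main obstacle will be bookkeeping: assembling the pasting diagram cleanly enough that the triangle-identity cancellations are visible, and keeping track of the shadow cyclic isomorphisms so that the final identification with $\sh{(f,\phi)}$ is a direct appeal to naturality rather than an ad hoc manipulation. One efficient way to organize this is to work in the strict double category equivalent to $\lW$ (guaranteed by~\cite{gp:double-limits}), where all the associativity and unit coherences are strict and the triangle identities reduce to equalities of pasting diagrams rather than isomorphisms mediated by coherence 2-cells.
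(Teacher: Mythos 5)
Your overall plan is the same as the paper's proof: substitute the framed-bicategory descriptions of $B(\id,\phi)$ (from \autoref{thm:bco-ff}) and of the evaluation and coevaluation of $B(\id,f)$ (from \autoref{lem:basechangeder}) into the trace, cancel pairs using the four defining identities of $B(\id,f)$ and $B(f,\id)$, and finish with the shadow axioms. But your pivotal computational claim is false, and it hides exactly the step that carries the content of the theorem. Unwinding the definitions, $\tr(B(\id,\phi))$ is: the shadow of the vertical pasting of $\id_H$ pasted with the coevaluation followed by $B(\id,\phi)\odot\id_{B(f,\id)}$, then the cyclic isomorphism $\sh{(B(\id,f)\odot K)\odot B(f,\id)}\cong\sh{B(f,\id)\odot B(\id,f)\odot K}$, then the shadow of the evaluation pasted with $\id_K$. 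Of the three pairs that must cancel, exactly one is adjacent in a pasting diagram, namely the $\alpha$ inside the coevaluation sitting above the $\beta$ inside $B(\id,\phi)$; this cancels by one defining identity. The other two pairs --- the $\alpha$ inside $B(\id,\phi)$ against the $\beta$ inside the evaluation, and the $\eta$ inside the coevaluation against the $\epsilon$ inside the evaluation --- lie on opposite sides of the cyclic isomorphism, which is not a pasting cell in the double category, and no triangle identity applies across it. So the assertion that every $\alpha$ is adjacent to a $\beta$ and every $\eta$ to an $\epsilon$ fails at that stage, and your cancellation procedure stalls after the first pair. (Two minor slips: $\beta$ is not invertible, so $B(\id,\phi)$ involves no $\beta^{-1}$ --- it is simply the horizontal pasting of $\alpha$, $\phi$, $\beta$; and strictifying the double category does not help here, since the cyclic isomorphism is extra shadow structure, not double-category coherence.)

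The missing ingredient is that the naturality of the cyclic isomorphism with respect to morphisms of $\loops\lW$ whose vertical components are $f$ --- precisely the clause that distinguishes a shadow on a framed bicategory from a shadow on its horizontal bicategory --- must be invoked in the \emph{middle} of the computation, to carry the block consisting of $\alpha$, $\phi$, $\eta$ across the cyclic isomorphism so that the remaining two pairs become vertically adjacent; only then do they cancel by the defining identities, and the leftover isomorphism $\sh{\lI_S\odot K}\cong\sh{K\odot \lI_S}$ disappears by the shadow's unit axiom. Your final appeal to naturality is also aimed at the wrong axiom: after all cancellations what survives is the shadow of $\phi$ bordered by unit $2$-cells on $f$, and its identification with $\sh{(f,\phi)}$ is just functoriality of $\sh{-}\colon\loops\lW\to\T$ together with unit coherence, not the cyclicity-naturality clause. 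With the naturality step relocated to where the non-adjacent pairs must be brought together --- which is how the paper's proof proceeds --- your outline goes through.
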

\begin{proof}
  Invoking the definitions of $B(\id,\phi)$ and of the evaluation and coevaluation in terms of the framed bicategory structure, we find that $\tr(B(\id,\phi))$ is the following composite.
  We have drawn pasting composites of 2-cells in \lW with $\sh{-}$ markers around them to indicate application of $\sh{-}\colon \loops\lW\to\T$; thus the picture below denotes a morphism in $\T$.
  \begin{equation}
  \vcenter{\xymatrix@C=3pc{
      \makebox[-1mm]{$\Big\langle$}\makebox[0mm]{$\Big\langle$} &
      &
       &
      \ar[r]^H \ar@{=}[d] &
      \ar@{=}[r]\ar@{=}[d] \drtwocell\omit{\alpha} &
      \ar@{=}[r]\ar[d]^f \drtwocell\omit{\eta} &
      \ar@{=}[d]
      & \makebox[-1mm]{$\Big\rangle$}\makebox[0mm]{$\Big\rangle$}
      \\
      \makebox[-1mm]{$\Big\langle$}\makebox[0mm]{$\Big\langle$} &
       &
      \ar@{=}[r]\ar@{=}[d]\drtwocell\omit{\alpha} &
      \ar[r]^H\ar[d]_f \drtwocell\omit{\phi} &
      \ar[r]^{B(\id,f)}\ar[d]^f \drtwocell\omit{\beta} &
      \ar[r]^{B(f,\id)}\ar@{=}[d] &
      \ar@{=}[d]
      & \makebox[-1mm]{$\Big\rangle$}\makebox[0mm]{$\Big\rangle$}
      \\
      \makebox[-1mm]{$\Big\langle$}\makebox[0mm]{$\Big\langle$} &
       &
      \ar[r]_{B(\id,f)} &
      \ar[r]^K \ar@{}[dr]|{\cong} &
      \ar@{=}[r] &
      \ar[r]^{B(f,\id)} &
      & \makebox[-1mm]{$\Big\rangle$}\makebox[0mm]{$\Big\rangle$}
      \\
      \makebox[-1mm]{$\Big\langle$}\makebox[0mm]{$\Big\langle$} &
      \ar[r]^{B(f,\id)}\ar@{=}[d] \drtwocell\omit{\epsilon} &
      \ar[r]^{B(\id,f)}\ar[d]^f \drtwocell\omit{\beta} &
      \ar[r]^K \ar@{=}[d] & \ar@{=}[d]
      &&& \makebox[-1mm]{$\Big\rangle$}\makebox[0mm]{$\Big\rangle$}
      \\
      \makebox[-1mm]{$\Big\langle$}\makebox[0mm]{$\Big\langle$} &
      \ar@{=}[r] &
      \ar@{=}[r] &
      \ar[r]_K &
      &&& \makebox[-1mm]{$\Big\rangle$}\makebox[0mm]{$\Big\rangle$}
      }}
  \end{equation}
  One of the defining equations of $B(\id,f)$ enables us to simplify this to
  \begin{equation}
  \vcenter{\xymatrix@C=3pc{
      \makebox[-1mm]{$\Big\langle$}\makebox[0mm]{$\Big\langle$} &
       &
      \ar@{=}[r]\ar@{=}[d]\drtwocell\omit{\alpha} &
      \ar[r]^H\ar[d]_f \drtwocell\omit{\phi} &
      \ar@{=}[r]\ar@{=}[d]^f \drtwocell\omit{\eta} &
      \ar@{=}[d]
      & \makebox[-1mm]{$\Big\rangle$}\makebox[0mm]{$\Big\rangle$}
      \\
      \makebox[-1mm]{$\Big\langle$}\makebox[0mm]{$\Big\langle$} &
       &
      \ar[r]_{B(\id,f)}  &
      \ar[r]^K \ar@{}[d]|{\cong} &
      \ar[r]^{B(f,\id)}
      && \makebox[-1mm]{$\Big\rangle$}\makebox[0mm]{$\Big\rangle$}
      \\
      \makebox[-1mm]{$\Big\langle$}\makebox[0mm]{$\Big\langle$} &
      \ar[r]^{B(f,\id)}\ar@{=}[d] \drtwocell\omit{\epsilon} &
      \ar[r]^{B(\id,f)}\ar[d]^f \drtwocell\omit{\beta} &
      \ar[r]^K \ar@{=}[d] & \ar@{=}[d]
      && \makebox[-1mm]{$\Big\rangle$}\makebox[0mm]{$\Big\rangle$}
      \\
      \makebox[-1mm]{$\Big\langle$}\makebox[0mm]{$\Big\langle$} &
      \ar@{=}[r] &
      \ar@{=}[r] &
      \ar[r]_K &
      && \makebox[-1mm]{$\Big\rangle$}\makebox[0mm]{$\Big\rangle$}
      }}
  \end{equation}
  Now the naturality of the shadow isomorphism on $\loops\lW$ implies that this is equal to
  \begin{equation}
  \vcenter{\xymatrix@C=3pc{
      \makebox[-1mm]{$\Big\langle$}\makebox[0mm]{$\Big\langle$} &
      \ar@{=}[r]\ar@{=}[d]\drtwocell\omit{\alpha} &
      \ar[r]^H\ar[d]_f \drtwocell\omit{\phi} &
      \ar@{=}[r]\ar@{=}[d]^f \drtwocell\omit{\eta} &
      \ar@{=}[d]
      & \makebox[-1mm]{$\Big\rangle$}\makebox[0mm]{$\Big\rangle$}
      \\
      \makebox[-1mm]{$\Big\langle$}\makebox[0mm]{$\Big\langle$} &
      \ar[r]_{B(\id,f)} \ar[d]^f \drtwocell\omit{\beta}  &
      \ar[r]^K \ar@{=}[d] &
      \ar[r]^{B(f,\id)} \ar@{=}[d] \drtwocell\omit{\epsilon}
      & \ar@{=}[d]
      & \makebox[-1mm]{$\Big\rangle$}\makebox[0mm]{$\Big\rangle$}
      \\
      \makebox[-1mm]{$\Big\langle$}\makebox[0mm]{$\Big\langle$} &
      \ar@{=}[r] &
      \ar[r]_K &\ar@{=}[r]
      && \makebox[-1mm]{$\Big\rangle$}\makebox[0mm]{$\Big\rangle$}
      }}
  \end{equation}
  followed by the shadow isomorphism $\sh{\lI_S\odot K} \cong \sh{K\odot \lI_S}$.
  However, the latter is the identity by one of the axioms of a shadow.
  Finally, two of the defining laws of $B(\id,f)$ and $B(f,\id)$ reduce this composite to simply $\sh{(f,\phi)}$.
\end{proof}

  \autoref{thm:general-omega} implies in particular that a shadow on a framed bicategory is uniquely determined by the underlying shadow on its horizontal bicategory.
  However, at present our interest is in applying it in the other direction: we will construct a shadow on a framed bicategory and use \autoref{thm:general-omega} to yield a more explicit characterization of traces for endomorphisms of representable proarrows.

\begin{rmk}
  There is also a sort of converse to \autoref{thm:general-omega}.
  If \W is any bicategory, assumed for simplicity to be a strict 2-category, then there is a framed bicategory \lW whose horizontal bicategory is \W, whose vertical arrows are right dualizable 1-cells in \W, and whose 2-cells
\begin{equation}
  \vcenter{\xymatrix{
      A\ar[r]|{|}^Q\ar[d]_{N} \drtwocell\omit{\phi} &
      B\ar[d]^{M}\\
      C\ar[r]|{|}_{P} &
      D
      }}
\end{equation}
  are 2-cells $\phi\colon Q\odot M \to N\odot P$ in \W.
  The converse of \autoref{thm:general-omega} says that any shadow on \W can be (uniquely) extended to a shadow on this \lW, where the shadow of $\phi$ above is the trace of the corresponding 2-cell in \W.
  The axioms required of this shadow are basically the properties of bicategorical traces from~\cite[\S7]{PS2}.
  In particular, its functoriality on 2-cells in $\loops\lW$ is precisely \autoref{thm:compose-traces}, the composition theorem for traces that plays such a major role in this paper.
\end{rmk}

\section{Base change objects for bicategories}\label{sec:basechangebicat}
In this section, we will  use \autoref{thm:general-omega} to identify the components of bicategorical traces in the non-homotopical examples (\autoref{thm:bicatomega}).
We start by describing base change objects in terms of the bicategorical structure used in \S\ref{sec:bicat}.

From the categorical fibration $(S,T)\colon \lW_1 \to \lW_0\times \lW_0$ of a framed bicategory \lW,
we obtain a pseudofunctor $(\lW_0\times \lW_0)\op \to \cCAT$ sending $(A,B)$ to the category $\W(A,B)$.
We write $f^*Mg^*$ for the action of a pair of vertical arrows $(f,g)\colon (A,B)\to (A',B')$ on $M\colon A\hto B$ under this pseudofunctor; it is called the \textbf{restriction} of $M$ along $f$ and $g$.

We can further extend this  
to a pseudofunctor whose domain is a 2-category.
The \textbf{vertical 2-category} $\cV\lW$ of a double category has underlying category $\lW_0$, and a 2-cell from $f\colon A\to B$ to $g\colon A\to B$ is a 2-cell
\begin{equation}\label{eq:v2cell}
  \vcenter{\xymatrix{
      A\ar[r]|{|}^{\lI_A}\ar[d]_g \drtwocell\omit{} &
      A\ar[d]^f\\
      B\ar[r]|{|}_{\lI_B} &
      B
      }}
\end{equation}
in \lW.
Pasting of 2-cells in \lW yields the required compositional structure making $\cV\lW$ a 2-category.

\begin{lem}\label{thm:2cat-restriction}
  The pseudofunctor $\W\colon (\lW_0\times \lW_0)\op \to \cCAT$ can be extended to a pseudofunctor $\cV\lW\op \times \cV\lW\coop \to \cCAT$.
\end{lem}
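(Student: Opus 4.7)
The plan is to extend the restriction pseudofunctor using the fibration structure $(S,T)\colon \lW_1 \to \lW_0\times \lW_0$ of \lW. A 2-cell in $\cV\lW$ is by definition a 2-cell in \lW whose horizontal source and target are the identity horizontal arrows, so it acts on restrictions via pasting with the cartesian cells that define them. Concretely, for a 2-cell $\alpha$ relating $f$ and $f'$ in $\cV\lW$ and a horizontal arrow $M$, I will paste $\alpha$ with the cartesian cell exhibiting $f^*M$ (respectively $(f')^*M$) as a restriction, and then invoke the universal property of the cartesian cell exhibiting the other one to obtain a unique factoring 2-cell $f^*M \to (f')^*M$. This yields the component at $M$ of the induced natural transformation of restriction functors. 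The second variable is handled symmetrically; the use of $\cV\lW\coop$ on that factor reflects the fact that the vertical source/target orientation for 2-cells in $\cV\lW$ is opposite on the target side of a restriction, so the induced natural transformation points in the reverse direction.

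The next step is to verify that this assignment satisfies the axioms of a pseudofunctor. Naturality in $M$, and functoriality with respect to vertical composition of 2-cells in $\cV\lW$, both follow from uniqueness of factorization through a cartesian cell: two 2-cells with the same composite with the cartesian lift must be equal. Interchange between the two variables is a routine pasting calculation. Compatibility with the composition and unit constraints of the original pseudofunctor is again a uniqueness-of-factorization argument: in each case, both sides of the coherence equation share the same composite with the appropriate cartesian cell, and hence must agree.

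The main obstacle is not conceptual but bookkeeping. The delicate point is to pin down the variance conventions ($\cV\lW\op$ in the first factor, $\cV\lW\coop$ in the second) consistently with the chosen orientations of 2-cells in \lW and $\cV\lW$, and then to keep this orientation straight across several layers of pasting in the coherence checks. Once the orientation is fixed, every coherence reduces to an application of the universal property of the fibration $(S,T)$.
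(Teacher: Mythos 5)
Your proposal is correct and matches the paper's own argument: the paper likewise defines the action of a pair of 2-cells by pasting them with the cartesian 2-cell exhibiting one restriction and then factoring uniquely through the cartesian 2-cell exhibiting the other, with functoriality and coherence following from uniqueness of such factorizations (the paper simply declares this verification straightforward). The only cosmetic difference is that the paper treats both variables in a single pasting rather than one at a time, which changes nothing essential.
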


As always, $\cV\lW\op$ denotes reversal of 1-morphisms but not 2-morphisms, while $\cV\lW\coop$ denotes reversal of both.

\begin{proof}
  Suppose given 2-cells
  \begin{equation}
    \vcenter{\xymatrix{
        A\ar[r]|{|}^{\lI_A}\ar[d]_g \drtwocell\omit{\phi} &
        A\ar[d]^f\\
        B\ar[r]|{|}_{\lI_B} &
        B
      }}
    \qquad\text{and}\qquad
    \vcenter{\xymatrix{
        C\ar[r]|{|}^{\lI_C}\ar[d]_k \drtwocell\omit{\psi} &
        C\ar[d]^h\\
        D\ar[r]|{|}_{\lI_D} &
        D.
      }}
  \end{equation}
  Their image under the desired pseudofunctor must be a natural transformation whose component at $M\colon B\hto D$ is a map $f^*Mk^* \to g^*Mh^*$.
  The fibrational structure of $(S,T)$ yields cartesian 2-cells
  \begin{equation}
    \vcenter{\xymatrix@C=4pc{
        A\ar[r]|{|}^{f^*Mk^*}\ar[d]_f \drtwocell\omit{\quad\mathrm{cart}_1} &
        C\ar[d]^k\\
        B\ar[r]|{|}_M &
        D
      }}
    \qquad\text{and}\qquad
    \vcenter{\xymatrix@C=4pc{
        A\ar[r]|{|}^{g^*Mh^*}\ar[d]_g \drtwocell\omit{\quad\mathrm{cart}_2} &
        C\ar[d]^h\\
        B\ar[r]|{|}_M &
        D.
      }}
  \end{equation}
  Pasting ${\mathrm{cart}_1}$ of these with $\phi$ and $\psi$ we have
  \begin{equation}
  \vcenter{\xymatrix{
      A\ar[r]|{|}\ar[d]_g \drtwocell\omit{\phi} &
      A\ar[r]|{|}\ar[d]^f\drtwocell\omit{\mathrm{cart}_1} &
      C\ar[r]|{|}\ar[d]_k \drtwocell\omit{\psi}&
      C\ar[d]^h\\
      B\ar[r]|{|} &
      B\ar[r]|{|} &
      D\ar[r]|{|} &
      D
      }}
  \end{equation}
  whose unique factorization through ${\mathrm{cart}_2}$ yields the desired map $f^*Mk^* \to g^*Mh^*$.
  The verification of functoriality is straightforward.
\end{proof}

Given a 2-cell $\mu\colon f\to g$ in $\cV\lW$,
we have an induced map $B(\id,\mu)\colon B(\id,f) \to B(\id,g)$ in $\W(A,B)$, defined as the following composite:
\begin{equation}
  \vcenter{\xymatrix{
      \ar[r]|{|}^{\lI_A}\ar@{=}[d] \drtwocell\omit{\beta} &
      \ar[r]|{|}^{\lI_A}\ar[d]^g \drtwocell\omit{\mu} &
      \ar[r]|{|}^{B(\id,f)}\ar[d]^f \drtwocell\omit{\alpha}&
      \ar@{=}[d]\\
      \ar[r]|{|}_{B(\id,g)} &
      \ar[r]|{|}_{\lI_B} &
      \ar[r]|{|}_{\lI_B} &
      }}
\end{equation}
Similarly, we have an induced map $B(\mu,\id)\colon B(g,\id) \to B(f,\id)$.
These constructions yield pseudofunctors $\cV\lW \to \W$ and $\cV\lW\coop \to \W$; see~\cite[Appendix~C]{shulman:frbi}.

\begin{lem}\label{thm:bco-restriction}
  For $f$, $g$ and $M$ as above and $\phi$ and $\psi$ as in the proof of \autoref{thm:2cat-restriction}
  \[f^*Mg^* \cong A'(\id,f) \odot M \odot B'(g,\id)\]
  \label{thm:bco-2cat-restriction}
  and under this isomorphism  the map $f^*Mk^* \to g^*Mh^*$ is identified 
  with the map
  \[ B(\id,f) \odot M \odot D(k,\id) \xto{B(\id,\phi) \odot \id_M \odot D(\psi,\id)} B(\id,g) \odot M \odot D(h,\id). \]
\end{lem}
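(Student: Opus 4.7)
The plan is to derive both claims from the universal property of base change objects encapsulated in \autoref{thm:bco-ff}. The isomorphism $f^*Mk^*\cong B(\id,f)\odot M\odot D(k,\id)$ is a standard structural property of framed bicategories: a cartesian lift along a pair of vertical arrows can always be computed as a horizontal composite of the proarrow with the appropriate companion and conjoint of those vertical arrows. Once this is in place, the identification of the induced map on restrictions with $B(\id,\phi)\odot\id_M\odot D(\psi,\id)$ is a naturality calculation that unpacks the defining cartesian factorizations.

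For the first statement, I would exhibit a canonical 2-cell in \lW
\[
\vcenter{\xymatrix{
A\ar[r]|{|}^-{B(\id,f)\odot M\odot D(k,\id)}\ar[d]_f \drtwocell\omit{} &
C\ar[d]^k \\
B\ar[r]|{|}_M &
D
}}
\]
by horizontally pasting the defining 2-cell $\beta$ for $B(\id,f)$ (over $(f,\id)$), the identity on $M$, and the defining 2-cell $\epsilon$ for $D(k,\id)$ (over $(\id,k)$). Then I would verify that this 2-cell is cartesian for the fibration $(S,T)$: by two applications of \autoref{thm:bco-ff}, one for each vertical boundary, any 2-cell in \lW with vertical boundary $(f,k)$ out of some $N\colon A\hto C$ corresponds bijectively to a 2-cell $N\to B(\id,f)\odot M\odot D(k,\id)$ in $\W(A,C)$. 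Uniqueness of cartesian lifts then identifies $B(\id,f)\odot M\odot D(k,\id)$ with $f^*Mk^*$.

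For the second statement, recall that the map $f^*Mk^*\to g^*Mh^*$ was defined in the proof of \autoref{thm:2cat-restriction} as the unique factorization through $\mathrm{cart}_2$ of the composite obtained by pasting $\mathrm{cart}_1$ with $\phi$ and $\psi$. Under the explicit description above, $\mathrm{cart}_1$ and $\mathrm{cart}_2$ become the horizontal pastings $\beta_f\odot\id_M\odot\epsilon_k$ and $\beta_g\odot\id_M\odot\epsilon_h$ respectively. Transporting the pasting $\phi\cdot\mathrm{cart}_1\cdot\psi$ across the bijection of \autoref{thm:bco-ff}, applied separately to the two vertical boundaries, identifies it with the composite $B(\id,\phi)\odot\id_M\odot D(\psi,\id)$, essentially by unpacking the definitions of these induced maps on base change objects.

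The main obstacle I anticipate is carrying out the pasting-diagram computation in the second step with enough care: the various $\alpha, \beta, \eta$, and $\epsilon$ cells must be tracked and combined using the four triangle identities of the base change objects, and the contravariance of $B(\id,-)$ versus the covariance of $D(-,\id)$ must be respected in the orientations. These manipulations are closely analogous to those used in the proof of \autoref{thm:general-omega}, and no genuinely new ingredients are required beyond careful diagram chasing.
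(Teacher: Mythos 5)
Your proposal is correct and is essentially the paper's argument: the paper's proof simply cites the equivalence of the three definitions of a framed bicategory (\cite[Theorem~4.1]{shulman:frbi}), and what you reconstruct—showing that the pasting $\beta_f\odot\id_M\odot\epsilon_k$ exhibits $B(\id,f)\odot M\odot D(k,\id)$ as a cartesian lift and then chasing the factorization defining $f^*Mk^*\to g^*Mh^*$—is precisely the content of that citation. One small precision: the bijection you need (squares over $(f,k)$ out of $N$ versus maps $N\to B(\id,f)\odot M\odot D(k,\id)$) is not literally two applications of \autoref{thm:bco-ff}, since that lemma yields maps $N\odot D(\id,k)\to B(\id,f)\odot M$; you must also invoke the duality of $D(\id,k)$ and $D(k,\id)$ from \autoref{lem:basechangeder} (or check cartesianness directly via the triangle identities), but this is available in the paper and does not affect the argument.
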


\begin{proof}
  The isomorphism follows from the proof of equivalence of the three definitions of framed bicategory in~\cite[Theorem~4.1]{shulman:frbi}.
  The identification of maps is  a straightforward calculation using the proof of~\cite[Theorem~4.1]{shulman:frbi}.
\end{proof}

We can now prove the Component Lemma for bicategories.

\begin{replem}{thm:bicatomega}[The component lemma for bicategories]\label{thm:bicatomegap}
  Let \W be a locally cocomplete bicategory with a cocontinuous shadow.
  For any right dualizable $X\colon (A,R) \hto (\tc,S)$ in $\prof(\W)$ with $f\colon X\to X\odot P$, and any morphism $\alpha\in A(a,a)$, the component $\tr(f)_{[\alpha]}$ is the trace in \W of the composite
  \begin{equation}
    \xymatrix{X(a) \ar[r]^-{X_{\alpha}} & X(a) \ar[r]^-{f_a} & X(a)\odot P.}\label{eq:trfalbi}
  \end{equation}
\end{replem}
\begin{proof}
  In outline, the proof is identical to that of \autoref{thm:smcomega2a}.
  With $a\colon (\tc,R)\to (A,R)$ the functor picking out $a\in A$, the representable profunctor $(A,R)(\id,a)$
  is right dualizable.
  Moreover, we have a 2-cell in $\lProf(\W)$:
  \begin{equation}
    \vcenter{\xymatrix{
        (\tc,R)\ar[r]^{\lI}\ar[d]_a \drtwocell\omit{\alpha} &
        (\tc,R)\ar[d]^a\\
        (A,R)\ar[r]_{\lI} &
        (A,R)
      }}
  \end{equation}
  whose unique component is the coprojection $\lI_R \to A(a,a) \cdot \lI_R$ indexed by $\alpha$.
  Thus, there is an induced endomorphism $(A,R)(\id,\alpha) \colon (A,R)(\id,a) \to (A,R)(\id,a)$, and by \autoref{thm:compose-traces}, the trace of the composite
  \begin{equation}\label{eq:trfalbi2}
    (A,R)(\id,a) \odot X \xto{(A,R)(\id,\alpha)\odot \id} (A,R)(\id,a) \odot X\odot P \xto{\id \odot f} (A,R)(\id,a) \odot X\odot P
  \end{equation}
  is equal to the composite
  \[ \sh{R} \xto{\tr((A,R)(\id,\alpha))} \sh{(A,R)} \xto{\tr(f)} \sh{P}. \]
  Now note that $(A,R)(\id,a) \odot X \cong a^*X \cong X(a)$.
  Moreover, \autoref{thm:bco-2cat-restriction} informs us that under this isomorphism,~\eqref{eq:trfalbi2} is identified with~\eqref{eq:trfalbi}.
  The proof is now completed by \autoref{thm:general-omega}, which identifies $\tr((A,R)(\id,\alpha))$ with $\sh{\alpha}$, where by construction the latter picks out the component of $\sh{A}$ indexed by $[\alpha]$.
\end{proof}

\begin{rmk}
  Since the framed bicategories of profunctors constructed in this section have general enriched categories as their objects, we can obtain versions of the linearity formulas from \S\ref{sec:moncat} and \S\ref{sec:bicat} that apply to the more general case of weighted colimits of \emph{enriched} diagrams.
  However, we do not have any interesting examples, so we leave the details to the reader.
\end{rmk}

\section{Base change objects for derivators}
\label{sec:bco-der}

We now apply the theory of \S\ref{sec:base_change}
to prove the component lemmas 
for derivators (\autoref{thm:smderomega} and \autoref{thm:derbicatomega}).
The structure is similar to \S\ref{sec:basechangebicat}, and we build on the results there. 

\begin{thm}\label{thm:der-frbi}
  Let \dW be a closed derivator bicategory with a shadow.
  Then the bicategory $\prof(\dW)$ and its shadow constructed in \S\ref{sec:derbicat} extend to a framed bicategory $\lProf(\dW)$ with a shadow, whose vertical arrows are of the form $(f,R)\colon (A,R) \to (B,R)$ for a functor $f\colon A\to B$.
\end{thm}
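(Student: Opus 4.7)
The plan is to equip $\prof(\dW)$ with a double-category structure whose vertical arrows come from functors between indexing categories, and then exhibit it as a framed bicategory by taking the base change objects to be restrictions of units. The key mechanism is the 2-functoriality of the hom-derivators $\dW(R,S)\colon\cCat\op\to\cCAT$, which provides restriction functors along arbitrary functors between indexing categories.

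For the double-category data, the vertical category $\lW_0$ has objects $(A,R)$ and morphisms $(A,R)\to(B,R)$ given by functors $f\colon A\to B$ (with $R$ held fixed). A 2-cell
\[
\vcenter{\xymatrix@C=3pc{
(A,R) \ar[r]|{|}^M \ar[d]_f \drtwocell\omit{\phi} & (C,S) \ar[d]^g\\
(B,R) \ar[r]|{|}_N & (D,S)
}}
\]
with $M\in\dW(R,S)(A\times C\op)$ and $N\in\dW(R,S)(B\times D\op)$ is defined to be a morphism $\phi\colon M\to(f\times g\op)^*N$ in $\dW(R,S)(A\times C\op)$. Vertical pasting uses pseudofunctoriality of $(-)^*$, while horizontal pasting combines the external product of 2-cells with the canonical comparison $\int^{C}(g\times g\op)^*(-)\to\int^{D}(-)$ of coends induced by $g$. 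The latter is well-defined because the coend defining $\odot$ in \autoref{thm:derivbicat-prof} interacts with restriction in the uninvolved variables via the derivator's pseudofunctoriality.

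For the base change objects, I would set
\[
(B,R)(\id,f) \coloneqq (f\times\id)^*\lI_{(B,R)},\qquad
(B,R)(f,\id) \coloneqq (\id\times f\op)^*\lI_{(B,R)},
\]
and build the structural 2-cells $\alpha,\beta,\eta,\epsilon$ from the canonical map $\lI_{(A,R)}\to(f\times f\op)^*\lI_{(B,R)}$ induced by the action of $f$ on twisted arrow categories $\tw(A)\to\tw(B)$, together with the universal property of left Kan extension. Rather than verifying the four triangle identities directly, I would invoke the equivalent cartesian characterization from~\cite[Theorem~4.1]{shulman:frbi}: for every $H\in\dW(R,S)(B\times D\op)$ and functors $f,g$, the identity on $(f\times g\op)^*H$ regarded as a 2-cell above $(f,g)$ is cartesian. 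By our definition of 2-cells, this reduces on both sides of the defining universal property to $\Hom(M,(f\times g\op)^*H)$.

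Finally, the shadow extends via the formula $\sh{H}=\int^A\symm^*\sh{H}_{A\times A\op}$ from \autoref{thm:derivbicat-prof}, which is manifestly functorial on pairs $((A,R),H)\in\loops{\lProf(\dW)}$: a vertical 2-cell $(f,\phi)$ induces a map of coends by applying $\sh{-}_{A\times A\op}$ to $\phi$ and composing with the canonical $\int^A(f\times f\op)^*(-)\to\int^B(-)$, using cocontinuity of the shadow morphism of \dW. Compatibility with the shadow isomorphism $\sh{H\odot K}\cong\sh{K\odot H}$ reduces to the same Fubini/Beck--Chevalley manipulation as the shadow axiom in \autoref{thm:derivbicat-prof}. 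The main obstacle will be the coherent bookkeeping of coends, symmetries, and restrictions; however, the arguments are essentially routine extensions of~\cite[Theorem~5.9 and Lemma~5.12]{gps:additivity}, with the object parameter $R$ playing a passive role throughout.
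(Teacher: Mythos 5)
Your proposal is correct and follows essentially the same route as the paper: 2-cells are defined as morphisms $M\to(f\times g\op)^*N$, so that restrictions exist tautologically and framedness follows from the restriction characterization of~\cite[Theorem~4.1]{shulman:frbi}, while the shadow extends by cocontinuity and functoriality of the coend, with all coherence reduced to naturality of the homotopy exact squares and mates from~\cite{gps:additivity}. Your explicit base change objects $(f\times\id)^*\lI_{(B,R)}$ and $(\id\times f\op)^*\lI_{(B,R)}$ are exactly what the paper's construction produces implicitly, so this is the same argument, not a different one.
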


We could state this theorem more generally replacing \dW by a \emph{framed derivator bicategory} \lW, thereby allowing the vertical arrows in $\lProf(\lW)$ to incorporate a vertical arrow in $\lW$ as well as a functor in \cCat.
Moreover, most motivating examples of derivator bicategories are in fact framed, including the bicategory of rings and complexes of bimodules (its vertical arrows are ring homomorphisms) and the bicategory of parametrized spectra that we will use in~\cite{PS6} (its vertical arrows are maps of base spaces).
However, here we have no need for these more general vertical arrows, so we can avoid giving a definition of framed derivator bicategories.

\begin{proof}
  We define a 2-cell
  \begin{equation}
  \vcenter{\xymatrix{
      (A,R)\ar[r]|{|}^M \ar[d]_{(f,R)} \drtwocell\omit{\phi} &
      (B,S)\ar[d]^{(g,S)}\\
      (A',R)\ar[r]|{|}_N &
      (B',S)
      }}
  \end{equation}
  in $\lProf(\dW)$ to be a morphism $M \to (f\times g\op)^* N$ in $\dW(R,S)(A\times B\op)$.
  Vertical composites and identities are obvious.
  The horizontal identity 2-cell of $(f,R)\colon (A,R) \to (B,R)$ is the mate-transformation $\lI_{(A,R)} \to (f\times f\op)^* \lI_{(B,R)}$ induced by the following commutative square:
  \begin{equation}
    \vcenter{\xymatrix{
        \tw(A)\ar[r]^{\tw(f)}\ar[d]_{(t,s)} &
        \tw(B)\ar[r]\ar[d]^{(t,s)} &
        \tc\\
        A\times A\op\ar[r]_{f\times f\op} &
        B\times B\op      
      }}
  \end{equation}
  The horizontal composite of 
  \begin{equation}
  \vcenter{\xymatrix{
      (A,R)\ar[r]|{|}^M \ar[d]_{(f,R)} \drtwocell\omit{\phi} &
      (B,S)\ar[d]^{(g,S)} \ar[r]|{|}^P \drtwocell\omit{\psi} &
      (C,T) \ar[d]^{(h,T)} \\
      (A',R)\ar[r]|{|}_N &
      (B',S)\ar[r]|{|}_Q &
      (C',T)
      }}
  \end{equation}
  is the composite
  \begin{align*}
    (\pi_{\tw(B)\op})_! (t\op,s\op)^* (M\odot P)
    &\xto{\phi\odot \psi} (\pi_{\tw(B)\op})_! (t\op,s\op)^* ((f\times g\op)^*N \odot (g\times h\op)^*Q)\\
    &\cong (\pi_{\tw(B)\op})_! (t\op,s\op)^* (f\times g\op \times g\times h\op)^* (N\odot Q)\\
    &\cong (\pi_{\tw(B)\op})_! (f\times \tw(g)\op \times h\op)^* (t\op,s\op)^* (N\odot Q)\\
    &\too (f\times h\op)^* (\pi_{\tw(B')\op})_! (t\op,s\op)^* (N\odot Q).
  \end{align*}
  Here $\odot$ denotes the \emph{external} version of the two-variable derivator morphism
  \[\odot \colon \dW(R,S) \times W(S,T) \to \dW(R,T),\]
  and the final map in the composite is the mate-transformation induced by the following commutative square:
  \begin{equation}
  \vcenter{\xymatrix@C=6pc{
      A\times \tw(B)\op \times C\op\ar[r]^-{f\times \tw(g)\op\times h\op} \ar[d]_{\pi_{\tw(B)\op}} &
      A'\times \tw(B')\op \times (C')\op\ar[d]^{\pi_{\tw(B')\op}}\\
      A\times C\op\ar[r]_-{f\times h\op} &
      A'\times (C')\op
      }}
  \end{equation}
  Associativity and unitality of the horizontal composition of 2-cells are automatic because the construction of the associativity and unit isomorphisms for $\prof(\dW)$ in~\cite{gps:additivity} use homotopy exact squares that are natural with respect to the small categories appearing therein, and mates are functorial under pasting of squares.
  Thus, we have a double category $\lProf(\dW)$.
  Moreover, the definition of the 2-cells implies immediately that it has restrictions, hence is a framed bicategory.

  Similarly, the shadow of a 2-cell $\phi\colon M\to (f\times f\op)^*N$, for $M \colon  (A,R) \hto (A,R)$ and $N\colon (B,S) \hto (B,S)$ with $f\colon A\to B$, is the map
  \begin{align*}
    (\pi_{\tw(A)\op})_! (t\op,s\op)^* M
    &\xto{\phi} (\pi_{\tw(A)\op})_! (t\op,s\op)^* (f\times f\op)^*N\\
    &\cong (\pi_{\tw(A)\op})_! (\tw(f)\op)^* (t\op,s\op)^* N\\
    &\too(\pi_{\tw(B)\op})_!(t\op,s\op)^* N
  \end{align*}
  where the final morphism is induced by the commutative square
  \begin{equation}
  \vcenter{\xymatrix@C=3pc{
      \tw(A)\op \ar[r]^{\tw(f)\op}\ar[d] &
      \tw(B)\op\ar[d]\\
      \tc\ar@{=}[r] &
      \tc.
      }}
  \end{equation}
  Finally, the naturality of the shadow isomorphism with respect to 2-cells also follows from the naturality with respect to small categories of the homotopy exact squares appearing in its definition.
\end{proof}

The proof of \autoref{thm:der-frbi} implies that for $M\in \dW(R,S)(B,D)$ and $f\colon A\to B$ and $g\colon C\to D$, the restriction $(f,R)^* M (g,S)^*$ in the framed-bicategory sense may be identified with the restriction $(f\times g\op)^*M \in \dW(R,S)(A,C)$ in the derivator sense.
Thus, by \autoref{thm:bco-restriction}, we have
\begin{equation}
  (f\times g\op)^*M \cong (B,R)(\id,f) \odot M \odot (D,S)(g,\id)\label{eq:derbicat-bcorestr}
\end{equation}

Now, any natural transformation $\mu\colon f\to g \colon  A \to B$ induces a 2-cell
\begin{equation}
  \vcenter{\xymatrix@C=3pc{
      (A,R)\ar[r]|{|}^\lI\ar[d]_{(g,R)} \drtwocell\omit{\mathrlap{(\mu,R)}} &
      (A,R)\ar[d]^{(f,R)}\\
      (B,R)\ar[r]|{|}_\lI &
      (B,R)
    }}
\end{equation}
as the following composite:
\begin{align*}
  (t,s)_! \pi_{\tw(A)}^* \lI_R
  &\cong (t,s)_! \tilde{\mu}^* \pi_{\tw(B)}^* \lI_R\\
  &\too (g\times f\op)^* (t,s)_! \pi_{\tw(B)}^* \lI_R.
\end{align*}
Here $\tilde{\mu}\colon \tw(A) \to \tw(B)$ is the functor sending a morphism $\alpha\colon a\to a'$ of $A$ (regarded as an object of $\tw(A)$) to the morphism $\mu_{a'} \circ f(\alpha) = g(\alpha) \circ \mu_{a}$ of $B$, and the final morphism above is induced by the following commutative square:
\begin{equation}
  \vcenter{\xymatrix{
      \tw(A)\ar[r]^{\tilde{\mu}}\ar[d]_{(t,s)} &
      \tw(B)\ar[d]^{(t,s)}\\
      A\times A\op\ar[r]_{g\times f\op} &
      B\times B\op.
      }}
\end{equation}
In fact, for any $R$, this construction defines a 2-functor $(-,R)\colon \cCat \to \cV(\lProf(\dW))$.
However, we will not need its functoriality, so we leave the proof to the reader.
What we do need is that the induced action of $\mu$ on horizontal arrows of $\lProf(\dW)$ agrees with that arising from the derivator structure of \dW.
For simplicity, we state and prove only the one-sided version.

\begin{lem}\label{thm:der-bco-2cat-restr}
  For $\mu\colon f\to g \colon  A \to B$ and $M\in \dW(R,S)(B,C)$, the derivator restriction map
  \[(\mu\times \id)^*M \colon  (f\times \id)^*M\to (g\times \id)^* M\]
  is equal to the framed-bicategory restriction map
  \[(\mu,R)^* M \colon  (f,R)^*M\to (g,R)^*M\]
  defined as in \autoref{thm:2cat-restriction}.
  Therefore, by \autoref{thm:bco-2cat-restriction}, the isomorphism~\eqref{eq:derbicat-bcorestr} identifies $\mu^*M$ with $(B,R)(\id,\mu) \odot \id_M$.
\end{lem}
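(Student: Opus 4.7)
The plan is to unfold both maps explicitly as composites of mate transformations associated to commutative squares of small categories, and then use the naturality of mates under pasting to identify them. To keep the notation manageable, I would suppress the base object $R$ and work with the 2-cell $(f,R)^*M \to (g,R)^*M$ as a morphism in $\dW(R,S)(A\times C\op)$.

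First I would unpack the framed-bicategory restriction. By the description of 2-cells in $\lProf(\dW)$ given in the proof of \autoref{thm:der-frbi}, a 2-cell with left vertical $(f,R)$, right vertical $(\id_C,S)$, top horizontal $N$, and bottom horizontal $M$ is the same as a morphism $N\to (f\times \id)^*M$ in $\dW(R,S)(A\times C\op)$. Consequently the cartesian 2-cell with $M$ on the bottom and $(f,R)$ on the left is the identity of $(f\times \id)^*M$, and similarly for $(g,R)$; the restriction $(\mu,R)^*M$ is then, by construction, the unique factorization of the composite $\mathrm{cart}_1\odot(\mu,R)$ through $\mathrm{cart}_2$, and this composite is literally the horizontal composition in $\lProf(\dW)$ of $(\mu,R)$ with $\id_{(f\times \id)^*M}$.

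Next I would compute this horizontal composite using the formula for composition of 2-cells in $\lProf(\dW)$: it is a coend over $\tw(A)$ of an external tensor followed by a mate-transformation along
\begin{equation*}
  \vcenter{\xymatrix@C=4pc{
      A\times \tw(A)\op \times C\op\ar[r]^-{\id\times \tw(f)\op\times \id}\ar[d] &
      A\times \tw(B)\op \times C\op\ar[d]\\
      A\times C\op\ar@{=}[r] & A\times C\op.
    }}
\end{equation*}
Unraveling the definition of $(\mu,R)$, which factors through the mate of the square with $(t,s)$ vertical and $\tilde\mu$ on top and $g\times f\op$ on the bottom, the resulting composite of mate transformations in the derivator can be recomposed, by the functoriality of mates under pasting, into the single mate for the square
\begin{equation*}
  \vcenter{\xymatrix{
      A\times C\op \ar[r]^{g\times \id}\ar@{=}[d] \drtwocell\omit{\mathrlap{\mu\times \id}} & B\times C\op \ar@{=}[d]\\
      A\times C\op \ar[r]_{f\times \id} & B\times C\op
    }}
\end{equation*}
acting on $M$. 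This mate is precisely the derivator restriction $(\mu\times\id)^*M\colon (f\times\id)^*M\to (g\times\id)^*M$.

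The main obstacle is the bookkeeping in this last step: the construction of $(\mu,R)$ and of horizontal composition in $\lProf(\dW)$ each involves several Beck-Chevalley squares and mate passages, and one must verify carefully that the accumulated composite agrees with a single mate. A clean way to do this is to reduce to the universal case in which $M=\lI$ and the composite can be analyzed by the explicit definition of the unit 1-cells via $(t,s)_!\pi_{\tw(-)}^*\lI_R$, and then use functoriality in $M$ to deduce the general case; the pasting of Beck-Chevalley squares that appears is the same one used in the proof of \autoref{thm:der-frbi} to establish the associativity and unitality of horizontal composition. Once that diagram chase is performed, the identification of the two maps follows.
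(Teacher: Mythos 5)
Your overall skeleton does match the paper's proof: in $\lProf(\dW)$ the cartesian 2-cells are identities, so $(\mu,R)^*M$ is computed as the horizontal composite of $(\mu,R)$ with the identity 2-cell on $(f\times\id)^*M$, and the task is to unwind that composite of mates and compare it with the single mate $(\mu\times\id)^*$. But two things go wrong. First, the Beck--Chevalley square you attach to the horizontal composition is not the right one: since the left vertical of the pasted rectangle is $(g,R)$, the composite 2-cell is a morphism $\lI_{(A,R)}\odot(f\times\id)^*M\to(g\times\id)^*\bigl(\lI_{(B,R)}\odot M\bigr)$, so the final mate is taken along the square with top $g\times\tw(f)\op\times\id\colon A\times\tw(A)\op\times C\op\to B\times\tw(B)\op\times C\op$ and bottom $g\times\id\colon A\times C\op\to B\times C\op$ --- not $\id\times\tw(f)\op\times\id$ over an identity. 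The $g$ in those positions is exactly where the codomain $(g\times\id)^*M$ comes from, so with your square the composite does not even land in the right place.

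More seriously, the step you defer --- that the accumulated composite "can be recomposed, by functoriality of mates under pasting, into the single mate" for the $\mu$-square --- is the entire content of the lemma, and it is not a formal consequence of pasting alone: the composite also contains the two unit isomorphisms of $\prof(\dW)$ (identifying $(f\times\id)^*M$ with $\lI_{(A,R)}\odot(f\times\id)^*M$ and $\lI_{(B,R)}\odot M$ with $M$), and these are themselves composites of mates along the homotopy exact squares~\eqref{eq:profunit1} and~\eqref{eq:profunit2}. The paper's proof consists precisely of unpacking those squares for both $A$ and $B$, collapsing the diagram~\eqref{eq:der-bco-2cat-restr-1} by the universal property of the pullbacks $\tw(-)\times_{(-)\op}\tw(-)\op$, and then factoring the resulting square through the square carrying $\mu$; nothing in your sketch carries out this reduction. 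Your fallback of "reducing to the universal case $M=\lI$" does not typecheck as stated, since $M\in\dW(R,S)(B\times C\op)$ while the unit lives in $\dW(R,R)(B\times B\op)$; the best one could do is a co-Yoneda-style argument via $M\cong\lI_{(B,R)}\odot M$, which would require knowing that both transformations are compatible with $-\odot M$, a naturality statement your proposal neither formulates nor proves and whose verification is essentially the same mate calculation you are trying to avoid. So as written there is a genuine gap at the decisive step.
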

\begin{proof}
  By definition, $(\mu,R)^* M$ is obtained by factoring the horizontal pasting
  \begin{equation}
    \vcenter{\xymatrix@C=3pc{
        (A,R)\ar[r]|{|}^{\lI}\ar[d]_{(g,R)} \drtwocell\omit{\mathrlap{(\mu,R)}} &
        (A,R)\ar[r]|{|}^{f^*M}\ar[d]|{(f,R)} \drtwocell\omit{\mathrlap{\mathrm{cart}}} &
        (C,S)\ar@{=}[d]\\
        (B,R)\ar[r]|{|}_{\lI} &
        (B,R)\ar[r]|{|}_M &
        (C,S)
      }}
  \end{equation}
  through the cartesian 2-cell defining $(g,R)^*M$.
  However, in the case of $\lProf(\dW)$, these cartesian 2-cells are simply identities $(f\times \id)^*M = (f\times \id)^*M$ and $(g\times \id)^*M=(g\times \id)^*M$.
  Thus, by definition of horizontal composition of 2-cells in $\lProf(\dW)$ and of $(\mu,R)$, we must show that the composite
  \begin{align*}
    (f\times \id)^*M
    &\cong (\pi_{\tw(A)\op})_! (t\op,s\op)^* (\lI_{(A,R)}\odot (f\times \id)^*M)\\
    &= (\pi_{\tw(A)\op})_! (t\op,s\op)^* ((t,s)_! \pi_{\tw(A)}^* \lI_R \odot (f\times \id)^*M)\\
    &\cong (\pi_{\tw(A)\op})_! (t\op,s\op)^* ((t,s)_! \tilde{\mu}^* \pi_{\tw(B)}^* \lI_R \odot (f\times \id)^*M)\\
    &\too (\pi_{\tw(A)\op})_! (t\op,s\op)^* ((g\times f\op)^* (t,s)_! \pi_{\tw(B)}^* \lI_R \odot (f\times \id)^*M)\\
    &= (\pi_{\tw(A)\op})_! (t\op,s\op)^* ((g\times f\op)^* \lI_{(B,R)} \odot (f\times \id)^*M)\\
    &\cong (\pi_{\tw(A)\op})_! (t\op,s\op)^* (g\times f\op \times f\times \id)^* (\lI_{(B,R)}\odot M)\\
    &\cong (\pi_{\tw(A)\op})_! (g\times \tw(f)\op \times \id)^* (t\op,s\op)^* (\lI_{(B,R)}\odot M)\\
    &\too (g\times \id)^* (\pi_{\tw(B)\op})_! (t\op,s\op)^* (\lI_{(B,R)}\odot M)\\
    &\cong (g\times \id)^* M.
  \end{align*}
  is equal to $(\mu\times \id)^*M$.
  Omitting the first and last isomorphisms (which are just unit isomorphisms in $\prof(\dW)$), we may see this as a sequence of natural transformations applied to $\lI_R \odot M$.
  From the definitions of the two noninvertible factors, it is the composite mate-transformation relating the two outer paths from the top-right to the bottom-left in the following diagram of functors (restricting to the left and left Kan extending downwards).
  (For brevity, we omit the factor $C\op$ from the notation from now on, as it plays no role in the calculation.)
  \begin{equation}\label{eq:der-bco-2cat-restr-1}
    \vcenter{\xymatrix@C=3pc{
        &
        \tw(A)\times A \ar[r]^{\tilde{\mu}\times f}\ar[d] &
        \tw(B)\times B\ar[r]\ar[d] &
        B \\
        A\times \tw(A)\op \ar[r]\ar@{=}[d] &
        A\times A\op\times A \ar[r]^{g\times f\op\times f} &
        B\times B\op\times B \ar@{=}[d]\\
        A\times \tw(A)\op \ar[r]^{g\times\tw(f)}\ar[d] &
        B\times\tw(B)\op \ar[r]\ar[d] &
        B\times B\op\times B \\
        A \ar[r]^{g} &
        B 
      }}
  \end{equation}
  Now the proof of the unit isomorphisms of $\prof(\dV)$ in~\cite{gps:additivity} has two steps, involving the following two homotopy exact squares: one pullback
  \begin{equation}
    \vcenter{\xymatrix@C=3pc{
        \tw(B)\times_{B\op} \tw(B)\op\ar[r]\ar[d]&
        \tw(B)\times B \ar[d]\\
        B\times \tw(B)\op\ar[r]&
        B\times B\op\times B
      }}\label{eq:profunit1}
  \end{equation}
  and one transformation that composes up pairs of morphisms:
  \begin{equation}
    \vcenter{\xymatrix{
        \tw(B)\times_{B\op} \tw(B)\op \ar[r]\ar[d] \drtwocell\omit &
        B\ar@{=}[d]\\
        B\ar@{=}[r] &
        B.
      }}\label{eq:profunit2}
  \end{equation}
  (The $(-)\op$'s are switched around from the proof in~\cite{gps:additivity} since here we are considering the \emph{left} unit isomorphism.)
  Applying~\eqref{eq:profunit1} to both $A$ and $B$ in our case, and using the universal property of pullbacks and the functoriality of mates,~\eqref{eq:der-bco-2cat-restr-1} becomes
  \begin{equation}
    \vcenter{\xymatrix@C=1.5pc{
        \tw(A)\times_{A\op} \tw(A)\op \ar[rr]^{\tilde{\mu}\times_f \tw(f)} \ar[d]&&
        \tw(B)\times_{B\op} \tw(B)\op \ar[r]\ar[d] &
        \tw(B)\times B \ar[r]\ar[d] &
        B \\
        A\times \tw(A)\op \ar[rr]^{g\times\tw(f)}\ar[d] &&
        B\times\tw(B)\op \ar[r]\ar[d] &
        B\times B\op \times B \\
        A \ar[rr]^{g} &&
        B 
      }}
  \end{equation}
  Now applying~\eqref{eq:profunit2} for $B$, we obtain
  \begin{equation}
  \vcenter{\xymatrix{
      \tw(A)\times_{A\op} \tw(A)\op \ar[r]^-{f t\op}\ar[d]_s \drtwocell\omit{\overline{\mu}} &
      B \ar@{=}[d]\\
      A \ar[r]_{g} &
      B .
      }}
  \end{equation}
  However, this square factors into
  \begin{equation}
  \vcenter{\xymatrix{
      \tw(A)\times_{A\op} \tw(A)\op \ar[r]^-{t\op}\ar[d]_s \drtwocell\omit{} &
      A  \ar[r]^{f} \ar@{=}[d] \drtwocell\omit{\mu} &
      B \ar@{=}[d]\\
      A \ar@{=}[r] &
      A  \ar[r]_{g} &
      B .
      }}
  \end{equation}
  in which the left-hand square is~\eqref{eq:profunit2} for $A$.
  Thus, after passing fully across both unit isomorphisms, we obtain simply $\mu^*$.
\end{proof}

We will also need to know about the shadows of such natural transformations.
As we observed in eq.~\eqref{eq:derLA}, the argument for~\eqref{eq:LA} applies essentially verbatim to conclude
\[ \sh{(A,R)} \cong |N(\Lambda A)| \tens \sh{R}. \]

\begin{lem}\label{thm:derbi-2cell-sh}
  For a natural transformation $\mu\colon f\to f\colon A\to B$ and any $R$, the shadow $\sh{(\mu,R)}\colon \sh{(A,R)} \to \sh{(B,R)}$ in $\lProf(\dW)$ is the map induced by the functor $\Lambda \mu \colon  \Lambda A \to \Lambda A$ from \S\ref{sec:der}.
\end{lem}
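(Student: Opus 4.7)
The plan is to unwind the definition of $\sh{(\mu,R)}$ from the proof of \autoref{thm:der-frbi} and compare it term-by-term with the map induced by $\Lambda\mu$, exhibiting both as mate-transformations around homotopy exact squares that agree by a diagram chase.

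First, by the construction in the proof of \autoref{thm:der-frbi}, $\sh{(\mu,R)}$ is the composite obtained by applying $(\pi_{\tw(A)\op})_!(t\op,s\op)^*$ to the 2-cell $(\mu,R)\colon \lI_{(A,R)} \to (f\times f\op)^*\lI_{(B,R)}$, using the equality $(t\op,s\op)^*(f\times f\op)^* = (\tw(f)\op)^*(t\op,s\op)^*$, and then applying the mate-transformation induced by the commutative square whose top edge is $\tw(f)\op\colon \tw(A)\op \to \tw(B)\op$ and whose bottom edge is $\id_\tc$. On the other hand, $(\mu,R)$ itself was defined by factoring $\pi_{\tw(A)}^*\lI_R$ through $\tilde{\mu}^*\pi_{\tw(B)}^*\lI_R$ and then applying a mate-transformation associated to the square relating $(t,s)$, $\tilde\mu$ and $g\times f\op$. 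My first step is simply to expand these definitions and, using functoriality of mates under pasting, rewrite the entire shadow as a single composite of a canonical isomorphism (the Fubini / exchange of restrictions) with two mate-transformations against squares involving $\tw(A)$, $\tw(B)$, and $\tilde\mu$.

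Next, I will introduce $\Lambda A$ and $\Lambda B$ via the pullback square \eqref{eq:dershunit}. As used in the derivation of \eqref{eq:LA}, since $(t,s)\colon\tw(A)\to A\times A\op$ is a discrete opfibration, the Beck-Chevalley condition holds, so $\sh{(A,R)} \cong (\pi_{\Lambda A})_!\pi_{\Lambda A}^*\lI_R$, and likewise for $B$. The crucial observation is that the functor $\tilde\mu\colon \tw(A)\to\tw(B)$ restricts to $\Lambda\mu\colon \Lambda A\to\Lambda B$ on the corresponding pullbacks; indeed, one checks directly that if $(\alpha,\beta)\in\Lambda A$ then $(\tilde\mu(\alpha),\tilde\mu\op(\beta)) = (\mu_{a_2}f(\alpha), f(\beta))$ lies in $\Lambda B$, which is precisely the image under $\Lambda\mu$. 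This furnishes commutative cubes connecting the pullback squares for $A$ and $B$ through $\Lambda\mu$ and $\tilde\mu$.

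Finally, using functoriality of mates under pasting and Beck-Chevalley across these homotopy exact squares (all the relevant projections being discrete opfibrations), I will collapse the composite identified in the first step into a single mate-transformation against the square whose top edge is $\Lambda\mu\colon\Lambda A\to \Lambda B$ and whose bottom edge is $\id_\tc$. By definition, this mate is the map on $|N(-)|\tens \sh R$ induced by $\Lambda\mu$, which is exactly the map described in \S\ref{sec:der}. The main obstacle is bookkeeping: tracking that every mate-transformation arising from unwinding $(\mu,R)$ and the shadow pairs up correctly with an exact square through $\Lambda\mu$, with no stray factors. This parallels precisely the bookkeeping done in the proof of \autoref{thm:der-bco-2cat-restr}, and I expect essentially the same two-step use of the unit isomorphism's defining homotopy exact squares from \cite{gps:additivity} to suffice, now applied symmetrically on both sides of the twisted arrow construction.
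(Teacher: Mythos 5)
Your proposal is correct and follows essentially the same route as the paper: unwind $\sh{(\mu,R)}$ as a composite of mate-transformations from the definitions in \autoref{thm:der-frbi}, pull back along the squares defining $\Lambda A$ and $\Lambda B$ (homotopy exact since $(t,s)$ is a discrete opfibration), and use the universal property of the pullback to identify the induced comparison functor with $\Lambda\mu$. The only quibble is notational: the second component of the induced map $\Lambda A\to \Lambda B$ comes from $\tw(f)\op$ rather than from ``$\tilde\mu\op$'', but since the value you compute, $(\mu_{a_2}\circ f(\alpha), f(\beta))$, is exactly $\Lambda\mu(\alpha,\beta)$, nothing in the argument changes.
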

\begin{proof}
  Putting together the definitions of $(\mu,R)$ and the shadow of $\lProf(\dW)$, we see that $\sh{(\mu,R)}$ is the composite mate-transformation comparing the two extreme paths from the top right to the bottom left below (restricting to the left and left Kan extending downwards):
  \begin{equation}\label{eq:derbi-2cell-sh-1}
    \vcenter{\xymatrix@C=3pc{
      & \tw(A) \ar[r]^{\tilde\mu}\ar[d] &
      \tw(B) \ar[r] \ar[d] &
      \tc\\
      \tw(A)\op \ar[r] \ar@{=}[d] &
      A\times A\op \ar[r]^{f\times f\op} &
      B\times B\op \ar@{=}[d]\\
      \tw(A)\op \ar[r]^{\tw(f)\op} \ar[d] &
      \tw(B)\op \ar[r] \ar[d] &
      B\times B\op\\
      \tc\ar@{=}[r] &
      \tc
    }}
  \end{equation}
  In \S\ref{sec:der} we introduced $\Lambda A$ as a pullback
  \begin{equation}
    \vcenter{\xymatrix@C=3pc{
        \Lambda A \ar[r] \ar[d] \pullbackcorner & \tw(A) \ar[d]^{(t,s)} \\
        \tw(A)\op \ar[r]_{(s\op,t\op)} & A\times A\op.
      }}
  \end{equation}
  Thus, applying this to both $A$ and $B$ in~\eqref{eq:derbi-2cell-sh-1}, and using the universal property of the pullback, we obtain
  \begin{equation}
    \vcenter{\xymatrix@C=3pc{
        \Lambda A \ar[r]^{\Lambda \mu} \ar[d] &
        \Lambda B \ar[r] \ar[d] &
        \tw(B) \ar[r] \ar[d] &
        \tc \\
        \tw(A)\op \ar[r]^{\tw(f)\op} \ar[d] &
        \tw(B)\op \ar[r] \ar[d] &
        B\times B\op\\
        \tc \ar@{=}[r] &
        \tc
      }}
  \end{equation}
  It is easy to see that the induced map is indeed $\Lambda \mu$, as shown.
\end{proof}

We can now prove a generalization of  \autoref{thm:derbicatomega} (which includes \autoref{thm:smderomega} as a special case).
Recall that any $\alpha\colon a\to a$ in $A$ induces a map $[\alpha] \colon  \sh{R} \to \sh{(A,R)}$ by way of the functor $\Lambda \alpha \colon \tc \to \Lambda A$.

\begin{replem}{thm:derbicatomega}[The component lemma for derivator bicategories]\label{thm:compderivbicat} 
  If $X\in\dW(R,S)(A)$ is pointwise dualizable and $f\colon X\to X\odot P$, then for any conjugacy class $[a\xto{\alpha} a]$ in $A$, the composite
  \begin{equation}
    \xymatrix{ \sh{R} \ar[r]^-{[\alpha]} & \sh{(A,R)} \ar[r]^-{\tr(f)} & \sh{P} }
  \end{equation}
  is equal to the trace in $\dW(R,S)(\tc)$ of the composite
  \begin{equation}\label{eq:trfalderbi}
    \xymatrix{ X_a \ar[r]^-{X_\alpha} & X_a \ar[r]^-{f_a} & X_a\odot P_a .}
  \end{equation}
\end{replem}
\begin{proof}
  As before, let $a\colon \tc\to A$ be the functor picking out $a\in A$.
  Then the representable profunctor $(A,R)(\id,a)\colon (\tc,R) \hto (A,R)$ is right dualizable, and we have the 2-cell
  \begin{equation}
    \vcenter{\xymatrix{
        (\tc,R)\ar[r]^{\lI}\ar[d]_{(a,R)} \drtwocell\omit{\mathrlap{(\alpha,R)}} &
        (\tc,R)\ar[d]^{(a,R)}\\
        (A,R)\ar[r]_{\lI} &
        (A,R)
      }}
  \end{equation}
  defined as above, with induced endomorphism $(A,R)(\id,\alpha) \colon (A,R)(\id,a) \to (A,R)(\id,a)$.
  Thus, by \autoref{thm:compose-traces}, the trace of the composite
  \begin{equation}\label{eq:trfalderbi2}
    (A,R)(\id,a) \odot X \xto{(A,R)(\id,\alpha)\odot \id} (A,R)(\id,a) \odot X \xto{\id \odot f} (A,R)(\id,a) \odot X\odot P
  \end{equation}
  is equal to the composite
  \[ \sh{R} \xto{\tr((A,R)(\id,\alpha))} \sh{(A,R)} \xto{\tr(f)} \sh{P}. \]
  But by the proof in \autoref{thm:der-frbi} that $\lProf(\dW)$ is framed, we have
  \[(A,R)(\id,a) \odot (X\odot P) \cong a^*(X\odot P) \cong (X\odot P)_a,\] 
  while \autoref{thm:der-bco-2cat-restr} and \autoref{thm:bco-2cat-restriction} inform us that under this isomorphism,~\eqref{eq:trfalderbi2} is identified with~\eqref{eq:trfalderbi}.
  Finally, \autoref{thm:general-omega} identifies $\tr((A,R)(\id,\alpha))$ with $\sh{(\alpha,R)}$, while \autoref{thm:derbi-2cell-sh} identifies this with the map $[\alpha]$.
\end{proof}

Finally, we can  establish the uniqueness of our linearity formula using \autoref{thm:derbi-2cell-sh}.

\begin{proof}[Proof of \autoref{thm:uniqueness}]\label{proof:uniqueness}
  For any $\dW\in \fB$ and any object $R$ of $\dW$, let $(A,R)(\id,-)$ denote the unit object $\lI_{(A,R)} \in \dprof(\dW)((A,R),(A,R)) = \dW(R,R)(A\op\times A)$ regarded as an object of $\dprof(\dW)((\tc,R),(A,R))(A)$.
  Then by \autoref{thm:bicat-colim-is-colim}, we have
  \[ \colim((A,R)(\id,-)) \cong
  (\pi_{A\op})^* \lI_R \otimes_{[A]} \lI_{(A,R)} \cong
  (\pi_{A\op})^* \lI_R. \]
  In other words, the colimit of $(A,R)(\id,-)$ is the constant weight $(\pi_{A\op})^* \lI_R$, regarded as a morphism from $(\tc,R)$ to $(A,R)$ in $\dprof(\dW)$.
  Thus, by~\ref{item:uniqh1} applied in $\dprof(\dW)$, to show that $(\pi_{A\op})^* \lI_R$ is absolute, it will suffice to show that $(A,R)(\id,-)$ is pointwise right dualizable.
  But its value at $a\in A$ is just the representable profunctor $(A,R)(\id,a)$, which as we have seen is always right dualizable.

  Similarly, applying~\ref{item:uniqh2} to the identity map of $(A,R)(\id,-)$, we find that the coefficient vector of the constant weight $(\pi_{A\op})^* \lI_R$ is the sum
  \[ \sum_{[\alpha]} \phi_{[\alpha]} \tr((A,R)(\id,\alpha)) \]
  where for $\alpha\in A(a,a)$, the map $(A,R)(\id,\alpha)$ is the induced endomorphism of $(A,R)(\id,a)$.
  However, by \autoref{thm:general-omega} and \autoref{thm:derbi-2cell-sh}, $\tr((A,R)(\id,\alpha))$ is just the map $\sh{R} \to \sh{(A,R)}$ induced by $[\alpha]$, so to say that the coefficient vector is the above sum is exactly to say that the $\phi_{[\alpha]}$ are its components.
\end{proof}

\bibliographystyle{alpha}
\bibliography{additivity}

\end{document}